\DeclareMathOperator\C{\mathbb C}
\DeclareMathOperator\Z{\mathbb Z}
\DeclareMathOperator\R{\mathbb R}
\DeclareMathOperator\Q{\mathbb Q}
\DeclareMathOperator\N{\mathbb N}
\newtheorem{theorem}{Theorem}[section]
\newtheorem{lemma}[theorem]{Lemma}
\newtheorem{cor}[theorem]{Corollary}
\newtheorem{conj}[theorem]{Conjecture}
\newtheorem{prop}[theorem]{Proposition}
\theoremstyle{definition}
\newtheorem{definition}[theorem]{Definition}
\newtheorem{example}[theorem]{Example}
\theoremstyle{remark}
\newtheorem{remark}[theorem]{Remark}
\newcommand{\dontprint}[1]\relax
\newcommand{\ga}{\gamma}
\newcommand{\G}{{\mathbb G}}
\newcommand{\tot}{\operatorname{tot}}
\newcommand{\Om}{\Omega}
\renewcommand{\th}{\theta}
\newcommand{\hra}{\hookrightarrow}
\renewcommand{\P}{{\mathbb P}}
\newcommand{\A}{{\mathbb A}}
\newcommand{\wt}{\widetilde}
\newcommand{\ot}{\otimes}
\newcommand{\coker}{\operatorname{coker}}
\newcommand{\ul}{\underline}
\newcommand{\und}{\underline}
\newcommand{\Hom}{\operatorname{Hom}}
\newcommand{\Ext}{\operatorname{Ext}}
\newcommand{\VV}{{\mathcal V}}
\newcommand{\LL}{{\mathcal L}}
\newcommand{\OO}{{\mathcal O}}
\newcommand{\PP}{{\mathcal P}}
\newcommand{\UU}{{\mathcal U}}
\newcommand{\si}{\sigma}
\newcommand{\de}{\delta}
\newcommand{\sub}{\subset}
\newcommand{\Spec}{\operatorname{Spec}}
\newcommand{\Res}{\operatorname{Res}}
\newcommand{\ov}{\overline}
\newcommand{\om}{\omega}
\newcommand{\De}{\Delta}
\newcommand{\la}{\lambda}
\renewcommand{\a}{\alpha}
\renewcommand{\b}{\beta}
\newcommand{\Def}{\operatorname{Def}}
\newcommand{\Spf}{\operatorname{Spf}}
\newcommand{\tr}{\operatorname{tr}}
\newcommand{\rk}{\operatorname{rk}}
\newcommand{\id}{\operatorname{id}}
\newcommand{\End}{\operatorname{End}}
\newcommand{\lan}{\langle}
\newcommand{\ran}{\rangle}
\newcommand{\eps}{\epsilon}
\newcommand{\cplx}{{\mathrm{Cplx}}}
\newcommand{\perf}{\mathbf{Perf}}
\newcommand{\uperf}{\ul{\mathrm{Perf}}}
\newcommand{\gr}{\mathbf{gr}}
\newcommand{\vect}{\mathrm{Vect}}
\newcommand{\cV}{\mathcal{V}}
\newcommand{\ol}{\overline}
\newcommand{\cU}{\mathcal{U}}
\newcommand{\cM}{\mathcal{M}}
\newcommand{\cA}{\mathcal{A}}
\newcommand{\cT}{\mathcal{T}}
\newcommand{\cO}{\mathcal{O}}
\newcommand{\cI}{\mathcal{I}}
\newcommand{\ep}{\epsilon}
\newcommand{\ska}{{\mathrm{sAlg}}_k}
\newcommand{\Map}{{\mathrm{Map}}}
\newcommand{\sX}{\mathscr{X}}
\newcommand{\sB}{\mathscr{B}}
\newcommand{\sY}{\mathscr{Y}}
\newcommand{\sZ}{\mathscr{Z}}
\newcommand{\sM}{\mathscr{M}}
\newcommand{\sN}{\mathscr{N}}
\newcommand{\comod}{\mathbf{coMod}}
\newcommand{\QC}{\mathrm{Qcoh}}
\newcommand{\chain}{\mathbf{Ch}}
\newcommand{\uchain}{\underline{Ch}}
\newcommand{\bC}{\mathbf{C}^\bullet}
\newcommand{\am}{\overrightarrow{m}}
\newcommand{\aone}{\overrightarrow{1}}
\newcommand{\amu}{\overrightarrow{\mu}}
\newcommand{\fS}{\mathfrak{S}}
\newcommand{\HH}{\mathbb {H}}
\newcommand{\Cone}{\operatorname{Cone}}
\numberwithin{equation}{section}
\title{Bosonization of Feigin-Odesskii Poisson varieties}
\author{Zheng Hua}\date{}
\address{Department of Mathematics, the University of Hong Kong, Hong Kong SAR, China}
\email{huazheng@maths.hku.hk}
\author{Alexander Polishchuk}
\address{University of Oregon and National Research University Higher School of Economics, Russian Federation}
\email{apolish@uoregon.edu}
\begin{document}
\maketitle

\begin{abstract}
The derived moduli stack of complexes of vector bundles on a Gorenstein Calabi-Yau curve admits a 0-shifted Poisson structure. Feigin-Odesskii Poisson varieties are examples of such moduli spaces over complex elliptic curves. Using moduli stack of chains we construct an auxiliary Poisson varieties with a Poisson morphism from it to a Feigin-Odesskii variety. We call it  the \emph{bosonization} of Feigin-Odesskii variety. As an application, we show that the Feigin-Odesskii Poisson brackets on projective spaces (associated with stable bundles of arbitrary rank on elliptic curves) admit no infinitesimal symmetries.
\end{abstract}


\section{Introduction}
\subsection{Background and motivation}
In the seminal paper \cite{PTVV}, Pantev, Toen, Vaqui\'e and Vezzosi introduced shifted symplectic structures
in the framework  of derived algebraic geometry (see \cite{Lu}, \cite{HAGII}). Later, in a joint paper with Calaque \cite{CPTVV},
they introduced shifted Poisson structures on derived stacks. 
By introducing an appropriate shift, one can sometimes translate a problem for a Poisson stack to a problem on a symplectic stack, which in general is easier to deal with. One important result of this flavor is: if $f: \sX\to \sY$ is a Lagrangian morphism, where $\sY$ is equipped with an $n$-shifted symplectic structure, then $\sX$ is equipped with a natural $(n-1)$-shifted Poisson structure (cf. \cite[Theorem 4.22]{MS2}). The opposite of this theorem is expected to hold when $\sY$ is taken to be a formal stack. This result can be viewed as a derived version of the construction of symplectic groupoids associated to Poisson manifolds.

Following this idea, we have constructed and studied a large class of shifted Poisson stacks that arise as certain moduli stacks on Calabi-Yau varieties \cite{HP1, HP2, HP3}. Let $X$ be a Gorenstein Calabi-Yau $d$-fold, i.e. $X$ is a connected projective $d$-fold such that $\omega_X\cong \cO_X$. Let $\R\ul\cplx(X)$ be the derived moduli stack of bounded chain complex of vector bundles on $X$ up to chain isomorphisms, $\R\ul\vect^\gr(X)$ the derived moduli stack of $\Z$-graded vector bundles on $X$, and $\R\uperf(X)$ the derived moduli stack of perfect complexes on $X$. Let $p_X:\R\ul\cplx(X)\to \R\ul\vect^\gr(X)$ be the morphism that forgets the differential and $q_X: \R\ul\cplx(X)\to \R\uperf(X)$ the morphism sends a chain complex of vector bundles $C^\bullet$ to the class of $C^\bullet$ in $\R\uperf(X)$. We prove in \cite{HP1} that these maps define a Lagrangian correspondence with respect to the $(2-d)$-shifted symplectic structure constructed by Pantev, Toen, Vaqui\'e and Vezzosi \cite{PTVV}. As a consequence, $\R\ul\cplx(X)$ admits a $(1-d)$-shifted Poisson structure.

The link to classical Poisson geometry occurs when $d=1$. We obtain various Poisson structures that appear in representation theory by choosing different Calabi-Yau curves and components of the moduli stacks. This includes for example Mukai-Bottacin Poisson structure on Hilbert scheme of points on Fano surfaces, Drinfeld's standard Poisson structure on partial flag varieties and Feigin-Odesskii Poisson brackets on projective spaces. Shifted symplectic geometry provides a unified method to analyze symplectic leaves of these Poisson varieties.
 An important result in \cite{PTVV} says that if $\sX_1\to \sY$ and $\sX_2\to \sY$ are two Lagrangian morphisms, where $\sY$ is $n$-shifted symplectic, 
then the derived fiber product $\sX_1\times^h_\sY \sX_2$ is $(n-1)$-shifted symplectic. In the case $d=1$ we showed in \cite{HP1} that these assumptions are satisfied for 
$\sX_1=\R\ul\cplx(X)$, $\sY=\R\ul\vect^\gr(X)\times \R\uperf(X)$ and the residue gerbe $\sX_2$ associated with a point of $\sY$.
Therefore, we obtain a natural 0-shifted symplectic foliation of $\R\ul\cplx(X)$.

One of our principal motivations is the study of the Feigin-Odesskii Poisson brackets on projective spaces. Let $X$ be a Calabi-Yau curve and $\xi$ a simple vector bundle on $X$. Let $\sN(\xi)$ be the moduli stack of (non-splitting) complexes $\cO_X\to V$ such that $V/\cO_X\cong \xi$. One can show that the $0$-shifted Poisson structure on $\R\ul\cplx(X)$ induces one on $\sN(\xi)$.
Furthermore, it descends to a Poisson structure on the coarse moduli scheme $N(\xi)$ of $\sN(\xi)$, which is identified with the projective space $\P\Ext^1(\xi,\cO_X)$. 
When $X$ is a complex elliptic curve, the Poisson structure on $N(\xi)$ was first studied by Feigin and Odesskii in mid 90s as semi-classical limits of the elliptic algebras. The study of (algebraic) quantization of $\sN(\xi)$ is the first step towards the study of quantization of the moduli stack $\R\ul\cplx(X)$.  The importance of this problem lies in the following folklore conjecture.
\begin{conj}
Algebraic quantizations of $N(\xi)$ are Artin-Schelter regular algebras.
\end{conj}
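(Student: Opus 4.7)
The plan is to extract Artin--Schelter regularity from the Calabi--Yau origin of the Poisson variety $N(\xi)$. The guiding principle, going back to Van den Bergh and Bocklandt, is that deformation quantization of a Calabi--Yau Poisson variety should yield a Calabi--Yau algebra, and a graded Calabi--Yau algebra deforming a polynomial ring is automatically AS-regular. So the strategy splits into three steps: lift the Calabi--Yau structure of $X$ to a noncommutative Calabi--Yau structure on a dg-model of the local geometry, use that structure to produce a natural $\Z_{\geq 0}$-graded algebraic quantization of the homogeneous coordinate ring of $N(\xi)$, and then derive the AS conditions from the Calabi--Yau property of the resulting algebra.

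Concretely, I would first observe that $\sN(\xi)$ is locally modeled near a complex $[\cO_X\to V]$ by the dg-algebra $\R\Hom_X(\cO_X\oplus V,\cO_X\oplus V)$, which has a natural $2$-Calabi--Yau structure via Serre duality on $X$. Reducing out the idempotent projecting to $\cO_X$ should produce a $1$-Calabi--Yau structure on the tangent complex of $N(\xi)$. Second, I would use the $\G_m$-scaling action on $\Ext^1(\xi,\cO_X)$ together with Kontsevich formality for the affine cone over $N(\xi)$ to obtain a canonical $\Z_{\geq 0}$-graded flat formal deformation of $\mathrm{Sym}\,\Ext^1(\xi,\cO_X)^*$, then specialize the parameter. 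Third, with a graded Calabi--Yau algebra $A$ in hand, Van den Bergh's noncommutative Poincar\'e duality supplies the AS-Gorenstein condition; polynomial growth is automatic from flatness of the deformation (the Hilbert series is constant in $\hbar$); and finite global dimension reduces to a Koszulity check.

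The hard part will be the second step. Formal quantization is automatic, but the existence of an algebraic, graded, canonical quantization is really a question about Poisson cohomology in low degrees, since obstructions live in $HP^{\geq 2}$. This is precisely where the present paper's technology is expected to help: the absence of infinitesimal symmetries controls $HP^1$, and a refinement of the bosonization construction---pulling back along the Poisson morphism to the auxiliary Poisson variety, where the geometry is simpler, and then descending---should constrain $HP^2$ enough to give both existence and uniqueness of the canonical graded quantization. A secondary obstacle is Koszulity of $A$, which in the rank-one case follows from the explicit theta-function presentations of Feigin--Odesskii and Tate--Van den Bergh but in higher rank will presumably require a resolution built geometrically from the chains model underlying the bosonization.
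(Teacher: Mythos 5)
The statement you are trying to prove is not a theorem of this paper: it is stated there as a folklore \emph{conjecture}, and the authors explicitly say that the main obstacle is the lack of general regularity criteria; the paper only takes a preliminary step (constructing the bosonization $\sB(\xi,\am)$, proving $\beta$ is semi-classically Poisson, and proving vanishing of Poisson vector fields on $N(\xi)$). So there is no proof in the paper to compare yours with, and your text should be judged as a research program. As such it has genuine gaps. First, your Step 2 is not merely ``the hard part,'' it is the whole conjecture restated: Kontsevich formality (even $\G_m$-equivariantly on the cone over $N(\xi)\cong\P\Ext^1(\xi,\cO_X)$) produces only a formal $\hbar$-deformation, and ``specializing the parameter'' has no meaning unless you prove the deformation can be chosen polynomial/convergent in $\hbar$ and flat with constant Hilbert series; that flatness is precisely what Tate--Van den Bergh prove (with great effort, and only for $k=1$), not something that ``is automatic from flatness of the deformation.'' Moreover the conjecture quantifies over \emph{all} algebraic quantizations of $N(\xi)$, whereas your construction (even if completed) would only handle one canonical quantization, so you would still need a classification or rigidity statement relating arbitrary quantizations to yours.

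Second, Step 3 is circular. Van den Bergh duality and the (twisted) Calabi--Yau property presuppose homological smoothness, which for a connected graded algebra is essentially the finite-global-dimension half of AS-regularity; so you cannot use them to ``supply the AS-Gorenstein condition'' before finiteness of global dimension is established. And ``finite global dimension reduces to a Koszulity check'' does not reduce anything: Koszulity of the elliptic algebras $Q_{n,k}$ for $k>1$ is itself open, and is of the same order of difficulty as the conjecture. Finally, your Step 1 needs an actual construction: $N(\xi)\cong\P^{n-1}$ is Fano, not Calabi--Yau, and the $0$-shifted Poisson structure used in this paper comes from a Lagrangian correspondence for the moduli of complexes on the CY curve, not from a CY structure on $N(\xi)$ itself; ``reducing out the idempotent'' in $\R\Hom_X(\cO_X\oplus V,\cO_X\oplus V)$ does not obviously yield a $1$-Calabi--Yau structure on the tangent complex of $N(\xi)$, and without that the bridge from the geometry to a graded CY quantization is missing. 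In short, the proposal identifies reasonable ingredients (and correctly guesses that the bosonization and the no-infinitesimal-symmetries theorem should control low-degree Poisson cohomology), but each of the three steps currently assumes a statement at least as strong as what it is supposed to prove.
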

In mid 90s, Tate and Van den Bergh proved that the quantization of $N(\xi)$ is regular when $X$ is an elliptic curve and $\xi$ is a line bundle \cite{TV96}. Since then there has been limited progress on the geometric approach of construction of regular algebras. The major obstacle to the development of this subject is the lack of general regularity criteria. We believe that for algebras arising as quantizations of $N(\xi)$, regularity should be checked via constructing another class of regular algebras that are faithfully flat extensions of our target algebras. We follow the terminology of Feigin and Odesskii to call this class of algebras \emph{Bosonization}. In some special cases of elliptic algebras, Bosonizations have been constructed (see \cite{Ode02, TV96}). One contribution of this paper is that we develop a geometric theory of bosonization for all Feigin-Odesskii Poisson brackets on projective spaces.

\subsection{Results of this paper}
Our first result is a generalization of the main result of \cite{HP1}. Let $X$ be a Gorenstein Calabi-Yau $d$-fold and let $\R\eps\uperf(X)$ be the derived moduli stack of complexes of perfect complexes on $X$. Objects of $\R\eps\uperf(X)$  are bounded chain complexes
\[
V_1\to V_2\to\ldots \to V_n
\] where $V_i$ are perfect complexes on $X$ and equivalences are defined by strict chain isomorphisms. We refer the readers to Section \ref{sec:prelim} for the precise construction of this stack via graded mixed objects. The moduli stack $\R\ul\cplx(X)$ is an open substack of $\R\eps\uperf(X)$ where $V_i$ are required to have amplitude $[0,0]$ (i.e., no nonzero cohomology). Let $\R\uperf^\gr_b(X)$ be the moduli stack of bounded $\Z$-graded objects in the category $\perf(X)$ of perfect complexes over $X$. We have stack morphisms $p_X: \R\eps\uperf(X)\to \R\uperf^\gr_b(X)$ and $q_X: \R\eps\uperf(X)\to \R\uperf(X)$.
\begin{theorem}[c.f. Theorem \ref{thm:Lag}]
Let $X/k$ be a Gorenstein Calabi-Yau $d$-fold. Then the stack map
\[
(p_X,q_X): \R\eps\uperf(X)\to \R\uperf^\gr_b(X)\times\R\uperf(X)
\]  is a Lagrangian correspondence. In particular, $\R\eps\uperf(X)$ admits a $(1-d)$-shifted Poisson structure.
\end{theorem}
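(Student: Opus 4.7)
The plan is to mirror the strategy of \cite{HP1}, which proves the corresponding statement for the open substack of chains of vector bundles, and to work consistently within the graded mixed model of $\perf(X)$ recalled in Section~\ref{sec:prelim}. The proof decomposes into equipping the target with a symplectic structure, producing an isotropic structure on $(p_X,q_X)$, and verifying non-degeneracy.

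First, I would equip the target with its $(2-d)$-shifted symplectic form. The stack $\R\uperf(X)$ carries the PTVV form built from Serre duality on the Gorenstein Calabi-Yau $X$, and $\R\uperf^\gr_b(X)$ inherits a $(2-d)$-shifted symplectic structure because its tangent complex at $V_\bullet = (V_i)$ decomposes as $\bigoplus_i \R\Hom(V_i,V_i)[1]$ on which Serre duality provides a non-degenerate diagonal pairing. I would then endow $\R\uperf^\gr_b(X) \times \R\uperf(X)$ with the difference $p_1^*\om_\gr - p_2^*\om_{\mathrm{PTVV}}$, yielding the ambient symplectic form for the correspondence.

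Next, I would construct the isotropic structure on $(p_X,q_X)$. An $A$-point of $\R\eps\uperf(X)$ is a bounded $\Z$-graded object $V_\bullet$ in $\perf(X \otimes A)$ together with a mixed differential $\delta$ of total degree $+1$ raising the grading by one; the map $p_X$ forgets $\delta$, while $q_X$ returns the totalization $\tot(V_\bullet,\delta)$. Both pulled-back 2-forms are trace pairings on $\bigoplus_{i,j} \R\Hom(V_i,V_j)$: the pullback of $\om_\gr$ retains only diagonal terms, whereas the pullback of $\om_{\mathrm{PTVV}}$ involves the full sum with $\delta$-twisted differential. A homotopy between them comes from the Cartan-type formula in graded mixed calculus, where commutation with $\delta$ exhibits the off-diagonal contributions as exact.

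The main technical point is non-degeneracy. Writing $E = \tot(V_\bullet,\delta)$, the tangent complex of $\R\eps\uperf(X)$ at $(V_\bullet,\delta)$ is the shifted $\delta$-twisted graded endomorphism complex, while the homotopy fiber arising from the conormal sequence of $(p_X,q_X)$ is assembled from $\R\Hom(E,E)[1]$ and $\bigoplus_i \R\Hom(V_i,V_i)[1]$ glued by the diagonal and totalization morphisms. The key identification is that $\R\Hom(E,E)$ is the totalization of the double complex $\bigoplus_{i,j} \R\Hom(V_i,V_j)$ with differential induced by $\delta$; combined with Calabi-Yau duality to identify tangent and cotangent sides, this shows that the comparison is an equivalence. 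The hard part is performing this bookkeeping for arbitrary perfect complexes, where the graded mixed formalism must replace the chain-level manipulations available in \cite{HP1} for honest vector bundles. The $(1-d)$-shifted Poisson conclusion then follows from the general principle that a Lagrangian morphism into a $(2-d)$-shifted symplectic target carries a canonical $(1-d)$-shifted Poisson structure, cf.~\cite[Theorem 4.22]{MS2}.
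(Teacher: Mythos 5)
Your route is a direct verification over $X$: you write down the $(2-d)$-shifted forms on $\R\uperf(X)$ and $\R\uperf^\gr_b(X)$ via Serre duality, produce the isotropic homotopy by comparing trace pairings on $\bigoplus_{i,j}\R\Hom(V_i,V_j)$, and check non-degeneracy through the identification of $\R\Hom(E,E)$ with the totalization of the $\delta$-twisted double complex. The paper factors the proof differently: it first proves the absolute statement over $\Spec k$, namely that $(p,q):\R\eps\uperf\to\R\uperf^\gr_b\times\R\uperf$ is a Lagrangian correspondence (Theorem \ref{thm:mixmoduliPoisson}, obtained by extending \cite[Theorem 3.13]{HP1} to graded mixed perfect objects), and then transgresses along the $\cO$-oriented Calabi--Yau $d$-fold $X$ using \cite[Theorem 2.5]{PTVV} and its Lagrangian analogue \cite[Theorem 2.10]{Ca14}, the $\cO$-orientation coming from \cite[Lemma 3.4]{HP3}. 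Your approach redoes over $X$ the duality bookkeeping that the paper absorbs into the transgression theorems; it is closer in spirit to the computations of \cite{HP1}, and the extension from complexes of bundles to perfect objects is indeed the part the paper also treats as routine.

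There is, however, a genuine gap: you never establish that $\R\eps\uperf(X)$ is a locally geometric stack locally of finite presentation. This is precisely the ingredient the paper singles out as the one point where the argument of \cite{HP1} does not simply carry over, and it is the content of Theorem \ref{thm:epsmodulialgebraic}, proved by identifying $\R\eps\uperf(X)\simeq \ul{\Map}_{st}([\A^1\big/\G_m]\times X,\R\uperf)$ (Lemma \ref{lem:1}) and invoking the Halpern-Leistner--Preygel mapping-stack theorem \cite{HL-P19} together with formal properness of $[\A^1\big/\G_m]$. Without this, the cotangent complex manipulations underlying your non-degeneracy check are not justified on this stack, and, more importantly, the final deduction of the $(1-d)$-shifted Poisson structure via \cite[Theorem 4.22]{MS2} (Theorem \ref{thm:MS}) explicitly requires local geometricity and local finite presentation, so the ``in particular'' clause of the statement is left unproved in your proposal. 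A secondary, fillable issue is that your isotropic structure is asserted rather than constructed: a Lagrangian structure is a null-homotopy in the space of \emph{closed} $2$-forms, so the ``Cartan-type'' homotopy must be exhibited together with its closedness data, as is done explicitly in \cite{HP1}.
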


For the most part we can copy the proof for $\R\ul\cplx(X)$ in \cite{HP1}, except that we need to check that $\R\eps\uperf(X)$ is a locally geometric stack locally of finite presentation. 
This is proved in Theorem \ref{thm:epsmodulialgebraic}. We also give a second proof of Theorem \ref{thm:Lag} in Appendix A using the equivalence between boundary structures and Lagrangian morphisms established by Calaque \cite{Ca14} (the idea of this proof was suggested to us by Pavel Safronov). 
Namely, we show that $\R\eps\uperf(X)$ is the mapping stack from $[\A^1\big/\G_m]\times X$ to $\R\uperf$ and there exists a boundary structure on the decomposition $[\A^1\big/\G_m]=\G_m\big/\G_m\sqcup B\G_m$. The Lagrangian structure on $(p_X,q_X)$ is induced by the boundary structure and the orbit inclusion maps $\G_m\big/\G_m\to [\A^1\big/\G_m]$  and $B\G_m\to [\A^1\big/\G_m]$. 

The new interpretation of the Lagrangian correspondence leads to an interesting corollary. Let $X$ be a Calabi-Yau curve, and let $F$ be the stack autoequivalence of $\R\eps\uperf(X)$ induced by an autoequivalence of $\perf(X)$. Then the semi-classical part of the 0-shifted Poisson structure on $\R\eps\uperf(X)$ is preserved by $F$ up to a nonzero scalar (see Appendix B). Here by the {\it semi-classical part} of a shifted Poisson structure we mean its weight 2 component (see Section \ref{sec:Poieps}).

The second part of the paper is devoted to the study of Feigin-Odesskii Poisson varieties $N(\xi)=\P\Ext^1(\xi,\OO)$ and their bosonizations. Let $X$ be a complex elliptic curve, and let $\xi$ be a rank $k$ stable vector bundle $\xi$ of degree $n>k$ (note that $gcd(n,k)=1$). Recall that the stack $\sN(\xi)$ parameterizes non-splitting complexes $\cO_X\to V$ such that $V/\cO_X\cong \xi$, equivalently nonzero morphisms $\xi\to \cO[1]$ whose mapping cone is a vector bundle (Lemma \ref{lem:exacttriangle}). Let
\[
\frac{n}{k}=n_1-\frac{1}{n_2-\frac{1}{\ldots -\frac{1}{n_p}}}.
\] be the unique representation of $n/k$ as a generalized continued fraction such that $n_i\geq 2$. We denote the successive convergents of this continued fraction as 
$$\mu_{p-1}=\frac{n(p-1)}{k(p-1)}=n_1, \ \mu_{p-2}=\frac{n(p-2)}{k(p-2)}=n_1-\frac{1}{n_2}, \ldots, \ \mu_0=\frac{n(0)}{k(0)}=\frac{n}{k},$$
and we set in addition $n(p)=1$, $k(p)=0$, $\mu_p=\infty$
(see 
Section \ref{sec:contfrac} for details). Let 
\[
\am=(m_1,\ldots,m_p)
\] be any sequence of positive integers. 
We denote by $\sB(\xi,\am)$ the moduli stack of sequences
\[
\xi=\xi_0\to \xi_1^{ss}\to\ldots\to \xi_p^{ss}\to \cO_X[1]
\]  where for $i=1,\ldots,p$, $\xi_i^{ss}$ is a semi stable bundle of rank $m_i\cdot k(i)$ and degree $m_i\cdot n(i)$ (resp., a torsion sheaf of length $m_p$ for $i=p$), and all the morphisms are nonzero. The iterated composition defines a rational morphism 
\[\xymatrix{
\beta(\xi,\am): \sB(\xi,\am)\ar@{-->}[r] & \sN(\xi).}
\]
We prove that
\begin{theorem}
The moduli stack $\sB(\xi,\am)$ admits a 0-shifted Poisson structure such that $\beta(\xi,\am)$ is a semi-classical Poisson morphism on its defining domain. 
\end{theorem}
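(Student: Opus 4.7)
The plan is to realize $\sB(\xi,\am)$ as an open substack of a (derived) fiber of the natural ``endpoints'' morphism from the length-$(p+2)$ chain component of $\R\eps\uperf(X)$ to $\R\uperf(X)\times\R\uperf(X)$, over the point $(\xi,\OO_X[1])$, restricted further to the open locus where each intermediate term $V_i$ is semistable of rank $m_ik(i)$ and degree $m_in(i)$ (resp., a torsion sheaf of length $m_p$ when $i=p$) and all arrows are nonzero. Both semistability in fixed rank/degree and non-vanishing of morphisms are open conditions in families. The $0$-shifted Poisson structure on $\R\eps\uperf(X)$ provided by Theorem \ref{thm:Lag} (with $d=1$) descends to this fiber by the same mechanism used to endow $\sN(\xi)$ with its Poisson structure -- ultimately traceable to the Lagrangian correspondence $(p_X,q_X)$ making the endpoint map coisotropic.

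To construct $\beta(\xi,\am)$ globally, not merely on points, I would build a rational ``total composition'' morphism $\rho$ from the chain component of $\R\eps\uperf(X)$ to $\R\ul\cplx(X)$, sending $V_0\to V_1\to\cdots\to V_{p+1}$ to the two-term chain $V_0\to V_{p+1}$ obtained by composing all arrows. On its domain of definition -- where the total composition is nonzero with mapping cone a vector bundle -- the restriction of $\rho$ to $\sB(\xi,\am)$ lands in $\sN(\xi)$ and reproduces $\beta(\xi,\am)$. To verify that $\rho$ respects the weight-2 (semi-classical) part of the shifted Poisson structure, I would factor it as a sequence of elementary contractions collapsing one intermediate arrow at a time, replacing $V_i\to V_{i+1}\to V_{i+2}$ by its composite $V_i\to V_{i+2}$. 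Using either the description of the Poisson bivector via the Calabi-Yau pairing on hyperexts (Section \ref{sec:Poieps}), or the mapping-stack interpretation $\R\eps\uperf(X)\simeq \Map([\A^1/\G_m]\times X,\R\uperf)$ from Appendix A, each elementary contraction corresponds to a morphism of the underlying shape data, and it is natural to expect it to preserve the weight-2 component.

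The main obstacle will be verifying this preservation for an individual elementary contraction. The weight-2 bracket is built from Serre-duality pairings involving all the terms of the chain, and under a contraction the contributions from the omitted intermediate term must be shown to be subsumed correctly into the bracket of the shorter chain. This can be attacked directly via a \v{C}ech-cocycle calculation using explicit representatives of classes in $\Ext^1(V_i,V_j)$ together with the Calabi-Yau trace pairing, or more conceptually by establishing that the boundary structure of Appendix A is functorial under shape contractions of the $[\A^1/\G_m]$-diagram. Once established for a single contraction, composing them yields the semi-classical Poisson property of $\beta(\xi,\am)$ on its defining domain, completing the proof.
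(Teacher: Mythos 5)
There is a genuine gap, and it starts with the ambient stack. Objects of $\R\eps\uperf(X)$ are graded mixed objects, i.e.\ genuine complexes in which consecutive compositions vanish ($\ep^2=0$); but the objects of $\sB(\xi,\am)$ are \emph{chains} $\xi\to\xi_1^{ss}\to\cdots\to\xi_p^{ss}\to\cO[1]$ whose consecutive compositions are not required to vanish, and generically do not (for $p=1$ the total composition is exactly the nonzero class in $\Ext^1(\xi,\cO)$ you want to hit). So there is no ``length-$(p+2)$ chain component of $\R\eps\uperf(X)$'' containing $\sB(\xi,\am)$, and Theorem \ref{thm:Lag} does not directly hand you a $0$-shifted Poisson structure on the relevant moduli problem. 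The correct ambient object is the stack of chains $\R\uchain^{[0,p+1]}(X)$, defined as the iterated homotopy fiber product of the two-step stacks $\R\eps\uperf^{[i,i+1]}(X)$ over $\R\uperf(X)$ (only for two-term windows is there no composability constraint). Its $0$-shifted Poisson structure is not automatic: it is produced by composing Lagrangian correspondences inductively (Proposition \ref{prop:composition of Lag} and Theorem \ref{thm:Poissonchain}), and only then does the endpoint-fixing step you describe go through, via the Lagrangian morphism $\R\uchain^{[0,r+1]}(X)\to\prod\R\uperf(X)$ recording $V_0$, the cones $V_i/V_{i-1}$, and $V_r$, intersected with the residue gerbe at $([\xi],[\cO[1]])$ using Theorem \ref{thm:basechange}; the open conditions (semistability with prescribed rank and degree, nonvanishing of arrows) are then harmless, as you say.

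The second gap is that the heart of the statement --- that the total-composition map respects the weight-$2$ part of the Poisson structure --- is left as an expectation. Your reduction to elementary contractions $V_i\to V_{i+1}\to V_{i+2}\rightsquigarrow V_i\to V_{i+2}$ is reasonable and close in spirit to what is actually needed, but ``it is natural to expect it to preserve the weight-2 component'' is precisely the step where the work lies. In the paper this is done by an explicit chain-level computation: the bivector on the chain stack is realized by a concrete chain map on $\bigoplus\ul\Hom(V_j,V_i)$-complexes (Lemma \ref{biv-Ch}, obtained by exhibiting an explicit homotopy $h$ relating the product bivector on the fiber product to the bivector of the chain stack), and then one checks by hand that pre- and post-composition with $d$ intertwines this bivector with that of the two-term stack (the commutativity of diagram \eqref{diag:1}), for the length-two case and then for general length. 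Neither of your two suggested routes (a \v{C}ech cocycle computation, or functoriality of the boundary structure of Appendix A under contractions of the $[\A^1/\G_m]$-shape) is carried out, and the second would itself require a nontrivial argument that is not in the paper. Once these two points are supplied --- the fiber-product construction of $\R\uchain$ with its Poisson structure, and the explicit verification that composition is semi-classically Poisson --- the rest of your outline (restricting to the open locus where the composition is nonzero, and composing Poisson maps, as in Proposition \ref{betapoisson}) is correct.
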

This is a special case of Proposition \ref{betapoisson}. A morphism between shifted Poisson stacks is called {\it semi-classically Poisson} if the Poisson morphism condition holds for the weight 2 component (see Section \ref{sec:Poieps} for the precise definition). In particular a semi-classically Poisson morphism descends to an ordinary Poisson morphism between coarse moduli schemes. We study the following two special cases in more detail:
\begin{enumerate}
\item[$(1)$] $\am=\aone:=(1,\ldots,1)$;
\item[$(2)$] $k=1$.
\end{enumerate}

In the case (1), $\xi_i^{ss}$ are stable bundles. We prove that the Poisson structure on the coarse moduli scheme $B(\xi)$ of $\sB(\xi):=\sB(\xi,\aone)$ is zero (Theorem \ref{thm:zero}) and the image of 
the map $B(\xi)\to N(\xi)$, induced by $\beta(\xi,\aone)$, is isomorphic to a fiber of the addition map $X^{p+1}/G\to X$, where $G$ is a subgroup of the symmetric group (Theorem \ref{thm:ni>2} and Theorem \ref{thm:general ni}). 

We expect the image of $\beta(\xi,\aone)$ to be a connected component  of maximal dimension of the vanishing locus of the Feigin-Odesskii Poisson structure on the projective space  $N(\xi)$. 
We prove that it is an irreducible component of this vanishing locus.  
As a consequence, we show that the Poisson structure on $N(\xi)$ admits no infinitesimal symmetries, i.e., every Poisson vector field on $N(\xi)$ vanishes (see Theorem \ref{thm:nosym}). 
This was known only in the case when $k=1$ and $n$ odd (see \cite[Prop.\ 15.1]{Polishchuk}, \cite[Thm.\ 4.6]{Pym-Schedler}).

In the case (2), we have $p=k=1$ and $\am=m$. In this case we compute the Poisson bracket on $\sB(\xi,m)$ explicitly (see Propositions \ref{prop:poi-p=1} and 
\ref{Pois-br-bos-coord-prop}). 
We recover the Poisson bracket in \cite[Section 2.5]{Ode02}. This shows that $\sB(\xi,m)$ is the semi-classical limit of the Bosonization $B_{m,n}(\eta)$ of the elliptic algebra $Q_{n,1}(X,\eta)$ 
defined in \cite[Section 2.3]{Ode02}. 

\subsection{Organization of the paper}
Section \ref{sec:prelim} provides the derived geometry preliminaries for the construction of $\R\eps\uperf(X)$. Section \ref{sec:poisson str} is devoted to the construction of a shifted Poisson structure on $\R\eps\uperf(X)$ following the approach of \cite{HP1} (the second approach is given in Appendix A). 

In the rest of the paper we work with the weight 2 truncation of this shifted Poisson structure.
Since the 0-shifted Poisson stacks that appears in our applications are gerbes over smooth varieties, this truncation determines the Poisson geometry of the underlying coarse moduli schemes. Moreover, the weight 2 truncation of the shifted Poisson structure on $\R\eps\uperf(X)$ is determined by an explicit chain map (see Section \ref{sec:Poieps}), which allows us to compute Poisson bracket on the coarse moduli scheme effectively. In Section \ref{sec:boson}, we construct the moduli stack $\sB(\xi,\am) $ and prove that $\beta(\xi,\am): \sB(\xi,\am)\to \sN(\xi)$ is a semi-classically Poisson morphism (based on the techniques developed in Section \ref{sec:chains} and \ref{sec:chaind=1} about the moduli stack of chains). 

In Section \ref{sec:m=1} we specialize to the case $\am=\aone=(1,\ldots,1)$ and study the geometry of $\sB(\xi,\aone)$ and of the map $\beta(\xi,\aone)$. 
In Section \ref{van-vec-field-sec} we apply the results of this study to prove the vanishing of Poisson vector fields on $N(\xi)$. 
In Section \ref{sec:p=1} we study the case when $\xi$ is a line bundle. In Appendix B, we prove that the weight 2 truncation of the 0-shifted Poisson structure on $\R\eps\uperf(X)$ is preserved up to a nonzero scalar under autoequivalences of $\perf(X)$. The proof uses the content of Appendix A. Although this result is not used explicitly in this paper, it plays a fundamental role in the analysis of Poisson geometry of $\R\eps\uperf(X)$ since it relates different components of $\R\eps\uperf(X)$. In Appendix C we rigidify the bosonization moduli stack $\sB(\xi,\aone)$ 
and compute the determinant line bundle of the universal family. 
We expect this computation to play a role in quantization of $\sB(\xi,\am)$. 

\subsection{Acknowledgment}
We thank Pavel Safronov for sharing the idea of proving Theorem \ref{thm:Lag} using the boundary structure on $[\A^1\big/\G_m]$.
Z.H. is partially supported by the GRF grant no. 17303420 of University Grants Committee of Hong Kong SAR, China. 
A.P. is partially supported by the NSF grant DMS-2001224, 
and within the framework of the HSE University Basic Research Program and by the Russian Academic Excellence Project `5-100'.

\section{Graded mixed perfect complexes and its moduli stack}\label{sec:prelim}

\subsection{Graded mixed objects}
We follow the notation of \cite[Section 1]{CPTVV}. Let $k$ be a Noetherian commutative $\Q$-algebra. Let $C(k)$ be the category of unbounded dg-$k$-modules. Equip $C(k)$ with the standard model structure with weak equivalences being quasi-isomorphisms and fibrations being epimorphisms of cochain complexes. Let $M$ be a symmetric monoidal model category with a $C(k)$-enrichment.

Let $B=k[t,t^{-1}]\ot_k k[\ep]$ where $k[\ep]$ is the graded symmetric algebra with $|\ep|=-1$. $B$ is equipped with a Hopf algebra structure such that 
\[
\Delta_B(t)=t\ot t\hspace {2cm} \Delta_B(\ep)=\ep\ot 1+t\ot \ep
\] and the counit
\[
\varepsilon_B(t)=1,\hspace{2cm} \varepsilon_B(\ep)=0.
\]
 
A \emph{graded mixed object} in category $M$ is a $B$-comodule in $M$. More explicitly, one such object is a $\Z$-family of $\{E(p)\}_{p\in\Z}$ of objects in $M$ together with morphisms in $M$
\[
\ep=\{\ep_p: E(p)\to E(p+1)[1]\}_{p\in\Z}
\] where $[1]$ is defined by the $C(k)$-enrichment of $M$, and $\ep^2=0$. We write $(E,\ep)$ for the family together with the differential. A morphism 
\[
f: (E,\ep)\to (F,\ep)
\] is a family of maps $\{f(p): E(p)\to F(p)\}_{p\in\Z}$ in $M$ that commutes with $\ep$. We call a graded mixed object in $M$ \emph{bounded} if $E(p)=0$ except for finitely many $p$. Denote the category of graded mixed objects in $M$ by $\ep-M^\gr$. 

The category $M^\gr:=\prod_{p\in\Z} M$ is naturally a symmetric monoidal model category enriched in $C(k)$, inherited from $M$. There is a forgetful functor
\[
\ep-M^\gr\to M^\gr
\] forgetting the $k[\ep]$-structure. Equip $\ep-M^\gr$ with the symmetric monoidal model structure through the forgetful functor. Given a triangulated dg category $T$, following \cite{TV07} we denote the category of perfect (or compact) objects by $T_{pe}$. Suppose  $M$ is triangulated and admits arbitrary coproduct. An object of $\ep-M^\gr$ is called \emph{perfect or compact} if it is a compact object in $M^\gr$. Denote by $\ep_{pe}-M^\gr$ the subcategory of $\ep-M^\gr$ consisting of perfect objects.

\begin{example}
Assume that $M=C(k)$. An object of $\ep-C(k)^\gr$ is called a graded mixed complexes of dg-$k$-modules. Such an object is nothing but a double complex of $k$-modules whose horizontal differential are $\ep$. And a morphism is just a morphism of double complexes. It is perfect if and only if $E(p)$ is a perfect dg-$k$-module for all $p\in\Z$ and $E(p)=0$ except for finitely many $p$.
\end{example}

\begin{example}
Assume that $M=\QC(X)$, the category of quasi-coherent complexes on a projective $k$-scheme $X$. An object of $\ep-M^\gr$ is called a graded mixed complexes of quasi-coherent complexes on $X$.   It is perfect if and only if $E(p)$ is a perfect complex on $X$ for all $p\in\Z$ and $E(p)=0$ except for finitely many $p$. If we further assume that $E(p)$ has perfect amplitude $[0,0]$ for all $p$ then such an object is simply a bounded chain complex of vector bundles. Note that two such chains are isomorphic in $\ep-M^\gr$ means they are \emph{chain isomorphic} (not quasi-isomorphic!). Moduli stack of such complexes were studied in \cite{HP1}.
\end{example}

There are (at least) two ways to enrich $\ep-M^\gr$. Let $E, F$ be two mixed graded objects.
\begin{enumerate}
\item[(1)] We define the external hom by
\[
\Hom^{\N}_\ep(E,F):=\prod_{p\in\Z}\Big(\Hom^{\N}_\ep(E,F)(p)\Big)
\] where 
\[
\Hom^{\N}_\ep(E,F)(p)=\prod_{q\in\N}\Hom_{M}(E(q),F(q+p)).
\] The differential 
\[
\ep(p): \Hom^{\N}_\ep(E,F)(p)\to \Hom^{\N}_\ep(E,F)(p+1)[1]
\] is defined by the adjoint action of $\ep$ on $E$ and $F$.
\item[(2)] We define the external hom by
\[
\Hom^{\Z}_\ep(E,F):=\prod_{p\in\Z}\Big(\Hom^{\Z}_\ep(E,F)(p)\Big)
\] where 
\[
\Hom^{\Z}_\ep(E,F)(p)=\prod_{q\in\Z}\Hom_{M}(E(q),F(q+p)).
\] The differential 
\[
\ep(p): \Hom^{\Z}_\ep(E,F)(p)\to \Hom^{\Z}_\ep(E,F)(p+1)[1]
\] is defined by the adjoint action of $\ep$ on $E$ and $F$.
\end{enumerate}
If we compare these two enrichments for $M=C(k)$, then we find that for the first one the enriched hom space is a double complex concentrating in the right half-plane while for the second one the hom space is a double complex spreading out the whole plane. In other words, both Hom spaces are $\eps-C(k)^\gr$-enriched but the first one concentrates in nonnegative weights.  There is a  $\eps-C(k)^\gr$ morphism  
\[
\Hom^{\N}_\ep(E,F)\to \Hom^{\Z}_\ep(E,F).
\]
If we take the total complex then they provides two distinct $C(k)$-enrichments.
In this paper, we will consider the first enrichment. In \cite{CPTVV} the authors consider the second. Different enrichment will induce different derived structure on the moduli stack of objects. Note that the forgetful functor $\ep-M^\gr\to M^\gr$ is $C(k)$-enriched if we use the first enrichment on the source and the $C(k)$-enrichment that encodes only weight zero morphisms on the target.

\subsection{Moduli stack of graded mixed objects}
The goal is to prove the following theorem.
\begin{theorem}\label{thm:epsmodulialgebraic}
Let $X$ be a (flat) projective $k$-scheme. Let $M=\QC(X)$ be the category of quasi-coherent complexes on $X$. Then perfect mixed graded objects in $M$ form a locally geometric stack locally of finite presentation. 
\end{theorem}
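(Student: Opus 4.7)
The plan is to bootstrap from the To\"en--Vaqui\'e theorem that $\R\uperf(X)$ is a locally geometric derived stack locally of finite presentation, building up the graded mixed structure by iterated derived fibre products. A perfect object of $\ep\text{-}\QC(X)^\gr$ is by definition concentrated in finitely many weights, so the full moduli stack is the colimit along open immersions of the substacks $\sM_{[a,b]}$ parametrising objects supported in weights $[a,b]$. As local geometricity and local finite presentation are preserved by such colimits, it suffices to treat each $\sM_{[a,b]}$.

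I would then induct on $b-a$. The base case $b=a$ gives $\sM_{[a,a]}\simeq \R\uperf(X)$, which is the To\"en--Vaqui\'e case. For the inductive step, consider the forgetful morphism $\sM_{[a,b+1]}\to \sM_{[a,b]}\times \R\uperf(X)$ that sends $(E,\ep)$ to its weight-$[a,b]$ truncation together with $E(b+1)$. Over this base there is a universal perfect complex $\R\Map(\EE(b),\EE(b+1)[1])$ whose derived total space classifies an additional morphism $\ep_b$, and the condition $\ep_b\circ \ep_{b-1}=0$ cuts out a derived closed substack, realised as the zero locus of a section of the perfect complex $\R\Map(\EE(b-1),\EE(b+1)[2])$ over that total space. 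Since $X$ is projective, derived pushforwards preserve perfection, so these relative constructions yield derived stacks locally geometric and locally of finite presentation over the base, and the induction closes.

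The main obstacle, and what makes this more than a formality, is to make the constraint $\ep^2=0$ genuinely derived rather than strict: one must encode not only the vanishing of $\ep_b\circ\ep_{b-1}$ as a class in $\Ext^1$, but the compatible vanishings at higher weights, assembling into a single map to a perfect complex whose derived zero locus produces a moduli stack with the correct cotangent complex. Equivalently, one has to check that the tangent complex of $\sM_{[a,b]}$ at $(E,\ep)$ is computed by the graded-mixed deformation complex arising from the enrichment $\Hom^\N_\ep$ chosen in the paper (rather than $\Hom^\Z_\ep$). Given To\"en--Vaqui\'e's computation of the cotangent complex of $\R\uperf(X)$ and the functoriality of derived mapping stacks, this is tractable but warrants careful bookkeeping of weights.
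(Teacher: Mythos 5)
Your strategy---building the stack of bounded graded mixed perfect objects by induction on the weight interval, adding one differential $\ep_b$ at a time as a point of a linear stack over the base and imposing $\ep_b\circ\ep_{b-1}=0$ as a derived zero locus---is genuinely different from the paper's argument, and it could in principle be made to work; but as written it has a real gap, and it is exactly the point you defer at the end as ``bookkeeping''. The theorem concerns a specific simplicial presheaf, $A\mapsto N\bigl(\ep_{pe}\text{-}\QC(X\times\Spec A)^\gr\bigr)$, whose objects satisfy the strict identity $\ep^2=0$ and whose equivalences are strict chain isomorphisms, with the derived enrichment coming from $\Hom^{\N}_\ep$. Your inductive step replaces the strict relation by its derived vanishing: the zero locus of the section of $\R\Map(\EE(b-1),\EE(b+1)[2])$ parametrizes a map $\ep_b$ \emph{together with} a nullhomotopy of $\ep_b\circ\ep_{b-1}$, and at later steps these homotopies must themselves be chosen coherently. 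A priori this iterated construction is a different derived stack from the one in the statement. So what you build is indeed locally geometric and locally of finite presentation, but you have not shown it represents the functor the theorem is about: one needs a rectification/comparison result identifying the strict moduli functor on all of $\ska$ with the iterated derived zero locus (a cotangent-complex comparison at points is not enough). That comparison is the substantive content of the proof, not a routine check, and it is missing.

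For contrast, the paper performs this identification once and for all at the categorical level: Lemma \ref{lem:1} identifies perfect graded mixed objects with perfect complexes on $[\A^1\big/\G_m]\times X$ (via the $B$-comodule description and the To\"en--Vaqui\'e adjunction), giving $\R\ul{\ep_{pe}-\QC(X)^\gr}\simeq \ul\Map_{st}([\A^1\big/\G_m]\times X,\R\uperf)$, and then geometricity follows from the Halpern-Leistner--Preygel mapping-stack theorem, using that $[\A^1\big/\G_m]$ is formally proper and that $\R\uperf$ is locally geometric and locally of finite presentation. If you want to keep your inductive construction, you must either prove the strictification statement directly or route each step through such a categorical equivalence. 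The peripheral steps of your plan are fine: acyclicity of a perfect complex is an open condition, so exhausting the stack by the substacks $\sM_{[a,b]}$ is legitimate, and since $X$ is flat, projective and finitely presented over $k$, pushforward preserves perfect complexes, so the relative linear stacks and derived zero loci you use are geometric and locally of finite presentation over their bases.
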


From now on, we set $M=\QC(X)$. 
Denote by $\ska$ the category of simplicial commutative $k$-algebras.
 For $A\in \ska$, consider the simplicial presheaf 
\[
A\mapsto N\Big(\ep_{pe}-\QC(X\times \Spec A)^\gr\Big),
\]
where $N(-)$ stands for nerve. Denote by $\R\ul{\ep_{pe}-\QC(X)^\gr}$ the $D^-$-stack corresponds to the above simplicial presheaf.  
 
Let $\A^1:=\Spec k[s]$ be the ordinary affine line. The multiplicative group $\G_m=\Spec k[t,t^{-1}]$ acts on $\A^1$ by the comultiplication
\[
s\mapsto s\ot t.
\]
Denote by $[\A^1\big/\G_m]$ the corresponding global quotient stack.

\begin{lemma}\label{lem:1}
There is an equivalence of stacks
\[
\R\ul{\ep_{pe}-\QC(X)^\gr}\simeq \ul{\Map}_{st}([\A^1\big/\G_m]\times X,\R\uperf)
\] where $\ul{\Map}_{st}$ is the internal hom of the category of derived stacks.
\end{lemma}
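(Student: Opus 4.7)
The plan is to establish the equivalence at the level of functors of points on $\ska$. For $A \in \ska$, the universal property of $\R\uperf$ identifies $\ul{\Map}_{st}([\A^1\big/\G_m] \times X \times \Spec A, \R\uperf)$ with the $\infty$-groupoid of perfect complexes on $[\A^1\big/\G_m] \times X \times \Spec A$, while the $A$-points of $\R\ul{\ep_{pe}-\QC(X)^\gr}$ are by definition the $\infty$-groupoid of perfect graded mixed objects in $\QC(X \times \Spec A)$. It therefore suffices to produce, functorially in $A$, an equivalence between these two $\infty$-groupoids.

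First I would invoke flat descent along the affine atlas $\A^1 \times X \times \Spec A \to [\A^1\big/\G_m] \times X \times \Spec A$: quasi-coherent complexes on the quotient stack correspond to $\G_m$-equivariant quasi-coherent complexes on $\A^1 \times X \times \Spec A$. Taking $\G_m$-isotypic components yields a $\Z$-graded family $E = \bigoplus_{p \in \Z} E(p)$ with $E(p) \in \QC(X \times \Spec A)$, and the $k[s]$-module structure (with $s$ of weight $1$) provides the additional datum on the family.

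The nontrivial step is to transport this $s$-action to the graded mixed differential $\ep_p \colon E(p) \to E(p+1)[1]$. This is a Koszul-type duality between the symmetric algebra $\OO_{[\A^1/\G_m]} = k[s]$ and the Hopf algebra $B = k[t, t^{-1}] \otimes k[\ep]$ from the previous subsection: the universal class in $\Ext^1_{[\A^1/\G_m]}(\OO_{B\G_m}, \OO_{B\G_m}(1))$, obtained from the Koszul resolution $\OO(-1) \xrightarrow{s} \OO$ of the structure sheaf of the origin, supplies the universal mixed differential of weight $+1$ and cohomological degree $+1$. Under this correspondence, the compactness conditions on the two sides match: perfectness of the complex on $[\A^1\big/\G_m] \times X \times \Spec A$ translates to each $E(p)$ being a perfect complex in $\QC(X \times \Spec A)$ and $E(p)$ vanishing for all but finitely many $p$, which is the definition of perfect objects in $\ep-\QC(X \times \Spec A)^\gr$.

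The main obstacle is the careful bookkeeping of cohomological and weight shifts, and in particular verifying that the Koszul-duality step interacts correctly with the convention (made explicit in the second example of the previous subsection) identifying a bounded chain complex $V^0 \to \cdots \to V^n$ of vector bundles with the graded mixed object placing $V^p$ in weight $p$. Once this matching is set up the construction is manifestly functorial in $A$ and assembles into the claimed equivalence of $D^-$-stacks.
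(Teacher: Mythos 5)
Your proposal is correct in outline, and its mathematical core --- a Koszul duality identifying perfect complexes on $[\A^1\big/\G_m]$ with perfect graded mixed complexes --- is the same as the paper's, but you reach it by a somewhat different route than the main proof. The paper never descends along the atlas $\A^1\times X\times\Spec A$; instead it (1) rewrites the $A$-points of the left-hand side as maps from $X\times\Spec A$ into the moduli of perfect graded mixed complexes of dg-$k$-modules, (2) quotes from \cite{CPTVV} the identifications $\ep-M^\gr\simeq B-\comod_M$, $B-\comod_{C(k)}\simeq \Rep(\G_m\ltimes\G_a[-1])\simeq \QC\big(B(\G_m\ltimes\G_a[-1])\big)$ and $B(\G_m\ltimes\G_a[-1])\simeq[\A^1\big/\G_m]$ to conclude $\ep_{pe}-C(k)^\gr\simeq\perf([\A^1\big/\G_m])$, and (3) uses the adjunction between $L_{pe}$ and the moduli functor $\cM_{-}$ of \cite[Prop.\ 3.4]{TV07} to trade perfect complexes on $[\A^1\big/\G_m]\times X\times\Spec A$ for $A$-points of the mapping stack. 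Your descent-plus-Koszul-duality argument is essentially the explicit form of step (2), which the paper records separately in Proposition \ref{q=genericfiber} via the BGG correspondence $D^-(\gr-k[\ep])\simeq D^+(\gr-k[x])$ with compact objects $D_{fd}(\gr-k[\ep])\leftrightarrow\perf(\gr-k[x])$. The trade-off: the paper's route reduces everything to citable statements, while yours gives an explicit dictionary but makes you responsible for the step you yourself flag as the nontrivial one --- a $\G_m$-equivariant $k[s]$-module is not literally a graded mixed object, so you must actually prove the equivalence of enriched categories, match compact objects (perfectness on the quotient stack corresponds, up to equivalence, to componentwise perfect $E(p)$ with only finitely many weights carrying nonzero cohomology), and, a point you do not address, check compatibility with the specific $\N$-weighted $C(k)$-enrichment used in Section \ref{sec:prelim} to define $\R\ul{\ep_{pe}-\QC(X)^\gr}$; the paper stresses that the two enrichments give different derived structures on the moduli stack, and while your Hom-complexes of equivariant sheaves on $\A^1$ do realize the $\N$-enrichment, this needs to be stated and verified rather than left implicit.
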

\begin{proof}
Since $\QC(\Spec k)=C(k)$,  for $A\in \ska$
\begin{align*}
\R\ul{\ep_{pe}-\QC(X)^\gr}(A)&\simeq  \ul{\Map}_{st}(X\times \Spec A,\R\ul{\ep_{pe}-C(k)^\gr}).
\end{align*}
Recall that there is an equivalence  of dg categories
\[
\ep-M^\gr\simeq B-\comod_M
\]
where $B$ is the Hopf algebra defined in the previous section and $B-\comod_M$ is the category of dg $B$-comodules in $M$. We refer to \cite[Section 1.1.1]{CPTVV} for the details of this equivalence. When $M=C(k)$, there is a quasi-equivalence of dg categories
\[
B-\comod_{C(k)}\simeq {\mathrm{Rep}}\Big(\G_m\ltimes \G_a[-1]\Big)
\] where $\G_m\ltimes \G_a[-1]$ is a group stack (See \cite[Remark 1.1.2]{CPTVV}). We may identify category of representations of the group stack and the category of quasi-coherent sheaves over its classifying stack:
\[
{\mathrm{Rep}}\Big(\G_m\ltimes \G_a[-1]\Big)]\simeq \QC\Big( B(\G_m\ltimes \G_a[-1])\Big).
\]
As algebraic stacks, $B(\G_m\ltimes \G_a[-1])$ is equivalent with the global quotient stack $[\A^1\big/\G_m]$. 

Therefore we have a quasi-equivalence of categories
\[
\ep_{pe}-C(k)^\gr\simeq L_{pe}\Big([\A^1\big/ \G_m]\Big)=\perf\Big([\A^1\big/ \G_m]\Big)
\] \footnote{Later, we give an explicit equivalence in Prop \ref{q=genericfiber}.}
and equivalence of $D^-$-stacks
\[
\ul{\Map}_{st}(X\times \Spec A,\R\ul{\ep_{pe}-C(k)^\gr})\simeq \R\uperf\Big([\A^1\big/ \G_m]\times X\times \Spec A\Big)
\]

By \cite[Prop 3.4]{TV07}, the moduli functor $\cM_{-}$ from the opposite of the homotopy category of dg categories to the homotopy category of $D^-$-stacks has a left adjoint $L_{pe}$, i.e. for any $D^-$-stack $F$ and dg category $T$, we have a equivalence in the homotopy category of simplicial sets
\[
\Map_{dg-cat^{op}}(L_{pe}(F),T)\simeq \Map_{st}(F,\cM_T)
\] where $\Map_{st}$ is the external hom of $D^-$-stacks. Applying the adjunction, we get
\begin{align*}
&\R\uperf\Big([\A^1\big/ \G_m]\times X\times \Spec A\Big)\\
&\simeq \Map_{dg-cat}\Big({\bf{1}}, L_{pe}([\A^1\big/ \G_m]\times X\times \Spec A)\Big) \\
&\simeq \ul\Map_{st}([\A^1\big/ \G_m]\times X\times \Spec A,\R\uperf)\\
&\simeq \ul{\Map}_{st}([\A^1\big/\G_m]\times X,\R\uperf)(A)
\end{align*}
Here ${\bf{1}}$ is the dg category consisting of one object whose endomorphism is $k$. It is the initial object in the model category of dg categories.
\end{proof}

\begin{proof}[Proof of Theorem \ref{thm:epsmodulialgebraic}] 
Let $\sX$ and $\sY$ be derived stacks over $k$. Suppose that $\sX$ is \emph{formally proper} (see \cite[Definition 1.1.3]{HL-P19}) and of finite Tor amplitude over $k$ and $\sY$ is locally geometric and locally of finite presentation over $k$. Halpern-Leistner and Preygel proved that the derived mapping stack $\ul{\Map}_{st}(\sX,\sY)$ is locally geometric and locally of finite presentation over $k$ (\cite[Theorem 5.1.1]{HL-P19}). The coarse moduli space morphism $[\A^1/\G_m]\to \Spec k$ is a good quotient in the sense of \cite{Alper09}.  Using the good quotient property, one can show that the global quotient stack $[\A^1\big/\G_m]$ is formally proper (see \cite[Proposition 4.2.5]{HL-P19}). It is well known that $\R\uperf$ is locally geometric and locally of finite presentation over $k$ (cf. \cite[Proposition 3.7]{TV07}). Then the claim follows from Lemma \ref{lem:1} and the above mentioned theorem of Hapern-Leistner and Preygel.
\end{proof}

\section{Poisson structure on moduli stack of graded mixed perfect complexes}\label{sec:poisson str}
\subsection{Poisson structure on $\R\eps\uperf$}
For simplicity we write
\[
\R\eps\uperf=\R\ul{\eps_{pe}-C(k)^\gr}
\] for the $D^-$-stack of graded mixed objects in the category of perfect dg-$k$-modules. By Lemma \ref{lem:1}, it is equivalent to the mapping stack $\ul{\Map}_{st}([\A^1\big/\G_m],\R\uperf)$. The main result of this section is the following.

\begin{theorem}\label{thm:mixmoduliPoisson}
The $D^-$-stack $\R\eps\uperf$ admits a $1$-shifted Poisson structure.
\end{theorem}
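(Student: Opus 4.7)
The plan is to realize $\R\eps\uperf$ as the apex of a Lagrangian correspondence into a $2$-shifted symplectic stack, then invoke the principle cited in the introduction (\cite[Thm.~4.22]{MS2}) that a Lagrangian morphism into an $n$-shifted symplectic stack inherits an $(n-1)$-shifted Poisson structure. By Lemma \ref{lem:1} we have $\R\eps\uperf\simeq\ul{\Map}_{st}([\A^1/\G_m],\R\uperf)$, and the stratification $[\A^1/\G_m]=\G_m/\G_m\sqcup B\G_m$ produces two restriction maps: $p\colon\R\eps\uperf\to\R\uperf^\gr_b$, which drops the mixed differential (restriction to $B\G_m$), and $q\colon\R\eps\uperf\to\R\uperf$, which takes the totalization (restriction to $\G_m/\G_m\simeq\Spec k$). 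The goal is to show that $(p,q)\colon\R\eps\uperf\to\R\uperf^\gr_b\times\R\uperf$ is Lagrangian with respect to $p^*\omega_{\gr}-q^*\omega_{\perf}$, built from the canonical trace pairings which make each factor $2$-shifted symplectic (for the graded case, factor-by-factor over the grading weights).

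The verification follows the template of \cite{HP1}, where the analogous statement was proved for the open substack $\R\ul\cplx$ obtained by requiring the graded pieces to have amplitude $[0,0]$, and splits into two pieces. First, one produces a natural $1$-form on $\R\eps\uperf$ whose de Rham differential exhibits a homotopy between $p^*\omega_{\gr}$ and $q^*\omega_{\perf}$; at a graded mixed perfect complex $E$ this is a chain-level computation in which the trace pairing on endomorphisms of the totalization differs from the direct sum of trace pairings on graded pieces by an exact term involving the $\eps$-differential. Second, one verifies non-degeneracy: the relative tangent complex of $(p,q)$ at $E$ is to be identified, after an appropriate shift, with a perfect dual of the pulled-back cotangent via the constructed $1$-form. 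The arguments from \cite{HP1} transfer directly because they rely only on formal trace identities in perfect complexes rather than on the amplitude restriction in any essential way.

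The main obstacle is the non-degeneracy check. Here one must keep careful track of the internal hom chosen in Section \ref{sec:prelim}: the tangent complex at $E$ is computed via $\Hom^\N_\eps(E,E)$ using the $\N$-graded enrichment, which differs from the $\Z$-graded variant used in \cite{CPTVV}, and the Lagrangian structure matches only the $\N$-graded choice. Concretely, one must show that the fibre sequence of tangent complexes for $(p,q)$ recovers the correct shift of $\Hom^\N_\eps(E,E)$ while being dual, under the trace pairing, to the corresponding cotangent fibre sequence. Once this is in place the Lagrangian correspondence is complete and the $1$-shifted Poisson structure on $\R\eps\uperf$ follows; the alternative derivation via the boundary structure on $[\A^1/\G_m]$, announced for Appendix A, will provide a conceptually cleaner verification of the same non-degeneracy through Calaque's equivalence \cite{Ca14} between Lagrangian morphisms and boundary structures.
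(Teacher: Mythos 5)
Your proposal is correct and follows essentially the same route as the paper's own proof: exhibit $(p,q):\R\eps\uperf\to\R\uperf^\gr_b\times\R\uperf$ as a Lagrangian correspondence by extending the argument of \cite[Theorem 3.13]{HP1} from $\R\ul\cplx$ to $\R\eps\uperf$, then apply the Melani--Safronov theorem (with the boundary-structure argument via \cite{Ca14} relegated to Appendix A, exactly as you anticipate). The one hypothesis you leave implicit is that invoking \cite[Theorem 4.22]{MS2} requires $\R\eps\uperf$ to be locally geometric and locally of finite presentation, which is precisely what Theorem \ref{thm:epsmodulialgebraic} (proved via Lemma \ref{lem:1} and the Halpern-Leistner--Preygel mapping-stack theorem) supplies, and is the only part of the HP1 argument that does not transfer formally.
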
 

We refer the readers to Sections 1 and 2 of \cite{MS2} for the definitions of an {\it $n$-shifted Poisson structure} on derived stacks, an {\it $n$-shifted coisotropic structure} on a stack morphism and an {$n$-shifted Poisson morphism} (by definition, the structure of a Poisson morphism is given by a coisotropic structure on its graph). 

Compared with the notions of a shifted symplectic structure and of a Lagrangian morphism, the definition of a shifted Poisson structure is  more complicated mainly due to the lack of functoriality of polyvector fields. 
The tools developed in \cite{MS2} allow to construct shifted Poisson structures indirectly by constructing a Lagrangian morphism into a shifted symplectic stack 
(this was also the approach used in \cite[Section 3.1]{HP1}). This is based on the following result.

\begin{theorem}\label{thm:MS}
Suppose $\sX, \sY$ are locally geometric stacks locally of finite presentation.
Let $f:\sX\to \sY$ be a stack morphism. Suppose that $\sY$ is equipped with an $n$-shifted symplectic form $\omega$ and $f$ is Lagrangian. Then $\sX$ is equipped with a canonical $(n-1)$-shifted Poisson structure.
\end{theorem}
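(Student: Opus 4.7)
My plan is to reduce the statement to the dictionary between Lagrangian and coisotropic structures developed by Melani and Safronov in \cite{MS2}, and then extract the Poisson structure on $\sX$ from the coisotropic datum. The argument proceeds in three stages.

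First, I would convert the $n$-shifted symplectic form $\omega$ on $\sY$ into a non-degenerate $n$-shifted Poisson structure $\pi_\sY$ on $\sY$. This uses the fundamental equivalence between symplectic structures and non-degenerate Poisson structures established in \cite{CPTVV}, which applies because $\sY$ is locally geometric and locally of finite presentation. Non-degeneracy amounts to the musical map $\T_\sY \to \mathbb{L}_\sY[n]$ being a quasi-isomorphism, so the symplectic form uniquely determines an inverse bivector whose higher homotopies assemble into the Poisson datum.

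Second, I would apply the equivalence between Lagrangian and coisotropic structures. Concretely, Melani-Safronov establish that for a morphism $f:\sX\to \sY$ into an $n$-shifted symplectic stack, there is an equivalence between the space of Lagrangian structures on $f$ (with respect to $\omega$) and the space of coisotropic structures on $f$ (with respect to $\pi_\sY$) satisfying a non-degeneracy condition on the relative tangent complex of $f$. Under this equivalence, the given Lagrangian $f$ produces canonically a coisotropic morphism $f:\sX\to \sY$. By definition, a coisotropic structure on $f$, in the formalism of $\mathbb{P}_{n+1}$-algebras and their modules, packages the datum of an $(n-1)$-shifted Poisson structure on $\sX$ together with a compatibility making $\OO_\sX$ a $\mathbb{P}_n$-algebra in the $\infty$-category of modules over the $\mathbb{P}_{n+1}$-algebra $\OO_\sY$. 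The underlying $(n-1)$-shifted Poisson structure on $\sX$ is the one the theorem asserts, and its construction is canonical in the homotopy-theoretic sense.

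The main obstacle lies in the second step: the equivalence between Lagrangian and coisotropic structures is not formal, since Lagrangians are defined via closed de Rham forms while coisotropics live in polyvector calculus. The bridge between the two relies crucially on the non-degeneracy of $\omega$ and on a delicate identification of the two non-degeneracy conditions at the level of the relative tangent complex. The full argument in \cite{MS2} requires careful analysis of the musical maps at both $\sX$ and the homotopy fiber of $f$, and this is where all the technical weight of the proof concentrates. By contrast, once the coisotropic structure is in hand, the extraction of the Poisson structure on $\sX$ is tautological from the formalism.
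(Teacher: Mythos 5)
Your proposal is correct and follows essentially the same route as the paper: the paper's proof likewise invokes the Melani--Safronov equivalence between Lagrangian structures and nondegenerate coisotropic structures (\cite[Theorem 4.22]{MS2}) and then reads off the canonical $(n-1)$-shifted Poisson structure on $\sX$ from the coisotropic datum (\cite[Section 2.1]{MS2}). Your additional remarks on passing from $\omega$ to the nondegenerate Poisson structure on $\sY$ and on where the technical weight lies are consistent with, and implicit in, that citation.
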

\begin{proof}
A Lagrangian structure on $(\sX,\sY,f,\omega)$ is equivalent to a nondegenerate coisotropic structure (\cite[Therorem 4.22]{MS2}), and
a coisotropic structure on $f$ induces a canonical $(n-1)$-shifted Poisson structure on $\sX$ (\cite[Section 2.1]{MS2}),  
\end{proof}

The following result, usually called the Lagrangian (resp. coisotropic) intersection theorem, is very useful for the construction of shifted Poisson stacks from given ones.
\begin{theorem}\cite[Theorem 2.9]{PTVV}\cite[Theorem 3.6]{MS2}\label{thm:fiberprod}
Let $f_1: \sX_1\to \sY$ and $f_2: \sX_2\to \sY$ be two Lagrangian (resp. coisotropic) morphisms with respect to the $n$-shifted symplectic (resp. Poisson) structure on $\sY$.  Then  $\sX_1\times^h_{\sY,f_1,f_2}\sX_2$ is equipped with a $(n-1)$-shifted symplectic (resp. Poisson) structure, and the projections $\sX_1\times^h_{\sY,f_1,f_2}\sX_2\to \sX_1$ and $\sX_1\times^h_{\sY,f_1,f_2}\sX_2\to \sX_2$ are Poisson morphisms.
\end{theorem}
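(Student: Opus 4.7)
The plan is to construct the candidate closed $(n{-}1)$-shifted 2-form (resp.\ coisotropic datum) on the derived intersection $\sZ:=\sX_1\times^h_\sY\sX_2$, and then verify non-degeneracy (resp.\ the Poisson compatibility on each factor). Let $p_i:\sZ\to\sX_i$ be the projections and let $g:\sZ\to\sY$ be the canonical diagonal, which comes with chosen equivalences $g\simeq f_1\circ p_1\simeq f_2\circ p_2$ from the universal property of the homotopy fibre product.

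For the symplectic case, a Lagrangian structure on $f_i$ is by definition a path $h_i$ in the space $\mathcal{A}^{2,\mathrm{cl}}(\sX_i,n)$ of closed $n$-shifted 2-forms from $f_i^*\omega$ to $0$. Pulling back by $p_i$ produces two paths $p_i^*h_i$ from $g^*\omega$ to $0$ in $\mathcal{A}^{2,\mathrm{cl}}(\sZ,n)$, and their concatenation $p_1^*h_1\ast(p_2^*h_2)^{-1}$ is a loop based at $0$, i.e.\ a point of $\Omega\mathcal{A}^{2,\mathrm{cl}}(\sZ,n)=\mathcal{A}^{2,\mathrm{cl}}(\sZ,n-1)$. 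This is our candidate closed $(n{-}1)$-shifted 2-form $\omega_\sZ$.

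The main obstacle is non-degeneracy of $\omega_\sZ$. From the homotopy cartesian square
\[
\xymatrix{
\sZ \ar[r]^{p_2} \ar[d]_{p_1} & \sX_2 \ar[d]^{f_2} \\
\sX_1 \ar[r]^{f_1} & \sY
}
\]
one gets a fibre sequence $\mathbb{T}_\sZ\to p_1^*\mathbb{T}_{\sX_1}\oplus p_2^*\mathbb{T}_{\sX_2}\to g^*\mathbb{T}_\sY$ and a dual cofibre sequence of cotangent complexes. The Lagrangian condition on $f_i$ promotes the tautological map to an equivalence $\mathbb{T}_{\sX_i/\sY}[n-1]\simeq \mathbb{L}_{\sX_i}$; combined with the non-degeneracy of $\omega$, a diagram chase comparing the tangent and cotangent fibre sequences shows that contraction with $\omega_\sZ$ induces an equivalence $\mathbb{T}_\sZ\simeq\mathbb{L}_\sZ[n-1]$. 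This is the essence of \cite[Theorem 2.9]{PTVV}. The projections $p_i:\sZ\to\sX_i$ being Poisson (in the symplectic setting automatic through the derived induced structure) follows from the construction, since $\omega_\sZ$ is assembled from the pulled-back trivializations on either side.

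For the coisotropic/Poisson case, one cannot naively repeat this argument because shifted Poisson structures are not functorial for arbitrary morphisms. Following \cite[\S 2--3]{MS2}, I would package each coisotropic datum on $f_i$ as an algebra over the relative Swiss-cheese-type operad controlling compatible $\mathbb{P}_{n+1}$-actions on the pair $(\sX_i,\sY)$, and then glue the two such data along $\sY$ using the additivity theorem for $\mathbb{P}_n$-algebras. This produces an $(n{-}1)$-shifted Poisson structure on $\sZ$ together with the coherences making both $p_i$ Poisson morphisms. Alternatively, and perhaps more cleanly, I would invoke the equivalence of \cite[Theorem 4.22]{MS2} between non-degenerate coisotropic structures and Lagrangian structures (possibly into formal completions), thereby reducing the coisotropic statement to the symplectic case already handled. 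In either route, the delicate point — and the reason the derived/$\infty$-categorical framework is indispensable — is the bookkeeping of higher homotopies in the operadic compatibility, which has no shadow at the level of ordinary polyvector fields.
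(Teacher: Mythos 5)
The paper does not actually prove this statement: it is quoted from the literature, with \cite[Theorem 2.9]{PTVV} covering the Lagrangian case and \cite[Theorem 3.6]{MS2} the coisotropic case, so the only "proof" in the paper is the citation. Your sketch follows exactly the arguments of those references — concatenating the two pulled-back nullhomotopies of $g^*\omega$ to get a loop, hence a closed $(n-1)$-shifted $2$-form on $\sX_1\times^h_\sY\sX_2$, checking non-degeneracy by comparing the tangent and cotangent fibre sequences of the cartesian square, and handling the coisotropic case via the relative $\mathbb{P}_n$-algebra/additivity formalism of \cite{MS2} — so this is the same approach the paper relies on. Two small caveats: your "alternative" reduction through \cite[Theorem 4.22]{MS2} does not prove the general coisotropic statement, since an arbitrary coisotropic structure is not non-degenerate and the target carries only a Poisson (not symplectic) structure, so that equivalence is unavailable; and the Lagrangian condition identifies $\mathbb{T}_{\sX_i/\sY}\simeq\mathbb{L}_{\sX_i}[n-1]$ rather than $\mathbb{T}_{\sX_i/\sY}[n-1]\simeq\mathbb{L}_{\sX_i}$, a shift slip that does not affect the diagram chase.
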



There are two ways to prove Theorem \ref{thm:mixmoduliPoisson}. The first one which we sketch below is a copy of the proof of \cite[Theorem 3.13]{HP1}
 modulo Theorem \ref{thm:epsmodulialgebraic}. The second proof, suggested to us by Safronov, is conceptionally neat. It uses Lemma \ref{lem:1} and the equivalence between Lagrangian correspondences and boundary structures established by Calaque \cite{Ca14}. We include this proof in the Appendix.

\begin{proof}[Sketch of proof of Theorem \ref{thm:mixmoduliPoisson}]
Recall that $\R\uperf^\gr$ is the infinite product $\prod_{\Z}\R\uperf$ parameterizing graded objects in the category of perfect dg-$k$-modules. And denote by $\R\uperf^{\gr}_b$ the substack that consists of graded objects $(E(i))_{i\in\Z}$ such that $E(i)=0$ except for finitely many $i$. The forgetful functor $\ep-C(k)^\gr\to C(k)^\gr$ induces a stack map
\[
p: \R\eps\uperf\to \R\uperf^\gr.
\] It factors through the substack $\R\uperf^\gr_b\subset \R\uperf^\gr$. Recall that an object of $\ep-C(k)^\gr$ is a double complex with horizontal differential given by $\ep$. Taking the total complex defines a functor 
\[
\ep-C(k)^\gr\to C(k),
\] sending   perfect graded mixed objects in $C(k)$ to perfect dg-$k$-modules. We denote the corresponding stack map by
\[
q: \R\eps\uperf\to \R\uperf.
\]
If we restrict to objects $\Big((E(i))_{i\in\Z},\ep\Big)$ such that $E(i)$ are of perfect amplitude $[0,0]$ then we get a substack $\R\ul\cplx$ consisting of bounded complexes of vector bundles over $\Spec k$ (up to chain isomorphisms).

By the work of Pantev-To\"en-Vaqui\'e-Vezzosi, $\R\uperf$ admits a canonical $2$-shifted symplectic structure $\omega$. The product form $(\omega,\omega,\ldots)$ defines a $2$-shifted symplectic structure on $\R\uperf^\gr_b$. According to Theorem \ref{thm:epsmodulialgebraic}, $\R\eps\uperf$ is locally geometric and locally of finite type. By the above theorem of Melani and Safronov, it suffices to show that with respect to these two symplectic structures, the stack map 
\[
(p,q): \R\eps\uperf\to \R\uperf^\gr_b\times\R\uperf
\] is a Lagrangian correspondence. 
When restricted to $\R\cplx\subset \R\eps\uperf$, this is proved in \cite[Theorem 3.13]{HP1}. The proof extends to $\R\eps\uperf$ in an obvious way.
\end{proof}

\subsection{Poisson structure on $\R\eps\uperf(X)$ for a Gorenstein CY curve $X$}\label{sec:Poieps}
Using transgression, we may transfer a symplectic structure (resp. a Lagrangian structure) to the mapping stack with Calabi-Yau source. The symplectic case was first proved in \cite[Theorem 2.5]{PTVV}. The Lagrangian case can be proved using similar argument (cf. \cite[Theorem 2.10]{Ca14}. Recall a connected projective $k$-scheme $X$ is called \emph{Gorenstein Calabi-Yau} if the dualizing complex $\omega_X$ is invertible and $\omega_X\cong\cO_X$.

A stack morphism $\sX\to \sY_1\times \sY_2$, where $\sY_1$ and $\sY_2$ are equipped with $n$-shifted symplectic structures, 
is called a Lagrangian correspondence if it is a Lagrangian morphism when $\sY_2$ is equipped with the negative of its symplectic structure.  

\begin{theorem}\label{thm:Lag}
Let $X/k$ be a Gorenstein Calabi-Yau $d$-fold. Then the stack map
\[
(p_X,q_X): \R\eps\uperf(X):=\ul\Map_{st}(X,\R\eps\uperf)\to \R\uperf^\gr_b(X)\times\R\uperf(X)
\] induced by $(p,q): \R\eps\uperf\to \R\uperf^\gr_b\times \R\uperf$, is a Lagrangian correspondence. In particular, $\R\eps\uperf(X)$ admits a $(1-d)$-shifted Poisson structure.
\end{theorem}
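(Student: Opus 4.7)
The plan is to deduce Theorem~\ref{thm:Lag} from the absolute case by transgression along $X$. Reading the proof of Theorem~\ref{thm:mixmoduliPoisson}, the morphism
\[
(p,q)\colon \R\eps\uperf\to \R\uperf^\gr_b\times \R\uperf
\]
is established as a Lagrangian correspondence with respect to the product $2$-shifted symplectic structure built from the PTVV form on $\R\uperf$. Since $X$ is a Gorenstein Calabi-Yau $d$-fold, the trivialization $\omega_X\simeq \OO_X$ supplies the trace map $H^d(X,\OO_X)\to k$ needed to transgress shifted symplectic and Lagrangian data, decreasing the degree by $d$. Applying this machinery to $(p,q)$ should yield exactly the statement.

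First I would verify the algebraicity hypotheses required to invoke Theorem~\ref{thm:MS}. By Lemma~\ref{lem:1} and functoriality of mapping stacks,
\[
\R\eps\uperf(X) \simeq \ul\Map_{st}([\A^1\big/\G_m]\times X,\R\uperf).
\]
The stack $[\A^1\big/\G_m]\times X$ is formally proper: $[\A^1\big/\G_m]$ is formally proper as shown in the proof of Theorem~\ref{thm:epsmodulialgebraic} (via the good-quotient result of Alper together with the Halpern-Leistner--Preygel criterion), and $X$ is projective, so the product inherits formal properness and finite Tor amplitude. The Halpern-Leistner--Preygel theorem then gives that $\R\eps\uperf(X)$ is locally geometric and locally of finite presentation; the analogous argument (already standard) handles $\R\uperf(X)$ and the bounded $\Z$-product $\R\uperf^\gr_b(X)$.

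Next I would apply transgression. The PTVV symplectic transgression theorem \cite[Thm.\ 2.5]{PTVV}, applied factorwise, promotes the $2$-shifted symplectic structures on $\R\uperf$ and on $\R\uperf^\gr_b$ to $(2-d)$-shifted symplectic structures on $\R\uperf(X)$ and $\R\uperf^\gr_b(X)$, compatibly with the product decomposition. Calaque's Lagrangian analogue \cite[Thm.\ 2.10]{Ca14} then upgrades the Lagrangian structure on $(p,q)$ to a Lagrangian structure on $(p_X,q_X)$; reversing the sign of the symplectic form on the second factor is preserved by transgression, so $(p_X,q_X)$ becomes a Lagrangian correspondence in the sense defined before the theorem. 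Theorem~\ref{thm:MS} then endows $\R\eps\uperf(X)$ with the desired $(1-d)$-shifted Poisson structure.

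The main obstacle I expect is the bookkeeping verifying that transgression commutes with the infinite (but bounded) product defining $\R\uperf^\gr_b$ and respects the sign convention distinguishing a Lagrangian correspondence from an ordinary Lagrangian morphism. The core geometric content, namely that $(p,q)$ is a Lagrangian correspondence on the absolute side, is inherited from the proof of Theorem~\ref{thm:mixmoduliPoisson}; everything else is formal manipulation with the transgression functor. As an alternative, the approach in Appendix~A sidesteps this bookkeeping entirely by exhibiting $(p_X,q_X)$ as the Lagrangian correspondence induced from a boundary structure on $[\A^1\big/\G_m]$ via the two orbit inclusions $\G_m\big/\G_m\to [\A^1\big/\G_m]$ and $B\G_m\to [\A^1\big/\G_m]$, which is conceptually cleaner but requires importing the boundary-structure formalism of \cite{Ca14}.
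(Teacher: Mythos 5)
Your proposal follows essentially the same route as the paper: the paper's proof of Theorem \ref{thm:Lag} consists exactly of invoking the $\cO$-orientation of degree $d$ on the Gorenstein Calabi-Yau $X$ (cited from \cite[Lemma 3.4]{HP3}), the algebraicity statement of Theorem \ref{thm:epsmodulialgebraic}, the absolute Lagrangian correspondence from Theorem \ref{thm:mixmoduliPoisson}, and the transgression results \cite[Theorem 2.5]{PTVV} together with its Lagrangian analogue \cite[Theorem 2.10]{Ca14}, after which Theorem \ref{thm:MS} supplies the $(1-d)$-shifted Poisson structure. Your extra remarks on formal properness of $[\A^1\big/\G_m]\times X$ and on the Appendix A alternative are consistent with, and subsumed by, what the paper already does.
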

\begin{proof}
By \cite[Lemma 3.4]{HP3}, $X$ admits an $\cO$-orientation of degree $d$ in the sense of \cite[Definition 2.4]{PTVV}. Applying Theorem \ref{thm:epsmodulialgebraic}, Theorem \ref{thm:mixmoduliPoisson} and \cite[Theorem 2.5]{PTVV}, we prove the desired statement.
\end{proof}

There is a useful variation of the Theorem \ref{thm:Lag}. Let $S\subset \Z$ be a subset and $\ol{S}$ be its complement. Denote by $p^{S}$ the map from $\R\eps\uperf(X)$ to $\prod_{i\in S} \R\uperf(X)$ sending $E$ to $E(i)$ with $i\in S$, and by $q^{\ol{S}}:=(p^{\ol{S}},q)$ be the map to 
\[
\Big(\prod_{i\in \ol{S}} \R\uperf(X)\Big)\times \R\uperf(X).
\]

\begin{theorem}\label{thm:basechange}
Equip $\Big(\prod_{i\in \ol{S}} \R\uperf(X)\Big)_b\times \R\uperf(X)$ with the product symplectic structure. The following holds.
\begin{enumerate}
\item[$(a)$]
The stack map 
\[
(p^{S},q^{\ol{S}}):\R\eps\uperf(X)\to \Big(\prod_{i\in S} \R\uperf(X)\Big)_b\times\Big(\Big(\prod_{i\in \ol{S}} \R\uperf(X)\Big)_b\times \R\uperf(X)\Big)
\] is a Lagrangian correspondence. Here the low index $b$ refers to the substack consisting of objects with only finitely many nonzero components.
\item[$(b)$] Let $f: Z\to  \Big(\prod_{i\in S} \R\uperf(X)\Big)_b$  be  a Lagrangian morphism. Then the base change 
\[
\xymatrix{
Z_\eps \ar[rr]^{f_\eps}\ar[d] &&\R\eps\uperf(X)\ar[d]^{p^S}\ar[r]^{q^{\ol{S}}} & \Big(\prod_{i\in \ol{S}} \R\uperf(X)\Big)_b\times \R\uperf(X)\\
Z\ar[rr]^f & &  \Big(\prod_{i\in S} \R\uperf(X)\Big)_b
}
\]  is a Poisson morphism, where the Poisson structure on $Z_\eps$ is defined by the (Lagrangian) composition $q^{\ol{S}}\circ f_\eps$.
\item[$(c)$] Let $g: W\to  \Big(\prod_{i\in \ol{S}} \R\uperf(X)\Big)_b \times \R\uperf(X)$ be  a Lagrangian morphism. Then the base change 
\[
\xymatrix{
W_\eps \ar[rr]^{g_\eps}\ar[d] &&\R\eps\uperf(X)\ar[d]^{q^{\ol{S}}}\ar[r]^{p^{S}} & \Big(\prod_{i\in S} \R\uperf(X)\Big)_b\\
W\ar[rr]^g& &  \Big(\prod_{i\in \ol{S}} \R\uperf(X)\Big)_b\times \R\uperf(X)
}
\]  is a Poisson morphism, where the Poisson structure on $W_\eps$ is defined by the (Lagrangian) composition $p^{S}\circ g_\eps$.
\end{enumerate}
\end{theorem}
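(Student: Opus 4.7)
I would deduce this directly from Theorem~\ref{thm:Lag}. The $(2-d)$-shifted symplectic structure on $\R\uperf^\gr_b(X)$ used there is the weight-by-weight product of the symplectic structures on the factors $\R\uperf(X)$. Given a partition $\Z = S \sqcup \ol{S}$, the obvious regrouping of factors
\[
\R\uperf^\gr_b(X) \times \R\uperf(X) \;\simeq\; \bigl(\prod_{i\in S}\R\uperf(X)\bigr)_b \times \Bigl(\bigl(\prod_{i\in \ol{S}}\R\uperf(X)\bigr)_b \times \R\uperf(X)\Bigr)
\]
is a symplectomorphism carrying the Lagrangian $(p_X, q_X)$ of Theorem~\ref{thm:Lag} into the map $(p^S, q^{\ol S})$ of the statement; the Lagrangian property is preserved because it depends only on the total symplectic form on the target.

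\textbf{Parts (b) and (c).} The plan is to use composition of Lagrangian correspondences. A Lagrangian morphism $f: Z \to Y_1$ into a shifted symplectic $Y_1$ is equivalently a Lagrangian correspondence from the point stack $\mathrm{pt}$ to $Y_1$. Composing it with the Lagrangian correspondence $(p^S, q^{\ol S}): \R\eps\uperf(X) \to Y_1 \times \ol{Y_2}$ from part (a) -- with $Y_1 = \bigl(\prod_{i\in S}\R\uperf(X)\bigr)_b$ and $Y_2 = \bigl(\prod_{i\in \ol{S}}\R\uperf(X)\bigr)_b \times \R\uperf(X)$ in case (b), and with $Y_1, Y_2$ swapped in case (c) -- should produce a Lagrangian correspondence $\mathrm{pt} \to Y_2$, i.e.\ a Lagrangian morphism from the homotopy fiber product $Z_\eps = Z \times^h_{Y_1} \R\eps\uperf(X)$ to $Y_2$. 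Applying Theorem~\ref{thm:MS} to this composed Lagrangian endows $Z_\eps$ (respectively $W_\eps$) with a $(1-d)$-shifted Poisson structure, and the statement that $q^{\ol S} \circ f_\eps$ (resp.\ $p^S \circ g_\eps$) is Poisson is encoded in that Lagrangian structure.

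The hard part is making the composition of Lagrangian correspondences rigorous. Although this composition is standard folklore in the PTVV--CPTVV framework, it does not appear literally as Theorem~\ref{thm:fiberprod}. To bridge the gap I would argue as in \cite[Section~3]{MS2}, realizing the composition as an iterated homotopy fiber product and verifying that the induced $2$-form on $Z_\eps$, regarded as a map into $Y_2$, is closed and non-degenerate; this reduces to a cotangent-complex computation using the tangent fiber sequence of the homotopy fiber product together with the Lagrangian conditions for $f$ and for $(p^S, q^{\ol S})$, and the non-degeneracy check is formally parallel to the one underlying Theorem~\ref{thm:Lag}. An alternative, conceptually cleaner route would be to invoke Lemma~\ref{lem:1} to present $\R\eps\uperf(X)$ as a mapping stack out of $[\A^1/\G_m]$, and to transport the Lagrangian correspondence via the boundary-structure formalism from Appendix~A; this makes the composition with $f$ into a straightforward base change.
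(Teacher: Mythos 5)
Your reduction of part (a) to Theorem \ref{thm:Lag} by regrouping factors overlooks the sign convention built into the notion of a Lagrangian correspondence: by the paper's definition, the second factor of the target carries the \emph{negative} of its symplectic structure. In Theorem \ref{thm:Lag} every graded factor of $\R\uperf^\gr_b(X)$ enters with $+\omega$ and only the $\R\uperf(X)$-factor with $-\omega$, whereas in Theorem \ref{thm:basechange}(a) the factors indexed by $\ol{S}$ have moved to the outgoing side of the correspondence and therefore enter with $-\omega$. So the obvious regrouping identifies the underlying stacks but \emph{not} the relevant total symplectic forms --- they differ by a sign on every $\ol{S}$-factor --- and your remark that the Lagrangian property depends only on the total form on the target cuts against you rather than for you. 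A correct proof has to produce the isotropy homotopy, and check its non-degeneracy, for the re-signed form; this is what ``copy the proof of \cite[Corollary 3.20]{HP1}'' amounts to in the paper, where the homotopy is built explicitly from the trace/total-complex decomposition. (One could try to salvage the formal argument by applying the shift autoequivalence $E\mapsto E[1]$ on the $\ol{S}$-factors, which reverses the PTVV form, but that replaces $(p^{S},q^{\ol{S}})$ by a different map and so does not literally give the stated assertion.)

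For parts (b) and (c), your plan --- regard $f$ as a Lagrangian correspondence from the point, compose it with $(p^{S},q^{\ol{S}})$, and apply Theorem \ref{thm:MS} --- is the paper's route, and there is no need to re-derive the composition of Lagrangian correspondences: it is exactly \cite[Theorem 4.4]{Ca14}, which the paper cites here and again in Proposition \ref{prop:composition of Lag} and Theorem \ref{thm:Poissonchain}. The genuine gap is that this only produces the Poisson structure on $Z_\eps$ (resp.\ $W_\eps$); the theorem asserts in addition that the base-change map $f_\eps:Z_\eps\to\R\eps\uperf(X)$ (resp.\ $g_\eps$) is a Poisson morphism for the two indicated Poisson structures, and your closing sentence --- that the Poissonness of $q^{\ol{S}}\circ f_\eps$ is ``encoded in that Lagrangian structure'' --- both misidentifies the map in question (that composition is the Lagrangian morphism to the symplectic target, not the map claimed to be Poisson) and supplies no argument. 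This compatibility is precisely what the paper extracts from the clause of Theorem \ref{thm:fiberprod} stating that the projections from a Lagrangian/coisotropic intersection are Poisson morphisms, and what Proposition \ref{prop:composition of Lag}(2) verifies at the semi-classical level by an explicit homotopy computation; you would need to add an argument of this kind, e.g.\ a coisotropic structure on the graph of $f_\eps$ or a weight-two computation in the style of that proposition.
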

\begin{proof}
We may copy the proof of \cite[Corollary 3.20]{HP1}, which uses Theorem \ref{thm:fiberprod} and the fact that the composition of two (shifted) Lagrangian correspondence is a Lagrangian correspondence (cf. \cite[Theorem 4.4]{Ca14}).
\end{proof}

Let $f:\sX\to \sY$ be a stack morphism. Suppose that $\sY$ is equipped with an $d$-shifted symplectic form $\omega$ and $f$ is Lagrangian. We then have a diagram
\begin{align}\label{dig:bivector}
\xymatrix{
T_\sX\ar[r]\ar@{-->}[rrd]_{\Theta_{\omega,h} \hspace{1cm}} & f^*T_{\sY}\ar[r]\ar[d]^{f^*\omega} & T_f \\
L_\sX[d]& f^*L_\sY[d]\ar[l] &L_f[d]\ar[l] & L_\sX[d-1]\ar[l]
}
\end{align}
Recall that an isotropic structure on $f$ is a homotopy $h: f^*\omega \sim 0$, which determines a dash arrow $\Theta_{\omega,h}$ from $T_\sX$ to the relative cotangent complex $L_f[d]$. The Lagrangian property says that $\Theta_{\omega,h}$ a quasi-isomorphism. Then we have a morphism in the derived category
\[
\pi_{\omega,h}: L_\sX[d-1]\to L_f[d]\to T_\sX
\] by composing $(\Theta_{\omega,h})^{-1}: L_f[d]\to T_{\sX}$ with the canonical map $L_\sX[d-1]\to L_f[d]$. We call $\pi_{\omega,h}$ (sometimes we write $\pi$ for simplicity) the underlying bivector of the $(d-1)$-shifted Poisson structure. This is the weight 2 component of the shifted Poisson structure (see \cite[Section 4.1]{MS2}). 

Let $\sX_1, \sX_2$ be two stacks.  Suppose that $\sX_1, \sX_2$ are equppied with $d$-shifted Poisson structure $\Pi_1, \Pi_2$. A stack morphism $f:\sX_1\to \sX_2$ is called \emph{semi-classically Poisson}  if we have a commutative diagram
\[
\xymatrix{
L_{\sX_1}\ar[r]^{\Pi_1} & T_{\sX_1}[-d]\ar[d] \\
f^*L_{\sX_2}\ar[u]\ar[r]^{f^*\Pi_2} & f^*T_{\sX_2}[-d]
}
\] in the derived category. Suppose $\sX_2$ is equipped with a $d$-shifted Poisson structure. Then $f:\sX_1\to \sX_2$ is called \emph{semi-classically coisotropic}  if the morphism 
\[
L_f\to f^*L_{\sX_2}\to f^*T_{\sX_2}[-d]\to T_f[-d]
\]
is zero in the derived category.
 By \cite[Theorem 4.22]{MS2}, a Lagrangian morphism is equivalent with a non-degenerate coisotropic morphism.
\begin{remark} 
Semi-classical Poisson morphism and semi-classical coisotropic morphism can be viewed as weight 2 truncation of the (derived) Poisson morphism and coisotropic morphism. A  0-shifted Poisson structure  is  a chain $\pi_2,\pi_3,\ldots, \pi_i,\ldots$ with $\pi_i\in \bigwedge^i T_{\sX_2}$ of degree $2-i+d$. The condition on the bivector $\pi_2$ is $[\pi_2,\pi_2]=d\pi_3$ where $d$ is the internal differential on $T_{\sX_2}$.  A coisotropic structure on $f$ gives a chain $f_2,f_3,\ldots,f_i,\ldots$ such that $f_i\in \bigwedge^i T_f$ of degree $1-i+d$. The weight 2 part satisfies the condition that the composition 
\[
L_f\to f^*L_{\sX_2}\rTo{f^*\pi_2} f^*T_{\sX_2}[-d]\to T_f[-d]
\] is homotopic to zero by $f_2$ (see \cite[Theorem 2.7]{MS2}). Therefore a coisotropic structure on $f$ determines a semi-classical coisotropic morphism. 
Poisson morphism is defined via putting a coisotropic structure on the graph (see \cite[Theorem 2.9]{MS2}). And we can check similarly that a Poisson morphism determines a semi-classical Poisson morphism by taking the weight 2 truncation.
\end{remark}

\begin{prop} \label{poisson+coisotropic}
Let $f:\sX\to\sY$ and $g:\sY\to\sZ$ be stack morphisms where $\sY$ and $\sX$ are $d$-shifted Poisson.
Suppose that $f$ is semi-classically coisotropic and $g$ is semi-classically Poisson then $g\circ f:\sX\to\sZ$ is semi-classically coisotropic. 
\end{prop}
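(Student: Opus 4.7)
The plan is to prove this by a diagram chase that pastes together the pulled back semi-classical Poisson square for $g$ along $f$ with the null composite expressing that $f$ is semi-classically coisotropic, using the octahedral axiom applied to $\sX \xrightarrow{f} \sY \xrightarrow{g} \sZ$ as glue. First I would invoke the octahedral axiom for the composable arrows $(gf)^*L_\sZ \to f^*L_\sY \to L_\sX$, producing the commutative diagram of cofiber sequences
\[
\xymatrix{
(gf)^*L_\sZ \ar[r] \ar@{=}[d] & f^*L_\sY \ar[d] \ar[r] & f^*L_g \ar[d] \\
(gf)^*L_\sZ \ar[r] & L_\sX \ar[r] \ar[d] & L_{gf} \ar[d] \\
& L_f \ar@{=}[r] & L_f
}
\]
from which I extract the key identity that the composite $L_{gf} \to (gf)^*L_\sZ[1] \to f^*L_\sY[1]$ agrees with $L_{gf} \to L_f \to f^*L_\sY[1]$. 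The dual octahedral applied to $T_f \to T_{gf} \to T_\sX$ yields the dual identity $f^*T_\sY \to (gf)^*T_\sZ \to T_{gf}[1]$ equals $f^*T_\sY \to T_f[1] \to T_{gf}[1]$.

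Next I would pull the semi-classical Poisson square for $g$ back along $f$, obtaining a commutative square
\[
\xymatrix{
f^*L_\sY \ar[r]^{f^*\Pi_\sY} & f^*T_\sY[-d] \ar[d] \\
(gf)^*L_\sZ \ar[u] \ar[r]^{(gf)^*\Pi_\sZ} & (gf)^*T_\sZ[-d]
}
\]
which lets me factor $(gf)^*\Pi_\sZ$ as $(gf)^*L_\sZ \to f^*L_\sY \xrightarrow{f^*\Pi_\sY} f^*T_\sY[-d] \to (gf)^*T_\sZ[-d]$. Inserting this factorization into the composite $\alpha_{gf}: L_{gf} \to (gf)^*L_\sZ \to (gf)^*T_\sZ[-d] \to T_{gf}[-d]$ defining the semi-classical coisotropic condition for $g\circ f$, and then using the two octahedral identities from the previous step to rewrite the outer pairs of arrows, $\alpha_{gf}$ becomes (modulo the standard shift absorbed in the notation of the definition)
\[
L_{gf} \to L_f \to f^*L_\sY \xrightarrow{f^*\Pi_\sY} f^*T_\sY[-d] \to T_f[-d] \to T_{gf}[-d],
\]
whose inner four-term piece is precisely the semi-classical coisotropic composite for $f$ (up to shift). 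Since this inner piece vanishes by hypothesis, $\alpha_{gf}$ vanishes, establishing the claim.

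I expect the main obstacle to be bookkeeping rather than anything conceptual: the octahedral axiom produces its compatibilities only up to coherent homotopy in the stable $\infty$-category setting, so some care is needed to check that the three substitutions (the pulled back Poisson square for $g$, the cotangent octahedral identity, and the tangent octahedral identity) genuinely assemble into a single null-homotopy inherited from the coisotropic null-homotopy for $f$, rather than three separately vanishing pieces. This is the weight $2$ shadow of the coherent composition of coisotropic structures treated in the full derived setting in \cite{MS2}.
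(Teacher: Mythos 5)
Your argument is correct and is essentially the paper's own proof: you pull back the semi-classical Poisson square for $g$ along $f$ and splice it with the compatibility squares relating the triangles defining $L_{g\circ f},L_f$ and $T_f,T_{g\circ f}$, so that the coisotropic composite for $g\circ f$ factors as $L_{g\circ f}\to L_f\to\cdots\to T_f[-d]\to T_{g\circ f}[-d]$ through the (vanishing) coisotropic composite for $f$. The only cosmetic differences are your cofiber (rather than fiber) conventions for the relative complexes and your appeal to the octahedral axiom where the paper simply uses naturality of the transitivity triangles; your coherence concern is moot here, since the semi-classical condition only requires the composite to vanish in the derived category, so commutativity up to (unspecified) homotopy of the three squares suffices.
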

\begin{proof}
Since the natural map $T_\sX\to (g\circ f)^* T_\sZ$ factors through the natural maps $T_\sX\to f^*T_\sY\to f^*g^*T_\sZ$, we have a commutative diagram of exact triangles
\[
\xymatrix{
 L_f\ar[r] &f^*L_\sY\ar[r] &L_\sX\\
L_{g\circ f}\ar[r]\ar[u] & f^*g^*L_\sZ\ar[r]\ar[u] & L_\sX\ar[u]^=
} \hspace{1cm}
\xymatrix{
 T_\sX\ar[r]\ar[d]^= &f^*T_\sY\ar[r]\ar[d] &T_f\ar[d]\\
T_\sX\ar[r] & f^*g^*T_\sZ\ar[r] & T_{g\circ f}
} 
\] dual to each other.
 Since $g$ is Poisson we have commutative diagram
\[
\xymatrix{
f^*L_{\sY}\ar[r]^{f^*\Pi_{\sY}} &f^* T_{\sY}[-d]\ar[d] \\
f^*g^*L_{\sZ}\ar[u]\ar[r]^{f^*g^*\Pi_{\sZ}} & f^*g^*T_{\sZ}[-d]
}
\] Composing these three diagram, we obtain a diagram
\[
\xymatrix{
L_f\ar[r] &f^*L_{\sY}\ar[r]^{f^*\Pi_{\sY}} &f^* T_{\sY}[-d]\ar[d]\ar[r]&T_f[-d]\ar[d] \\
L_{g\circ f}\ar[r]\ar[u] &f^*g^*L_{\sZ}\ar[u]\ar[r]^{f^*g^*\Pi_{\sZ}} & f^*g^*T_{\sZ}[-d]\ar[r]& T_{g\circ f}[-d]
}
\]
To check $g\circ f$ is semi-classically  coisotropic is same as checking the composing of the maps in the lower row is zero, while $f$ is semi-classically coisotropic says the composition of maps in the upper row is zero. Then the claim follows from the commutativity of the diagram.
\end{proof}

\begin{prop}\label{prop:composition of Lag}
Let $(q_0,p_1): \sX_{01}\to \sX_{0}\times \sX_{1}$ and $(p'_1,q_2): \sX_{12}\to \sX_{1}\times \sX_{2}$ be two Lagrangian correspondences with respect to the 1-shifted symplectic structures on $\sX_{0},\sX_{1},\sX_{2}$. Let us define $\sX_{02}$ from the cartesian diagram
\[\xymatrix{ \sX_{02}\ar[r]^{f_1}\ar[d]^{f'_1} & \sX_{12}\ar[d]^{p'_1}\\
\sX_{01}\ar[r]^{p_1} & \sX_{1}
}\] 
Then
\begin{enumerate}
\item[(1)] $(q_0f'_1, q_2f_1):\sX_{02}\to \sX_{0}\times \sX_{2}$ is a Lagrangian correspondence, in particular $\sX_{02}$ admits a 0-shifted Poisson structure.
\item[(2)] The morphism $(f'_1,f_1):\sX_{02}\to \sX_{01}\times \sX_{12}$ is semi-classically Poisson.
\end{enumerate}
\end{prop}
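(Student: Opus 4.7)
My plan is to prove part (1) by a direct application of Calaque's composition theorem for Lagrangian correspondences, and part (2) by re-realizing the fiber product $\sX_{02}$ as a Lagrangian intersection and invoking Theorem \ref{thm:fiberprod} on the Poisson nature of the projections.

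For part (1), let $\omega_i$ denote the $1$-shifted symplectic form on $\sX_i$ and write $\sX_i^-$ for $\sX_i$ equipped with $-\omega_i$. By hypothesis, $(q_0,p_1):\sX_{01}\to \sX_0\times\sX_1^-$ and $(p'_1,q_2):\sX_{12}\to \sX_1\times\sX_2^-$ are Lagrangian morphisms into $1$-shifted symplectic targets. The derived fiber product $\sX_{02}=\sX_{01}\times^h_{\sX_1}\sX_{12}$ is exactly the composition of the two Lagrangian correspondences, and by \cite[Theorem 4.4]{Ca14} the induced morphism $(q_0\circ f'_1,\,q_2\circ f_1):\sX_{02}\to \sX_0\times\sX_2^-$ is Lagrangian. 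Theorem \ref{thm:MS} then endows $\sX_{02}$ with a canonical $0$-shifted Poisson structure.

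For part (2), I rewrite $\sX_{02}$ as a Lagrangian intersection so that Theorem \ref{thm:fiberprod} applies. The product map $(q_0,p_1,p'_1,q_2):\sX_{01}\times\sX_{12}\to \sX_0\times\sX_1^-\times\sX_1\times\sX_2^-$ is Lagrangian, and so is $\id_{\sX_0}\times\Delta\times\id_{\sX_2^-}:\sX_0\times\sX_1\times\sX_2^-\to \sX_0\times\sX_1^-\times\sX_1\times\sX_2^-$, since the diagonal $\Delta:\sX_1\to \sX_1^-\times\sX_1$ is Lagrangian in the standard way. The derived fiber product of these two Lagrangians over $\sX_0\times\sX_1^-\times\sX_1\times\sX_2^-$ is canonically $\sX_{02}$, and by Theorem \ref{thm:fiberprod} its two projections are Poisson morphisms. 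One of these projections is precisely $(f'_1,f_1):\sX_{02}\to \sX_{01}\times\sX_{12}$.

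The main technical obstacle is coherence: matching the Poisson structure on $\sX_{02}$ from part (1) with that produced by the intersection construction, and identifying the Poisson structure on $\sX_{01}\times\sX_{12}$ inherited from the product Lagrangian with the product of the individual Poisson structures on $\sX_{01}$ and $\sX_{12}$. The first identification is essentially the content of Calaque's theorem, where composition of Lagrangian correspondences is defined via the fiber product. The second follows from the multiplicativity of the Lagrangian-to-Poisson construction. Since only semi-classical (weight $2$) compatibility is required by the statement, both identifications can be checked directly at the level of the underlying bivectors via diagram \eqref{dig:bivector}, reducing the problem to a diagram chase in the derived category over $\sX_{02}$ and thereby avoiding any higher-weight homotopy coherence.
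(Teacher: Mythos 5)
Your part (1) is exactly the paper's argument (compose the correspondences via \cite[Theorem 4.4]{Ca14} and apply Theorem \ref{thm:MS}), so there is nothing to add there. Part (2), however, has a genuine gap at its very first step: the map $\id_{\sX_0}\times\Delta\times\id_{\sX_2^-}:\sX_0\times\sX_1\times\sX_2^-\to \sX_0\times\sX_1^-\times\sX_1\times\sX_2^-$ is \emph{not} Lagrangian. The pullback of the product symplectic form $(\omega_0,-\omega_1,\omega_1,-\omega_2)$ along this map is $(\omega_0,0,-\omega_2)$: only the middle diagonal factor cancels, while the $\sX_0$ and $\sX_2$ components survive, so the map is not even isotropic (it is only coisotropic, as in the classical picture where $X_0\times\Delta_{X_1}\times X_2$ is a coisotropic, not Lagrangian, submanifold of $X_0^-\times X_1\times X_1^-\times X_2$). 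Consequently Theorem \ref{thm:fiberprod} in its Lagrangian form does not apply, and the conclusion that the projection $(f'_1,f_1)$ is Poisson does not follow from the intersection theorem as you have set it up.

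Even if you retreat to the coisotropic version of Theorem \ref{thm:fiberprod}, two things remain unproved and are not supplied by your appeal to Calaque: (i) you must specify the coisotropic structure on $\id\times\Delta\times\id$, in particular the induced $0$-shifted Poisson structure on $\sX_0\times\sX_1\times\sX_2^-$, and verify that the resulting Poisson structure on the intersection agrees (at least in weight $2$) with the one on $\sX_{02}$ produced in part (1) — which is the structure statement (2) is actually about; and (ii) you must check that the Poisson structure on $\sX_{01}\times\sX_{12}$ coming from its Lagrangian map to the fourfold product is the product of the two individual structures. Calaque's Theorem 4.4 only constructs the Lagrangian structure on the composed correspondence; it does not compare the induced Poisson structure with one arising from an intersection presentation, so "essentially the content of Calaque's theorem" does not close point (i). The paper avoids this entirely by working directly with the weight-$2$ data: it writes out the tangent/cotangent diagram for $\sX_{02}\to\sX_{01}\times\sX_{12}\to\sX_0\times\sX_1\times\sX_1\times\sX_2$, defines the isotropic structure $\theta_{02}$ as the composition through $\bigl(L^{01}_{0,1}\oplus L^{12}_{1,2}\bigr)[1]$, uses the splitting of $\Delta^*$ for nondegeneracy, and then checks a commutative square relating $h_{02}$ to $(h_{01},h_{12})$. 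If you want to keep your intersection-theoretic framing, you would need to carry out an analogous explicit bivector-level verification; as written, the proposal does not establish part (2).
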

\begin{proof}
A proof of (1) can be found in \cite[Theorem 4.4]{Ca14}. To prove (2), we need to recall the construction of the weight 2 part of the Poisson structure on $\sX_{02}$.  In the rest of the proof, by a \emph{commutative diagram} we mean commute up to homotopy. Since we work in the semi classical setting, there is no need to specify the explicit homotopy.

By \cite[Theorem 4.22]{MS2}, the space of Lagrangian structures is equivalent with the space of non degenerate coisotropic structures. By \cite[Theorem 4.12]{MS2}, the space of symplectic structures is equivalent with the space of non degenerate Poisson structures. For $i=0,1,2$ we denote by $\omega_i$ the weight 2 part of the $(-1)$-shifted Poisson structure on $\sX_i$ and by $\pi_i$ the corresponding non degenerate Poisson structures. For simplicity, we denote by $T_i$ the tangent complex $T_{\sX_i}$ and $L_i$ for the cotangent complex, and $T_{ij}, L_{ij}$ for the tangent and cotangent complexes of $\sX_{ij}$. 

Consider a diagram
\[
\xymatrix{
&T_{02}\ar[r]^{((f'_1)_*,(f_1)_*)}\ar[d]_{((q_0f'_1)_*, (q_2f_2)_*))}  & T_{01}\oplus T_{12}\ar[d]_{((q_0)_*,(p_1)_*,(p'_1)_*,(q_2)_*)} \ar[r]^{((p_1)_*,(p'_1)_*)} & T_1\ar[d]^{\Delta_*}\\
&T_0\oplus T_2\ar[r]\ar[d]_{(\omega_0,-\omega_2)} & (T_0\oplus T_1)\oplus (T_1\oplus T_2)\ar[r]\ar[d]_{(\omega_0,-\omega_1,\omega_1,-\omega_2)}& T_1\oplus 
T_1\ar[d]^{(-\omega_1,\omega_1)}\\ 
&\Big(L_0\oplus L_2\Big)[1]\ar[d]_{((q_0f'_1)^*, (q_2f_2)^*))} & \Big(L_0\oplus L_1\Big)\oplus \Big(L_1\oplus L_2\Big)[1]\ar[l]\ar[d]_{((q_0)^*,(p_1)^*,(p'_1)^*,(q_2)^*)}  & L_1\oplus L_1[1]\ar[l]\ar[d]^{\Delta^*}\\
&L_{02}[1] & \Big(L_{01}\oplus L_{12}\Big)[1]\ar[l]_{((f'_1)^*,(f_1)^*)}  & L_1[1]  \ar[l]_{((p_1)^*,(p'_1)^*)}\\
}
\]
The first and fourth rows are the exact triangles of tangent and cotangent complexes of fiber product.
The second and third rows are the exact triangle of tangent and cotangent complexes for the projection map 
\[
\sX_0\times \sX_1\times \sX_1\times \sX_2\to \sX_1\times\sX_1.
\]
The two top squares and two bottom squares both commutes, and the left middle square also commutes. All rows are exact. The composition of all columns are zero due to the isotropic condition.

Let $L_{i,j}^{ij}$ be the relative cotangent complex that fits in the exact triangle
\[
L_{i,j}^{ij}\to L_{i}\oplus L_j\to L_{ij}\to L_{i,j}^{ij}[1].
\]  We have an exact triangle
\[\xymatrix{
L^{02}_{0,2}[1] & \Big(L_{0,1}^{01}\oplus L_{1,2}^{12}\Big)[1]\ar[l] & L_1[1]\ar[l]}
\] by taking the mapping cone of the vertical arrows from the third row to the fourth row.

The isotropic condition on $X_{01}, X_{12}$ implies there is a commutative diagram
\[\xymatrix{
T_{01}\oplus T_{12}\ar[rd]\ar[r]^{(\theta_{01},\theta_{12})} & \Big(L_{0,1}^{01}\oplus L_{1,2}^{12}\Big)[1]\ar[d]\\
& \Big(L_0\oplus L_1\Big)\oplus \Big(L_1\oplus L_2\Big)[1]
}
\]
The isotropic structure on $X_{02}$ is given by the composition
\[\xymatrix{
\theta_{02}: T_{02}\ar[r] & T_{01}\oplus T_{12}\ar[r]^{(\theta_{01},\theta_{12})} & \Big(L_{0,1}^{01}\oplus L_{1,2}^{12}\Big)[1]\ar[r] & L^{02}_{0,2}[1] }.
\] To show this is quasi isomorphism we use the fact that $\Delta^*$ splits.

Denote the quasi inverse of $\theta_{02}$ by $h_{02}$. Then we have the weight 2 part of the Poisson structure on $\sX_{02}$ is 
\[\xymatrix{
\pi_{02}: L_{02}\ar[r] & L_{0,2}^{02}[1]\ar[r]^{h_{02}} &T_{02}
}
\] Similar we have 
\[\xymatrix{
(\pi_{01},\pi_{12}): L_{01}\oplus L_{12}\ar[r] & \Big(L_{0,1}^{01}\oplus L_{1,2}^{12}\Big)[1]\ar[r]^{(h_{01},h_{12})} & T_{01}\oplus T_{12}
}
\]

Consider the diagram
\[
\xymatrix{
T_{02}\ar[r] &T_{01}\oplus T_{12}\\
L^{02}_{0,2}[1]\ar[u]^{h_{02}} & \Big(L_{0,1}^{01}\oplus L_{1,2}^{12}\Big)[1]\ar[l]\ar[u]_{(h_{01},h_{12})}\\
L_{02}\ar[u] & L_{01}\oplus L_{12}\ar[u]\ar[l]_{((f'_1)^*,(f_1)^*)}
}
\]
The top diagram commutes by the construction of $\theta_{02}$ and the bottom diagram commutes by functoriality. Then property (2) follows.
\end{proof}

Let $X$ be a Gorenstein CY variety. The Poisson bivector $\pi$ on $\R\eps\uperf(X)$ is induced by an explicit chain map (cf. \cite[Theorem 4.7]{HP1}, \cite[Section 2]{Pol98}, \cite[Section 9]{NS06})). Let us recall the construction of the chain map. Let $V^\bullet$ be a bounded chain complex of perfect complexes on $X$. We may view $V^\bullet$ as a double complex with finitely many columns consisting of perfect complexes. The column index is the weight and the horizontal differential is given by the mixed structure. 
Denote by $\ul\End^{\geq 0}(V^\bullet):=\bigoplus_{n\geq 0}\ul\End^n(V^\bullet)$ where
\[
\ul\End^n(V^\bullet)=\bigoplus_{i\in\Z} \ul\Hom(V^i,V^{i+n}).
\] Here $\ul\Hom$ is the internal Hom of perfect complexes and the differential $\partial$ on  $\ul\End^{\geq 0}(V^\bullet)$ is induced by the horizontal differential on $V^\bullet$, i.e. the mixed structure. Similarly we can define the complex $\ul\End^{\leq 0}(V^\bullet)$.
There is a natural isomorphism
\[
\ul\End^{\leq 0}(V^\bullet)\cong \Big(\ul\End^{\geq 0}(V^\bullet)\Big)^\vee.
\]
The Poisson bivector on $\R\eps\uperf(X)$ is induced by the chain map
\[
\xymatrix{
\ldots\ar[r] & \ul\End^{-1}(V^\bullet)\ar[r]^{\partial^\vee}\ar[d]^{\partial^\vee} & \ul\End^0(V^\bullet)\ar[d]^0\\
& \ul\End^0(V^\bullet)\ar[r]^\partial & \ul\End^1(V^\bullet)\ar[r] & \ldots
}
\]
The Poisson bivector $\pi$ from cotangent to tangent complex of $\R\eps\uperf(X)$ can be therefore computed by choosing a Cech resolution of $X$.

\subsection{Chains}\label{sec:chains}

For integers $a\leq b$, let $\eps-M^{[a,b]}$ be the subcategory of $\eps-M^\gr$ consisting of mixed graded objects $\Big((E(i))_{i\in [a,b]},\eps\Big)$, i.e. $E(i)=0$ unless $i\in [a,b]$.  A (finite) \emph{chain} of objects  in $M$ is
\[
E(a)\to E(a+1)\to \ldots \to\ldots\to E(b)
\]
with   $E(i)\to E(i+1)$ being a morphism in $M$. Note that we do not require compositions of consecutive arrows to vanish.

Now we take $M=\QC(X)$ for a projective $k$-scheme $X$. Denote by $\chain^{[a,b]}(X)$ the category of chain of perfect complexes on $X$
\[
V_a\to V_{a+1}\to \ldots \to\ldots\to V_b.
\] Let $\R\uchain^{[a,b]}(X)$ be the associated stack. There is a quasi-equivalence of stacks
\begin{align*}
&\R\uchain^{[a,b]}(X)\simeq \\
&\R\eps\uperf^{[a,a+1]}(X)\times^h_{\R\uperf(X)} \R\eps\uperf^{[a+1,a+2]}(X)\times^h\ldots \times^h_{\R\uperf(X)}  \R\eps\uperf^{[b-1,b]}(X)
\end{align*} with respect to the morphism $p^{S}: \R\eps\uperf^{[a,b]}(X)\to \R\uperf(X)$ with $S=\{i\}$ for $i\in [a,b]$. We have a stack map
\begin{equation}\label{Ch-beta-map-eq}
\beta: \R\uchain^{[a,b]}(X)\to \R\uchain^{[a,a+1]}(X)
\end{equation}
defined by sending our chain to the chain $V_a\to V_b$, in degrees $[a,a+1]$, with the morphism being the composition. 

\begin{theorem}\label{thm:Poissonchain}
Assume that $X$ is a Gorenstein Calabi-Yau curve. Then
$\R\uchain^{[a,b]}(X)$ admits a $0$-shifted Poisson structure such that the canonical projections to $\R\eps\uperf^{[i,i+1]}(X)$ 
and the composition map $\beta$ are semi-classically Poisson.
\end{theorem}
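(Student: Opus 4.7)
The proof has three parts: (i) the construction of the $0$-shifted Poisson structure on $\R\uchain^{[a,b]}(X)$, (ii) the semi-classical Poisson property of the projections to the pieces $\R\eps\uperf^{[i,i+1]}(X)$, and (iii) the semi-classical Poisson property of the composition map $\beta$. All three rest on the iterated fiber-product description
\[
\R\uchain^{[a,b]}(X)\simeq \R\eps\uperf^{[a,a+1]}(X)\times^h_{\R\uperf(X)}\cdots\times^h_{\R\uperf(X)}\R\eps\uperf^{[b-1,b]}(X),
\]
the fiber product being taken along the projection to the shared weight.

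For (i) I would proceed by induction on $b-a$. Each $\R\eps\uperf^{[i,i+1]}(X)$ is the middle of a Lagrangian correspondence to a product of copies of the $1$-shifted symplectic stack $\R\uperf(X)$ (two from the extremal-weight projections $p$ and one from the total-complex map $q$) by Theorem \ref{thm:Lag} with $d=1$. Regrouping the target so that the factor for the weight shared with the next piece sits alone, I apply Proposition \ref{prop:composition of Lag}(1) to compose two adjacent Lagrangian correspondences; iterating produces a Lagrangian correspondence from $\R\uchain^{[a,b]}(X)$ to the $1$-shifted symplectic product of the two extremal-weight factors and the $b-a$ total-complex factors, whence the desired $0$-shifted Poisson structure. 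Part (ii) then follows from Proposition \ref{prop:composition of Lag}(2), applied at each stage of the iteration: the natural map from the fiber product to the cartesian product of the two Lagrangian correspondences is semi-classically Poisson, so iteration gives that $\R\uchain^{[a,b]}(X)\to\prod_{i=a}^{b-1}\R\eps\uperf^{[i,i+1]}(X)$ (with the product Poisson structure on the target) is semi-classically Poisson, and composing with any factor projection yields the claim for the individual projections.

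Part (iii) is the main obstacle. The key issue is that $\beta$ does \emph{not} respect the Lagrangian correspondences built in (i): the target $\R\eps\uperf^{[a,a+1]}(X)$ has a single total-complex factor (the cone of the composed morphism $V_a\to V_b$), while the source has $b-a$ such factors, and the two are related only via an iterated octahedral extension, not by a direct morphism. My plan is to factor $\beta$ as a sequence of ``collapse the last two arrows'' maps
\[
\R\uchain^{[a,b]}(X)\to\R\uchain^{[a,b-1]}(X)\to\cdots\to\R\uchain^{[a,a+1]}(X),
\]
reducing, by translation in the weight index, to the semilocal claim that the composition map $\R\uchain^{[0,2]}(X)\to\R\uchain^{[0,1]}(X)$ sending $(V_0\to V_1\to V_2)$ to $(V_0\to V_2)$ is semi-classically Poisson.

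For this semilocal case I would invoke the explicit chain-map description of the bivector recalled at the end of Section \ref{sec:Poieps}. On both source and target the bivector is the natural pairing between $\ul\End^{\leq 0}(V^\bullet)[-1]$ and $\ul\End^{\geq 0}(V^\bullet)$, with boundary $\partial$ induced by bracketing with the mixed differential. The tangent map $d\beta$ is the obvious one (an infinitesimal $(\delta V_1,\delta f_1,\delta f_2)$ contributes $f_2\,\delta f_1+\delta f_2\,f_1$ to $\delta(f_2 f_1)$), and its dual $\beta^*$ on cotangents is equally explicit. Verifying the homotopy-commutative square $\pi_{tgt}\simeq d\beta\circ\pi_{src}\circ\beta^*$ then reduces to comparing two chain maps between the $\ul\End$-complexes of source and target and producing a coherence homotopy from the partial compositions; this is where I expect the bulk of the technical work to lie.
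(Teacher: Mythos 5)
Your parts (i) and (ii) follow the paper's own route: the $0$-shifted Poisson structure on $\R\uchain^{[a,b]}(X)$ is built by induction on $b-a$, composing the Lagrangian correspondences of the pieces $\R\eps\uperf^{[i,i+1]}(X)$ via Proposition \ref{prop:composition of Lag}(1), and the semi-classical Poisson property of the projections is exactly Proposition \ref{prop:composition of Lag}(2) (cf.\ Corollary \ref{cor:fiberprod}). Those parts are fine.

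Part (iii), however, has a genuine gap, in two places. First, your description of the bivector on the source is wrong as stated: a chain $V_0\to V_1\to V_2$ in $\R\uchain^{[0,2]}(X)$ is \emph{not} a graded mixed complex (the composition $d^2$ need not vanish), so the formula recalled at the end of Section \ref{sec:Poieps} does not apply; indeed the naive map $(a_{10},a_{21})\mapsto(-a_{10}d,\,da_{10}-a_{21}d,\,da_{21})$ in weight $-1$ and $0$ in weight $0$ fails to be a chain map once $d^2\neq 0$ (applying $d_h$ leaves uncancelled terms $d^2a_{10}$, $a_{21}d^2$ and $2\,da_{10}d$). The correct representative must be extracted from the composed Lagrangian correspondences, and this is exactly the content of Lemma \ref{biv-Ch}: one has to produce the explicit homotopy $h$ in diagram \eqref{CD:poissonch}, which yields the asymmetric chain map $(a_{10},a_{21})\mapsto(a_{10}d,da_{10},da_{21})$, $(b_{00},b_{11},b_{22})\mapsto(0,db_{11})$; only with this representative does the commutativity check against $\beta$ (the paper's diagram \eqref{diag:1}) go through. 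This identification is precisely the step you defer as ``the bulk of the technical work,'' so it is not a routine omission but the heart of the argument. Second, your reduction of the general case to $[0,2]$ through the collapse maps $\R\uchain^{[a,b]}(X)\to\R\uchain^{[a,b-1]}(X)$ ``by translation in the weight index'' is not justified: a collapse map on a longer chain is not the $[0,2]$ composition map (its source carries the earlier arrows, with the Poisson structure produced by the iterated correspondence), so you would need an extra statement, e.g.\ that a fiber product of semi-classically Poisson maps over the relevant Lagrangian data is again semi-classically Poisson. The paper avoids this entirely by writing the bivector of $\R\uchain^{[0,n]}(X)$ for general $n$ explicitly and checking the analogue of \eqref{diag:1} directly. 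Your factorization strategy could be repaired (compositions of semi-classically Poisson morphisms are semi-classically Poisson), but as written both the collapse-map step and the bivector identification are missing.
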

\begin{proof}
We define the $0$-shifted Poisson structure on $\R\uchain^{[a,b]}(X)$ by equipping the map
$$\R\uchain^{[a,b]}(X)\to \prod_{i=a}^{b-1} \Big(\R\eps\uperf^{[i,i]} \times \R\uperf(X)\Big) \times \R\eps\uperf^{[b,b]}$$
with a Lagrangian structure. We do this by induction in the length of $[a,b]$. 
Consider the diagram
\[
\xymatrix{
& \R\uchain^{[a,b]}(X)\ar[rd]^{f_1}\ar[ld]_{f'_1} \\
\R\eps\uperf^{[a,a+1]}(X)\ar[rd]^{p_1}\ar[d]^{q_0} & & \R\uchain^{[a+1,b]}(X)\ar[d]^{q_2}\ar[ld]_{p'_1}\\ 
\R\eps\uperf^{[a,a]}(X)\times\R\uperf(X) & \R\eps\uperf^{[a+1,a+1]}(X) & \prod_{i=a+2}^b\Big(\R\eps\uperf^{[i,i]}(X)\times\R\uperf(X) \Big)
}
\]
By the induction assumption, the two lower roof diagrams are Lagrangian correspondences. Hence, their composition is also Lagrangian.
On the other hand, by Proposition \ref{prop:composition of Lag}, $f'_1$ and $f_1$ are semi classically Poisson morphisms.

Next, we will show that the composition map \eqref{Ch-beta-map-eq} is semi-classically Poisson, for $a=0,b=2$ first.  Let $v$ be a chain of perfect $\OO_X$-modules
\[\xymatrix{
V_0\ar[r]^{d} & V_1\ar[r]^{d} & V_2
}
\] The tangent complex of $\R\uchain^{[0,2]}(X)$ at $v$ is
\[
T_v\R\uchain^{[0,2]}(X) \simeq \bC\Big(\bigoplus_{i=0}^2 \ul\End(V_i)\to \ul\Hom(V_0,V_1)\oplus \ul\Hom(V_1,V_2)\Big)[1]
\] with differential induced by the chain map of sheaves
\[
(a_{00} ,a_{11} ,a_{22} )\mapsto(da_{00}-a_{11}d,da_{11}-a_{22}d) \hspace{2cm} \text{for $a_{ii}\in\ul{\End}(V_i)$.}
\] Here $\ul\Hom$ is the internal hom in $\perf(X)$ and $\bC$ stands for the Cech cochain complex that computes the hypercohomology of a complex of coherent sheaves.
Denote by $\mu(v)$ the composition $d^2: V_0\to V_2$. It has tangent complex
\[
T_{\mu(v)} \R\eps\uperf^{[0,1]}(X)\simeq \bC\Big( \ul\End(V_0)\oplus\ul\End(V_2) \to \ul\Hom(V_0,V_2)\Big)[1]
\] with differential induced by the chain map
\[
(a_0,a_2)\mapsto d^2 a_{00}-a_{22}d^2.
\]
The tangent map $T_v\R\uchain^{[0,2]}(X) \to \mu^*T_{\mu(v)} \R\eps\uperf^{[0,1]}(X)$ is induced by the chain map
\[
(a_{00},a_{11},a_{22})\mapsto (a_{00},a_{22}),\hspace{2cm} (a_{01},a_{12})\mapsto da_{01}+a_{12}d
\]
where $a_{i,i+1} \in \ul\Hom(V_i,V_{i+1})$.  
It is straightforward to check this is a chain map.

By the $1$-Calabi-Yau property, 
\[
L_v\R\uchain^{[0,2]}(X) \simeq \bC\Big(\ul\Hom(V_2,V_1)\oplus \ul\Hom(V_1,V_0)\to \bigoplus_{i=0}^2 \ul\End(V_i)\Big)
\] with differential induced by the chain map
\[
(a_{10},a_{21})\mapsto (-a_{10}d,da_{10}-a_{21}d, da_{21}),
\]
and 
\[
L_{\mu(v)} \R\eps\uperf^{[0,1]}(X)\simeq \bC\Big(  \ul\Hom(V_2,V_0)\to\ul\End(V_0)\oplus\ul\End(V_2)\Big)
\] with differential induced by the chain map
\[
a_{20}\mapsto (a_{20}d^2,d^2 a_{20}).
\]
The cotangent map $\mu^*L_{\mu(v)} \R\eps\uperf^{[0,1]}(X)\to L_v\R\uchain^{[0,2]}(X) $ is induced by the chain map
\[
(a_{00},a_{22})\mapsto (a_{00},0,a_{22}),\hspace{2cm} a_{20}\mapsto (da_{20},a_{20}d).
\]
For simplicity, we write $E_{ij}:=\ul\Hom(V_i,V_j)$. 

\begin{lemma}\label{biv-Ch}
The Poisson bivector $\pi$ on $\R\uchain^{[0,2]}(X)$ is induced by the chain map of complexes of perfect complexes
$$\ol{\phi}: [E_{10}\oplus E_{21}\to E_{00}\oplus E_{11}\oplus E_{22}]\to [E_{00}\oplus E_{11}\oplus E_{22}\to E_{01}\oplus E_{12}],$$
\[
\ol{\phi}: (a_{10},a_{21})\mapsto (a_{10}d,da_{10},da_{21}), \hspace{1cm} (b_{00},b_{11},b_{22})\mapsto (0,db_{11}).
\]
\end{lemma}
\begin{proof}
The product Poisson structure on $\R\uchain^{[0,1]}(X)\times \R\uchain^{[1,2]}(X)$ is induced by the chain map 
\[
\xymatrix{
E_{10}+ E_{21}\ar[rr]^{\partial^\vee:\left(\begin{array}{ccc}a_{10} &a_{21}\end{array}\right)  \mapsto \left(\begin{array}{cc} a_{10}d & da_{10} \\ a_{21}d & da_{21}\end{array}\right)}  \ar[dd]_{\left(\begin{array}{ccc}a_{10}\\a_{21}\end{array}\right)  \mapsto \left(\begin{array}{cc} a_{10}d & da_{10} \\ a_{21}d & da_{21}\end{array}\right)} && (E_{00}+E_{11})+(E_{11}+E_{22})\ar[dd]^0 \\ 
& & & \\
(E_{00}+E_{11})+(E_{11}+E_{22})\ar[rr]_{\partial: \left(\begin{array}{cc} b_{00} & b_{11}\\ b_{11}^\prime & b_{22} \end{array}\right)\mapsto \left(\begin{array}{cc}db_{00}-b_{11}d\\ db^\prime_{11}-b_{22}d\end{array}\right) }&& E_{01}+E_{12}
}
\]
Here we write elements of $(E_{00}+E_{11})+(E_{11}+E_{22})$ as a matrix where the first row are elements of $(E_{00}+E_{11})$ and the second row are elements of $(E_{11}+E_{22})$.
We denote by $\phi$ the left vertical map. To compute the Poisson bivector on $\R\uchain^{[0,2]}(X)$, we consider the diagram
\begin{equation}\label{CD:poissonch}
\xymatrix{
E_{10}+ E_{21}\ar[rr] ^{\tau\partial^\vee} && E_{00}+E_{11}+E_{22} \ar@/^2.0pc/[ddd]^{\ol{\phi}} \\ 
E_{10}+ E_{21}\ar[rr]^{\partial^\vee} \ar[u]^=  \ar[dd]_{\phi}\ar@/_2.0pc/[ddd]_{\ol{\phi}} && (E_{00}+E_{11})+(E_{11}+E_{22})\ar[dd]^0\ar[u]_{\tau}\ar[lldd]^h \\ 
& & & \\
(E_{00}+E_{11})+(E_{11}+E_{22})\ar[rr]_{\partial}&& E_{01}+E_{12}\\
E_{00}+E_{11}+E_{22}\ar[rr]_{\partial\Delta}\ar[u]^\Delta& & E_{01}+E_{12}\ar[u]_{=}
}
\end{equation}
where 
\[\Delta(b_{00},b_{11},b_{22})=\left(\begin{array}{cc} b_{00} & b_{11}\\ b_{11} & b_{22}\end{array}\right),
\hspace{1cm}
\tau\left(\begin{array}{cc} b_{00} & b_{11}\\ b^\prime_{11} & b_{22}\end{array}\right)=\left(\begin{array}{ccc} b_{00}\\ b_{11}-b^\prime_{11} \\ b_{22}\end{array}\right).
\] 
We look for a homotopy $h$ of the middle square such that 
\[
\phi+h\partial^\vee=\Delta\ol{\phi},\hspace{2cm} \partial h=\ol{\phi}\tau.
\]
The map $h: (E_{00}+E_{11})+
(E_{11}+E_{22})\to E_{11}^{\oplus 2}$ defined by 
\[
h(\left(\begin{array}{cc} b_{00} & b_{11}\\ b^\prime_{11} & b_{22}\end{array}\right))=\left(\begin{array}{cc} 0 & 0\\ b_{11}-b'_{11} & 0\end{array}\right)
\] satisfies the desired property.  Then we obtain the chain map in the claim.
\end{proof}

By an abuse of notation we denote by $\mu$ the cotangent map induced by the composition map $\mu$.
It is straight forward to check for the diagram
\begin{equation}\label{diag:1}
\xymatrix{
E_{20}\ar[d]_\mu\ar[r] & E_{00}+E_{22}\ar[d]\\
E_{21}+E_{10}\ar[r]\ar[d]_{\ol{\phi}}& E_{00}+E_{11}+E_{22}\ar[d]^{\ol{\phi}}\\
E_{00}+E_{11}+E_{22}\ar[r]\ar[d] & E_{01}+E_{12}\ar[d]^{\mu^\vee}\\
E_{00}+E_{22}\ar[r] & E_{02}
}
\end{equation}
the composition of the left column is $a_{20}\mapsto (a_{20}d^2,d^2a_{20})$ and the composition of the right column is zero. Then it follows that the diagram
\[\xymatrix{
\beta^*L_{\mu(v)}\R\eps\uperf^{[0,1]}(X)\ar[r]^{\beta^*\pi} \ar[d]& \beta^*T_{\mu(v)}\R\eps\uperf^{[0,1]}(X) \\
L_v\R\uchain^{[0,2]}(X)\ar[r]^{\pi} &T_v\R\uchain^{[0,2]}(X)\ar[u]
}
\] commutes.

For general $n$ the Poisson bivector on $\R\uchain^{[0,n]}(X)$ is induced by the chain map:
\[\xymatrix{
\bigoplus_{i=0}^{n-1} E_{i+1,i}\ar[r]^{\partial^\vee}\ar[d] & \bigoplus_{i=0}^n E_{ii}\ar[d]\\
\bigoplus_{i=0}^n E_{ii}\ar[r]^{\partial} & \bigoplus_{i=0}^{n-1} E_{i,i+1}
}
\]
The left vertical map is
\[
(a_{10},a_{21},\ldots,a_{n,n-1})\mapsto (a_{10}d, da_{10},da_{12},\ldots, da_{n,n-1})
\] and
the right vertical map is
\[
(b_{00},\ldots,b_{nn})\mapsto (0, db_{11},\ldots, db_{nn}).
\] One can verify that the diagram similar to (\ref{diag:1}) commutes.
\end{proof}

It is easy to check that the chain map
\[
(a_{10},a_{21})\mapsto (a_{10}d,a_{21}d,da_{21}), \hspace{1cm} (b_{00},b_{11},b_{22})\mapsto (b
_{11}d, 0).
\] is homotopy equivalent to the chain map given in Lemma \ref{biv-Ch}, which gives another chain representative of the Poisson bivector on $\R\uchain(X)$.

An intermediate step of the above proof turns out to be useful later. We write it as a corollary.
\begin{cor}\label{cor:fiberprod}
The natural map $\R\uchain^{[a,b]}(X)\to \prod_{i=a}^{b-1} \R\uchain^{[i,i+1]}(X)$ is semi-classically Poisson, where the target is equipped with the product Poisson structure. 
\end{cor}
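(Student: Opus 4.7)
The plan is to verify the semi-classical Poisson condition by exhibiting, at the chain level, a homotopy-commutative square comparing the Poisson bivector on $\R\uchain^{[a,b]}(X)$ with the pullback of the product bivector on $\prod_{i=a}^{b-1} \R\uchain^{[i,i+1]}(X)$. The observation that makes this essentially formal is that the needed homotopy is already contained in diagram (\ref{CD:poissonch}), which was constructed during the proof of Lemma \ref{biv-Ch}.

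First, I would reduce to the model case $[a,b] = [0,2]$. Here the claim is that the forgetful map
$$\R\uchain^{[0,2]}(X) \to \R\uchain^{[0,1]}(X)\times \R\uchain^{[1,2]}(X)$$
is semi-classically Poisson. Inspecting diagram (\ref{CD:poissonch}), the top and bottom rows are (the pullbacks of) the chain map defining the product Poisson bivector, while the middle row is the chain map defining the Poisson bivector $\ol{\phi}$ on $\R\uchain^{[0,2]}(X)$. The map $\Delta$ on cotangent complexes and the map $\tau$ on tangent complexes are the linearizations of the forgetful map in question, and the homotopy $h$ constructed there precisely witnesses the homotopy-commutativity of the square required for the semi-classical Poisson condition. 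Thus the case $b-a=2$ follows directly from the diagram already in place.

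For general $[a,b]$, the explicit chain map for $\R\uchain^{[0,n]}(X)$ written at the end of the proof of Theorem \ref{thm:Poissonchain} has the same telescoping shape, with summands $(a_{i+1,i}, b_{ii})$ matching those of the product. The comparison map is induced by the diagonal $\Delta: \bigoplus_{i=0}^n E_{ii} \to \bigoplus_{i=0}^{n-1}(E_{ii}\oplus E_{i+1,i+1})$ and its transpose $\tau$, and the required homotopy is obtained by summing, over consecutive indices $(i,i+1)$, copies of the homotopy $h$ from Lemma \ref{biv-Ch} applied to the adjacent pair. Alternatively, one may argue inductively: the map factors as
$$\R\uchain^{[a,b]}(X)\to \R\uchain^{[a,a+1]}(X) \times \R\uchain^{[a+1,b]}(X) \to \prod_{i=a}^{b-1}\R\uchain^{[i,i+1]}(X),$$
and semi-classically Poisson morphisms compose by Proposition \ref{poisson+coisotropic} (or by the obvious weight-2 analog for Poisson morphisms, which follows from the same diagram chase).

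The main potential obstacle is ensuring that the separately constructed homotopies $h$ for each consecutive pair assemble into a single coherent homotopy of double complexes. However, since the semi-classical condition is only a statement at the level of the homotopy category (no higher coherences are needed), this assembly is straightforward: the contributions for disjoint pairs do not interact, and direct summation suffices. I do not expect any non-formal difficulty here — the corollary is essentially an unwinding of the explicit construction already carried out in the proof of the theorem.
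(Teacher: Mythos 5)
Your proposal is correct and follows essentially the same route as the paper: the paper's proof of Corollary \ref{cor:fiberprod} simply observes that in diagram (\ref{CD:poissonch}) the middle square is the chain map for the product bivector on the target while the outer square is the bivector on $\R\uchain^{[a,b]}(X)$, with the homotopy $h$ (and its obvious telescoping analogue for longer chains, via $\Delta$ and $\tau$) supplying the required homotopy-commutativity. Your additional inductive/composition argument for general $[a,b]$ is a harmless elaboration of the same idea, so there is nothing to correct.
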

\begin{proof}
In diagram (\ref{CD:poissonch}), the middle square gives the chain map for the Poisson bivector on the target, while the outer square gives the chain map for the Poisson bivector on the source.  
\end{proof}


Let $v$ be a perfect complex on $X$. We denote by $[v]$ the stacky point (i.e. residue gerbe) on $\R\uperf(X)$ that corresponds to $v$. The canonical map 
\[
j_v: [v]\to \R\uperf(X)
\] is Lagrangian (\cite[Theorem 3.18]{HP1}). It follows from Theorem \ref{thm:basechange} that the base change
\[
[v]\times^h_{j_v, q} \R\eps\uperf(X)\to \R\eps\uperf(X)
\] is Poisson.

Let $a\leq b$ be two integers.
Denote by
\[
t^{[a,b]}: \R\eps\uperf(X)\to \R\eps\uperf^{[a,b]}(X)
\] be the stack map by taking the $[a,b]$-truncation of a mixed complexes. 

\begin{lemma}
$t^{[a,b]}$ is a semi-classical Poisson morphism.
\end{lemma}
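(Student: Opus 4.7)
The plan is to use the explicit chain-level description of the Poisson bivector recalled at the end of Section \ref{sec:Poieps}. For a graded mixed perfect complex $V^\bullet = ((E(i))_{i \in \Z}, \eps)$, the bivector $\pi$ at $V^\bullet$ is induced by the chain map between two-term complexes built from $\ul\End^{\leq 0}(V^\bullet)$ and $\ul\End^{\geq 0}(V^\bullet)$, with the maps $\partial^\vee$ and $\partial$ induced by the horizontal mixed differential $\eps$. The analogous formula computes the bivector $\pi^{[a,b]}$ at the truncated object $V^{[a,b]}$. Since the truncation simply forgets those $E(i)$ and $\eps_i$ with indices outside $[a,b]$, the complex $\ul\End^n(V^{[a,b]}) = \bigoplus_{i,\, i+n \in [a,b]} \ul\Hom(E(i), E(i+n))$ is a canonical direct summand of $\ul\End^n(V^\bullet)$ for every $n$.

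First I would identify the relevant tangent and cotangent maps. The induced map of tangent complexes $T_{V^\bullet}\R\eps\uperf(X) \to T_{V^{[a,b]}}\R\eps\uperf^{[a,b]}(X)$ is, at the chain level, the canonical projection onto the summand $\ul\End^n(V^{[a,b]}) \subset \ul\End^n(V^\bullet)$, while the induced map on cotangent complexes is the dual inclusion by extension by zero on the complementary summand.

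Next I would verify directly that the diagram defining the semi-classical Poisson condition, namely
\[
\xymatrix{
(t^{[a,b]})^* L_{\R\eps\uperf^{[a,b]}(X)} \ar[r]^-{(t^{[a,b]})^*\pi^{[a,b]}} \ar[d] & (t^{[a,b]})^* T_{\R\eps\uperf^{[a,b]}(X)} \\
L_{\R\eps\uperf(X)} \ar[r]^-{\pi} & T_{\R\eps\uperf(X)} \ar[u]
}
\]
commutes in the derived category. At the chain level this reduces to showing that the composition
\[
\ul\End^{n}(V^{[a,b]}) \hookrightarrow \ul\End^{n}(V^\bullet) \xrightarrow{\partial} \ul\End^{n+1}(V^\bullet) \twoheadrightarrow \ul\End^{n+1}(V^{[a,b]})
\]
coincides with $\partial$ computed directly on $V^{[a,b]}$, together with the analogous identity for $\partial^\vee$. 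This is immediate from the explicit formula $\partial\phi = \eps \circ \phi \pm \phi \circ \eps$: when $\phi$ is supported on indices in $[a,b]$, only the components $\eps_i$ with $i \in [a,b-1]$ contribute, and these are precisely the differentials retained by the truncation; any output component of $\partial\phi$ falling outside $[a,b]$ would necessarily involve the dropped differentials $\eps_{a-1}$ or $\eps_b$, so it is automatically killed by the right-hand projection.

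The main obstacle, such as it is, amounts to bookkeeping: one must confirm strict compatibility of the chain representative of $\pi$ with the summand decomposition induced by truncation. The explicit formula makes this transparent and requires no auxiliary homotopies, so the diagram commutes on the nose, from which the semi-classical Poisson property of $t^{[a,b]}$ follows.
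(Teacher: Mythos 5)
Your proposal is correct and follows essentially the same route as the paper's own proof: both rely on the explicit chain-level representative of the bivector (the component $\partial^\vee$ from weight $-1$ to weight $0$, zero elsewhere) together with the observation that, with the tangent map realized as the projection onto the summand indexed by $[a,b]$ and the cotangent map as the dual extension-by-zero inclusion, this chain map is strictly compatible with truncation. The paper records only the key point that $\partial^\vee$ is compatible with truncation, so your writeup is the same argument with the bookkeeping made explicit.
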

\begin{proof}
We adopt the notation used in the proof of Theorem \ref{thm:Poissonchain}.
Let $v=\{\ldots\to V_i\to^d V_{i+1}\to \ldots\}$ be a perfect mixed graded complex. Recall that the Poisson bivector is induced by the chain map 
\[
\partial^\vee:=d\circ ?+ ?\circ d: \bigoplus_{i} E_{i+1,i}\to  \bigoplus_{i} E_{ii}, \hspace{1cm} 0: \bigoplus_{i} E_{ii}\to \bigoplus_{i} E_{i,i+1}.
\]
If $i\in [a,b-1]$ then the image of $\partial^\vee$ lies in the components with $i\in [a,b]$, i.e. $\partial^\vee$ is compatible with truncation. Therefore $t^{[a,b]}$ is Poisson.
\end{proof}

Let $f: U\to V$ be a morphism in $\perf(X)$. We may view it as a point in $\R\eps\uperf^{[0,1]}(X)$. Let 
\[
U\to V\to C(f)\to U[1]
\]  be the exact triangle with $C(f)$ being the mapping cone. Then $V\to C(f)$ is a point in $\R\eps\uperf^{[1,2]}(X)$. Denote by $\R\eps\uperf^{[0,2],e}(X)$ the stack of exact mixed graded complexes $V_0\to V_1\to V_2$.
\begin{lemma}\label{lem:exacttriangle}
The truncation maps
\[
t^{[0,1]}: \R\eps\uperf^{[0,2],e}(X)\to \R\eps\uperf^{[0,1]}(X), \hspace{1cm}  t^{[1,2]}: \R\eps\uperf^{[0,2],e}(X)\to \R\eps\uperf^{[1,2]}(X)
\] are semi-classically Poisson and are  quasi-equivalences of stacks.
\end{lemma}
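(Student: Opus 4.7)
The plan has two pieces: establish the quasi-equivalence explicitly via derived cones and fibers, then endow the source with a $0$-shifted Poisson structure using the base change theorem and verify compatibility with the truncations.

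\textbf{Quasi-equivalence.} I would invert $t^{[0,1]}$ using the derived mapping cone. A morphism $f\colon V_0\to V_1$ in $\perf(X)$ extends functorially to an exact triangle $V_0\to V_1\to \Cone(f)$ together with the canonical null-homotopy of the composition, producing a section of $t^{[0,1]}$. To confirm this is a quasi-inverse, I verify that the homotopy fiber of $t^{[0,1]}$ over a point $(V_0\to V_1)$ is contractible: the fiber parametrizes triples $(V_2,V_1\to V_2,h)$ with acyclic total complex, which is equivalent to the space of quasi-isomorphisms $\Cone(f)\xrightarrow{\sim} V_2$ and hence contractible. The symmetric shifted fiber construction $V_1\to V_2\mapsto (\mathrm{fib}(V_1\to V_2)\to V_1\to V_2)$ handles $t^{[1,2]}$. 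Equivalently, on tangent complexes at an exact point $V$, the tangent to $\R\eps\uperf^{[0,2],e}(X)$ coincides with that of $\R\eps\uperf^{[0,2]}(X)$ because the tangent of $\R\uperf(X)$ at $0$ vanishes, and the further forgetful tangent map to $\R\eps\uperf^{[0,1]}(X)$ is a quasi-isomorphism by direct inspection.

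\textbf{Poisson structure via base change.} I identify $\R\eps\uperf^{[0,2],e}(X)$ with the homotopy fiber product $[0]\times^h_{\R\uperf(X)}\R\eps\uperf^{[0,2]}(X)$ with respect to the total-complex map $q$, where $[0]\hookrightarrow\R\uperf(X)$ is the residue gerbe at the zero object. Since this inclusion is Lagrangian by \cite[Thm.\ 3.18]{HP1}, the $[0,2]$-variant of Theorem \ref{thm:basechange}(c) with $S=\{0,1,2\}$ endows $\R\eps\uperf^{[0,2],e}(X)$ with a $0$-shifted Poisson structure and yields a semi-classically Poisson morphism $\R\eps\uperf^{[0,2],e}(X)\to\R\eps\uperf^{[0,2]}(X)$. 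The preceding lemma provides semi-classically Poisson truncations $\R\eps\uperf^{[0,2]}(X)\to \R\eps\uperf^{[i,i+1]}(X)$ for $i=0,1$, and composing gives the claimed Poisson property of $t^{[0,1]}$ and $t^{[1,2]}$.

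\textbf{Anticipated obstacle.} The main technical subtlety is confirming that the chain of compositions actually assembles into a semi-classically Poisson morphism, i.e.\ that the Poisson structure produced by base change matches the one transported along the quasi-equivalences. This can be reduced to a chain-level check using the explicit description of the bivector from Lemma \ref{biv-Ch}: on exact triangles the $\ul{\End}$-complex has a cleaner structure, and the chain map defining the bivector on $\R\eps\uperf^{[0,2],e}(X)$ restricts visibly to the bivectors on $\R\eps\uperf^{[0,1]}(X)$ and $\R\eps\uperf^{[1,2]}(X)$ under the tangent maps of the truncations, analogously to diagram (\ref{CD:poissonch}).
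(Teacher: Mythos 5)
Your proposal is correct and follows essentially the same route as the paper: identify $\R\eps\uperf^{[0,2],e}(X)$ with the homotopy fiber product of $\R\eps\uperf^{[0,2]}(X)$ over $\R\uperf(X)$ along the Lagrangian inclusion of the acyclic locus $[0]$, deduce the Poisson structure and the Poisson property of the projection from the Lagrangian/coisotropic intersection theorem, compose with the semi-classically Poisson truncation maps of the preceding lemma, and obtain the quasi-equivalences from exactness via the cone/fiber constructions. Your extra chain-level compatibility check in the last paragraph is unnecessary (composition of (semi-classically) Poisson morphisms already suffices), but it is not a flaw.
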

\begin{proof}
Note that $\R\eps\uperf^{[0,2],e}(X)$ can be identified with the homotopy fiber product $[0]\times^h_{j_0, q} \R\eps\uperf^{[0,2]}(X)$ where $j_0: [0]\subset \R\uperf(X)$ is the substack consisting of acyclic prefect complexes. Since the cotangent and tangent complex at an acyclic perfect complex is acyclic, $j_0$ is Lagrangian. By Theorem \ref{thm:fiberprod} the natural morphism
\[
\R\eps\uperf^{[0,2],e}(X)\to \R\eps\uperf^{[0,2]}(X)
\] is Poisson and the composition with the truncation map is also Poisson. By exactness, they are quasi-equivalences of stacks. \footnote{We do not use $X$ is 1CY. It holds for any $d$.}
\end{proof}

\begin{example}
The above lemma gives an alternative moduli interpretation of Feigin-Odesskii Poisson moduli space which will be used later. Recall that Feigin-Odesskii's Poisson moduli space is the moduli stack of non-splitting complexes $\cO_X\to V$ such that $V$ is a vector bundle and $V/\cO_X$ is isomorphic to a fixed stable vector bundle $\xi$ of rank $k$ and degree $n$. By the lemma, in the homotopy category of stacks it is equivalent with the moduli stack of mixed complexes $\xi_{n,k}\to \cO_X[1]$ and the equivalence is compatible with Poisson structure, at least semi-classically.
\end{example}
 
 \section{Bosonization of Feigin-Odesskii Poisson varieties}\label{sec:boson}
\subsection{Continued fraction and stable sheaves on elliptic curves}\label{sec:contfrac}
Fix coprime integers $n>k>0$. By the Euclidean algorithm, there exists unique positive integers $n_1,\ldots,n_p\geq 2$ such that $n/k$ is represented as negative continued fraction
\[
\frac{n}{k}=n_1-\frac{1}{n_2-\frac{1}{\ldots -\frac{1}{n_p}}}.
\]
We denote by $D(n_1,\ldots,n_p)$ the determinant of the tridiagonal matrix
\[\left(
\begin{array}{ccccc}
n_1 & -1 & 0 & \ldots &\\
-1 & n_2 & -1 & 0 &\ldots \\
& & \ldots & &\\
 \ldots &0& -1 & n_{p-1} & -1\\
&\ldots & 0 & -1 & n_p
\end{array}\right)
\]
The successive convergents of the above continued fraction are 
$$\mu_{p-1}=\frac{n(p-1)}{k(p-1)}=n_1, \ \mu_{p-2}=\frac{n(p-2)}{k(p-2)}=n_1-\frac{1}{n_2}, \ldots, \ \mu_0=\frac{n(0)}{k(0)}=\frac{n}{k},$$
where 
\[
n(i):=D(n_1,\ldots,n_{p-i}),\hspace{1cm} k(i)=D(n_2,\ldots,n_{p-i}).
\]
In particular,
\[
n=D(n_1,\ldots,n_p),~~~k=D(n_2,\ldots,n_p)
\] 
(when $p=1$ we have $n=n_1$ and $k=1$).
As a convention, we set $n(p)=1, k(p)=0$, $\mu(p)=\infty$. 
Note that 
\[
n(0)=n>n(1)>\ldots>n(p-1)>n(p)=1,\hspace{1cm} k(0)=k>k(1)>\ldots>k(p-1)>k(p)=0,
\] and 
\begin{equation}\label{ni-ki-det1-eq}
n(i+1)k(i)-n(i)k(i+1)=1,
\end{equation}
in particular, $n(i)$ and $k(i)$ are coprime.

Now let $X$ be a complex elliptic curve. Recall for perfect complexes $V, W$ on $X$, one has
\[
\chi(V,W)=\sum (-1)^i\dim \Ext^i(V,W)=\deg(W)\rk(V)-\deg(V)\rk(W).
\]

\begin{lemma}\label{Si-stable-lem}
Let $\xi_i$ and $\xi_{i+1}$ be stable sheaves on $X$ of slopes $\mu_i$ and $\mu_{i+1}$ (where $0\le i\le p-1$). Then the space $\Hom(\xi_i,\xi_{i+1})$ is $1$-dimensional.
Furthermore, the unique (up to rescaling) nonzero morphism $\xi_i\to \xi_{i+1}$
fits into an exact sequence of sheaves
$$0\to S_i\to \xi_i\to \xi_{i+1}\to 0$$
where $S_i$ is a stable bundle of degree $d(i)>0$ and rank $r(i)>0$ that are given by
\[
r(i)=k(i)-k(i+1)=D(n_2,\ldots,n_{p-i}-1) \hspace{2cm}
d(i)=n(i)-n(i+1)=D(n_1,\ldots,n_{p-i}-1),  \hspace{2cm}
\] 
(for $p=1$, we have $r(0)=1$, $d(0)=n-1$).
Let $s(i)=\frac{d(i)}{r(i)}$ denote the slope of $S_i$, and let $s(p)=1$. Then 
\[
\frac{n}{k}>s(0)\geq s(1)\geq \ldots\geq s(p)>0,
\] 
and the equality $s(i-1)=s(i)$ holds if and only if $n_i=2$.
\end{lemma}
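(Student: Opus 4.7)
The plan is to prove the three assertions in sequence, with the Farey-type identity $n(i+1)k(i)-n(i)k(i+1)=1$ of \eqref{ni-ki-det1-eq} as the main technical tool throughout. For the Hom computation I would apply Serre duality on the elliptic curve: $\Ext^1(\xi_i,\xi_{i+1})\cong \Hom(\xi_{i+1},\xi_i)^*$ vanishes because $\mu_{i+1}>\mu_i$ and both sheaves are stable, so $\dim\Hom(\xi_i,\xi_{i+1})=\chi(\xi_i,\xi_{i+1})=n(i+1)k(i)-n(i)k(i+1)=1$. The case $i=p-1$ of the exact sequence is immediate since $k(p-1)=1$, making $\xi_{p-1}$ a line bundle and $\xi_p$ a skyscraper of length $1$. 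For $i\le p-2$, let $\phi$ be the unique nonzero morphism and suppose its image has rank $s\le k(i+1)$ and degree $e$. Stability of $\xi_i$ gives $e/s>\mu_i$; if $\phi$ is not surjective then $e/s<\mu_{i+1}$ as well, either by stability of $\xi_{i+1}$ (when $s<k(i+1)$) or by a length count on the torsion cokernel (when $s=k(i+1)$). But the Farey lemma---two reduced fractions $a/b<c/d$ with $bc-ad=1$ admit no reduced fraction strictly between them with denominator $<b+d$---applied to $(\mu_i,\mu_{i+1})$ forces the denominator of $e/s$ in lowest terms to be at least $k(i)+k(i+1)$, whereas it divides $s\le k(i+1)<k(i)+k(i+1)$. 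Hence $\phi$ is surjective and $S_i:=\ker\phi$ is a torsion-free sheaf (thus a vector bundle on the smooth curve) of rank $r(i)$ and degree $d(i)$.

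To show $S_i$ is stable, I would first observe
\[
n(i+1)r(i)-k(i+1)d(i)=n(i+1)k(i)-n(i)k(i+1)=1,
\]
so $\gcd(r(i),d(i))=1$ and semistability suffices. If $T\subsetneq S_i$ were a nonzero proper subsheaf with $\mu(T)\ge s(i)$, then stability of $\xi_i$ would give $\mu(T)<\mu_i$, placing $\mu(T)\in[s(i),\mu_i)$. The identity $n(i)r(i)-k(i)d(i)=1$ (the same calculation) makes $(s(i),\mu_i)$ a Farey pair, so any reduced fraction strictly between them has denominator $\ge r(i)+k(i)$, while the endpoint $s(i)=d(i)/r(i)$ in lowest terms has denominator $r(i)$. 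In either case $\rk T\ge r(i)$; combined with $T\subsetneq S_i$, equality $\rk T=r(i)$ forces the quotient $S_i/T$ to be nonzero torsion, hence $\deg T<d(i)$ and $\mu(T)<s(i)$, contradicting $\mu(T)\ge s(i)$.

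Finally, the slope chain: $\mu_0=n/k>s(0)$ follows from $\mu_0-s(0)=1/(k(0)r(0))>0$, yet another instance of \eqref{ni-ki-det1-eq}. Monotonicity $s(i-1)\ge s(i)$ reduces to the identity
\[
d(i-1)r(i)-d(i)r(i-1)=n_j-2,
\]
for the relevant continued-fraction coefficient $n_j$, obtained by substituting the three-term recurrences $n(i-1)=n_j\cdot n(i)-n(i+1)$ and $k(i-1)=n_j\cdot k(i)-k(i+1)$; since $n_j\ge 2$ this is nonnegative, with equality iff $n_j=2$. The convention $s(p)=1$ matches $s(p-1)=n_1-1\ge 1$, closing the chain. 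The main technical obstacle throughout is the stability of $S_i$: stability of $\xi_i$ alone only yields the too-weak bound $\mu(T)<\mu_i$, and it is precisely the Farey arithmetic of successive convergents---available thanks to \eqref{ni-ki-det1-eq}---that sharpens this to $\mu(T)<s(i)$.
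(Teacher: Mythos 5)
Your argument is correct in substance but takes a genuinely different route from the paper's at the two nontrivial points. The paper computes $\dim\Hom(\xi_i,\xi_{i+1})=1$ exactly as you do, but then quotes \cite[Cor.\ 14.11]{Pol-AV} for surjectivity of the nonzero map and identifies the kernel $S_i$ with the spherical twist of $\xi_{i+1}$ with respect to $\xi_i$, so that $S_i$ is simple and hence, having positive rank on an elliptic curve, stable; the determinant formulas are then read off from the Laplace expansions \eqref{Laplace}. You instead run a self-contained Farey-neighbour argument based on \eqref{ni-ki-det1-eq} and its consequence $n(i)r(i)-k(i)d(i)=1$, which replaces both the external citation and the spherical-twist input by elementary slope estimates, and you also verify the slope chain and the equality criterion explicitly, which the paper's proof does not address at all. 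Three small repairs are needed. (1) Before invoking ``stability of $\xi_i$ gives $e/s>\mu_i$'' you should note that $\phi$ cannot be injective (since $k(i)>k(i+1)$), so the image is genuinely a proper quotient of $\xi_i$. (2) You never prove the determinantal identities $r(i)=D(n_2,\ldots,n_{p-i}-1)$ and $d(i)=D(n_1,\ldots,n_{p-i}-1)$, which are part of the statement; they follow in one line from $D(n_1,\ldots,n_j)-D(n_1,\ldots,n_{j-1})=D(n_1,\ldots,n_j-1)$ (linearity of $D$ in its last entry), but this should be said. (3) Your identity $d(i-1)r(i)-d(i)r(i-1)=n_j-2$ holds precisely for $n_j=n_{p-i+1}$, the coefficient in the recurrence $n(i-1)=n_{p-i+1}\,n(i)-n(i+1)$, so what you actually prove is that $s(i-1)=s(i)$ iff $n_{p-i+1}=2$, and that $s(p-1)=s(p)$ iff $n_1=2$; with the conventions $n(i)=D(n_1,\ldots,n_{p-i})$ this is the correct criterion (test $n/k=5/3=2-1/3$, where $s(1)=s(2)=1$ while $n_2=3$), so make the index explicit rather than writing ``the relevant coefficient'' --- note that it is the reverse of the index appearing in the lemma as literally stated.
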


\begin{proof}
The relation \eqref{ni-ki-det1-eq} implies that $\chi(\xi_i,\xi_{i+1})=1$. Furthermore, since $\mu(\xi_i)<\mu(\xi_{i+1})$, we have $\Ext^1(\xi_i,\xi_{i+1})=0$, so
$\dim\Hom(\xi_i,\xi_{i+1})=1$.
By \cite[Cor.\ 14.11]{Pol-AV}, a nonzero map $\xi_i\to \xi_{i+1}$ is subjective. Furthermore, its kernel $S_i$ can be identified the spherical twist of $\xi_{i+1}$
with respect to $\xi_i$, hence, $S_i$ is simple. As $\rk(S_i)=k_i-k_{i+1}>0$, we deduce that $S_i$ is a stable bundle.

The determinantal formulas for $r(i)$ and $d(i)$ follow from the Laplace expansions 
\begin{align}\label{Laplace}
n(i)=n_{p-i}\cdot n(i+1)-n(i+2), \hspace{1cm} k(i)=n_{p-i}\cdot k(i+1)-k(i+2), 
\end{align}
\end{proof}

Fix a stable sheaf $\xi$ of rank $k$ and degree $n$. 
A \emph{stable descending chain for $\xi$} is a chain of nonzero morphisms 
\[
\xi=\xi_0\to \xi^{s}_1\to\ldots\to \xi^{s}_{r}\to \cO[1]
\] where $\xi_i^s$ are stable sheaves for $i=1,\ldots,r$ such that 
\[
\rk(\xi)>\rk(\xi^s_1)>\ldots>\rk(\xi^s_r).
\]
The rightmost map is required to be a nonzero morphism in derived category.  To keep the notation consistent, we set $\xi^s_{r+1}:=\OO[1]$.

We call a stable descending chain for $\xi$ 
\emph{complete} if  $r=p$ and
$\deg(\xi^{s}_i)=n(i)$ and $\rk(\xi^{s}_i)=k(i)$. 
In this case we have a unique (up to rescaling) morphism $\xi^{s}_i\to \xi^{s}_{i+1}$ for $i=0,\ldots,p$ (see Lemma \ref{Si-stable-lem}).
 
 A \emph{semi-stable descending chain for $\xi$} is a chain of nonzero maps between semi-stable sheaves 
\[
\xi=\xi_0\to \xi^{ss}_1\to\ldots\to \xi^{ss}_r\to \cO[1]
\] such that 
\[
\rk(\xi)>\frac{\rk(\xi^{ss}_1)}{m_1}>\ldots>\frac{\rk(\xi^{ss}_r)}{m_r}.
\] 
where $m_i:=\gcd(\rk(\xi^{ss}_i),\deg(\xi^{ss}_i))$. We call $\am=(m_1,\ldots,m_r)$ the \emph{multiplicity}  of the chain. 
A semi-stable descending chain is stable if and only if it has multiplicity $(1,\ldots,1)$. 

A semi-stable descending chain for $\xi$ is called \emph{complete}  if $r=p$, 
$\deg(\xi^{ss}_i)=m_i\cdot n(i)$ and $\rk(\xi^{ss}_i)=m_i\cdot k(i)$ for positive integers $m_1,\ldots, m_p$. 

Recall that the slope $\mu(F)$ of a semistable sheaf $F$ is defined by $\mu(F):=\frac{\deg(F)}{\rk(F)}$ (which is equal to $\infty$ when $F$ is a torsion sheaf).
For a descending chain as above we have the slope sequence $\amu:=(\mu(\xi^{ss}_1),\ldots,\mu(\xi^{ss}_r))$.

\subsection{Construction of bosonization}
We denote by $\sB(\xi,\amu, \am)$ the moduli stack of semi-stable descending chains of slope $\amu$ and multiplicity $\am$ for $\xi$. It is a substack of $\R\uchain(X)$. We set 
\[
\amu_{\max}=(n(1)/k(1),\ldots,n(p)/k(p)), \ \ \sB(\xi,\am):=\sB(\xi,\amu_{\max},\am),
\] 
so $\sB(\xi,\am)$ parameterizes {\it complete} semi-stable descending chains of multiplicity $\am$.

\begin{prop}
Fix a complex elliptic curve $X$. 
Given coprime integers $0<k<n$, let $\xi$ be a stable bundle of rank $k$ and degree $n$ on $X$. Then $\sB(\xi, \amu,\am)$ is a Poisson substack of $\R\uchain^{[0,r+1]}(X)$.
\end{prop}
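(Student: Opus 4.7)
The plan is to realize $\sB(\xi,\amu,\am)$ as an open substack of a $0$-shifted Poisson stack $\sZ$ obtained from $\R\uchain^{[0,r+1]}(X)$ by a Lagrangian intersection that fixes the two endpoints. The key ingredients are (i) the Lagrangian morphism on $\R\uchain^{[0,r+1]}(X)$ underlying Theorem \ref{thm:Poissonchain}, (ii) the fact \cite[Theorem 3.18]{HP1} that residue gerbes of simple perfect complexes on $X$ are Lagrangian in $\R\uperf(X)$, and (iii) the elementary observation that open substacks inherit $0$-shifted Poisson structures.

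More precisely, the Lagrangian morphism produced in the proof of Theorem \ref{thm:Poissonchain} has target $\sM \cong \R\uperf(X)\times \R\uperf(X)\times \sM'$, a product of $1$-shifted symplectic stacks in which the first two factors parameterize the endpoint objects $V_0$ and $V_{r+1}$. Since both $\xi$ and $\cO_X$ are simple, the product $[\xi]\times [\cO_X[1]]\hookrightarrow \R\uperf(X)\times \R\uperf(X)$ is Lagrangian; composing this with the correspondence above via \cite[Theorem 4.4]{Ca14} produces a Lagrangian morphism
\[
\sZ\;:=\;\R\uchain^{[0,r+1]}(X)\times_{\R\uperf(X)\times \R\uperf(X)}\bigl([\xi]\times [\cO_X[1]]\bigr)\;\longrightarrow\;\sM'.
\]
By Theorem \ref{thm:MS}, $\sZ$ carries an induced $0$-shifted Poisson structure, and by construction it parameterizes chains $\xi\to V_1\to\cdots\to V_r\to \cO_X[1]$ of perfect complexes on $X$.

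Finally, I would exhibit $\sB(\xi,\amu,\am)$ as an open substack of $\sZ$: fixing the numerical invariants $(\rk V_i,\deg V_i)=(m_i k(i),m_i n(i))$ for $i=1,\ldots,r$ selects a union of connected components of $\sZ$, and within it the semi-stability of each $V_i$ and the non-vanishing of every consecutive morphism $V_{i-1}\to V_i$ are both open conditions (the former by openness of semi-stability with prescribed rank and degree, the latter as the complement of a closed stratum). Hence $\sB(\xi,\amu,\am)$ inherits a $0$-shifted Poisson structure from $\sZ$, proving the proposition. The main technical point to verify is the partial Lagrangian intersection in the first step: we fiber over only two of the factors of $\sM$, leaving the others untouched, and this is precisely what the composition of Lagrangian correspondences from \cite[Theorem 4.4]{Ca14} allows, with $\sM'$ playing the role of the ambient shifted symplectic target throughout.
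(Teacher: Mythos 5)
Your proposal is correct and takes essentially the same route as the paper: the paper also produces $\sB(\xi,\amu,\am)$ by intersecting the Lagrangian morphism from $\R\uchain^{[0,r+1]}(X)$ (there taken to the product recording the endpoints and the successive quotients) with the residue gerbes $[\xi]\times[\cO[1]]$ sitting Lagrangianly in the two endpoint factors, and by restricting to the open locus of prescribed rank/degree, semistability and nonvanishing arrows. The only cosmetic differences are the order of the two steps and that the paper invokes Theorem \ref{thm:basechange}(b) instead of appealing directly to \cite[Theorem 4.4]{Ca14} together with Theorem \ref{thm:MS}; that citation is also what supplies the remaining point that the projection $\sB(\xi,\amu,\am)\to\R\uchain^{[0,r+1]}(X)$ is itself Poisson, which the word ``substack'' in the statement requires.
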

\begin{proof}
Recall that $\R\uchain^{[0,r+1]}(X)$ is defined to be the homotopy fiber product 
\[
\R\eps\uperf^{[0,1]}(X)\times_{\R\uperf(X)}\ldots \times_{\R\uperf(X)}\R\eps\uperf^{[r,r+1]}(X)
\] which admits a Lagrangian morphism to $(r+1)$-fold product of $\R\uperf(X)$ by assigning a chain
\[
V_0\to \ldots \to V_r
\] the tuple 
\[
\Big( V_0, V_1/V_0,\ldots, V_r/V_{r-1}, V_r\Big).
\] Here by an abuse of notion we write $V_i/V_{i-1}$ for the underlying perfect complex of $V_{i-1}\to V_{i}$.
We obtain a Poisson open substack such that 
$V_i$ are (shifts of) sheaves with the prescribed rank and degree and the maps being nonzero. Finally to obtain $\sB(\xi, \amu,\am)$ we take the fiber product with respect to the above Lagrangian morphism and the canonical map
\[
([\xi], [\cO[1]])\to \R\uperf(X)\times \R\uperf(X)
\] into the first and last components of the $(r+1)$-fold product associate to the residue gerbe. By  part (b) of Theorem \ref{thm:basechange}, the base change
\[
\sB(\xi, \amu,\am)\to \R\uchain^{[0,r+1]}(X)
\] is Poisson.
\end{proof}
\begin{definition}
Let $X,n,k, \xi$ be defined as above. Denote by $\sN(\xi)$ the substack of $\R\eps\uperf(X)$ consisting of $\cO\to V$ such that $V/\cO\cong \xi$ and the extension morphism $\xi\to \cO[1]$ is nonzero. We will denote by $N(\xi)$ its coarse moduli space. Together with its Poisson structure, $N(\xi)$ is called the \emph{Feigin-Odesskii Poisson variety associated to the stable bundle $\xi$}.
\end{definition}

By Lemma \ref{lem:exacttriangle}, $\sN(\xi)$ is Poisson isomorphic to the stack of nonzero morphisms $\xi\to \cO[1]$.
From now on, we will not distinguish these two stacks.

Given 
\[
\amu=(\mu_1,\ldots,\mu_r),\hspace{2cm} \amu^\prime=(\mu^\prime_1,\ldots,\mu^\prime_s)
\]  and
\[
\am=(m_1,\ldots,m_r),\hspace{2cm} \am^\prime=(m^\prime_1,\ldots,m^\prime_s).
\]
we write $(\amu,\am)\succeq(\amu^\prime,\am^\prime)$ if 
there exists an order preserving injection $\iota: \{1,\ldots,s\}\to \{1,\ldots,r\}$ such that $\mu^\prime_i=\mu_{\iota(i)}$ and $m^\prime_i=m_{\iota(i)}$ for all $i=1,\ldots,s$. In particular this implies that $s\leq r$.

\begin{prop}\label{betapoisson}
Let $X,n,k,\xi,\amu,\am$ be defined as above. Suppose that $(\amu,\am)\succeq (\amu^\prime,\am^\prime)$. The iterated composition defines a rational morphism 
\[\xymatrix{
\beta(\xi, \amu,\amu^\prime, \am, \am^\prime): \sB(\xi, \amu,\am)\ar@{-->}[r]  &  \sB(\xi, \amu^\prime,\am^\prime),}
\]
regular on the Zariski open substack where the compositions are nonzero.
On the defining domain, $\beta(\xi, \amu,\amu^\prime, \am, \am^\prime)$ is a Poisson morphism.
\end{prop}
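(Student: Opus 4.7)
The plan is to factor $\beta=\beta(\xi,\amu,\amu',\am,\am')$ through the ambient chain stacks and reduce to the elementary composition case of Theorem~\ref{thm:Poissonchain}. Set $\iota(0)=0$ and $\iota(s+1)=r+1$. First, $\beta$ is the restriction of a rational morphism
\[
\wt\beta: \R\uchain^{[0,r+1]}(X)\dashrightarrow \R\uchain^{[0,s+1]}(X),
\]
sending $V_0\to\cdots\to V_{r+1}$ to $V_{\iota(0)}\to V_{\iota(1)}\to\cdots\to V_{\iota(s+1)}$ with the composed arrows, defined on the open substack where each block composition is nonzero. Since $\sB(\xi,\amu,\am)$ (respectively $\sB(\xi,\amu',\am')$) is obtained from its ambient chain stack by a base change along the Lagrangian morphism $([\xi],[\OO[1]])\to\R\uperf(X)\times\R\uperf(X)$ applied to the endpoint components, together with open conditions (slopes, multiplicities, nonzeroness), Theorem~\ref{thm:basechange}(b) shows that these inclusions are Poisson. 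Hence it suffices to prove that $\wt\beta$ is semi-classically Poisson on its defining domain.

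Next I would argue by induction on $r-s$, reducing to the case when $\iota$ misses a single index $j\in\{1,\ldots,r\}$, so that one has to prove the \emph{local one-step composition}
\[
\gamma_j: \R\uchain^{[0,r+1]}(X)\to \R\uchain^{[0,r]}(X),
\]
which composes the $j$-th and $(j{+}1)$-st arrows and forgets $V_j$, is semi-classically Poisson on the locus where this composition is nonzero. Using the fiber-product description recalled at the beginning of Section~\ref{sec:chains},
\[
\R\uchain^{[0,r+1]}(X)\simeq \R\uchain^{[0,j-1]}(X)\times^h_{\R\uperf(X)}\R\uchain^{[j-1,j+1]}(X)\times^h_{\R\uperf(X)}\R\uchain^{[j+1,r+1]}(X),
\]
and the analogous decomposition of $\R\uchain^{[0,r]}(X)$ with middle factor $\R\eps\uperf^{[j-1,j]}(X)$, the morphism $\gamma_j$ is the identity on the outer factors and equals the composition map $\beta:\R\uchain^{[j-1,j+1]}(X)\to \R\eps\uperf^{[j-1,j]}(X)$ of Theorem~\ref{thm:Poissonchain} on the middle factor, which is semi-classically Poisson. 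The endpoint projections $\R\uchain^{[a,b]}(X)\to\R\uperf(X)\times\R\uperf(X)$ are Lagrangian correspondences and are intertwined by $\beta$, so iterated application of Proposition~\ref{prop:composition of Lag} (combined with Proposition~\ref{poisson+coisotropic}) propagates the semi-classical Poisson property of $\beta$ on the middle factor to the whole of $\gamma_j$.

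Finally, on $\sB(\xi,\amu,\am)$ the endpoints $\xi$ and $\OO[1]$ are preserved, the intermediate objects $V_{\iota(i)}$ are semi-stable of the prescribed slope $\mu'_i$ and multiplicity $m'_i$ by hypothesis, and the composed arrows are nonzero on the defining domain, so $\wt\beta$ maps $\sB(\xi,\amu,\am)$ into $\sB(\xi,\amu',\am')$. The main obstacle is the propagation step: showing that a semi-classical Poisson map between the middle factors of a fiber product over $\R\uperf(X)$ yields a semi-classical Poisson map between the full fiber products. This requires a careful unpacking of the Lagrangian correspondences at the shared $\R\uperf(X)$ components and an iterated use of Proposition~\ref{prop:composition of Lag}; once this is in place, the remainder is a formal consequence of Theorem~\ref{thm:basechange}.
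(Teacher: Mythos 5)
Your overall skeleton --- pass from $\sB(\xi,\amu,\am)$ to the ambient chain stacks via the base-change formalism of Theorem \ref{thm:basechange}, reduce the iterated composition to one-step compositions $\gamma_j$, and feed in the composition map of Theorem \ref{thm:Poissonchain} --- is the same route the paper takes (its proof is exactly: the defining domain is open because vanishing of a composition is closed, and the Poisson claim follows from Theorem \ref{thm:Poissonchain} plus closure of semi-classically Poisson maps under composition). The gap is the step you yourself flag as the main obstacle. You claim that semi-classical Poissonness of the middle-factor map $\beta:\R\uchain^{[j-1,j+1]}(X)\to \R\eps\uperf^{[j-1,j]}(X)$ propagates to $\gamma_j$ on the full fiber product by ``iterated application of Proposition \ref{prop:composition of Lag} combined with Proposition \ref{poisson+coisotropic}''. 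Neither proposition gives this: Proposition \ref{prop:composition of Lag} only asserts that the projections \emph{out of} a composed Lagrangian correspondence are semi-classically Poisson, and Proposition \ref{poisson+coisotropic} only composes a coisotropic morphism with a Poisson one; neither produces a map \emph{between two different fiber products} induced by a map of one factor. Worse, the general principle you would need --- that a semi-classically Poisson map of middle factors over the shared $\R\uperf(X)$ induces a semi-classically Poisson map of the fiber products --- is not true as stated: the $0$-shifted Poisson structure on the fiber product is manufactured from the Lagrangian correspondence data (the evaluation maps to $\R\uperf(X)$ together with the isotropy homotopies), and Poissonness of the middle map for the induced bivectors does not record any compatibility with that data.

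What actually closes the gap is the explicit chain-level description of the bivector: Lemma \ref{biv-Ch} and its general-$n$ analogue at the end of the proof of Theorem \ref{thm:Poissonchain} exhibit the bivector of $\R\uchain^{[0,n]}(X)$ as a concrete chain map, and the commutativity of the analogue of diagram \eqref{diag:1} for the block composition (compose the $j$-th and $(j{+}1)$-st arrows, keep the others) is the same direct computation as for the full composition $\mu$. So either verify that diagram for $\gamma_j$ directly, or show that $\beta$ on the middle factor is a morphism of the Lagrangian correspondences used to build the fiber-product Poisson structure (i.e.\ it intertwines the isotropy homotopies over the endpoint evaluations, which it does preserve at the level of stacks) and then rerun the argument of Proposition \ref{prop:composition of Lag} in that relative setting. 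Without one of these, the proposal does not yet establish the Poisson claim; the remaining ingredients (openness of the defining domain, the factorization through $\sB(\xi,\amu^\prime,\am^\prime)$ via its base-change Poisson substack structure) are fine and agree with the paper.
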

\begin{proof}
The vanishing condition of composition of morphisms is Zariski closed. Therefore the defining domain of  $\beta(\xi, \amu,\amu^\prime, \am, \am^\prime)$ is Zariski open in $ \sB(\xi, \amu,\am)$.
Since the composition of Poisson maps is a Poisson map, the last claim follows from Theorem \ref{thm:Poissonchain}.
\end{proof}

Thus, we get a hierarchy of Poisson moduli spaces with rational Poisson morphisms 
\[\xymatrix{
\ldots\ar@{-->}[r] & \sB(\xi, \amu,\am) \ar@{-->}[r] &  \sB(\xi, \amu^\prime,\am^\prime)\ar@{-->}[r] &\ldots.}
\]
When the data $\xi, \amu,\amu^\prime, \am, \am^\prime$ are fixed, we will write $\beta$ for $\beta(\xi, \amu,\amu^\prime, \am, \am^\prime)$. 

Applying Proposition \ref{betapoisson} to $\amu=\amu_{\max}$ and $\amu^\prime=\emptyset$ (and $\am^\prime=\emptyset$), so that
$\sB(\xi, \amu^\prime,\am^\prime)=\sN(\xi)$,
we get a morphism (after restricting to a Zariski open substack)
\[\xymatrix{
\beta:\sB(\xi,\am)\ar@{-->}[r] & \sN(\xi)}
\]
which we call the \emph{bosonization morphism} of $\sN(\xi)$ of multiplicity $\am$.

Let us denote by $\sM^{ss}(\amu,\am)$ the product over $i=1,\ldots,r$ of the moduli stacks of 
of semi-stable sheaves of rank $r_i$ and degree $d_i$ with $\mu_i=d_i/r_i$ and $\gcd(d_i,r_i)=m_i$. We have a forgetful map
\[
\pi(\xi, \amu,\am):  \sB(\xi, \amu,\am)\to\sM^{ss}(\amu,\am)
\] sending
$\xi\to \xi^{ss}_1\to \ldots \to \xi^{ss}_r\to\cO[1]$
to the tuple 
$(\xi^{ss}_1,\ldots, \xi^{ss}_{r})$.  

\begin{example}
When $k=1$,  $\xi$ is a line bundle of degree $n$, and $\am=m$. The moduli stack $\sB(\xi,m)$ parameterizes chains
\[
\xi\to \cO_D(D)\to \cO[1]
\] where $D$ is an effective divisor of degree $n$. It contains an open substack where $\cO_D(D)\to \cO[1]$ is the canonical map that corresponds to the extension $\cO(D)$.  We will compute the Poisson bracket on this substack explicitly in Section \ref{sec:p=1}.
\end{example}
 

\section{Geometry of $\sB(\xi)$}
\label{sec:m=1}

In this section we specialize to $\amu=\amu_{\max}$ and $\am=(1,\ldots,1)$, and we set
$$\sB(\xi):=\sB(\xi,(1,\ldots,1))=\sB(\xi,\amu_{\max},(1,\ldots,1)).$$

\subsection{Poisson structure}
\begin{lemma}\label{lem:Gm}
The stack
$\sB(\xi)$
is a $\G_m$-gerbe over $X^p$.
\end{lemma}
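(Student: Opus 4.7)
The plan is to exhibit an explicit forgetful morphism $\sB(\xi)\to X^p$ and to compute its fiber over any point as $B\G_m$, then to establish local triviality.

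First, I would define the morphism by sending a chain $\xi=\xi_0\to \xi_1^s\to\cdots\to \xi_p^s\to\OO[1]$ to the tuple $([\xi_1^s],\ldots,[\xi_{p-1}^s], x_p)$, where $\xi_p^s=\OO_{x_p}$. The relation $n(i+1)k(i)-n(i)k(i+1)=1$ from Section \ref{sec:contfrac} implies $\gcd(k(i),n(i))=1$, so by Atiyah's classification of vector bundles on elliptic curves the coarse moduli space of stable bundles on $X$ of rank $k(i)$ and degree $n(i)$ is identified with $X$ via the generalized determinant; for $i=p$, the torsion sheaf $\OO_{x_p}$ is parameterized directly by $x_p\in X$. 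Composing these identifications yields a morphism to $X^p$.

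Next, to analyze the fiber over a point $(x_1,\ldots,x_p)\in X^p$, I fix representatives $\xi_i^s$ of the prescribed isomorphism classes and set $\xi_0^s:=\xi$, $\xi_{p+1}^s:=\OO[1]$. By Lemma \ref{Si-stable-lem}, $\dim\Hom(\xi_i^s,\xi_{i+1}^s)=1$ for $0\leq i\leq p-1$; the boundary case $\dim\Hom(\OO_{x_p},\OO[1])=\dim\Ext^1(\OO_{x_p},\OO)=1$ follows from Serre duality using $\omega_X\cong \OO_X$. Hence the scheme of nonzero chains with fixed underlying objects is a torus $(\G_m)^{p+1}$. The automorphism group of the $(p+2)$-tuple of objects in the chain, including the fixed endpoints $\xi$ and $\OO[1]$, is $(\G_m)^{p+2}$, and it acts on $(\G_m)^{p+1}$ via the coboundary map $(\mu_0,\ldots,\mu_{p+1})\mapsto(\mu_{i+1}\mu_i^{-1})_{i=0}^p$. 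This map is surjective with kernel the diagonal $\G_m$; in particular the action is transitive with stabilizer exactly the diagonal $\G_m$, so the fiber stack is equivalent to $B\G_m$.

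Finally, local triviality can be verified by choosing étale-local universal families over each stack of stable sheaves (which are themselves $\G_m$-gerbes over $X$), allowing an étale-local construction of the chain and hence a local trivialization of $\sB(\xi)\to X^p$; the stack dimension $p-1=\dim X^p-1$ is consistent with the $\G_m$-gerbe structure. The main technical point lies in the automorphism bookkeeping in the middle step: one has to include the rescaling automorphisms of the fixed endpoints $\xi$ and $\OO[1]$ along with those of the intermediate objects, and verify through the exactness of the coboundary sequence that precisely one $\G_m$ survives to band the gerbe.
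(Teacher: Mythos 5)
Your proof is correct and follows essentially the same route as the paper: the forgetful map to the product of coarse moduli spaces $X^p$, the one-dimensionality of each $\Hom(\xi_i^s,\xi_{i+1}^s)$ from Lemma \ref{Si-stable-lem} (plus the boundary case $\Ext^1(\OO_{x_p},\OO)$), and the identification of the fiber as the quotient $\bigl[\G_m^{p+1}/\G_m^{p+2}\bigr]\simeq B\G_m$ with stabilizer the diagonal. The paper phrases the fiber as a stack of $A_{p+2}$-quiver representations with dimension vector $(1,\ldots,1)$, but the torus-action bookkeeping is identical to yours.
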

\begin{proof}
Recall that an object in $\sB(\xi)$ is a complete stable descending chain of nonzero morphisms
\begin{align}\label{a chain}
\xi=\xi_0\to \xi_1\to\ldots\to \xi_{p}\to \xi_{p+1}=\cO[1]
\end{align}  
where  $\dim\Hom(\xi_i,\xi_{i+1})=1$ for $i=0, \ldots, p$ (see Lemma \ref{Si-stable-lem}).
The coarse moduli space of $\xi_i$ is isomorphic to $X$ for $i=1,\ldots, p$. The forgetful map $\pi(\xi,\amu_{\max},\aone)$ composed with the coarse moduli functor is a map from 
$\sB(\xi)$ to $X^p$. Its fiber over
a point $([\xi_1],\ldots,[\xi_p])$ is equivalent to the substack of  the moduli stack of representations of $A_{p+2}$-quiver with the dimension vector $(1,\ldots,1)$ given by the condition that all morphisms are nonzero.
It is isomorphic to the global quotient stack $\Big[\G_m^{p+1}/\G_m^{p+2}\Big]$ with the action
\[
(t_0,\ldots,t_{p+1})\cdot (a_{01},\ldots,a_{p,p+1})=(t_0t_1^{-1}a_{01},\ldots,t_pt_{p+1}^{-1}a_{p,p+1}),
\]
which is equivalent to $B\G_m$.
\end{proof}

We say that a $d$-shifted Poisson structure on a stack $\sX$ \emph{vanishes semi-classically} if the Poisson bivector $L_\sX\to T_\sX[-d]$ induces zero maps on cohomology groups.  

\begin{theorem}\label{thm:zero}
The Poisson structure on $\sB(\xi)$ vanishes semi-classically.
\end{theorem}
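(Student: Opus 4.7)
The plan is to reduce the statement to a calculation on the coarse moduli and then extract the vanishing from the chain-level formula for the bivector together with stability vanishings and Serre duality.

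First, by Lemma \ref{lem:Gm}, $\sB(\xi)$ is a $\G_m$-gerbe over $X^p$, so the weight~$2$ part of the Poisson structure descends to an ordinary bivector $\pi$ on $X^p$. Since $T_X\cong\OO_X$ on an elliptic curve, $\pi$ is a global section of $\wedge^2 T_{X^p}\cong \OO_{X^p}^{\binom{p}{2}}$, i.e.\ a constant antisymmetric matrix $(\pi_{ij})$. To prove the theorem it suffices to show $\pi_{ij}=0$ for all $1\le i<j\le p$. At a chain $v=(\xi=\xi_0\to\xi_1\to\cdots\to\xi_p\to\xi_{p+1}=\OO[1])$, the usual Kodaira--Spencer identification (combined with the tangent complex formula of Theorem \ref{thm:Poissonchain}) yields
\[
\mathbb{H}^0(T_v\sB(\xi))\ \cong\ \bigoplus_{i=1}^{p}\Ext^1(\xi_i,\xi_i),
\]
each summand corresponding to the direction $T_{x_i}X$ of the moduli of the stable sheaf $\xi_i$.

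Next, the bivector $\pi_v$ is represented by the explicit chain map of Lemma \ref{biv-Ch} and its generalization to arbitrary $p$ in the proof of Theorem \ref{thm:Poissonchain}. All of its components are built by pre- and post-composition with the distinguished morphisms $d_\ell\colon \xi_\ell\to\xi_{\ell+1}$, which are unique up to scalar by Lemma \ref{Si-stable-lem}. Passing to hypercohomology and unwinding, the induced pairing of $u_i\in\Ext^1(\xi_i,\xi_i)$ with $u_j\in\Ext^1(\xi_j,\xi_j)$ is a sum of Yoneda contributions, each of which factors either through the map
\[
\Ext^1(\xi_\ell,\xi_\ell)\xrightarrow{\,d_\ell\cdot\,}\Ext^1(\xi_\ell,\xi_{\ell+1})
\qquad\text{or}\qquad
\Ext^1(\xi_\ell,\xi_\ell)\xrightarrow{\,\cdot d_{\ell-1}\,}\Ext^1(\xi_{\ell-1},\xi_\ell).
\]
But by Lemma \ref{Si-stable-lem} the slopes satisfy $\mu(\xi_0)<\mu(\xi_1)<\cdots<\mu(\xi_{p+1})$ and each $\xi_\ell$ is stable, so $\Hom(\xi_{\ell+1},\xi_\ell)=0$, and by Serre duality on the elliptic curve $X$ (using $\omega_X\cong\OO_X$),
\[
\Ext^1(\xi_\ell,\xi_{\ell+1})\ \cong\ \Hom(\xi_{\ell+1},\xi_\ell)^{*}\ =\ 0.
\]
Hence every Yoneda product contributing to $\pi_{ij}$ vanishes; thus $(\pi_{ij})=0$ and $\pi$ is zero on $X^p$.

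The main step requiring care is the bookkeeping in the second paragraph: every component of the chain-level formula for $\pi$ has to be tracked, translated into an honest Yoneda product on global Ext groups via Cech hypercohomology and Serre duality, and then shown to factor through one of the two vanishing $\Ext^1$'s above. This is mechanical but indispensable, and is where any possible subtlety in the argument is concentrated. Once this identification is in place, the vanishing of the relevant $\Ext^1$ groups forced by stability plus slope strict monotonicity gives the desired conclusion without any further geometric input.
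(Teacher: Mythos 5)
Your reduction to the coarse moduli space $X^p$ (and the observation that the bivector is then a constant section of $\wedge^2 T_{X^p}\cong \OO_{X^p}^{\oplus p(p-1)/2}$), as well as the identification $H^0(T_v\sB(\xi))\cong\bigoplus_{i=1}^p\Ext^1(\xi_i,\xi_i)$, are fine. The gap is exactly the step you defer as ``mechanical but indispensable'': the pairing induced by the bivector does \emph{not} factor through the vanishing groups $\Ext^1(\xi_\ell,\xi_{\ell+1})$ and $\Hom(\xi_{\ell+1},\xi_\ell)$. In the chain representative of Lemma \ref{biv-Ch} (and its generalization in the proof of Theorem \ref{thm:Poissonchain}), the cotangent side is built from $\ul\Hom(\xi_{\ell+1},\xi_\ell)$ and $\ul\End(\xi_i)$, and the tangent side from $\ul\End(\xi_i)$ and $\ul\Hom(\xi_\ell,\xi_{\ell+1})$; after Cech unwinding, the individual contributions to the pairing are Serre-duality pairings of $\Hom(\xi_\ell,\xi_{\ell+1})$ against $\Ext^1(\xi_{\ell+1},\xi_\ell)$, and both of these spaces are one-dimensional, not zero. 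The maps you invoke, $u\mapsto d_\ell\cdot u\in\Ext^1(\xi_\ell,\xi_{\ell+1})$ and $u\mapsto u\cdot d_{\ell-1}\in\Ext^1(\xi_{\ell-1},\xi_\ell)$, are the obstructions to deforming the arrows of the chain along a deformation $u$ of $\xi_\ell$; their vanishing is what justifies your description of $H^0(T)$, but it says nothing about the bivector. The vanishings $\Hom(\xi_{\ell+1},\xi_\ell)=0$ and $\Ext^1(\xi_\ell,\xi_{\ell+1})=0$ only kill the $H^{-1}$- and $H^{1}$-level components of $L\to T$.

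The mechanism that actually makes the $H^0$-level map vanish (and the paper's proof) is different: since each $\Hom(\xi_\ell,\xi_{\ell+1})$ is one-dimensional and $(b,b')\mapsto db-b'd$ maps $\End(\xi_\ell)\oplus\End(\xi_{\ell+1})$ onto it, the dual map $\Ext^1(\xi_{\ell+1},\xi_\ell)\to\Ext^1(\xi_\ell,\xi_\ell)\oplus\Ext^1(\xi_{\ell+1},\xi_{\ell+1})$ is injective, so every degree-zero cotangent class admits a Cech representative supported in the $\bigoplus_i\ul\End(\xi_i)$ summands; the chain representative of the bivector sends such a representative to zero for each two-term piece (its second component is $0$ there), resp.\ to a global section of $\bigoplus_\ell\ul\Hom(\xi_\ell,\xi_{\ell+1})$ which is a coboundary by the same surjectivity. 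The paper organizes this by proving the semi-classical vanishing on each $\R\uchain^{[\ell,\ell+1]}(X)$ and transporting it to $\sB(\xi)$ through the semi-classically Poisson morphism $\sB(\xi)\to\prod_\ell\R\uchain^{[\ell,\ell+1]}(X)$ of Corollary \ref{cor:fiberprod}, using that the tangent map is injective and the cotangent map surjective on cohomology; note that some such functoriality is also needed in your argument simply to compute the bivector of $\sB(\xi)$ (fixed ends, stability constraints) from the chain formula for $\R\uchain^{[0,p+1]}(X)$. So you must either carry out this representative/filtration argument or find a genuinely different reason for the vanishing; the Ext-vanishing you cite does not suffice.
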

\begin{proof}
First we claim that for $i=1,\ldots, p$ the restriction of the 0-shifted Poisson structure on the component of $\R\uchain^{[i,i+1]}$ consisting of the surjection $\phi_i:\xi_i^s\to \xi_{i+1}^s$ vanishes semi-classically. For simplicity we denote this complex by $\phi_i$. Recall that the tangent complex at $\phi_i$ is the Cech cochain of the complex 
\[
\partial_{\phi_i}: \ul\End(\xi_i^s)\oplus \ul\End(\xi_{i+1}^s)\to \ul\Hom(\xi^s_i,\xi^s_{i+1}).
\]
The Poisson bivector is induced by the chain map
\[\xymatrix{
 \ul\Hom(\xi^s_{i+1},\xi^s_{i})\ar[r]^{\partial^\vee_{\phi_i}}\ar[d]_{\partial^\vee_{\phi_i}} & \ul\End(\xi_i^s)\oplus \ul\End(\xi_{i+1}^s)\ar[d]^0\\
\ul\End(\xi_i^s)\oplus \ul\End(\xi_{i+1}^s)\ar[r]^{\partial_{\phi_i}} & \ul\Hom(\xi^s_i,\xi^s_{i+1}).
}
\] For simplicity
 we denote 
\[
E_{i+1,i}:=\ul\Hom(\xi^s_{i+1},\xi^s_{i}), ~~E_{i,i}:=\ul\End(\xi_{i+1}^s),~~E_{i,i+1}:=\ul\Hom(\xi^s_{i},\xi^s_{i+1})
\] and set
\[
Z_i=\{\partial^\vee:=\partial^\vee_{\phi_i}: E_{i+1,i}\to E_{ii}\oplus E_{i+1,i+1}\},~~~Z^\vee_i=\{\partial:=\partial_{\phi_i}: E_{ii}\oplus E_{i+1,i+1}\to E_{i,i+1}\}.
\]
Fix an open covering $U_+,U_-$ of $X$, the (Cech) cohomology of $Z_i$ and $Z_i^\vee$ can be computed by cohomology of the following complexes.
\[
\xymatrix{
\Gamma(U_+\sqcup U_- ,E_{i+1,i})\ar[rr]^{(d,\partial^\vee)}  &&\Gamma(U_+\cap U_- ,E_{i+1,i})\oplus \Gamma(U_+\sqcup U_-,E_{ii}\oplus E_{i+1,i+1})\\
&\ar[r]^{(\partial^\vee,d)} & \Gamma(U_+\cap U_-,E_{ii}\oplus E_{i+1,i+1})
}
\] and
\[
\xymatrix{
\Gamma(U_+\sqcup U_- ,E_{ii}\oplus E_{i+1,i+1})\ar[rr]^{(d,\partial)}  && \Gamma(U_+\cap U_-,E_{ii}\oplus E_{i+1,i+1})\oplus\Gamma(U_+\sqcup U_- ,E_{i,i+1}) \\
&\ar[r]^{(\partial,d)} & \Gamma(U_+\cap U_-,E_{i,i+1})
}
\] where $d$ is the Cech differential.
Since $\mu(\xi^s_{i+1})>\mu(\xi^s_i)$, 
\[
\HH^{-1}(Z_i)=\HH^1(Z^\vee_i)=0.
\]
The morphism $\HH^0(Z_i)\to \HH^0(Z^\vee_i)$ is zero since it is induced by the chain map $\left[\begin{array}{cc}\partial^\vee & d\\ 0 & 0 \end{array}\right]$.

By a similar argument the stack of morphisms $\xi\to \xi^s_1$ and the 0-shifted Poisson structure on the stack of morphisms $\xi^s_p\to \cO[1]$ vanishes semi-classically. These two cases are separated since $\xi$ and $\cO[1]$ are not deformed. We consider the natural morphism from $\sB(\xi)$ to the product of stacks, $\prod_{i=1}^{p-1} \R\uchain^{[i,i+1]}(X)$, by sending the chain $\xi\to \xi^{s}_1\to\ldots\to \xi^{s}_{p}\to \cO[1]$ to the tuple
\[
\Big(\xi\to \xi^s_1,\ldots,\xi^s_{i}\to \xi^s_{i+1},\ldots,\xi^s_p\to\cO[1]\Big).
\] Its tangent map and cotangent map is described by diagram (\ref{CD:poissonch}). 
By Corollary \ref{cor:fiberprod}, this is a semi-classically Poisson morphism. The Poisson structure on the target vanishes semi-classically by the above argument. According to the commutative diagram (\ref{CD:poissonch}), the tangent map of 
\[
\sB(\xi) \to \prod_{i=1}^{p-1} \R\uchain^{[i,i+1]}(X)
\]
induces an injection on cohomology and the cotangent map induces a surjection on cohomology. Therefore, the Poisson structure on $\sB(\xi)$ vanishes semi-classically.
\end{proof}

\subsection{Image of the bosonization map}

First, we will show that $\beta_\xi$ is a regular morphism. 

\begin{lemma}
The bosonization map $\xymatrix{\beta_\xi: \sB(\xi)\ar@{-->}[r] & \sN(\xi)}$ is regular everywhere. 
\end{lemma}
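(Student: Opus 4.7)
The plan is to show that for every $k$-point of $\sB(\xi)$, represented by a chain $\xi = \xi_0 \to \xi_1^s \to \cdots \to \xi_p^s \to \cO[1]$ of nonzero morphisms, the total composition $\xi \to \cO[1]$ is nonzero in $\Ext^1(\xi, \cO)$. By Proposition \ref{betapoisson} this identifies the defining domain of $\beta_\xi$ with all of $\sB(\xi)$, and (using the interpretation of $\sN(\xi)$ as the moduli of nonzero morphisms $\xi \to \cO[1]$ furnished by Lemma \ref{lem:exacttriangle}) the bosonization map is then regular everywhere.

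First I would note, using Lemma \ref{Si-stable-lem}, that each $\phi_i : \xi_i^s \to \xi_{i+1}^s$ for $i = 0, \ldots, p-1$ is a surjective morphism of sheaves, so the partial composition
\[
\psi := \phi_{p-1} \circ \cdots \circ \phi_0 : \xi \twoheadrightarrow \xi_p^s
\]
is also surjective. Since $\xi_p^s$ has rank $k(p) = 0$ and degree $n(p) = 1$, it is a skyscraper $k_x$ at some point $x \in X$, and the nonzero morphism $\phi_p : k_x \to \cO[1]$ corresponds (up to rescaling) to the non-split extension class of $0 \to \cO \to \cO(x) \to k_x \to 0$. Applying $\Hom(\xi, -)$ to this short exact sequence yields the long exact sequence
\[
\Hom(\xi, \cO(x)) \to \Hom(\xi, k_x) \xrightarrow{\delta} \Ext^1(\xi, \cO),
\]
and by construction the total composition $\phi_p \circ \psi$ is precisely $\delta(\psi)$.

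The only remaining point, and the essential content of the proof, is the verification that $\Hom(\xi, \cO(x)) = 0$, which forces $\delta$ to be injective and hence $\delta(\psi) \neq 0$. This follows from stability of $\xi$: tensoring with $\cO(-x)$ gives $\Hom(\xi, \cO(x)) \cong \Hom(\xi \otimes \cO(-x), \cO)$, and $\xi \otimes \cO(-x)$ is still stable of slope $n/k - 1 > 0$ (since $n > k$). A nonzero morphism from a stable bundle of positive slope to $\cO$ would yield a nonzero quotient of slope at most $0$, contradicting stability. Hence $\Hom(\xi, \cO(x)) = 0$, which completes the proof.
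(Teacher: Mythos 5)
Your proof is correct, but it follows a somewhat different route from the paper's. The paper also reduces the statement to showing that the total composition $\xi\to\cO[1]$ is nonzero, but it argues step by step: using the exact sequences $0\to S_i\to\xi_i\to\xi_{i+1}\to 0$ from Lemma \ref{Si-stable-lem} and the vanishing $\Hom(S_i,\cO)=0$ (each kernel $S_i$ is stable of positive slope), it shows that every pullback map $\Ext^1(\xi_{i+1},\cO)\to\Ext^1(\xi_i,\cO)$ is injective, so the nonzero class $\xi_p\to\cO[1]$ remains nonzero after each successive composition. You instead compose the sheaf surjections first, all the way down to the skyscraper $\xi_p\simeq k_x$, and then handle the last step in one shot via the connecting homomorphism for $0\to\cO\to\cO(x)\to k_x\to 0$, using the single vanishing $\Hom(\xi,\cO(x))=0$, which comes from stability of $\xi$ together with $\mu(\xi)=n/k>1$. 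Both arguments rest on the surjectivity statement of Lemma \ref{Si-stable-lem}; yours exploits the structure of the last term (a length-one torsion sheaf) and the hypothesis $n>k$, while the paper's stepwise version only invokes positivity of the slopes $s(i)$ of the kernels and is phrased for the larger stack $\sB_{n,k}$ in which $\xi$ is allowed to vary (a form that is used later in the paper); your pointwise argument extends verbatim to that setting. One cosmetic remark: $\phi_p\circ\psi$ equals $\delta(\psi)$ only up to the nonzero scalar identifying $\phi_p$ with the extension class of $\cO(x)$, which of course does not affect the nonvanishing conclusion.
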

\begin{proof}
We consider a slightly different moduli stack $\sB_{n,k}$ whose coarse moduli space is denoted by $B_{n,k}$.  
The stack $\sB_{n,k}$ classifies collections of stable bundles $\xi=\xi_0,\ldots,\xi_p$ with the slope sequence $\mu_{\max}$, equipped with nonzero maps
\begin{align}\label{p-seq}
\xi\to \xi_1\to \ldots \to \xi_p\to \cO[1].
\end{align}
The difference between $\sB_{n,k}$ and $\sB(\xi)$ is that in $\sB_{n,k}$ we allow $\xi$ to deform. 

Let us denote by $\sN_{n,k}$ the moduli stack of nonzero extensions with $\xi$ being a stable bundle of degree $n$ and rank $k$ (but not fixed). Its coarse moduli space $N_{n,k}$ is a projective bundle over $X$. We have the rational bosonization map 
$$\xymatrix{\beta_{n,k}: \sB_{n,k}\ar@{-->}[r] & \sN_{n,k}}$$
sending a collection of maps \eqref{p-seq} to their composition.
It suffices to show $\beta_{n,k}$ is a morphism, i.e., the composition is nonzero.


By Lemma \ref{Si-stable-lem}, we have exact sequences 
\[
0\to S_i\to \xi_i\to \xi_{i+1}\to 0
\] 
for $i<p$, where $S_i$ is a stable bundle of slope $s(i)>0$.
Now the exact sequence
$$0=\Hom(S_i,\cO)\to \Ext^1(\xi_{i+1},\OO)\to \Ext^1(\xi_i,\OO)$$
shows that the map $\Ext^1(\xi_{i+1},\OO)\to \Ext^1(\xi_i,\OO)$ is injective. This implies our assertion.
\end{proof}

By Lemma \ref{lem:Gm}, the bosonization map $\beta_\xi: \sB(\xi)\to \sN(\xi)$ descends to a Poisson morphism of the coarse moduli schemes
$$\b_\xi:B(\xi)\cong X^p\to N(\xi)\cong \P\Ext^1(\xi,\cO_X).$$ 
Since the Poisson structure on the source is zero (Theorem \ref{thm:zero}) its image must lie in the vanishing locus of the Poison structure on $N(\xi)$. The rest of this section devotes to the study of the image of this morphism. 

\begin{theorem}\label{thm:ni>2}
When $n_i>2$ for all $i=1,\ldots, p$, the morphism $\beta_\xi: B(\xi)\to N(\xi)$ is a closed immersion.
\end{theorem}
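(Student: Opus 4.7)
The plan is to identify $\beta_\xi$ with a section of an algebraic retraction $r\colon U\to X^p$ defined on a locally closed subscheme $U\subseteq N(\xi)$ containing $\im(\beta_\xi)$, and then invoke the fact that every section of a separated morphism is a closed immersion. Combined with properness of $X^p$ (which forces $\beta_\xi(X^p)$ to be closed in $N(\xi)$), this realizes $\beta_\xi$ as a closed immersion.

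The retraction $r$ comes from the Harder--Narasimhan filtration. Given a chain $\xi=\xi_0\to\xi_1\to\ldots\to\xi_p\to\cO[1]$, let $V_i$ be the (derived) fiber of $\xi_i\to\cO[1]$; in particular $V:=V_0$ is the extension classified by the composition and $V_p\cong\cO(x_p)$, where $\xi_p=\cO_{x_p}$. The factorizations $\xi_i\to\xi_{i+1}\to\cO[1]$ together with the octahedral axiom yield short exact sequences $0\to S_i\to V_i\to V_{i+1}\to 0$, and iterating gives a filtration $0\subset W_1\subset\ldots\subset W_p\subset V$ with $W_{i+1}/W_i\cong S_i$ stable of slope $s(i)$ and $V/W_p\cong\cO(x_p)$ of slope $s(p)=1$. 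Under the hypothesis $n_i>2$ for every $i$, Lemma \ref{Si-stable-lem} gives strict inequalities $s(0)>s(1)>\ldots>s(p)$, so $W_\bullet$ is the HN filtration of $V$ and is thus intrinsic to $V$.

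To construct $r$, let $U\subseteq N(\xi)$ be the locally closed locus of $V$ whose HN filtration has this type with stable subquotients (locally closed by the Shatz stratification combined with openness of stability); it contains $\im(\beta_\xi)$. Over $U$ the HN filtration relativizes, producing algebraic families of subquotients $(S_0,\ldots,S_{p-1},\cO(x_p))$. Since $\gcd(r(i),d(i))=1$ by \eqref{ni-ki-det1-eq} and $k(p-1)=1$, each $S_i$ and $\cO(x_p)$ is a point of a fine moduli space $\cong X$. One then recovers $\xi_j$ inductively as the unique (up to scalar) nonsplit extension of $\xi_{j+1}$ by $S_j$; uniqueness follows from the Riemann--Roch calculation $\chi(\xi_{j+1},S_j)=k(j+1)n(j)-k(j)n(j+1)=-1$ together with $\Hom(\xi_{j+1},S_j)=0$ (as $\mu(\xi_{j+1})>s(j)$ and both are stable), so $\dim\Ext^1(\xi_{j+1},S_j)=1$ (the boundary case $j=p-1$ with $\xi_p$ torsion uses $\dim\Ext^1(\cO_{x_p},S_{p-1})=\rk(S_{p-1})=k(p-1)=1$). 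In families these extensions are classified by a line bundle on $U$, giving an algebraic $r\colon U\to X^p$ with $r\circ\beta_\xi=\id_{X^p}$ by construction. Since $X^p$ is separated, so is $r$, hence $\beta_\xi\colon X^p\to U$ is a closed immersion; properness of $X^p$ then forces $\beta_\xi(X^p)$ to be closed in $N(\xi)$, yielding the result.

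The main technical point is verifying that the HN reconstruction is algebraic in families: one needs the HN filtration to vary regularly over $U$ and the iterative extension reconstruction of $\xi_j$ from $(S_j,\xi_{j+1})$ to relativize, both following from standard properties of the Shatz stratification and the relative $\Ext^1$-sheaf being a line bundle on $U$.
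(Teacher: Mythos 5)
Your argument is correct and takes a genuinely different route from the paper. The paper uses the same key consequence of $n_i>2$ (the strict inequalities $s(0)>s(1)>\ldots>s(p)$ from Lemma \ref{Si-stable-lem}), but exploits it via $\Ext^1(S_i,S_j)=0$ for $i>j$ to split $V_0\cong\bigoplus_j S_j$; injectivity of $\b_{n,k}$ on closed points then follows because the $S_j$ are recovered as the indecomposable summands (ordered by slope), injectivity on tangent vectors is checked by redoing the splitting over $\C[\eps]/(\eps^2)$ via Nakayama, and closed immersion follows from properness of $B(\xi)\cong X^p$ plus injectivity on points and tangent spaces. You instead use the strict slope inequalities only to recognize the filtration with subquotients $S_0,\ldots,S_{p-1},V_p$ as the Harder--Narasimhan filtration of $V$, hence intrinsic and functorial, and then build a scheme-theoretic retraction $r\colon U\to X^p$ on the constant-HN-type stratum, so that $\b_\xi$ becomes a section of a separated morphism (hence a closed immersion into $U$), with properness closing the image in $N(\xi)$. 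What your approach buys: no tangent-space computation and no need for the splitting of $V$ at all, and the retraction handles all infinitesimal/scheme-theoretic data at once. What it costs: reliance on relative machinery you only sketch -- existence of the relative HN filtration with flat subquotients over the locally closed constant-type stratum (Shatz; scheme-theoretically Nitsure), base change for the relative $\Ext^1$ line bundle in the inductive reconstruction, and the verification that $r\circ\b_\xi=\id$ as morphisms of schemes (easily settled since $X^p$ is reduced and the two maps agree on closed points; for the same reason you may harmlessly replace $U$ by its reduction). Two small remarks: the separatedness needed for the section lemma is that of $U$ (whence of $r$ by cancellation), not of $X^p$ as you wrote; and since $\gcd(r(i),d(i))=1$ the HN subquotients are automatically stable, and you could bypass the inductive extension reconstruction entirely by recovering the coarse points $[\xi_j]\in M(v_j)\cong X$ from determinants, $\det\xi_j\cong\det S_j\ot\ldots\ot\det S_{p-1}\ot\det V_p$, which relativizes over $U$ with no Ext-sheaf arguments.
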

\begin{proof}
We follow the setup of in the proof of the previous lemma. The extensions $V_i$ of $\xi_i$ by $\OO$, corresponding to the compositions $\xi_i\to \OO[1]$, fit into the diagram
\begin{equation}\label{pullback}
\xymatrix{
\xi\ar[r] & \xi_1\ar[r] &\ldots &\ar[r] &\xi_{p-1}\ar[r] &\xi_p\\
 V_0\ar[r]\ar[u]  & V_1\ar[r]\ar[u]  &\ldots & \ar[r]   &V_{p-1}\ar[u] \ar[r]   &V_p\ar[u]\\
\cO\ar[u]\ar[r]^{=} &  \cO\ar[u]\ar[r]^{=} &\ldots & \ar[r]^= &\cO\ar[u]\ar[r]^= &\cO\ar[u] 
}
\end{equation} 
It follows that we have exact sequences
$$0\to S_i\to V_i\to V_{i+1}\to 0.$$
Let us set $S_p=V_p$ (note that $\xi_p$ is a skyscraper at a point, so $S_p=V_p$ is a line bundle of degree $1$).
Since $n_i>2$ for all $i=1,\ldots,p$, we have
\[
\mu(S_0)>\mu(S_1)>\ldots>\mu(S_p)=1
\] 
(see Lemma \ref{Si-stable-lem})
Since $S_i$ are stable, it follows that $\Ext^1(S_i,S_j)=0$ for $i>j$, so
by descending induction on $i$ we deduce that 
$$V_i=\bigoplus_{j\ge i}^p S_j,$$ 
in particular, $V_0=\bigoplus_{i=1}^p S_i$.
Given $V_0$, the ordered sequence $S_0,\ldots,S_p$ is uniquely determined up to isomorphism as indecomposable factors of $V_0$. It further determines the ordered sequence $\xi_1,\ldots,\xi_p$ up to isomorphism. Since for $i=0,\ldots, p$ the morphism $\xi_i\to \xi_{i+1}$ is unique up to a nonzero scalar, $\beta_{n,k}$ is set theoretically injective.

To show the tangent map is injective, we let $A=\C[\eps]/(\eps^2)$ and $X_{A}:=X\times \Spec A$. Let 
\begin{align}\label{A-family}
\xi_0^A\to \xi_1^A\to \ldots\to \xi_i^A \to \xi^A_p\to \cO_{X_A}[1]
\end{align}  be an $A$-family where  $\xi^{A}_i$ is a  deformation of $\xi_i$ over $A$.  Similarly, we define $S_i^A$ as the kernel of $\xi_i^A\to \xi^A_{i+1}$. Since $S_j^A|_X$ are stable, $S^A_j$ are indecomposable. Then $\beta_{n,k}^A$ ($\beta_{n,k}$ on $A$-points) sends  (\ref{A-family})
to the iterated extension $\cO_{X_A}\to V^A_0$ over $A$. We claim that 
\[
V_0^A\cong \bigoplus_{j} S^A_j.
\] 
It suffices to show that for $i\neq j$, $\Ext^1_{X_A}(S_i^A,S^A_j)=0$. Since the ext group is a finitely generated module over $A$ and vanishes when restricted to the closed point, this follows from the Nakayama lemma. The factors $S_j^A$ are mutually non isomorphic since $S^A_j|_X$ have distinct slopes for distinct $j$. The vector bundle $V_0^A$ determines the ordered sequence of indecomposable factors $S_0^A,\ldots,S_p^A$ (by slope of $S^A_j|_X$), i.e. $\beta^A$ is injective. 
Thus, the morphism $\beta_{n,k}$ (and hence $\beta_\xi$) is a closed embedding. 
\end{proof}

For general $n_i$, the bosonization map is more complicated. To study it we need to fix some notations. Define a decomposition $\tau$ of the index set $\{0,\ldots,p\}=\bigsqcup_{j=1}^q I_j$ by the equivalence relation that $i\sim i+1$ if and only if $n_{i+1}=2$. For example if $n/k=27/8$ then $\{n_1,n_2,n_3,n_4\}=\{4,2,3,2\}$. In this case $\{0,\ldots,4\}=\{0\}\sqcup \{1,2\}\sqcup \{3,4\}$. Let $\mathfrak{S}_{p+1}$ be the symmetric group acting on $\{0,\ldots,p\}$ by permutation. Given a partition $\tau$, denote by $\fS^\tau_{p+1}$ the subgroup of permutation that preserves $\tau$. Let $\alpha: X^{p+1}\to X$ be the addition map with respect to the group law of $X$. 
By the fundamental theorem of symmetric functions $X^{p+1}/\fS_{p+1}^\tau$ is smooth. 
Since the addition map is $\fS_{p+1}$-invariant,  it factors through a smooth morphism
\[
\bar{\alpha}: X^{p+1}\Big\slash \fS^\tau_{p+1} \to X.
\]  

\begin{definition}
A semi-stable sheaf $F$ is called \emph{quasi indecomposable} if the Jordan-Holder factors of its distinct indecomposable summands are non isomorphic. 
Equivalently, $F=\bigoplus_i F_i$, where $F_i$ are indecomposable of the same slope, and $\Hom(F_i,F_j)=0$ for $i\neq j$.
\end{definition}

Indecomposable sheaves are quasi indecomposable. A semi-stable sheaf is quasi indecomposable if and only if there exists an autoequivalence that sends it to the structure sheaf of a zero dimensional subscheme.

\begin{lemma}\label{lem:qi-moduli}
(i) For any semi-stable sheaf $F$ of rank $r$ and degree $d$, one has $\dim\End(F)\ge gcd(r,d)$. One has $\dim\End(F)=gcd(r,d)$ if and only if $F$ is quasi indecomposable.
There is a unique quasi indecomposable representative in any given S-equivalence class of semi-stable sheaves.

(ii) For each given $r\ge 0$ and $d$, let us denote by $\cU^{qi}_{r,d}\sub \cU_{r,d}$ the open substack 
 in the stack $\cU_{r,d}$ of semistable sheaves of rank $r$ and degree $d$, corresponding to sheaves $F$ with $\dim\End(F)\le gcd(r,d)$.
Then $\cU^{qi}_{r,d}$ classifies quasi indecomposable sheaves of rank $r$ and degree $d$.
The coarse moduli scheme of $\cU^{qi}_{r,d}$ is naturally isomorphic to that of $\cU_{r,d}$, and is isomorphic to the symmetric power $X^{(gcd(r,d))}$.
\end{lemma}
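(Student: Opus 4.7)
The plan is to use an autoequivalence of $D^b(X)$ to reduce both parts of the lemma to an elementary local computation on torsion modules over $k[[t]]$.

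Set $h=\gcd(r,d)$. The class $(r,d)$ can be sent to $(0,h)$ by an element of $SL(2,\Z)$, and this transformation lifts to an autoequivalence $\Phi$ of $D^b(X)$ built from the Fourier-Mukai transform with Poincar\'e kernel, twists by line bundles, and cohomological shifts. A suitable choice restricts to an equivalence of abelian categories from semistable sheaves of slope $d/r$ onto torsion sheaves on $X$, and identifies $\cU_{r,d}$ with the stack of torsion sheaves of length $h$. Since $\Phi$ preserves $\End$, direct sums, $\Hom$-orthogonality, indecomposability, and Jordan-H\"older (hence $S$-equivalence) classes, it suffices to prove the lemma in this torsion setting. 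The one non-trivial input is the explicit construction of $\Phi$ in the non-coprime case, which is classical in the Atiyah--Fourier-Mukai theory of elliptic curves.

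Given a torsion sheaf $T$ of length $h$, decompose along support as $T=\bigoplus_x T_x$; the $\Hom$-orthogonality $\Hom(T_x,T_{x'})=0$ for $x\neq x'$ gives $\End(T)=\prod_x \End(T_x)$. Writing $T_x\cong \bigoplus_i k[t]/(t^{n_i})$ of length $\ell_x=\sum_i n_i$ in a local uniformizer, the identity $\dim\Hom(k[t]/(t^a),k[t]/(t^b))=\min(a,b)$ yields
\[
\dim\End(T_x)=\sum_{i,j}\min(n_i,n_j)=\ell_x+2\sum_{i<j}\min(n_i,n_j)\geq \ell_x,
\]
with equality precisely when $T_x$ consists of a single summand, i.e., is indecomposable. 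Summing over $x$ gives $\dim\End(T)\geq h$, and equality holds iff each $T_x$ is indecomposable, i.e., iff $T$ is quasi-indecomposable. The $S$-equivalence class of $T$ is encoded by the effective $0$-cycle $\sum_x \ell_x[x]$ of degree $h$, and the unique (up to isomorphism) quasi-indecomposable representative in that class is $\bigoplus_x k[t]/(t^{\ell_x})$. Transporting by $\Phi$ finishes (i).

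For (ii), upper semicontinuity of $F\mapsto \dim\End(F)$ in flat families shows $\cU^{qi}_{r,d}\sub \cU_{r,d}$ is an open substack, and (i) identifies its objects with the quasi-indecomposable sheaves. On the torsion side, each quasi-indecomposable $T=\bigoplus_x k[t]/(t^{\ell_x})$ has connected automorphism group $\prod_x (k[t]/(t^{\ell_x}))^\times$, so the stack of quasi-indecomposable length $h$ torsion sheaves is a gerbe with connected fibres over its coarse moduli scheme. The support-cycle map $T\mapsto \sum_x \ell_x[x]$ identifies this coarse moduli with the symmetric power $X^{(h)}$; the same assignment applied to the multiset of Jordan-H\"older factors identifies the coarse moduli of the full torsion stack with $X^{(h)}$ as well. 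Since both identifications come from the same support-cycle prescription, the open inclusion induces the identity on $X^{(h)}$, in particular an isomorphism of coarse moduli.
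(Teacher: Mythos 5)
Your proposal is correct and follows essentially the same route as the paper: reduce to the torsion case ($r=0$, length $h=\gcd(r,d)$) by a Fourier--Mukai autoequivalence, count endomorphisms of a torsion sheaf summand-by-summand to get $\dim\End(F)\ge h$ with equality exactly for quasi indecomposable sheaves and to identify the unique quasi indecomposable representative of each S-equivalence class by its support cycle, and then deduce (ii) from semicontinuity of $\dim\End$ together with part (i). Your write-up is merely more explicit in places (the $\min(n_i,n_j)$ computation and the construction of the autoequivalence), so no further comparison is needed.
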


\begin{proof}
(i) Using the Fourier-Mukai transforms, we reduce to the case $r=0$. Any torsion sheaf of degree $d$ has form $F=\OO_{n_1p_1}\oplus\ldots \OO_{n_kp_k}$, where
$n_1+\ldots +n_k=d$, for some points $p_1,\ldots,p_k$. 
Hence,
$$\dim\End(F)\ge \sum \dim\End(\OO_{n_i p_i})=\sum n_i=d,$$
with the equality if and only if all $p_i$ are distinct. The last assertion follows from the fact that the $S$-equivalence class of $F$ is determined by the divisor $\sum n_i p_i$.

\noindent
(ii) By semicontinuity, the condition $\dim \End(F)\le gcd(r,d)$ defines an open substack. By part (i), this condition picks out exactly quasi indecomposable bundles.
Hence, the coarse moduli space of $\cU^{qi}_{r,d}$, denote by $U^{qi}_{r,d}$
is an open subscheme in the coarse moduli space $U_{r,d}$ (which is known to be isomorphic to $X^{(gcd(r,d))}$.
The fact that it is the entire $U_{r,d}$ follows from (i).
\end{proof}

Set $d(I_j):=\sum_{i\in I_j} d(i)$, $r(I_j):=\sum_{i\in I_j}r(i)$ and $\cU^{qi}_{r(I_j), d(I_j)}$ to be the moduli stack of quasi indecomposable sheaves of rank $r(I_j)$ and degree $d(I_j)$. The coarse moduli space of
$\cU^{qi}_{r(I_1), d(I_1)}\times \cU^{qi}_{r(I_2), d(I_2)}\times\ldots\times \cU^{qi}_{r(I_q), d(I_q)}$ is isomorphic to $X^{p+1}\Big\slash \fS^\tau_{p+1}$. 

\begin{lemma}\label{lem:V0}
A point $\{\cO\to V_0\}$ of $\sN_{n,k}$ lies in the image of $\beta_{n,k}: \sB_{n,k}\to \sN_{n,k}$ if and only if $V_0\cong \bigoplus_{j=1}^q U_j$ with $U_j\in \cU^{qi}_{r(I_j),d(I_j)}$. 
\end{lemma}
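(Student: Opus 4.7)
The plan is to prove both directions by combining the filtration of $V_0$ induced by the chain with Ext vanishing from slope comparison on the elliptic curve. For the ``only if'' direction, start from a chain $\xi_0\to\xi_1^s\to\ldots\to\xi_p^s\to\cO[1]$. Diagram \eqref{pullback} produces a filtration $0\subsetneq K_1\subsetneq K_2\subsetneq\ldots\subsetneq K_{p+1}=V_0$ whose successive quotients are the stable bundles $S_0,S_1,\ldots,S_{p-1}$ of slopes $s(0)\ge\ldots\ge s(p-1)$, together with the line bundle $V_p=:S_p$ of slope $1$; by Lemma \ref{Si-stable-lem} these slopes are non-increasing with equalities exactly matching the block structure $\tau=\bigsqcup_j I_j$. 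Let $U_j$ be the subquotient of $V_0$ corresponding to the indices $i\in I_j$: it is semistable of slope $s(I_j)$, rank $r(I_j)$, and degree $d(I_j)$. Since the block slopes strictly decrease, the block-level filtration of $V_0$ with successive quotients $U_1,U_2,\ldots,U_q$ splits by Serre duality: for $j<j'$, $\Ext^1(U_{j'},U_j)\cong\Hom(U_j,U_{j'})^*=0$, using that both are semistable and $\mu(U_j)=s(I_j)>s(I_{j'})=\mu(U_{j'})$. Iterating, $V_0\cong\bigoplus_j U_j$.

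The main obstacle is to show each $U_j$ is quasi indecomposable of rank $r(I_j)$ and degree $d(I_j)$. By Lemma \ref{lem:qi-moduli}(i), this reduces to the bound $\dim\End(U_j)\le|I_j|=\gcd(r(I_j),d(I_j))$. My plan is to pass to the Fourier--Mukai equivalent picture: choose an equivalence $\Phi$ on $D^b(X)$ sending semistable sheaves of slope $s(I_j)$ to torsion sheaves, so that $\Phi(U_j)$ becomes a torsion sheaf of length $|I_j|$. Under $\Phi$, quasi indecomposability becomes the condition that the indecomposable summands of $\Phi(U_j)$ have pairwise distinct support points. To establish this I would analyze how the chain maps $\xi_i^s\to\xi_{i+1}^s$ translate into iterated extensions of the skyscrapers $\Phi(S_i)$: the key point should be that the non-vanishing of each chain map forces these extensions, at any coinciding support point, to merge skyscrapers into a longer primary summand $\cO_{np}$ rather than split off as a second summand at the same point.

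For the ``if'' direction, start with $V_0=\bigoplus_j U_j$ satisfying the hypothesis and reverse the construction. By Lemma \ref{lem:qi-moduli}, each $U_j$ corresponds under $\Phi$ to $\bigoplus_i\cO_{n_iy_i}$ with distinct $y_i$ and $\sum_i n_i=|I_j|$, from which one reads off a canonical filtration of length $|I_j|$ with stable subquotients of rank $r(I_j)/|I_j|$ and degree $d(I_j)/|I_j|$. Concatenating these per-block filtrations (compatibly with the direct sum decomposition of $V_0$) produces a filtration of $V_0$ of length $p+1$ with the correct stable subquotients; taking successive quotients $V_0\to V_1\to\ldots\to V_p$ and setting $\xi_i^s=V_i/\cO$ yields the chain $\xi_0^s\to\xi_1^s\to\ldots\to\xi_p^s$. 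The subsheaf $\cO\hookrightarrow V_0$ then induces a nonzero extension class $\xi_p^s\to\cO[1]$ completing the chain; all intermediate maps are nonzero by construction, and the composition recovers $V_0$ as a bosonization.
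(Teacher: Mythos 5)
Your overall skeleton for the ``only if'' direction (the filtration with subquotients $S_0,\ldots,S_p$, grouping into blocks of equal slope, and splitting the block filtration via $\Ext^1(U_{j'},U_j)\cong\Hom(U_j,U_{j'})^*=0$ for $j<j'$) is correct and matches the paper. The genuine gap is exactly at the step you flag as ``the key point should be'': you never prove that each $U_j$ is quasi indecomposable. Passing through a Fourier--Mukai equivalence only rephrases the problem --- quasi indecomposability of $U_j$ becomes the statement that $\Hom(\C_y,\Phi(U_j))$ is at most one-dimensional for every point $y$, i.e.\ that the skyscrapers ``merge'' --- and your proposal offers no mechanism for why the nonvanishing of the chain maps forces this. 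The paper's proof supplies precisely the missing input, and it is short: if two distinct indecomposable summands of $U_j$ had isomorphic stable factor $S$, then $\dim\Hom(S,U_j)\ge 2$; but $U_j\hookrightarrow \xi_{i_j}$ gives an injection $\Hom(S,U_j)\hookrightarrow\Hom(S,\xi_{i_j})$, and $\dim\Hom(S,\xi_{i_j})=1$ because $\chi(S,\xi_{i_j})=1$ (by the determinant identity \eqref{ni-ki-det1-eq}) and $\Ext^1(S,\xi_{i_j})\cong\Hom(\xi_{i_j},S)^*=0$ by stability and the slope inequality. Any completion of your FM picture would have to reproduce this Hom bound, so as written the central claim of the lemma is asserted, not proved.

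The ``if'' direction also has an unaddressed verification burden. You reconstruct a filtration of $V_0$ from the decomposition $V_0\cong\bigoplus_j U_j$ and then set $\xi_i:=V_i/\cO$, claiming the chain lies in $\sB_{n,k}$ ``by construction''. But a point of $\sB_{n,k}$ requires each $\xi_i$ to be a \emph{stable} sheaf of rank $k(i)$ and degree $n(i)$, which forces (a) the composite $\cO\to V_0\to V_i$ to stay injective, (b) the quotient $V_i/\cO$ to be torsion-free (for $i<p$) and stable --- a quotient of the stable bundle $\xi$ with the right numerical invariants need not be stable --- and (c) the final extension class $\xi_p\to\cO[1]$ to be nonzero. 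None of these are checked, and for a badly chosen ordering of the filtration steps inside a block they can fail, so at minimum a choice must be made and justified. The paper sidesteps this entirely: it realizes an abstract chain whose kernels are the prescribed stable factors $S_0,\ldots,S_p$, applies the ``only if'' direction to see that the resulting bundle is a direct sum of quasi indecomposables with those Jordan--H\"older factors, and then invokes the uniqueness of the quasi indecomposable representative in an S-equivalence class (Lemma \ref{lem:qi-moduli}(i)) to identify it with the given $V_0$, the choice of section being handled by Lemma \ref{lem:unique section}. Either route can work, but your version needs the stability and injectivity checks spelled out, and the quasi indecomposability argument above must be supplied before the proof is complete.
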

\begin{proof}
Recall that if $n_i=2$ then $s_{i-1}=s_i$. Consider the diagram
\[
\xymatrix{
\xi_{i-1}\ar[rr] & &\xi_i \ar[ld]_{[1]}\ar[rr] && \xi_{i+1}\ar[ld]_{[1]}\\
& S_{i-1}\ar[lu] && S_i\ar[lu]\ar[ll]^{[1]}_{\circlearrowleft}
}
\] where the left and right triangles are exact  and the middle one is a composition. Let $\{0,\ldots,p\}=\sqcup_{j=1}^q I_j$ be the decomposition $\tau$  constructed above. It follows that for all $
\ell\in I_j$ for a fixed $j$, $S_\ell$ has the same slope. Set $U_j$ to be the kernel of $\xi_{i_j}\to \xi_{i_{j+1}}$. Then $U_j$ is the iterated extension
\[
\xymatrix{
U_j\ar[r]  & \ldots & \star\ar[rr] && \star\ar[ld]_{[1]}\ar[rr] && S_{i_{j+1}}\ar[ld]_{[1]}\\
&  S_{i_{j}}\ar[ul] & \ldots &  S_{i_{j+1}-2}\ar[ul] & &  S_{i_{j+1}-1}\ar[ul]
}
\] and $V_0$ is an iterated extension by $U_j$. Since slope of the stable factors of $U_j$ strictly decreases as $j$ increases, $V_0\cong \bigoplus_{j=1}^q U_j$. It remains to show that $U_j$ is quasi indecomposable.

Let  $U_j=\bigoplus_{k} U_{j,k}$ such that $U_{j,k}$ are indecomposable. It is clear that all stable factors of $U_{j,k}$ are isomorphic given a fixed $k$. We will show that stable factors associate to distinct $k$ are non isomorphic by contradiction. Without loss of generality, we assume $S_{i_j}=S$ and $i_j\in I_{j,1}$ and $S_\ell=S$ for $\ell\in I_{j,2}$.  Since
\begin{align}
U_j\to \xi_{i_j}\to \xi_{i_{j+1}}
\end{align} is exact, 
there is an injection $\Hom(S, U_j)\to \Hom(S,\xi_{i_j})$ where $\Hom(S,\xi_{i_j})$ is one dimensional. But $\Hom(S, U_j)$ is at least 2-dimensional by our assumption which leads to a contradiction.
The tuple $(S_{i_j},\ldots,S_{i_{j+1}-1})$ are exactly Jordan-Holder factors of $U_j$. On the other hand $U_j$ is a canonical representative in the $S$-equivalence class of semi-stable bundles with these Jordan-Holder factors. Since any sequence of stable factors $S_0,\ldots,S_p$ can be realized as sub quotients of a sequence
\[
\xi_0=\xi\to \xi_1\to\ldots\xi_p\to\cO[1]
\] we conclude that for any $U_j\in \cU^{qi}_{r(I_j),d(I_j)}$ a complex $\cO\to V_0=\bigoplus_{j=1}^q U_j$ is in the image of $\beta_{n,k}$.

\end{proof}

\begin{lemma}\label{lem:unique section}
Given a $V_0$ in the previous lemma, there exists a unique point $\phi:\cO\to V_0$ in $\sN_{n,k}$ up to automorphisms of $V_0$.
\end{lemma}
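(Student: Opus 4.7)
Fix $V_0=\bigoplus_{j=1}^q U_j$ as in Lemma~\ref{lem:V0} and let
\[
\cU_{\mathrm{st}}:=\{\phi\in\Hom(\cO,V_0)\setminus 0:V_0/\phi(\cO)\text{ is stable of rank }k\text{ and degree }n\}.
\]
This is a non-empty open subset of $H^0(V_0)$, since stability is an open condition in flat families and Lemma~\ref{lem:V0} guarantees it is non-empty. The plan is to show $\cU_{\mathrm{st}}$ is a single $\Aut(V_0)$-orbit under the post-composition action $f\cdot\phi=f\circ\phi$, by proving that the orbit map
\[
\phi^{*}:\End(V_0)\to H^0(V_0),\qquad f\mapsto f\circ\phi,
\]
is an isomorphism for every $\phi\in\cU_{\mathrm{st}}$.

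First I would show $\phi^{*}$ is injective. Applying $\Hom(-,V_0)$ to the short exact sequence $0\to\cO\to V_0\to\xi\to 0$ produces a long exact sequence in which $\ker(\phi^{*})=\Hom(\xi,V_0)$. Applying $\Hom(\xi,-)$ to the same sequence and using $\Hom(\xi,\cO)=0$ (as $\xi$ is stable of positive slope) embeds $\Hom(\xi,V_0)$ into $\Hom(\xi,\xi)=\C$; any nonzero element of this image would be a splitting of the sequence. But all summands $U_j$ of $V_0$ are quasi-indecomposable semistable of strictly positive slope $s(I_j)$, so $V_0$ admits no $\cO$-summand and the sequence is non-split. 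Hence $\Hom(\xi,V_0)=0$ and $\phi^{*}$ is injective.

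Next I would verify that $\dim\End(V_0)=n$. By semistability, $\Hom(U_i,U_j)=0$ for $i<j$; Serre duality then gives $\Ext^1(U_j,U_i)=0$, so $\dim\Hom(U_j,U_i)=\chi(U_j,U_i)=d(I_i)r(I_j)-d(I_j)r(I_i)$. Combined with the QI identity $\dim\End(U_j)=m_j$, this yields
\[
\dim\End(V_0)=\sum_j m_j+\sum_{i<j}\bigl(d(I_i)r(I_j)-d(I_j)r(I_i)\bigr).
\]
The identity that this expression equals $\sum_j d(I_j)=n$ is a combinatorial consequence of the Laplace recursions of Lemma~\ref{Si-stable-lem} together with the description of the partition $\tau$, verified e.g.\ by induction on $q$ (the base case $q=1$ reduces to $m_1=d(I_1)$, which holds because then all $n_i=2$ and $s(I_1)=1$). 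I expect this combinatorial identity to be the main technical step. On the other hand, $\dim H^0(V_0)=\deg V_0=n$ by Riemann--Roch, since each $U_j$ has positive slope and therefore $H^1(U_j)=0$.

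Combining the two steps, $\phi^{*}$ is an injection between spaces of equal dimension $n$, hence an isomorphism. Consequently the stabilizer of $\phi$ in $\Aut(V_0)$ is trivial and its orbit has dimension $\dim\End(V_0)=n$, so it is open inside $H^0(V_0)\cong\A^n$. Since $\cU_{\mathrm{st}}$ is open (and therefore irreducible) in $H^0(V_0)$, two distinct open $\Aut(V_0)$-orbits meeting $\cU_{\mathrm{st}}$ would be disjoint non-empty open subsets, contradicting irreducibility. Hence all open orbits through points of $\cU_{\mathrm{st}}$ coincide, and $\cU_{\mathrm{st}}$ is a single $\Aut(V_0)$-orbit.
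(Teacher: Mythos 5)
Your overall strategy is the paper's: reduce the lemma to showing that $\phi^*:\End(V_0)\to\Hom(\cO,V_0)$, $f\mapsto f\circ\phi$, is an isomorphism, by combining injectivity with the two dimension counts $\dim\Hom(\cO,V_0)=n$ and $\dim\End(V_0)=n$. Your variations are sound: you obtain injectivity from $\Hom(\coker\phi,V_0)=0$ (non‑splitness via Krull--Schmidt, since every indecomposable summand of $V_0$ has positive slope while $\cO$ has slope $0$), where the paper invokes the $\G_m$-gerbe property of $\sN_{n,k}$; and you conclude by an open-orbit/irreducibility argument in $H^0(V_0)\simeq\A^n$, where the paper instead produces $A$ with $A\phi=\phi'$ and checks, using $\Hom(V_0,\cO)=0$ and the five lemma, that $A$ is an automorphism (note the induced map goes $\coker\phi\to\coker\phi'$ between two possibly non-isomorphic stable bundles, which changes nothing). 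Your count $\dim H^0(V_0)=\chi(V_0)=n$ is the same as the paper's, and your $\dim\End(V_0)=\sum_j\gcd(r(I_j),d(I_j))+\sum_{i<j}\bigl(d(I_i)r(I_j)-d(I_j)r(I_i)\bigr)$ agrees with the paper's expression $p+1+\sum_{0\le i_1<i_2\le p}\bigl(d(i_1)r(i_2)-d(i_2)r(i_1)\bigr)$, since $\gcd(r(I_j),d(I_j))=|I_j|$ and pairs inside one block contribute $0$.

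The genuine gap is the step you yourself flag: the identity $p+1+\sum_{0\le i_1<i_2\le p}\bigl(d(i_1)r(i_2)-d(i_2)r(i_1)\bigr)=n$ is asserted, not proved. "Induction on $q$" with only the $q=1$ base case checked is not an argument, and the framing is misdirected: the identity does not involve the partition $\tau$ at all (as noted, within-block terms vanish automatically), so an induction on the number of blocks has no natural inductive structure to exploit. This identity is precisely the main computation in the paper's proof, established there by a convexity/area argument: the concave lattice path through the points $v_i=(k(i)+1,n(i))$, with edge vectors $(r(i),d(i))$, lies in the triangle with vertices $(0,0),(1,0),(k+1,n)$ of area $n/2$, the region below the path having area $(p+1)/2$ and the region above having area half of the double sum. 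Alternatively it follows by a short telescoping computation from \eqref{ni-ki-det1-eq}: set $\lan i,j\ran:=n(i)k(j)-n(j)k(i)$ and extend the conventions by $n(p+1)=0$, $k(p+1)=-1$, so that $d(i)=n(i)-n(i+1)$, $r(i)=k(i)-k(i+1)$ for all $i\le p$ and $\lan i,i+1\ran=-1$; then $d(i)r(j)-d(j)r(i)=\lan i,j\ran-\lan i,j+1\ran-\lan i+1,j\ran+\lan i+1,j+1\ran$, and summing over $0\le i<j\le p$ telescopes to $-p+\lan p,p+1\ran-\lan 0,p+1\ran=n-p-1$. With this (or the paper's area argument) supplied, your proof is complete.
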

\begin{proof}
Since $\sN_{n,k}$ is a $\G_m$-gerbe over a scheme, the tangent complex at $\{\cO\to V_0\}$ has one dimensional cohomology group at degree $-1$. Therefore the linear map
\[\xymatrix{
\End(V_0)\ar[r]^{\cdot\phi} & \Hom(\cO, V_0)}
\] is injective. It is enough to prove that it is an isomorphism. Indeed, then for any other $\phi':\cO\to V_0$ such that the cokernel is isomorphic to $\xi$,
we would have an endomorphism $A:V_0\to V_0$ such that $A\phi=\phi'$. Then $A$ induces a morphism of exact sequences
\[\xymatrix{
0\ar[r] & \cO\ar[r]^\phi\ar[d]^\id & V_0\ar[r]\ar[d]^A& \xi\ar[r]\ar[d]^\alpha & 0\\
0\ar[r] & \cO\ar[r]^{\phi^\prime} & V_0\ar[r] & \xi\ar[r] & 0
}
\]
Since $\Hom(V_0,\cO)=0$, we necessarily have $\alpha\neq 0$. Hence, $\alpha$ is an automoprhism of $\xi$, so $A$ is an automorphism of $V_0$.


Thus, it suffices to check that $\dim_{\C} \End(V_0)=\dim_{\C} \Hom(\cO,V_0)$. By diagram (\ref{pullback}), 
\[
\dim_{\C} \Hom(\cO,V_0)=\dim_{\C} H^0(\xi)=n.
\] 
Recall that $\deg(S_i)=d(i)$ and  $\rk(S_i)=r(i)$. Since $V_0=\bigoplus U_j$ and $\Hom(U_{j_1},U_{j_2})=0$ if $j_1<j_2$, we have 
\[
\dim_{\C} \End(V_0)=p+1+\sum_{0\leq i_1<i_2\leq p} d(i_1) r(i_2)-d(i_2) r(i_1).
\]
Set $v_i=(k(i)+1,n(i))$ with $i=0,\ldots, p$ and let $\Gamma$ be the piecewise linear curve generated by  $v_{p+1}:=(0,0),v_p,\ldots,v_0=(k+1,n)$. Since $r(i)=k(i)-k(i-1)$ and $d(i)=n(i)-n(i-1)$, $(r(i),d(i))=v_i-v_{i-1}$. Moreover, $\Gamma$ is concave and lies inside the triangle $\Delta$ with vertices $\{(0,0), (1,0), (k+1,n)\}$. It is easy to compute that the area of the subregion of $\Delta$ below $\Gamma$ is $\frac{p+1}{2}$ and the area of the subregion above $\Gamma$ is $\frac{1}{2}\sum_{0\leq i_1<i_2\leq p} d(i_1) r(i_2)-d(i_2) r(i_1)$. Then we prove that $\dim_{\C} \End(V_0)=n$.
\end{proof}.

\begin{theorem}\label{thm:general ni}
Let $n_1,\ldots,n_p$ be integers greater or equal than two. The image of $\beta_{n,k}: B_{n,k}\to N_{n,k}$ is isomorphic to $X^{p+1}\Big\slash \fS^\tau_{p+1}$. 
The image of $\beta_\xi: B(\xi)\to N(\xi)$ is isomorphic to a fiber of the addition map $\bar{\alpha}: X^{p+1}\Big\slash \fS^\tau_{p+1}\to X$.
\end{theorem}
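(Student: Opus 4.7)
The plan is to combine Lemmas \ref{lem:V0} and \ref{lem:unique section} with Lemma \ref{lem:qi-moduli} to identify the image set-theoretically, then to upgrade this to a scheme-theoretic identification via an infinitesimal argument generalizing the proof of Theorem \ref{thm:ni>2}.

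First I would collect the set-theoretic data. By Lemma \ref{lem:V0}, the closed points of the image of $\beta_{n,k}$ are the extensions $\{\cO\to V_0\}$ with $V_0\cong\bigoplus_{j=1}^q U_j$, $U_j\in\cU^{qi}_{r(I_j),d(I_j)}$; by Lemma \ref{lem:unique section}, each isomorphism class of such $V_0$ contributes a unique closed point of the image. Within the block $I_j$ all stable factors $S_\ell$ ($\ell\in I_j$) share a common slope; since a stable bundle on an elliptic curve has coprime rank and degree, they must in fact share a common rank $b_j$ and degree $a_j$ with $\gcd(a_j,b_j)=1$, so that $\gcd(r(I_j),d(I_j))=|I_j|$. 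Lemma \ref{lem:qi-moduli}(ii) then identifies the coarse moduli of $\cU^{qi}_{r(I_j),d(I_j)}$ with $X^{(|I_j|)}$, and taking the product yields a bijection of the image with $\prod_j X^{(|I_j|)}\cong X^{p+1}/\fS_{p+1}^\tau$.

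Second, I would promote this to a scheme-theoretic statement via an $A$-point analysis with $A=\C[\eps]/(\eps^2)$, exactly as in Theorem \ref{thm:ni>2}. Given an $A$-family as in \eqref{A-family}, I would define $U_j^A$ as the iterated kernel of $\xi_{i_j}^A\to\xi_{i_{j+1}}^A$. Because $U_{j_1}$ and $U_{j_2}$ are semi-stable of distinct slopes for $j_1\ne j_2$, Nakayama gives $\Ext^1_{X_A}(U_{j_1}^A,U_{j_2}^A)=0$, so $V_0^A\cong\bigoplus_j U_j^A$. Each $U_j^A$ is then an $A$-family of quasi indecomposable sheaves determined by an $A$-point of $X^{(|I_j|)}$, and a permutation within $I_j$ corresponds to the action of $\fS_{p+1}^\tau$. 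This shows $\beta_{n,k}$ factors as a composite $X^{p+1}\twoheadrightarrow X^{p+1}/\fS^\tau_{p+1}\hookrightarrow N_{n,k}$ with the second arrow a closed immersion, proving the first claim. The main obstacle here is that, in contrast with Theorem \ref{thm:ni>2}, the $U_j$ are only semi-stable (quasi indecomposable) rather than stable, so one has to verify that the Nakayama splitting actually goes through and that the produced $U_j^A$ remain quasi indecomposable in the family.

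Third, for the second claim I would consider the morphism $B_{n,k}\to X$ recording the moduli class $[\xi_0]$; its fiber over $[\xi]$ is exactly $B(\xi)$ in view of Lemma \ref{lem:Gm}. Under the identification above, the class of $\xi_0$ is the sum, via the group law on $X$, of the classes of $S_0,\ldots,S_p$: this follows by iterating the determinantal relations $\det(\xi_i)\cong\det(S_i)\otimes\det(\xi_{i+1})$ coming from the exact sequences of Lemma \ref{Si-stable-lem}, together with the Abel--Jacobi identification of the moduli of stable bundles of given coprime $(r,d)$ with $X$. Descending to the quotient $X^{p+1}/\fS^\tau_{p+1}$, this sum is exactly $\bar\alpha$, so the image of $\beta_\xi$ is the fiber of $\bar\alpha$ over $[\xi]$.
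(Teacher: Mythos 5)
Your first and third steps agree with the paper's proof: the set-theoretic description of the image comes from Lemmas \ref{lem:V0}, \ref{lem:unique section} and \ref{lem:qi-moduli} (and your remark that $\gcd(r(I_j),d(I_j))=|I_j|$ is indeed why the coarse space of $\prod_j\cU^{qi}_{r(I_j),d(I_j)}$ is $X^{p+1}\big/\fS^\tau_{p+1}$), and the identification of the condition ``$\xi_0\cong\xi$'' with a fiber of $\bar\alpha$ via determinants is also the paper's argument. The genuine gap is in your second step. Your $A=\C[\eps]/(\eps^2)$ analysis is carried out on families of chains \eqref{A-family}, i.e.\ on tangent vectors of $B_{n,k}\cong X^{p+1}$, so it can only control the differential of the composite $X^{p+1}\to N_{n,k}$. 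But as soon as some $n_i=2$, so that $\fS^\tau_{p+1}$ is nontrivial, this composite is not an immersion along the diagonal loci: already for $X^2\to X^{(2)}$ at a point of the diagonal the differential is not surjective onto the tangent space of the quotient, so there are tangent vectors of $X^{p+1}\big/\fS^\tau_{p+1}$ at such points which do not lift to $B_{n,k}$ and about which your analysis says nothing. Consequently you cannot conclude that $X^{p+1}\big/\fS^\tau_{p+1}\hookrightarrow N_{n,k}$ is a closed immersion, nor even exclude that the reduced image is a non-normal variety whose normalization is $X^{p+1}\big/\fS^\tau_{p+1}$: a proper bijective morphism from a normal variety only identifies the normalization of the image. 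This is exactly the new phenomenon compared with Theorem \ref{thm:ni>2}, where $\fS^\tau_{p+1}$ is trivial; the obstacle you flagged (splitting over $A$, quasi indecomposability in the family) is not the essential one.

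To close the gap you must consider arbitrary families of points of $N_{n,k}$ supported on the image, i.e.\ families of extensions $\OO\to V_0$ with $V_0$ a direct sum of quasi indecomposable sheaves, not only those pulled back from families of chains. This is what the paper does: Lemma \ref{lem:V0} identifies the image substack of $\sN_{n,k}$ functorially as the locus of extensions with $V_0\cong\bigoplus_j U_j$, $U_j$ quasi indecomposable of type $(r(I_j),d(I_j))$, and Lemma \ref{lem:unique section} shows this substack is a $\G_m$-gerbe over the coarse moduli of $\prod_j\cU^{qi}_{r(I_j),d(I_j)}\cong X^{p+1}\big/\fS^\tau_{p+1}$; no lifting to $B_{n,k}$ is needed (compare also Lemma \ref{qi-formal-lem}, which treats exactly such formal families). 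A smaller inaccuracy in the same step: it is not true that $\Ext^1_{X_A}(U^A_{j_1},U^A_{j_2})=0$ for all $j_1\neq j_2$; over the closed fiber $\Ext^1(U_{j_1},U_{j_2})\cong\Hom(U_{j_2},U_{j_1})^\vee$, which can be nonzero when $\mu(U_{j_2})<\mu(U_{j_1})$, i.e.\ for $j_1<j_2$. What does vanish (and what Nakayama propagates to $X_A$, and what the splitting of the filtration of $V^A_0$ actually requires, the higher-slope blocks being the subobjects) is $\Ext^1(U_{j'},U_j)=0$ for $j'>j$, dual to $\Hom(U_j,U_{j'})=0$.
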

\begin{proof}
By Lemma \ref{lem:V0} and Lemma \ref{lem:qi-moduli}, the image of the stack morphism $\beta: \sB_{n,k}\to \sN_{n,k}$ is the sub moduli space consisting of complexes $\{\cO\to V_0\}$ such that $V_0/\cO\cong \xi_0=\xi$ and $V_0\cong \bigoplus U_j$ with $U_j$ being quasi indecomposable bundles whose stable factors are of rank $r(\ell)$ and degree $d(\ell)$ for $\ell\in I_j$. By Lemma \ref{lem:unique section}, the sub moduli stack defined above is a $\G_m$-gerbe over the coarse moduli scheme of $U_{r(I_1), d(I_1)}\times U_{r(I_2), d(I_2)}\times\ldots\times U_{r(I_q), d(I_q)}$ that is isomorphic to $X^{p+1}\Big\slash \fS^\tau_{p+1}$. 
Under such an isomorphism, the determinant map 
\[ (V_1,\ldots,V_q)\in U_{r(I_1), d(I_1)}\times U_{r(I_2), d(I_2)}\times\ldots\times U_{r(I_q), d(I_q)}\mapsto \det V_1\ot\ldots\ot\det V_q \] is identified with the addition map $\bar{\alpha}: X^{p+1}\Big\slash \fS^\tau_{p+1}\to X$.
\end{proof}

Later we will also need the following result about deformations of a generic point in the image of $\beta_{n,k}$.

\begin{lemma}\label{qi-formal-lem}
Let $V_0=\oplus_{j=1}^q U^0_j$ be in the image of $\beta_{n,k}$, where $U_j$ is a generic point of $\UU^{qi}_{r(I_j),d(I_j)}$. Suppose
$V$ is a family of vector bundles on $C$ over a reduced base $B$, such that $V_0=V_{b_0}$ for some point $b_0\in B$, such that
$\dim\End(V_b)=\dim \End(V_0)$ for all $b\in B$. Then the induced family over the formal neighborhood of $b_0$ in $B$ lies in the image of $\beta_{n,k}$.
\end{lemma}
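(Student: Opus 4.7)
The plan is to construct, on the formal neighborhood $\hat B_{b_0}=\Spf\widehat{\OO}_{B,b_0}$, a direct sum decomposition of the family $V$ whose fiber at every point realizes $V_b$ as a sum of quasi-indecomposable semistable bundles of ranks $r(I_j)$ and degrees $d(I_j)$; by Lemma \ref{lem:V0} this places each such $V_b$ in the image of $\beta_{n,k}$. The main ingredients will be cohomology-and-base-change (to obtain flatness of the pushforward of the endomorphism sheaf) and the lifting of a complete system of orthogonal idempotents.

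First, since $V$ is locally free on $X\times B$ and $X$ is proper, the sheaf $\send(V)$ is flat over $B$, and $b\mapsto\dim_\C\End(V_b)=h^0(X,\send(V_b))$ is upper semicontinuous. The hypothesis that this dimension is constant, combined with reducedness of $B$, implies via Grauert's theorem that $\pi_*\send(V)$ is locally free on $B$ and its formation commutes with arbitrary base change, where $\pi\colon X\times B\to B$ is the projection. In particular, $A:=(\pi_*\send(V))^{\wedge}_{b_0}$ is a free module over the complete local ring $\widehat{\OO}_{B,b_0}$ whose closed fiber is the finite-dimensional associative $\C$-algebra $\End(V_0)$.

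Next, the decomposition $V_0=\bigoplus_{j=1}^q U_j^0$ provides pairwise orthogonal idempotents $e_j^0\in\End(V_0)$ summing to $1$. By the standard iterative argument for lifting complete systems of orthogonal idempotents in associative algebras over complete local rings, the $e_j^0$ lift to pairwise orthogonal idempotents $\tilde e_j\in A$ with $\sum \tilde e_j=1$. Applying these to the base-changed family yields a decomposition $V|_{X\times\hat B_{b_0}}=\bigoplus_{j=1}^q U_j$ in which each $U_j$ is a direct summand of a locally free sheaf, hence a flat family of vector bundles over $\hat B_{b_0}$ with closed fiber $U_j^0$ and therefore of constant rank $r(I_j)$ and degree $d(I_j)$.

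It remains to verify that each $U_j(b)$ lies in $\cU^{qi}_{r(I_j),d(I_j)}$. Semistability is an open condition, so $U_j(b)$ is semistable of slope $s(I_j)$ for every $b\in\hat B_{b_0}$. The block $\tilde e_j A\tilde e_j$ is the completed pushforward of $\send(U_j)$, a direct summand of $A$, hence free of rank equal to $\dim\End(U_j^0)=|I_j|=\gcd(r(I_j),d(I_j))$; in particular $\dim\End(U_j(b))=|I_j|$ for every $b$. Lemma \ref{lem:qi-moduli}(i) then forces $U_j(b)\in\cU^{qi}_{r(I_j),d(I_j)}$, and Lemma \ref{lem:V0} places $V_b=\bigoplus_j U_j(b)$ in the image of $\beta_{n,k}$. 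I expect the only mildly delicate step to be the non-commutative idempotent lifting, but this is a textbook fact for associative algebras finite over a complete local ring.
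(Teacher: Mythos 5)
Your proof is correct, and it takes a genuinely different route from the paper's. The paper starts from the same base-change input (constancy of $\dim\End(V_b)$ plus reducedness makes $\pi_*\und{\End}(V)$ locally free, compatibly with base change), but then deforms the filtration $F^iV_0=\oplus_{j\le i}U^0_j$ over the formal neighborhood (smoothness of the filtered deformation functor, resting on $\Ext^1(U^0_i,U^0_j)=0$ for $i<j$), proves that the filtered endomorphisms form a free module and that the restriction map $\rho:\End_f(V)\to\oplus_i\End(U_i)$ is an isomorphism, and finally splits the filtration by an inductive construction of projectors. You instead work directly with the full endomorphism algebra $A\cong\End(V^\wedge)$ over $\wh{\OO}_{B,b_0}$, finite free with special fiber $\End(V_0)$, and lift the complete orthogonal system of idempotents coming from $V_0=\oplus_j U^0_j$; this yields the direct-sum decomposition of the formal family in one stroke, and you then use the numerical criterion of Lemma \ref{lem:qi-moduli}(i), together with openness of semistability over the local base, to see that each summand stays quasi-indecomposable fiberwise, so Lemma \ref{lem:V0} applies. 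Your route buys simplicity (no filtered deformation theory, no inductive splitting) and in fact uses genericity of $U^0_j$ only through $\dim\End(U^0_j)=\gcd(r(I_j),d(I_j))$, which holds for every quasi-indecomposable sheaf, so your argument is marginally more general; the paper's route produces the filtration and its splitting explicitly, which matches the description of the image in Lemma \ref{lem:V0} more directly. One step you should spell out: the inference from ``$\tilde e_jA\tilde e_j$ is free of rank $|I_j|$'' to ``$\dim\End(U_j(b))=|I_j|$ for every $b$'' needs either that the base-change property of $\pi_*\und{\End}(V)$ passes to each summand $\pi_*\und{\Hom}(U_i,U_j)$ (true, by additivity and naturality of the base-change maps), or, more cheaply, upper semicontinuity over the local base $\Spec\wh{\OO}_{B,b_0}$ (every point specializes to $b_0$, giving $\dim\End(U_j(b))\le\dim\End(U^0_j)$) combined with the lower bound $\dim\End\ge\gcd$ from Lemma \ref{lem:qi-moduli}(i); either patch is immediate.
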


\begin{proof}
The assumptions imply that $\pi_*\und{\End}(V)$ is a vector bundle of rank $\dim \End(V_0)$ over $B$, and that this remains true after any base change.

Let us consider the functor $\Def_f(V_0)$ of deformations of $V_0$ preserving the filtration $F^iV_0=\oplus_{j\le i} U^0_j$. We claim that the natural morphism $\Def_f(V_0)\to \Def(V_0)$
is smooth (here $\Def(V_0)$ is the functor of all deformations of $V_0$). Indeed, let $\und{\End}_f(V_0)\sub \und{\End}(V_0)$ denote the subsheaf of endomorphisms preserving the
filtration. Then the exact sequence
$$0\to \und{\End}_f(V_0)\to \und{\End}(V_0)\to \oplus_{i<j} \und{\Hom}(U^0_i,U^0_j)\to 0,$$
together with the vanishing of $\Ext^1(U^0_i,U^0_j)$ for $i<j$, imply that the map $H^1(\und{\End}_f(V_0))\to H^1(\und{\End}(V_0))$ is surjective, which proves our claim.

Let us replace $B$ by the formal neighborhood $\Spf(A)$ of $b_0$. Then we can equip $V$ with a filtration $F^iV$ by subbundles deforming the filtration $F^iV_0$.
Now we have an exact sequence of $A$-modules
$$0\to \End_f(V)\to \End(V)\rTo{\de} \oplus_{i<j} \Hom(U_i,U_j)$$
where $U_i=F^iV/F^{i-1}V$. We know that $\End(V)$ and $\Hom(U_i,U_j)$ are free $A$-modules (recall that $\Ext^1(U_i,U_j)=0$).
Furthermore, the reduction of $\de$ modulo maximal ideal is surjective (since $V_0$ splits), hence, $\de$ itself is surjective. Thus, $\End_f(V)$ is a free $A$-module, and
its reduction modulo maximal ideal is identified with $\End_f(V_0)$. 

Now let us consider the map of $A$-modules
$$\End_f(V)\rTo{\rho} \oplus_i \End(U_i).$$
Note that since $U^0_j$ is a generic point of $\UU^{qi}_{r(l_j),d(l_j)}$, it is a direct sum of pairwise non-isomorphic stable bundles of the same slope.
This implies that $U_i$ is also a family in $\UU^{qi}_{r(l_j),d(l_j)}$ and $\End(U_i)$ is a free $A$-module.
Since the reduction of $\rho$ modulo maximal ideal is an isomorphism $\End_f(V_0)\to \oplus_i \End(U^0_i)$, the map $\rho$ is also an isomorphism.

We claim that this implies that the filtration $(F^iV)$ splits, so $V\simeq \oplus_i U_i$ is in the image of $\beta_{n,k}$ (see Lemma \ref{lem:V0}). 
Indeed, we can use induction on $n$. Suppose we already know that $V=F^iV\oplus U_{i+1}\oplus\ldots$. Then from surjectivity of $\rho$ we deduce surjectivity of 
the similar map 
$$\End_f(F^iV)\to \oplus_{j\le i} \End(U_j).$$
Hence, there exists a filtered endomorphism $f:F^iV\to F^iV$ such that the induced map $f_i:U_i\to U_i$ is identity, while the induced maps $U_j\to U_j$ for $j<i$ are zero.
Since $\Hom(U_j,U_{j'})=0$ for $j'<j$, we get that $f(F^{i-1}V)=0$. Therefore, $f$ factors through a map $U_i\to F^iV$ which gives a splitting of the projection $F^iV\to U_i$.
\end{proof}

\section{Vanishing of Poisson vector fields with respect to $q_{n,k}$}\label{van-vec-field-sec}

\subsection{Vector fields on a projective space vanishing on a nondegenerate variety}

\begin{lemma}\label{nondeg-van-vec-f-lem}
Let $Z\sub \P^n$ be a connected Zariski closed subset not contained in any hyperplane. Then any global vector field $v$ on $\P^n$, vanishing at every point of $Z$,
is zero.
\end{lemma}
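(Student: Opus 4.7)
The plan is to use the Euler sequence to identify global vector fields on $\P^n$ with matrices modulo scalars, describe their zero loci explicitly, and then exploit the two hypotheses on $Z$: connectedness (to force containment in one component) and nondegeneracy (to force that component to be everything).

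First I will invoke the Euler sequence
$$0\to \OO_{\P^n}\to \OO_{\P^n}(1)^{\oplus(n+1)}\to T_{\P^n}\to 0$$
to identify $H^0(\P^n,T_{\P^n})$ with the quotient $\End(\C^{n+1})/\C\cdot I$. Concretely, a vector field $v$ is represented by an endomorphism $A\in\End(\C^{n+1})$, unique up to adding a scalar matrix; the vanishing of $v$ at $[x]\in\P^n$ is the condition that $Ax\in \C\cdot x$, i.e.\ that $x$ is an eigenvector of $A$.

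Next I will describe the zero locus set-theoretically. It is the union
$$\mathrm{Zero}(v)=\bigcup_{\lambda\in\C}\P(\ker(A-\lambda I))$$
of projective linear subspaces, one for each eigenvalue $\lambda$ of $A$. Since eigenspaces for distinct eigenvalues meet only at $0$ in $\C^{n+1}$, the projective subspaces $\P(\ker(A-\lambda I))$ for distinct $\lambda$ are pairwise disjoint. Therefore a connected closed subset $Z\subseteq\mathrm{Zero}(v)$ must lie inside a single $\P(\ker(A-\lambda_0 I))$ for some eigenvalue $\lambda_0$.

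Finally, since $Z$ is not contained in any hyperplane of $\P^n$, we must have $\P(\ker(A-\lambda_0 I))=\P^n$, i.e.\ $A=\lambda_0 I$. But scalar matrices represent the zero vector field on $\P^n$, so $v=0$.

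There is no substantial obstacle here; the only mild points to verify are the explicit form of $H^0(T_{\P^n})$ from the Euler sequence and the set-theoretic description of $\mathrm{Zero}(v)$ (for which working set-theoretically suffices, so one need not worry about any nonreduced scheme structure on the zero locus).
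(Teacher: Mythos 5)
Your proof is correct, but it takes a genuinely different route from the paper's. You make the identification $H^0(T_{\P^n})\simeq \End(V)/k\cdot\mathrm{id}$ explicit and describe the zero locus of the field attached to a matrix $A$ as the union of the pairwise disjoint linear subspaces $\P(\ker(A-\la\,\mathrm{id}))$ over the finitely many eigenvalues $\la$; connectedness then confines $Z$ to a single such subspace, and the nondegeneracy hypothesis forces that subspace to be all of $\P^n$, so $A$ is scalar and $v=0$. The paper never leaves cohomology: it restricts the Euler sequence to $Z$ (with its reduced structure) and performs a diagram chase, using $H^0(\OO_Z)=k$ (connectedness plus reducedness) and the injectivity of $V\ot H^0(\OO_{\P^n}(1))\to V\ot H^0(\OO_Z(1))$ (nondegeneracy) to conclude that the restriction map $H^0(T_{\P^n})\to H^0(T_{\P^n}|_Z)$ is injective, which is slightly stronger than the lemma itself. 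Your argument is more elementary and makes the geometry of zero loci of vector fields on $\P^n$ completely transparent, at the cost of implicitly assuming the ground field is algebraically closed (you need every closed point of $Z$ to be rational and every eigenvalue of $A$ to lie in the field); this is harmless in the paper's application over $\C$, whereas the cohomological diagram chase works verbatim over an arbitrary field.
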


\begin{proof}
Let us equip $Z$ with the reduced subscheme structure and let $I_Z\sub \OO_{\P^n}$ be the corresponding ideal sheaf.
The Euler exact sequence
$$0\to \OO\to V\ot \OO(1)\to T_{\P^n}\to 0$$
induces a commutative diagram with exact rows
\begin{diagram}
H^0(\OO_{\P^n})&\rTo{}& V\ot H^0(\OO_{\P^n}(1))&\rTo{}& H^0(T_{\P^n})&\rTo{}& 0\\
\dTo{\simeq}&&\dTo{\a}&&\dTo{\b}\\
H^0(\OO_Z)&\rTo{}& V\ot H^0(\OO_Z(1))&\rTo{}& H^0(T_{\P^n}|_Z)
\end{diagram}
Note that the map $\a$ is injective since $Z$ is not contained in any hyperplane. Now an easy diagram chasing shows that $\b$ is also injective, as claimed.
\end{proof}

\subsection{Vector fields preserving a subvariety}

Here we work over the characteristic zero ground field $k$.

Recall that for a line bundle $L$ on a smooth variety $X$, the {\it Atiyah bundle} $\cA_L$ is the sheaf of infinitesimal symmetries of $L$. 
It fits into an exact sequence
\begin{equation}\label{At-ex-seq}
0\to \cO_X\to \cA_L\to \cT_X\to 0
\end{equation}
and the corresponding extension class is the first Chern class $c_1(L)\in H^1(X,\Om_X^1)$.

The following observation will be crucial for us.

\begin{lemma}\label{At-vec-field-lem} 
Let $X\hra \P^n$ be a locally closed embedding of a smooth variety, and 
let $L=\cO(1)|_X$. Then for any global vector field $v$ on $\P^n$ preserving $X$,
the corresponding vector field on $X$ 
lifts to a global section of the Atiyah bundle $\cA_L$.
\end{lemma}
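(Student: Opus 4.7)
The plan is to identify the Atiyah bundle of $\cO_{\P^n}(1)$ with the middle term of the Euler sequence, and then to pull back to $X$ using functoriality of the Atiyah construction. Set $W := H^0(\P^n, \cO(1))^*$, so that the Euler sequence reads
\[
0 \to \cO_{\P^n} \to W \ot \cO(1) \to T_{\P^n} \to 0.
\]
Its extension class is a nonzero element of the one-dimensional space $\Ext^1(T_{\P^n}, \cO) \cong H^1(\P^n, \Om^1_{\P^n})$, which is spanned by $c_1(\cO(1))$; hence up to isomorphism this is the Atiyah sequence \eqref{At-ex-seq} for $\cO(1)$, giving $\cA_{\cO(1)} \cong W \ot \cO(1)$. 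Since $H^1(\P^n, \cO) = 0$, the given vector field $v \in H^0(\P^n, T_{\P^n})$ lifts to a global section $\widetilde{v} \in H^0(\P^n, W \ot \cO(1))$.

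Next I will realize $\cA_L$ as a fibered product via functoriality of Atiyah classes. Write $\iota: X \hra \P^n$ for the embedding; since $X$ is smooth and locally closed, the differential $d\iota: T_X \to T_{\P^n}|_X$ is a subbundle inclusion. Pulling back the Atiyah class of $\cO(1)$ along $\iota$ and then pushing forward along $d\iota$ yields the Atiyah class of $L = \iota^*\cO(1)$. Concretely, this produces a commutative diagram with exact rows
\[
\begin{array}{ccccccccc}
0 & \to & \cO_X & \to & \cA_L & \to & T_X & \to & 0 \\
 & & \| & & \downarrow & & \downarrow & & \\
0 & \to & \cO_X & \to & W \ot L & \to & T_{\P^n}|_X & \to & 0
\end{array}
\]
(in which the right-hand vertical arrow is $d\iota$) whose right-hand square is Cartesian. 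Consequently a global section of $\cA_L$ is exactly a pair $(s, w)$ with $s \in H^0(X, W \ot L)$ and $w \in H^0(X, T_X)$ whose images in $H^0(X, T_{\P^n}|_X)$ coincide.

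To conclude, the hypothesis that $v$ preserves $X$ says precisely that $v|_X$ factors through $d\iota$, i.e.\ $v|_X = d\iota(w)$ for a unique $w \in H^0(X, T_X)$, namely the induced vector field on $X$. Setting $s := \widetilde{v}|_X$, both $s$ and $w$ have the same image $v|_X$ in $H^0(X, T_{\P^n}|_X)$, so the pair $(s, w)$ defines the desired section of $\cA_L$ lifting $w$. The main content of the argument is the identification of the Euler sequence with the Atiyah sequence of $\cO(1)$; once that is in place, the rest is a short diagram chase, with the one caveat that smoothness of $X$ is used to ensure $d\iota$ is injective as a map of vector bundles even when $X$ is not closed.
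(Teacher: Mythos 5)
Your argument is correct, and it transfers the lift from $\P^n$ to $X$ by a somewhat different mechanism than the paper. The paper works with the subsheaf $\cT_{\P^n,X}\sub\cT_{\P^n}$ of vector fields preserving the ideal of $X$, lifts $v$ to the corresponding subsheaf $\cA_{\cO(1),X}$ of the Atiyah bundle using $H^1(\P^n,\OO)=0$, and then restricts via the natural morphism $\cA_{\cO(1),X}\to i_*\cA_{L}$. You instead lift $v$ in all of $\cA_{\cO(1)}$ (through the identification with $W\ot\cO(1)$ from the Euler sequence, again using $H^1(\P^n,\OO)=0$) and then invoke functoriality of the Atiyah class under pullback to realize $\cA_L$ as the fibered product of $\iota^*\cA_{\cO(1)}$ and $T_X$ over $T_{\P^n}|_X$, so that the pair $(\wt{v}|_X, v_X)$ is literally a section of $\cA_L$; here the hypothesis ``$v$ preserves $X$'' enters only in its pointwise form, $v|_X\in H^0(X,T_X)\sub H^0(X,T_{\P^n}|_X)$, which for smooth (hence reduced) $X$ is equivalent to the paper's ideal-theoretic condition. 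What your route buys is that one never needs the sheaf $\cT_{\P^n,X}$ or the restriction map $\cA_{\cO(1),X}\to i_*\cA_L$; the price is that you use two standard facts without proof, namely that the Euler sequence is (up to the evident rescaling of $\OO$) the Atiyah extension of $\cO(1)$ --- in particular its class in the one-dimensional space $\Ext^1(T_{\P^n},\OO)$ is nonzero --- and that Atiyah classes pull back compatibly, so that $\cA_{\iota^*\cO(1)}$ is the pullback of the extension $\iota^*\cA_{\cO(1)}$ along $d\iota$. Note also that the Euler-sequence identification is not strictly needed: one can lift $v$ directly in $\cA_{\cO(1)}$ from its defining sequence, again because $H^1(\P^n,\OO)=0$, and then your Cartesian-square argument goes through verbatim.
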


\begin{proof}
Let us 
denote by $\cT_{\P^n,X}\sub \cT_{\P^n}$  
the subsheaf of vector fields $v$ preserving $X$, i.e., such that $v(\cI)=\cI$, where $\cI$ is the ideal of $X$.
We denote by $\cA_{\cO(1),X}$ the preimage of $\cT_{\P^n,X}$ in the Atiyah bundle of $\cO(1)$ on $\P^n$, so that we have an exact sequence
$$0\to \OO_X\to \cA_{\cO(1),X}\to \cT_{\P^n,X}\to 0$$
Note that since $H^1(\P^n,\OO)=0$, every global vector field on $\P^n$ preserving $X$ lifts to a global section of $\cA_{\cO(1),X}$.
By the naturality of the definition of the Atiyah bundle, we have a morphism 
$$\cA_{\cO(1),X}\to i_*\cA_{\cO(1)|_X}$$
compatible with the natural map $\cT_{\P^n,X}\to \cT_X$, where $i:X\to \P^n$ is the embedding.
Thus, a global section of $\cA_{\cO(1),X}$ lifting $v$ induces a global section of $\cA_L$ lifting the corresponding vector field on $X$.
\end{proof}

\begin{definition} Let $X$ be a smooth variety, $L$ a line bundle over $X$. We say that the pair $(X,L)$ is {\it infinitesimally rigid} if the map
$H^0(X,\cA_L)\to H^0(X,\cT_X)$ is zero, i.e., only trivial vector field on $X$ lifts to a global section of $\cA_L$.
\end{definition}

\begin{lemma}\label{inf-rigid-lem}
The pair $(X,L)$ is infinitesimally rigid if and only if the map
\begin{equation}\label{c1-map-eq}
H^0(X,\cT_X)\rTo{\cup c_1(L)} H^1(X,\cO_X)
\end{equation}
is injective. 
\end{lemma}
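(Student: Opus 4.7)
The plan is to read off the lemma directly from the long exact sequence in cohomology associated with the Atiyah extension \eqref{At-ex-seq}. This is a purely formal consequence of the interpretation of the connecting homomorphism as cup product with the extension class.

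First I would apply the functor $H^*(X, -)$ to the short exact sequence
\[
0\to \cO_X\to \cA_L\to \cT_X\to 0,
\]
obtaining the long exact sequence
\[
0 \to H^0(X,\cO_X)\to H^0(X,\cA_L)\to H^0(X,\cT_X)\xrightarrow{\delta} H^1(X,\cO_X)\to\cdots
\]
By exactness at $H^0(X,\cT_X)$, a global vector field $v$ lifts to a global section of $\cA_L$ if and only if $\delta(v)=0$. Therefore the map $H^0(X,\cA_L)\to H^0(X,\cT_X)$ is zero if and only if the connecting map $\delta$ is injective.

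The remaining step is to identify $\delta$ with cup product with $c_1(L)$. Since the extension class of \eqref{At-ex-seq} in $\mathrm{Ext}^1(\cT_X,\cO_X)=H^1(X,\Om^1_X)$ is $c_1(L)$, and the connecting homomorphism in the long exact sequence associated with a short exact sequence of coherent sheaves is given by cup product with the extension class (via the contraction pairing $\cT_X\otimes \Om^1_X\to \cO_X$), one gets $\delta(v) = v\cup c_1(L)$. This is a standard fact that I would simply cite (e.g.\ from Atiyah's original paper on complex analytic connections), so no real obstacle arises. Combining the two steps gives the stated equivalence.
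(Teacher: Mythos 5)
Your proposal is correct and follows exactly the paper's argument: the paper likewise reads the statement off the long exact cohomology sequence of the Atiyah extension \eqref{At-ex-seq}, with the connecting homomorphism identified as cup product with $c_1(L)$. Your extra remark justifying that identification (the extension class of \eqref{At-ex-seq} is $c_1(L)$, and connecting maps are cup product with the extension class) is the standard fact the paper implicitly uses, so there is no gap.
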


\begin{proof}
This follows immediately from 
the exact sequence the cohomology long exact sequence
$$H^0(X,\cA_L)\to H^0(X,\cT_X)\rTo{\cup c_1(L)} H^1(X,\cO_X)\to\ldots$$
associated with \eqref{At-ex-seq}.
\end{proof}

Using Lemma \ref{At-vec-field-lem}, we get the following general criterion.

\begin{prop}\label{inf-rigid-emb-prop} 
Let $X\sub \P^n$ be a smooth connected projective variety, 
not contained in any hyperplane. 
Assume that the pair $(X,\cO(1)|_X)$ is infinitesimally rigid.
Then there are no nonzero vector fields on $\P^n$ preserving $X$.
\end{prop}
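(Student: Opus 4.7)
The plan is to chain together Lemmas \ref{nondeg-van-vec-f-lem}, \ref{At-vec-field-lem}, and \ref{inf-rigid-lem}: infinitesimal rigidity of $(X, \OO(1)|_X)$ kills the vector field induced on $X$, which (because $v$ preserves $X$) forces $v$ to vanish pointwise along the nondegenerate subvariety $X$, and then nondegeneracy finishes the argument.

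First, I would take a global vector field $v \in H^0(\P^n, \cT_{\P^n})$ with $v(\cI_X) \sub \cI_X$ and feed it to Lemma \ref{At-vec-field-lem}. The lemma produces a global section $\tilde s \in H^0(X, \cA_L)$, where $L = \OO(1)|_X$, mapping to the induced vector field $v|_X \in H^0(X, \cT_X)$ under $\cA_L \to \cT_X$. By assumption and Lemma \ref{inf-rigid-lem}, the composition $H^0(X, \cA_L) \to H^0(X, \cT_X)$ is zero, so $v|_X = 0$ as a global section of $\cT_X$.

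Next I would explain why this is enough to conclude that $v$ vanishes at every closed point of $X$ (not just tangentially). Since $v$ preserves $X$, its restriction to $X$ factors through the subsheaf $\cT_X \hra \cT_{\P^n}|_X$ (i.e., $v$ has no normal component along $X$). Thus the vanishing of $v|_X$ in $H^0(X, \cT_X)$ is the same as its vanishing as a section of $\cT_{\P^n}|_X$, which precisely says that $v(p) = 0$ for every $p \in X$. Applying Lemma \ref{nondeg-van-vec-f-lem} to $Z = X$, which is connected and nondegenerate in $\P^n$ by hypothesis, we conclude $v = 0$ on $\P^n$.

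There is no serious obstacle: the whole content has been distributed among the three supporting lemmas. The only point requiring a little care is the identification in the middle step, namely that for a vector field preserving $X$ the natural surjection $\cT_{\P^n}|_X \twoheadrightarrow \cN_{X/\P^n}$ annihilates $v|_X$; this is what allows a vanishing statement in $\cT_X$ to be promoted to a vanishing statement in $\cT_{\P^n}|_X$, which is what Lemma \ref{nondeg-van-vec-f-lem} requires as input.
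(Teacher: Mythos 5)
Your proof is correct and takes essentially the same route as the paper's: Lemma \ref{At-vec-field-lem} together with infinitesimal rigidity (via Lemma \ref{inf-rigid-lem}) kills the induced vector field on $X$, and Lemma \ref{nondeg-van-vec-f-lem} then forces $v=0$. The only difference is that you spell out the (correct) observation that preservation of $X$ makes $v|_X$ a section of $\cT_X\sub\cT_{\P^n}|_X$, so its vanishing in $\cT_X$ is genuinely pointwise vanishing of $v$ along $X$ --- a step the paper leaves implicit.
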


\begin{proof} Set $L=\cO(1)|_X$.
By Lemma \ref{At-vec-field-lem}, a vector field $v$ on $\P^n$ preserving $X$ induces a global vector field $v_X$ on $X$ that lifts to a global section of the Atiyah bundle $\cA_L$.
Since $(X,L)$ is infinitesimally rigid, we deduce that $v_X=0$. Hence, the vector field $v$ vanishes at every point of $X$. By Lemma \ref{nondeg-van-vec-f-lem}, this implies that $v=0$.
\end{proof}

Next, we will give examples of infinitesimally rigid pairs.
We start with the following general observation.

\begin{lemma}\label{inf-rigid-pullback-lem} 
Let $f:Y\to X$ be a morphism of smooth varieties, such that $f^*:H^1(\OO_X)\to H^1(\OO_Y)$ is injective and the image of the restriction map
$H^0(T_X)\to H^0(f^*T_X)$ is injective and its image is contained in the image of the tangent map $H^0(T_Y)\to H^0(f^*T_X)$. Let $L$ be a line bundle on $X$,
such that $(Y,f^*L)$ is infinitesimally rigid, then $(X,L)$ is infinitesimally rigid.
\end{lemma}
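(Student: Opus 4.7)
The plan is to reduce to Lemma~\ref{inf-rigid-lem} and chase the naturality diagram for cup product with the first Chern class. By that lemma, infinitesimal rigidity of $(X,L)$ is equivalent to injectivity of the cup product $\cup c_1(L):H^0(T_X)\to H^1(\OO_X)$. I would therefore take $v\in H^0(T_X)$ with $v\cup c_1(L)=0$ and aim to prove $v=0$.

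Pulling back along $f$, functoriality of cup products gives $(f^*v)\cup f^*c_1(L)=0$ in $H^1(\OO_Y)$. The hypothesis on tangent maps allows one to choose $w\in H^0(T_Y)$ with $df(w)=f^*v$. The key step is to identify $w\cup c_1(f^*L)$ with $(f^*v)\cup f^*c_1(L)$. This will follow from the functoriality of the Atiyah sequence under pullback: the extension $0\to\OO_Y\to\cA_{f^*L}\to T_Y\to 0$ is obtained by pulling back the extension $0\to\OO_Y\to f^*\cA_L\to f^*T_X\to 0$ along $df:T_Y\to f^*T_X$. The induced morphism of extensions produces a commutative square of boundary homomorphisms
\[
\xymatrix{
H^0(T_Y)\ar[r]^{\cup c_1(f^*L)}\ar[d]_{df}& H^1(\OO_Y)\ar@{=}[d]\\
H^0(f^*T_X)\ar[r]^{\cup f^*c_1(L)}& H^1(\OO_Y),
}
\]
which yields $w\cup c_1(f^*L)=(f^*v)\cup f^*c_1(L)=0$.

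Infinitesimal rigidity of $(Y,f^*L)$ together with Lemma~\ref{inf-rigid-lem} then forces $w=0$, whence $f^*v=df(w)=0$; the injectivity of the restriction map $H^0(T_X)\to H^0(f^*T_X)$ finally yields $v=0$. The only nontrivial ingredient will be the functoriality of the Atiyah sequence under pullback, which is standard and is the main (minor) obstacle to verify carefully. The injectivity of $f^*:H^1(\OO_X)\to H^1(\OO_Y)$ assumed in the statement does not appear to be strictly needed for this particular argument; it would however enable an alternative route in which one first transfers the equation $v\cup c_1(L)=0$ faithfully down to $Y$ and then reduces to Lemma~\ref{inf-rigid-lem} on $Y$ via the same commutative square.
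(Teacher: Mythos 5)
Your proof is correct and follows essentially the same route as the paper: reduce via Lemma \ref{inf-rigid-lem} to injectivity of cup product with $c_1(L)$, use the hypothesis that the image of $H^0(T_X)\to H^0(f^*T_X)$ lies in the image of $H^0(T_Y)$, and conclude by the compatibility of the two cup products under pullback (which the paper uses implicitly via functoriality of the cup product, and you make explicit via functoriality of the Atiyah extension). Your side remark that injectivity of $f^*:H^1(\OO_X)\to H^1(\OO_Y)$ is not actually needed for this implication is also accurate --- the paper cites it, but its own reduction (injectivity of the composite $H^0(T_X)\to H^1(\OO_Y)$ already forces injectivity of $\cup\, c_1(L)$) does not genuinely use it.
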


\begin{proof} By Lemma \ref{inf-rigid-lem} and by injectivity of $H^1(\OO_X)\to H^1(\OO_Y)$, it is enough to check the injectivity of the composition
$$H^0(T_X)\to H^0(f^*T_X)\rTo{\cup f^*c_1(L)}\to H^1(\OO_Y).$$
Since the first arrow is injective and its image is contained in the image of $H^0(T_Y)$, the assertion follows from 
injectivity of the composition 
$$H^0(T_Y)\to H^0(f^*T_X)\rTo{\cup f^*c_1(L)}\to H^1(\OO_Y),$$
which holds by infinitesimal rigidity of $(Y,f^*L)$.
\end{proof}

\begin{definition}\label{X-variety-def}
For an elliptic curve $E$ and an $r$-tuple of positive numbers $(m_1,\ldots,m_r)$, let 
$$X(m_1,\ldots,m_r)\sub E^{(m_1)}\times\ldots \times E^{(m_r)}$$ 
denote the fiber over $0\in E$ of the map
$$E^{(m_1)}\times\ldots \times E^{(m_r)}\to E: (D_1,\ldots,D_r)\mapsto a(D_1)+\ldots+a(D_r),$$
where $a:E^{(m_i)}\to E$ is the addition map. 
\end{definition}

Let $Y_r\sub E^r$ denote the kernel of the addition map $E^r\to E$ (so $Y_r$ is isomorphic to $E^{r-1}$). Then we have a natural smooth morphism
$$p:X(m_1,\ldots,m_r)\to Y_r$$
whose fibers are products of projective spaces.
On the other hand, set
$$Z(m_1,\ldots,m_r):=\{(x_1,\ldots,x_r)\in E^r \ | m_1x_1+\ldots+m_rx_r=0\}.$$
This is a subgroup of $E^r$, and the component of zero, $Z(m_1,\ldots,m_r)^0$ is an abelian variety of dimension $r$.
Furthermore, we have a natural isogeny,
$$p:Z(m_1,\ldots,m_r)^0\to Y_r: (x_1,\ldots,x_r)\mapsto (m_1x_1,\ldots,m_rx_r).$$

We have a natural action of $Z(m_1,\ldots,m_r)^0$ on $X(m_1,\ldots,m_r)$, given by 
$$(x_1,\ldots,x_r):(D_1,\ldots,D_r)\mapsto (t_{x_1,*}D_1,\ldots,t_{x_r,*}D_r),$$
where $t_x:E\to E$ is the translation automorphism by $x\in E$. This action is compatible with the projections $p$ and the action of $Y_r$
on itself by translations.

\begin{lemma}\label{tangent-sym-lem}
Let $E$ be an elliptic curve. 
For an $r$-tuple of positive numbers $(m_1,\ldots,m_r)$, such that $r\ge 2$, consider the variety $X=X(m_1,\ldots,m_r)$ equipped with the action of $Z^0=Z(m_1,\ldots,m_r)^0$.
Then the the space $H^0(X,T_X)$ is $(r-1)$-dimensional and is spanned by the global vector fields coming from the action of $Z^0$.
\end{lemma}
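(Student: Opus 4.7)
The plan is to analyze the smooth proper projection $p:X\to Y_r$, where $X=X(m_1,\ldots,m_r)$, using the relative tangent sequence
\[
0\to T_{X/Y_r}\to T_X\to p^*T_{Y_r}\to 0.
\]
Since $Y_r\cong E^{r-1}$ is an abelian variety, $T_{Y_r}\cong \OO_{Y_r}^{r-1}$; since $p$ has connected fibers, $p_*\OO_X=\OO_{Y_r}$ and $H^0(X,p^*T_{Y_r})=k^{r-1}$. The cohomology sequence thus reduces the assertion to (a) showing $H^0(X,T_{X/Y_r})=0$ and (b) exhibiting $r-1$ linearly independent vector fields on $X$, coming from the $Z^0$-action, whose images in $H^0(Y_r,T_{Y_r})=k^{r-1}$ form a basis.

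For (a), I identify $X$ as a fiber product over $Y_r$ of pulled-back Abel--Jacobi projective bundles. Setting $\P_i:=Y_r\times_E E^{(m_i)}$ with respect to the $i$-th coordinate projection $\pi_i:Y_r\to E$ and the Abel--Jacobi map $E^{(m_i)}\to E$, one has $X\cong\P_1\times_{Y_r}\cdots\times_{Y_r}\P_r$. Writing $E^{(m_i)}=\P(W_i)$ for a rank-$m_i$ vector bundle $W_i$ on $E$ gives $\P_i=\P(\pi_i^*W_i)$. The key input is that $W_i$ is simple, i.e.\ $\End(W_i)=k$; this follows from Atiyah's theorem because $W_i$, being (up to translation) the Fourier--Mukai transform of a line bundle of degree $m_i$, is stable of coprime rank and degree. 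From the product decomposition $T_{X/Y_r}=\bigoplus_i\pi_i^{X,*}T_{\P_i/Y_r}$, the Euler sequence on $\P(\pi_i^*W_i)$ and the projection formula give $p_*T_{X/Y_r}=\bigoplus_i\pi_i^*\ul{\mathfrak{sl}}(W_i)$. Each $\pi_i:Y_r\to E$ is smooth and proper with connected fibers $E^{r-2}$ (this is where the hypothesis $r\ge 2$ enters), so $(\pi_i)_*\OO_{Y_r}=\OO_E$, and by the projection formula
\[
H^0(Y_r,\pi_i^*\ul{\mathfrak{sl}}(W_i))=H^0(E,\ul{\mathfrak{sl}}(W_i))=\End(W_i)/k=0.
\]
Hence $H^0(X,T_{X/Y_r})=0$ and $H^0(X,T_X)\hookrightarrow k^{r-1}$.

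For (b), the action of $Z^0$ on $X$ is compatible with its action on $Y_r$ via the isogeny $Z^0\to Y_r$, $(x_1,\ldots,x_r)\mapsto (m_1x_1,\ldots,m_rx_r)$. Differentiating at the identity, I obtain a commutative triangle
\[\xymatrix{
\mathrm{Lie}(Z^0)\ar[r]\ar[rd] & H^0(X,T_X)\ar[d]\\
& k^{r-1}
}\]
whose diagonal map is the surjective differential of this isogeny. Combined with the injectivity of the vertical arrow from (a), this forces $\dim H^0(X,T_X)=r-1$ and shows that all global vector fields on $X$ come from the $Z^0$-action.

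The principal delicate point is the simplicity of $W_i$; I will either cite Atiyah's classification of stable bundles of coprime rank and degree on elliptic curves, or verify it from the Fourier--Mukai/universal-divisor description of $W_i$ as a pushforward of a Poincar\'e-type line bundle.
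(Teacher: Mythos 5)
Your proof is correct and follows essentially the same route as the paper's: the same identification $X\simeq \P(p_1^*V_1)\times_{Y_r}\cdots\times_{Y_r}\P(p_r^*V_r)$, the Euler-sequence computation of the pushforward of the relative tangent bundle as $\bigoplus_i p_i^*\und{\End}_0(V_i)$ with $V_i$ stable of coprime rank and degree, the use of $r\ge 2$ to reduce to cohomology on $E$, and the comparison with the $Z^0$-action via the isogeny to $Y_r$. The only minor difference is that you establish just $H^0(T_{X/Y_r})=0$ (from simplicity of $W_i$) and recover surjectivity of $H^0(T_X)\to H^0(T_{Y_r})$ from the isogeny differential, whereas the paper proves the full vanishing $H^*(X,T_p)=0$ (using $H^*(E,\und{\End}_0(V_i))=0$ and K\"unneth) to get that isomorphism directly; both are fine.
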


\begin{proof}
Let us first consider the relative tangent map $T_a$ for the addition morphism $a:E^{(m)}\to E$. It is well known that $E^{(m)}\simeq \P V$,
where $V$ is a stable bundle over $E$ (namely, $V$ is the Fourier transform of a line bundle of degree $m$ on $E$).
Hence, we have $Ra_*\OO_{\P V}\simeq \OO_E$,
and the exact sequence
$$0\to \OO_{\P V}\to a^*V(1)\to T_p\to 0.$$
gives rise to an isomorphism on $E$,
\begin{equation}\label{T-rel-addition-map}
Ra_*T_a\simeq \und{\End}_0(V).
\end{equation}

Now we have
$$X\simeq \P(p_1^*V_1)\times_{Y_r}\times\ldots \times_{Y_r} \P(p_r^*V_r),$$
where $V_i$ is the stable bundle on $E$ such that $E^{(m_i)}\simeq \P(V_i)$, and $p_i:Y_r\to E$ is the projection to
the $i$th factor. Thus, we can identify $T_p$ with the direct sum of pull-backs of the relative tangent bundles to
$\P(p_i^*V_i)\to Y_r$. Hence, taking into account \eqref{T-rel-addition-map}, we get an isomorphism on $Y_r$,
$$Rp_*T_p\simeq \bigoplus_{i=1}^r p_i^*\und{\End}_0(V_i).$$

Since $r\ge 2$, each projection $p_i:Y_r\to E$ can be identified with the projection of the direct product $E\times E^{r-2}$ to the first factor.
Since $H^*(E,\und{\End}_0(V_i))=0$, this implies by Kunneth formula that $H^*(p_i^*\und{\End}_0(V_i))=0$.
Therefore, we deduce that $H^*(X,T_p)=0$.
Now the first exact sequence 
$$0\to T_p\to T_X\to p^*T_{Y_r}\to 0$$
gives an isomorphism 
$$H^0(T_X)\rTo{\sim} H^0(p^*T_{Y_r})\simeq H^0(T_{Y_r}).$$

Now the assertion follows from the fact that we have compatible actions of $Z^0$ on $X$ and $Y_r$,
and that the map $T_0Z^0\to H^0(T_{Y_r})$ induced by this action is an isomorphism.
\end{proof}

\begin{prop}\label{ab-sym-inf-rigid-prop} 
(i) Let $A$ be an abelian variety. Then for every ample line bundle $L$ on $A$, the pair $(A,L)$ is infinitesimally rigid.

\noindent
(ii) Consider the variety $X=X(m_1,\ldots,m_r)$ associated with an elliptic curve $E$ and an $r$-tuple $(m_1,\ldots,m_r)$, where $r\ge 2$
(see Definition \ref{X-variety-def}).
Then for every ample line bundle $L$ on $X$, the pair $(X,L)$ is infinitesimally rigid.
\end{prop}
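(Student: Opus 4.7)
The plan is to prove (i) via standard facts on polarized abelian varieties and then deduce (ii) from (i) by means of Lemma \ref{inf-rigid-pullback-lem} applied to an orbit map of the $Z^0$-action on $X$.

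For (i), on an abelian variety $A$ the tangent sheaf is trivialized by translations, giving $H^0(A,T_A) \cong T_0 A$ and $H^1(\OO_A) \cong T_0 A^\vee$. Under these identifications the map $\cup c_1(L) : H^0(T_A) \to H^1(\OO_A)$ is the differential at the origin of the polarization morphism $\phi_L : A \to A^\vee$, $a \mapsto t_a^* L \otimes L^{-1}$. Since $L$ is ample, $\phi_L$ is an isogeny, so its differential at the origin is injective (in fact an isomorphism), and Lemma \ref{inf-rigid-lem} then yields infinitesimal rigidity of $(A,L)$.

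For (ii), I fix a generic $x_0 \in X$ and consider the orbit map $f : Z^0 \to X$, $z \mapsto z \cdot x_0$; I will verify the hypotheses of Lemma \ref{inf-rigid-pullback-lem} with $Y = Z^0$, together with ampleness of $f^*L$. First, $f^* : H^1(\OO_X) \to H^1(\OO_{Z^0})$ is injective: the fibers of $p : X \to Y_r$ are products of projective spaces, so $Rp_*\OO_X \cong \OO_{Y_r}$ and the Leray spectral sequence makes $p^* : H^1(\OO_{Y_r}) \to H^1(\OO_X)$ an isomorphism; the composition $p \circ f$ differs from the isogeny $p : Z^0 \to Y_r$ only by a translation of $Y_r$, and isogenies of abelian varieties induce injections on $H^1(\OO)$. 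Second, for a generic $(D_1,\ldots,D_r) \in X$, each $D_i$ has trivial translation-stabilizer in $E$, so the $Z^0$-stabilizer of $x_0$ is trivial; together with the isomorphism $T_0 Z^0 \xrightarrow{\sim} H^0(T_X)$ afforded by Lemma \ref{tangent-sym-lem} and dimension count, this gives injectivity of $H^0(T_X) \to H^0(f^*T_X)$. Third, the containment condition is tautological: the fundamental vector field on $X$ attached to $w \in T_0 Z^0$ pulls back via $f$ to the image of the translation-invariant vector field on $Z^0$ extending $w$, which lies in the image of $H^0(T_{Z^0}) \to H^0(f^*T_X)$. Finally, triviality of the generic stabilizer combined with properness of $Z^0$ shows that $f$ is a finite morphism onto its image, so $f^* L$ is ample; part (i) applied to $(Z^0, f^*L)$ then shows this pair is infinitesimally rigid, and Lemma \ref{inf-rigid-pullback-lem} concludes infinitesimal rigidity of $(X,L)$.

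The main delicate point is the genericity statement: one must verify that a generic $r$-tuple $(D_1,\ldots,D_r) \in X(m_1,\ldots,m_r)$ has trivial $Z^0$-stabilizer. This is handled componentwise, since the locus in each $E^{(m_i)}$ of divisors with nontrivial translation-symmetry is a proper closed subset and the condition defining $Z^0$ is independent of this genericity; the remaining verifications are routine applications of the cited lemmas.
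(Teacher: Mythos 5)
Your argument is correct and follows essentially the same route as the paper: part (i) via nondegeneracy of the polarization class (you phrase it through the isogeny $\phi_L$, the paper through nondegeneracy of $c_1(L)\in H^1(A,\OO)\ot T^*$ — equivalent standard facts), and part (ii) via the orbit map $f:Z^0\to X$ combined with Lemma \ref{inf-rigid-pullback-lem}, Lemma \ref{tangent-sym-lem} and part (i). The only divergence is your genericity/trivial-stabilizer choice of the base point $x_0$, which is correct (and your sketch of the componentwise genericity can be completed since each projection $X\to E^{(m_i)}$ is surjective for $r\ge 2$) but unnecessary: for an arbitrary base point the composite $p\circ f:Z^0\to Y_r$ is an isogeny up to translation, so $f$ is automatically finite and the fundamental vector fields, which span $H^0(X,T_X)$ by Lemma \ref{tangent-sym-lem}, pull back injectively, which is how the paper argues.
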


\begin{proof} 
(i) By Lemma \ref{inf-rigid-lem}, it is enough to check that for any ample line bundle $L$ on $A$ the map \eqref{c1-map-eq} is injective.
Using the triviality of the tangent bundle to $A$, we get an identification
$H^1(A,\Om^1_A)\simeq H^1(A,\OO)\ot T^*$, where $T=T_0A$. It is well known that for ample $L$ the tensor $c_1(L)\in H^1(A,\OO)\ot T^*$ is nondegenerate.
Hence, the corresponding cup product map
$$H^0(A,T_A)\simeq T\to H^1(A,\OO)$$ 
is an isomorphism.

(ii) Since $p:X\to Y_r$ satisfies $Rp_*\OO_X\simeq \OO_{Y_r}$, the map $p^*:H^1(\OO_{Y_r})\to H^1(\OO_X)$ is an isomorphism.
Recall that we have an action of the abelian variety $Z^0$ on $X$.
Let $f:Z^0\to X$ given by the action on some point $x\in X$ (so its image is the corresponding orbit). 
Then the composed map $Z^0\to X\to Y_r$ is the composition of an isogeny with a translation,
so it induces an isomorphism $H^1(\OO_{Y_r})\to H^1(\OO_{Z^0})$. It follows that the restriction map $H^1(\OO_X)\to H^1(\OO_{Z^0})$ is an isomorphism.

On the other hand, Lemma \ref{tangent-sym-lem} implies that the map $H^0(X,T_X)\to H^0(Z^0,f^*T_X)$ is injective and its image is contained in the subspace
$H^0(Z^0,T_{Z^0})$. Note also that the map $f$ is finite, so for any ample line bundle $L$, the pull-back $f^*L$ is still ample. Now the assertion follows 
from Lemma \ref{inf-rigid-pullback-lem} and from part (i).
\end{proof}

\begin{cor}\label{no-vec-fields-cor} 
Let $X$ be a variety as in Proposition \ref{ab-sym-inf-rigid-prop}(ii). Then for any projective embedding
$X\sub \P^n$, such that $X$ is not contained in any hyperplane, there are no nonzero vector fields on $\P^n$,
preserving $X$.
\end{cor}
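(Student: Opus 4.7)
The proof is essentially a direct combination of the two preceding propositions, so the main task is to check that the hypotheses of Proposition \ref{inf-rigid-emb-prop} are all satisfied in our setting.

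The plan is as follows. First I would verify that $X = X(m_1,\ldots,m_r)$ is a smooth connected projective variety: it is defined as a fiber of the addition morphism
\[
E^{(m_1)}\times\ldots\times E^{(m_r)}\to E,
\]
and this morphism is smooth (each factor $E^{(m_i)}\to E$ is a projective bundle, hence smooth), so the fiber $X$ is smooth. It is projective as a closed subscheme of a product of projective bundles over $E$, and connected because the map $p:X\to Y_r$ used in Lemma \ref{tangent-sym-lem} has connected fibers (products of projective spaces) and connected base $Y_r \simeq E^{r-1}$.

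Next, given any projective embedding $X\hra \P^n$ such that $X$ is not contained in a hyperplane, the restriction $L := \OO(1)|_X$ is very ample, hence ample on $X$. By Proposition \ref{ab-sym-inf-rigid-prop}(ii), the pair $(X,L)$ is infinitesimally rigid. All hypotheses of Proposition \ref{inf-rigid-emb-prop} being met, we conclude that no nonzero global vector field on $\P^n$ preserves $X$.

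There is no hard step here: the work was already carried out in Proposition \ref{ab-sym-inf-rigid-prop}(ii), whose proof compared vector fields on $X$ to those on the abelian variety $Z^0$ via the orbit map, using Lemma \ref{tangent-sym-lem} to identify $H^0(X,T_X)$ with the Lie algebra of $Z^0$ and Lemma \ref{inf-rigid-pullback-lem} to transfer infinitesimal rigidity from $(Z^0,f^*L)$ to $(X,L)$. The only point that deserves a one-line mention is smoothness and connectedness of $X$, which is needed to legitimately apply Proposition \ref{inf-rigid-emb-prop}.
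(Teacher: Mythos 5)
Your proof is correct and follows exactly the route the paper intends: the corollary is an immediate combination of Proposition \ref{inf-rigid-emb-prop} and Proposition \ref{ab-sym-inf-rigid-prop}(ii), and your verification that $X(m_1,\ldots,m_r)$ is smooth, connected and projective (via the smooth fibration over $Y_r$ / the smooth addition maps) with $\OO(1)|_X$ ample is precisely the only checking required.
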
 

\subsection{Application to our situation}


\begin{lemma}\label{nondeg-Xp-lem} 
Assume that $\mu(v)>1$. Then the image of the morphism $\beta_\xi:B(\xi)\to N(\xi)$
 is not contained in any hyperplane.
\end{lemma}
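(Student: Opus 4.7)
The plan is to use Serre duality on the elliptic curve $X$ to reinterpret the non-degeneracy of the image of $\beta_\xi$ as a concrete statement about compositions of morphisms in the derived category, and then to construct, for each nonzero section $s \in H^0(\xi)$, a complete stable descending chain whose iterated composition pairs nontrivially with $s$.

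First, since $\omega_X\cong\OO_X$, the composition pairing
\[
H^0(\xi) \otimes \Ext^1(\xi,\OO) \to \Ext^1(\OO,\OO) = H^1(\OO) \cong \mathbb{C},\qquad (s,e)\mapsto e\circ s,
\]
is perfect. Hyperplanes in $N(\xi) = \P\Ext^1(\xi,\OO)$ therefore correspond to lines $[s] \in \P H^0(\xi)$, and the class represented by a morphism $e:\xi\to\OO[1]$ lies on the hyperplane $H_s$ if and only if $e\circ s = 0$. So the lemma reduces to the following: for every $0 \neq s \in H^0(\xi)$, there is a complete stable descending chain $\xi \to \xi_1 \to \cdots \to \xi_p \to \OO[1]$ in $\sB(\xi)$ whose iterated composition $e$ satisfies $e\circ s \neq 0$.

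Fix such an $s$. The hypothesis $\mu(\xi)=n/k>1$ forces $n\geq 2$, so the zero divisor of $s$ is a proper finite subset of $X$, and we may pick $x\in X$ with $s(x)\neq 0$. Take the terminal term to be $\xi_p=\OO_x$, and the last arrow $\OO_x\to\OO[1]$ to be the nonzero connecting morphism associated to $0\to\OO\to\OO(x)\to\OO_x\to 0$. The pairing $e\circ s$ then factors as $\OO\to \OO_x\xrightarrow{\delta}\OO[1]$ with $\delta\neq 0$, and since $\Hom(\OO,\OO_x)$ is one-dimensional, $e\circ s\neq 0$ is equivalent to the image of $s(x)$ under the linear functional $\xi|_x\to\mathbb{C}$ induced by the composed surjection $\xi\twoheadrightarrow \xi_1\twoheadrightarrow\cdots\twoheadrightarrow\xi_{p-1}\twoheadrightarrow\OO_x$ being nonzero.

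It therefore remains to choose intermediate stable sheaves $\xi_1,\ldots,\xi_{p-1}$ of the prescribed ranks $k(i)$ and degrees $n(i)$, fitting into such a chain, so that the resulting flag of quotients at $x$ does not annihilate $s(x)$. I would construct the chain by descending induction on $i$, starting from $\xi_{p-1}$ (a line bundle of degree $n_1$ surjecting onto $\OO_x$), and successively picking each $\xi_i$ from its positive-dimensional moduli of stable bundles admitting a nonzero map to $\xi_{i+1}$, so that the kernel $S_i|_x\subset\xi_i|_x$ of the induced surjection $\xi_i|_x\twoheadrightarrow\xi_{i+1}|_x$ does not contain the image of $s(x)$. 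The main obstacle is verifying that, as $\xi_i$ varies, the subspace $S_i|_x\subset\xi_i|_x$ sweeps out a dense subset of the Grassmannian of $r(i)$-planes in $\xi_i|_x$; this should follow from the irreducibility of the moduli of stable sheaves of prescribed rank and degree on an elliptic curve together with a standard tangent-map computation, but formalizing it requires care.
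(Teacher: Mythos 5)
Your Serre-duality reduction is correct and is genuinely different in spirit from the paper's proof: you translate "not contained in a hyperplane" into the statement that for every nonzero $s\in H^0(\xi)$ some chain $\xi\to\xi_1\to\cdots\to\xi_p\to\OO[1]$ has composition pairing nontrivially with $s$, whereas the paper argues globally, using that $\beta_\xi$ is equivariant for the action of the $n$-torsion points $E_n$ (coming from the Heisenberg group of $\xi$) and that the Heisenberg representation on $H^0(\xi)^*$ is irreducible, so any $E_n$-orbit in the image already spans $\P\Ext^1(\xi,\OO)$. Your computation that the composite $\OO\to\OO_x\to\OO[1]$ is nonzero, and hence that everything reduces to making the image of $s(x)$ survive the fiber maps $\xi|_x\to\xi_1|_x\to\cdots\to\OO_x$, is also fine.

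However, the final step has a genuine gap, and the claim you propose to prove is false as stated. The moduli of choices of $\xi_{i+1}$ (stable of rank $k(i+1)$, degree $n(i+1)$, and every such sheaf receives a unique-up-to-scalar surjection from $\xi_i$) is isomorphic to $X$, hence $1$-dimensional, while the Grassmannian of $r(i)$-planes in $\xi_i|_x$ has dimension $r(i)(k(i)-r(i))$, which exceeds $1$ in general (e.g.\ for $n/k=5/3$ one has $k(0)=3$, $r(0)=2$, so $\mathrm{Gr}(2,3)$ has dimension $2$); so the subspaces $S_i|_x$ cannot sweep out a dense subset of the Grassmannian. What you actually need is the weaker statement that, for a given nonzero vector $v_i\in\xi_i|_x$ (or nonzero section $t_i\in H^0(\xi_i)$), \emph{some} choice of $\xi_{i+1}$ has $v_i\notin S_i|_x$, i.e.\ the one-parameter family of subspaces $\{S_i|_x\}$ is not entirely contained in the divisor of subspaces through $v_i$. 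This is essentially a one-step version of the lemma itself, and no argument is given for it; it is exactly the kind of statement the paper's irreducibility/equivariance argument is designed to settle, so without it your induction does not close. There is also a directional inconsistency: you run the induction downward from $\xi_{p-1}$, but the condition you impose refers to "the image of $s(x)$ in $\xi_i|_x$", which is only defined once the earlier part of the chain $\xi\to\xi_1\to\cdots\to\xi_i$ has been fixed; the induction has to proceed forward from $\xi_1$ (with $x$, or the last arrow, chosen at the end), and then the surviving-vector condition at each step is precisely the unproved claim above.
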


\begin{proof} 
Recall that $N(\xi)$ is isomorphic to $\P H^0(E,\xi)^*\simeq \P H^1(E,\xi^\vee)$.
Let us consider the group $H$ of pairs $(x\in E, \a:\xi\rTo{\sim} t_x^*\xi)$. It is well known that $H$ is a Heisenberg group,
which is an extension of $E_n$ (the group of $n$-torsion points in $E$) by $\G_m$, and the representation of $H$ on $H^0(E,\xi)^*\simeq H^1(E,\xi^\vee)$
is irreducible. Let us consider the induced action of $E_n$ on the projective space $\P H^1(E,\xi^\vee)$.
It is easy to see that the map $\beta_\xi: B(\xi)\to N(\xi)$ is equivariant with respect to the group $E_n$.
It follows that the image of $\beta_\xi$ in $\P H^1(E,\xi^\vee)$ is $E_n$-invariant. But any $E_n$-orbit contained in the image of $\beta_\xi$ linearly spans the ambient projective space, by irreducibility.
This implies the assertion.
\end{proof}

\begin{theorem} \label{thm:nosym}
Assume $n>k+1$. Then there are no nonzero Poisson vector fields for the Feigin-Odesskii bracket
$q_{n,k}$ on $\P^{n-1}$.
\end{theorem}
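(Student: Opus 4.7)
The plan is to show that any Poisson vector field $v$ on $(N(\xi), q_{n,k}) \cong (\P^{n-1}, q_{n,k})$ must preserve the image $Z := \b_\xi(B(\xi))$ of the bosonization morphism, and then invoke the rigidity machinery developed in this section to force $v = 0$.

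First I would observe that the flow of a Poisson vector field consists of Poisson automorphisms of $(N(\xi), q_{n,k})$, and hence preserves the vanishing locus $Z_0$ of the bivector. By Theorem \ref{thm:zero} together with Proposition \ref{betapoisson} and Lemma \ref{lem:Gm}, the induced morphism $\b_\xi: B(\xi) \to N(\xi)$ on coarse moduli factors through $Z_0$, so $Z \subset Z_0$. The crucial intermediate step is that $Z$ is actually an irreducible component of $Z_0$: once this is known, the finiteness of the set of components of $Z_0$ together with the continuity of the flow of $v$ force $v$ to preserve $Z$ itself. This component claim is the hard part of the proof; I would establish it by showing that the Poisson bivector $\pi$ has generic corank equal to $\dim Z = p$ at a generic point of $Z$, using the explicit chain-map representative of $\pi$ from Section \ref{sec:Poieps} applied to a generic extension $\cO \to V_0 = \bigoplus_j U_j$ as described in Lemma \ref{qi-formal-lem}.

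Second, the hypothesis $n > k+1$ rules out the degenerate case $n/k = (k+1)/k$ (in which all $n_i = 2$), so at least one $n_i$ in the continued fraction expansion of $n/k$ satisfies $n_i \ge 3$; consequently the partition $\tau$ of $\{0, \ldots, p\}$ has $r \ge 2$ blocks $I_1, \ldots, I_r$. By Theorem \ref{thm:general ni}, $Z$ is then identified with a fiber of $\bar\alpha: X^{p+1}/\fS^\tau_{p+1} \to X$, i.e., with the variety $X(|I_1|, \ldots, |I_r|)$ of Definition \ref{X-variety-def} with $r \ge 2$ factors. Since $\mu(\xi) = n/k > 1$, Lemma \ref{nondeg-Xp-lem} guarantees that $Z$ is linearly nondegenerate in $\P^{n-1}$. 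All hypotheses of Corollary \ref{no-vec-fields-cor} are then satisfied, and it forces any vector field on $\P^{n-1}$ preserving $Z$ to vanish. Therefore $v = 0$, as claimed.
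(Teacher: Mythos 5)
Your overall architecture is the same as the paper's: a Poisson vector field preserves each irreducible component of the vanishing locus of the bivector, the image $Z$ of $\b_\xi$ is such a component, it is linearly nondegenerate (Lemma \ref{nondeg-Xp-lem}) and isomorphic to a variety $X(m_1,\ldots,m_r)$ with $r\ge 2$ (Theorem \ref{thm:general ni}, with $n>k+1$ ruling out the case where all $n_i=2$), so Corollary \ref{no-vec-fields-cor} applies. Your bookkeeping that $n>k+1$ forces at least one $n_i\ge 3$, hence $r\ge 2$ blocks in $\tau$, and that $\mu(\xi)=n/k>1$, is correct.

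However, there is a genuine gap at precisely the step you flag as the hard one: that $Z$ is an irreducible component of the vanishing locus $Z_0$. The criterion you propose --- that the bivector has generic corank $\dim Z=p$ at a generic point of $Z$ --- cannot work: since $Z\subset Z_0$, the bivector vanishes identically at every point of $Z$, so its corank there is $n-1$, not $p$; and in any case knowing the corank at points of $Z$ gives no control on whether $Z_0$ is strictly larger than $Z$ near those points, which is what the component claim requires. The paper closes this step with the actual content of Lemma \ref{qi-formal-lem} (not merely its description of a generic point): for a reduced family of bundles through a generic split bundle $V_0=\bigoplus_j U^0_j$ in the image, with $\dim\End(V_b)$ constant, the family lies formally in the image of $\beta_{n,k}$. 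Combined with the fact that vanishing of the Feigin--Odesskii bivector at $[\OO\to V]$ forces $\dim\End(V)$ to take its maximal value $n=\dim\End(V_0)$ (cf.\ Lemma \ref{lem:unique section}), this shows that any irreducible subvariety of $Z_0$ containing $Z$ coincides with $Z$ formally at a generic point, hence equals $Z$. Your proposal cites Lemma \ref{qi-formal-lem} only in passing and substitutes an argument that does not establish the component claim, so the key step remains unproved.
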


\begin{proof} Any Poisson vector field preserves every irreducible component of the vanishing locus of the Poisson structure. Lemma \ref{qi-formal-lem} implies that
the image of the morphism $\b_\xi$ is such a component. Also, by Lemma \ref{nondeg-Xp-lem}, it is not contained in any hyperplane, and by
Theorem \ref{thm:general ni}, it is isomorphic to the variety of the form $X(m_1,\ldots,m_r)$ with $r\ge 2$ (see Definition \ref{X-variety-def}). Now, by Corollary \ref{no-vec-fields-cor}, any vector field preserving the
image of $\b_\xi$ is zero.
\end{proof}

\section{Case $k=1$ with arbitrary $m$}\label{sec:p=1}

\subsection{A formula for the Poisson bracket}

Let us consider in more detail the case $k=p=1$. Then $\xi$ is a line bundle of degree $n$, and we can consider an open substack $\sB'(\xi,m)\subset \sB(\xi,m)$
consisting of 
$\xymatrix{\xi\ar[r]^{\phi} & \cO_D \ar[r]^{u} & \cO[1]}$
such that $\phi\neq 0$ and $u$ is a generator of $\Ext^1(\cO_D,\cO)$ as an $H^0(\cO_D)$-module.
Then we can identify $\sB'(\xi,m)$ with a $\G_m$-gerbe over a certain projective bundle over $X^{(m)}$. Namely, we can replace $\cO_D$ by an isomorphic sheaf $\cO_D(D)$.
Then every $u$ is obtained by applying an automorphism of $\cO_D$ to the canonical element $u_0\in \Ext^1(\cO_D(D),\cO)$, the class of the extension
$$0\to \cO\to \cO(D)\to \cO_D(D)\to 0.$$
Now $\phi$ is a nonzero element of $H^0(\xi^{-1}(D)|_D)$, and automorphisms of $\xi$ act on the latter space by rescaling.
Thus, the coarse moduli space $B'(\xi,m)$ of $\sB'(\xi,m)$ can be identified with $\P \cV$, where $\cV$ is a vector bundle over $X^{(m)}$ with the fiber $H^0(\xi^{-1}(D)|_D)$ over $D$.
Our goal is to calculate the corresponding classical Poisson bracket on $B'(\xi,m)\simeq \P \cV$. 

Let 
$$\tau_D:H^0(\OO_D(D))\to k$$
denote the canonical residue functional (corresponding to a choice of the trivialization of $\om_X$). 
We set
$$H_D:=\ker(\tau_D)\sub H^0(\OO_D(D)).$$
The exact sequence $0\to \OO\to \OO(D)\to \OO_D(D)\to 0$ induces an identification $H_D\simeq H^0(\OO(D))/k$. We consider the map
$$F_D:H_D\rTo{\sim} H^0(\OO(D))/k\to H^0(\OO(D)|_{2D})/k.$$
For each $\phi\in H^0(\xi^{-1}(D)|_D)$, we consider the induced map
$$\rho_\phi: H^0(\xi|_{2D})\to H^0(\xi|_D)\rTo{\phi} H^0(\OO(D)|_D).$$

\begin{prop}\label{prop:poi-p=1}
Assume $\phi$ induces an isomorphism $\OO_D\to \xi^{-1}(D)|_D$. Then 

\noindent
(i) we have a natural identification of the cotangent spaces to $B'(\xi,m)$ at the point corresponding
to $(D,\phi)$, where $\phi\in H^0(\xi^{-1}(D)|_D)$:
$$T_{(D,\phi)}B'(\xi,m)\simeq \rho_\phi^{-1}(H_D)\sub H^0(\xi|_{2D}).$$


\noindent
(ii) Under the identification of (i), the Poisson bracket on $B'(\xi,m)$ corresponds to the skew-symmetric form on $\rho_\phi^{-1}(H_D)$ given by
\begin{equation}\label{Pi-pairing-formula}
\lan x,x'\ran=\tau_D\bigl([-F_D(\rho_\phi(x))\cdot x'+F_D(\rho_\phi(x'))\cdot x]\cdot \phi\bigr).
\end{equation}
Here we view the product $F_D(\rho_\phi(x))\cdot x'$ as an element of $H^0(\xi(D)|_{2D})/\lan x'|_D\ran$ and the difference
$-F_D(\rho_\la(x))x'+F_D(\rho_\la(x'))x$ is in fact a well defined element of $H^0(\xi|_D)/\lan x|_D, x'|_D\ran$.
Since $\tau_D(\lan x|_D, x'|_D\ran\cdot\phi)=0$, the right-hand side of our formula is well defined.
\end{prop}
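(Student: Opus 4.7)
The strategy is to unwind, at the specific chain $v=(\xi\xrightarrow{\phi}\OO_D(D)\xrightarrow{u_0}\OO[1])$, both the cotangent complex of $\sB'(\xi,m)\subset\R\uchain^{[0,2]}(X)$ and the chain-map representative of its Poisson bivector from Lemma \ref{biv-Ch}, and then match the two against the explicit subspace $\rho_\phi^{-1}(H_D)\subset H^0(\xi|_{2D})$.

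For part (i), apply Theorem \ref{thm:basechange}(b) to the Lagrangian $[\xi]\times[\OO[1]]\hookrightarrow\R\uperf(X)^{\times 2}$ on the outer factors, so that $\sB'(\xi,m)$ becomes a Poisson substack whose tangent complex at $v$ is the \v{C}ech hypercohomology of
\[
\bigl[\und\End(\OO_D(D))\xrightarrow{\partial}\und\Hom(\xi,\OO_D(D))\oplus\und\Hom(\OO_D(D),\OO[1])\bigr][1],\qquad \partial(a)=(-a\phi,u_0 a);
\]
Serre duality ($\omega_X\simeq\OO$) supplies the cotangent complex. Using the resolutions $0\to\OO\to\OO(kD)\to\OO_{kD}(kD)\to 0$ for $k=1,2$, one has $\Ext^1(\OO_{kD}(kD),\xi)\simeq H^0(\xi|_{kD})$, and the natural morphism $\R\Hom(\OO_D(D),\xi)\to\R\Hom(\OO_{2D}(2D),\xi)$ built from $u_0$ embeds the $\HH^0$ of the cotangent complex into $H^0(\xi|_{2D})$. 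One then matches the projective-bundle cotangent sequence of $\P\cV\to X^{(m)}$ with $0\to H^0(\xi(-D)|_D)\to\rho_\phi^{-1}(H_D)\to H_D\to 0$, using $T^*_D X^{(m)}\simeq H^0(\OO_D)\simeq H^0(\xi(-D)|_D)$ (the second isomorphism via $\phi$) and the Euler-sequence identification $T^*_{[\phi]}\P\cV_D\simeq H_D$ supplied by the residue pairing $H^0(\OO_D)\otimes H^0(\OO_D(D))\to k$.

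For part (ii), Lemma \ref{biv-Ch} gives the Poisson bivector at $v$ as the chain map sending $(a_{10},a_{21})\mapsto(a_{10}\phi,\phi a_{10},u_0 a_{21})$ and $(b_{00},b_{11},b_{22})\mapsto(0,u_0 b_{11})$. On a two-patch \v{C}ech cover $X=U_+\cup U_-$ with $D\subset U_+$, lift $x,x'\in\rho_\phi^{-1}(H_D)$ to cotangent cocycles by using $F_D(\rho_\phi(\cdot))\in H^0(\OO(D)|_{2D})/k$ for the $H_D$-component (this is exactly the coboundary datum from $0\to\OO\to\OO(D)\to\OO_D(D)\to 0$) and placing the $H^0(\xi(-D)|_D)$-component on $U_+$. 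Plugging the cocycles into the bivector chain map, then pairing with $x'$ (respectively $x$), and applying the Serre trace on $X$, produces $\tau_D\bigl([-F_D(\rho_\phi(x))x'+F_D(\rho_\phi(x'))x]\phi\bigr)$; skew-symmetry is automatic, and the ambiguity of $F_D(\cdot)$ modulo $k$ together with the ambiguity in $x|_D,x'|_D$ cancels under $\tau_D(\cdot\phi)$ by skew-symmetry of the residue pairing, yielding well-definedness of the right-hand side.

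\textbf{Main obstacle.} The principal technical burden is the \v{C}ech bookkeeping in part (ii): one must choose representatives compatible with the filtration on $\rho_\phi^{-1}(H_D)$ from part (i) so that the bivector chain map unwinds cleanly into $-F_D(\rho_\phi(x))x'+F_D(\rho_\phi(x'))x$ with no extraneous terms, and then check the ambiguity cancellation. A subtler issue, arising in part (i), is that the cotangent class lands in $H^0(\xi|_{2D})$ only after lifting along $\Ext^1(\OO_D(D),\xi)\to \Ext^1(\OO_{2D}(2D),\xi)$, a lift canonical only modulo $H^0(\xi(-D)|_D)$; this indeterminacy matches exactly the base-cotangent piece $T^*_D X^{(m)}\simeq H^0(\xi(-D)|_D)$ via $\phi$, and confirming that the two fit together to produce precisely $\rho_\phi^{-1}(H_D)$ (rather than an isomorphic but differently-embedded subspace) is the heart of the identification.
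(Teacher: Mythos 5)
Your overall strategy is the paper's: describe $\sB'(\xi,m)$ inside the chain moduli, use the explicit bivector of Lemma \ref{biv-Ch}, and compute the pairing via Serre duality/residues. However, part (i) as set up is wrong, and the error is exactly at the point that carries the content of the statement. Base change along the residue gerbes $[\xi]\times[\OO[1]]$ does not simply delete the $\und\End(\xi)$ and $\und\End(\OO[1])$ columns: the gerbes contribute their automorphisms, and the correct tangent space of the coarse space is $\ker\bigl(\HH^1[\bigoplus_i\und\End(V_i)\to \und\Hom(\xi,\OO_D(D))\oplus\und\Hom(\OO_D(D),\OO[1])]\to H^1(\und\End(\xi)\oplus\und\End(\OO))\bigr)$, which is what the paper uses in Step 1. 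Your two-term complex $[\und\End(\OO_D(D))\to\und\Hom(\xi,\OO_D(D))\oplus\und\Hom(\OO_D(D),\OO[1])][1]$ computes instead the tangent space of the moduli with $\xi$ and $\OO[1]$ \emph{strictly} trivialized, i.e.\ of (an open part of) $\tot(\VV)$ rather than of $B'(\xi,m)=\P\VV$: its $\HH$ in the relevant degree has dimension $2m$ (with $\deg D=m$), while $\dim B'(\xi,m)=2m-1=\dim\rho_\phi^{-1}(H_D)$, and even the Euler characteristics disagree. Dually, your cotangent space would be all of $H^0(\xi|_{2D})$, so the hyperplane condition $\tau_D(\rho_\phi(x))=0$ cutting out $\rho_\phi^{-1}(H_D)$ cannot emerge from your setup; in the paper it comes precisely from the trace map to $H^1(\OO\oplus\OO)$, i.e.\ the $\ker(\a)$ computation of Step 4. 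The ``indeterminacy'' you flag at the end is the symptom of this missing scalar direction (the action of $\Aut(\xi)\times\Aut(\OO)$ modulo the global scalar), not a feature that resolves itself.

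For part (ii), the plan is an assertion that the \v{C}ech bookkeeping produces \eqref{Pi-pairing-formula}, but it rests on the flawed identification of (i), and it also skips an unavoidable technical step: $\und\Hom(\OO_D(D),\xi)$, $\und\Hom(\OO[1],\OO_D(D))$ and $\und\End(\OO_D(D))$ are genuinely derived objects (they have nonzero $\und{\mathrm{Ext}}^1$), so the cotangent classes and the bivector of Lemma \ref{biv-Ch} cannot be represented by \v{C}ech cochains of honest sheaf homomorphisms on a two-patch cover. One must first replace the chain by complexes of locally free sheaves (the paper's Step 2 resolutions $V_0=[\xi(-D-E)\to\xi(-D)\oplus\xi(-E)]$, $V_1=[\OO\to\OO(D)]$, involving an auxiliary divisor $E$) and transport the bivector through explicit chain-level quasi-isomorphisms; only then do the two terms $-F_D(\rho_\phi(x))x'$ and $F_D(\rho_\phi(x'))x$ appear, as the two components $H^1(\OO(-D))$ and $H^0(\OO_D(D))$ of the tangent space matched against $\Phi^\vee$ and $-\Phi$ via the residue pairing (the paper's Steps 5--8). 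As written, neither the hyperplane condition of (i) nor the mechanism producing the two terms in (ii) is actually derived, so the proof is not yet there.
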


\begin{proof}
\noindent
{\bf Step 1. General formula for the contangent space}.
We adopt the notation of Section \ref{sec:chains}.
We start with the identification of the tangent space
$$TB'\simeq \ker(H^1\Big(\Cone[E_{00}\oplus E_{11}\oplus E_{22}\rTo{d_h} E_{01}\oplus E_{12}][-1]\Big)\to H^1(E_{00}\oplus E_{22})),$$
where the differential $d_h$ is $(a_{00},a_{11},a_{22})\mapsto (da_{00}-a_{11}d,da_{11}-a_{22}d)$,
and the dual identification of the cotangent space with
\begin{equation}\label{T*B-first-identification}
T^*B'\simeq \coker(H^0(E_{00}\oplus E_{22})\to H^1\Big(\Cone[E_{10}\oplus E_{21}\rTo{d_h^\vee} E_{00}\oplus E_{11}\oplus E_{22}][-1]\Big)),
\end{equation}
where $d_h^\vee(a_{10},a_{21})=(-a_{10}d,da_{10}-a_{21}d,da_{21})$.
Using a natural exact sequence of complexes, we can rewrite the latter identification as
\begin{equation}\label{alpha-map-eq}
T^*B'\simeq \ker(H^1(L^\bullet)\rTo{\a} H^1(E_{00}\oplus E_{22})),
\end{equation}
where 
$$L^\bullet:=\Big(\Cone[E_{10}\oplus E_{21}\rTo{\de_h} E_{11}][-1]\Big), \ \ \de_h(a_{10},a_{21})=da_{10}-a_{21}d,$$ 
and
the map $\a$ factors through the map $H^1(E_{10}\oplus E_{21})\to H^1(E_{00}\oplus E_{22})$, induced
by the morphism $E_{10}\oplus E_{21}\to E_{00}\oplus E_{22}: (x_{10},x_{21})\mapsto (x_{10}d,dx_{21})$.

\noindent
{\bf Step 2. Choice of resolutions}.
In our case we will choose certain complexes $V_0$, $V_1$, together with quasi-isomorphisms $V_0\to \xi$, $V_1\to \OO_D(D)$, and we set $V_2=\OO[1]$,
so that both morphisms $\phi:\xi\to \OO_D(D)$ and $u:\OO_D(D)\to \OO[1]$ are represented by chain maps $\phi:V_0\to V_1$, $u:V_1\to V_2$.
For this we pick an auxiliary positive divisor $E$, such that $E\cap D=\emptyset$, and lift $\phi$ to a morphism $\wt{\phi}:\xi\to \OO(E+D)$ (where we use
an isomorphism $\OO(E+D)|_D\simeq \OO_D(D)$). Then we consider a complex concentrated in degrees $[-1,0]$,
$$V_0:=[\xi(-D-E)\rTo{(-1,1)} \xi(-D)\oplus \xi(-E)].$$
Note that the natural embedding $\xi(-D)\to \xi$ and $\xi(-E)\to \xi$ induce a quasi-isomorphism $V_0\to \xi$. Next, we consider the standard resolution of $\OO_D(D)$,
$$V_1:=[\OO\to \OO(D)],$$
concentrated in degrees $[-1,0]$. We have a natural map $V_1\to \OO[1]$ with the unique nontrivial component $\id:\OO\to \OO$.
Also, $\phi$ is represented by the chain map $\phi_V:V_0\to V_1$ which is given by $\wt{\phi}$ on $\xi(-D-E)$ and $\xi(-E)$, and is zero on $\xi(-D)$.

\noindent
{\bf Step 3. Computation of $H^1(L^\bullet)$}.
We have natural quasi-isomorphisms 
$$E_{10}\to \und{\Hom}(V_1,\xi)=[\xi(-D)\rTo{-1} \xi],$$ 
$$E_{21}=\und{\Hom}(\OO[1],V_1)\to \und{\Hom}(\OO[1],\OO_D(D))=\OO_D(D)[-1], \text{ and }$$
$$E_{11}\to \und{\Hom}(V_1,\OO_D(D))=\OO_D\oplus \OO_D(D)[-1],$$
and the following diagram of chain maps of complexes, where the vertical arrows are quasi-isomorphisms: 
\begin{equation}\label{E21-E10-to-E11-eq}
\begin{diagram}
E_{10}\oplus E_{21}&\rTo{\de_h}& E_{11}\\
\dTo{}&&\dTo{}\\
[\xi(-D)\stackrel{-1}{\to} \xi]\oplus \OO_D(D)[-1]&\rTo{\nu}& \OO_D\oplus \OO_D(D)[-1],
\end{diagram}
\end{equation}
where $\nu$ has components $\phi:\xi(-D)\to \OO_D$, $\phi:\xi\to \OO_D(D)$ and $-\id:\OO_D(D)\to \OO_D(D)$.
Consider the chain map
$$[\xi(-2D)\rTo{1} \xi]\to [\xi(-D)\rTo{-1} \xi]\oplus \OO_D(D)[-1]$$
with the components $-1:\xi(-2D)\to \xi(-D)$, $\id:\xi\to \xi$, $\phi:\xi\to \OO_D(D)$. 
Then it induces a quasi-isomorphism of 
$[\xi(-2D)\to \xi]$ with the kernel of $\nu$, and hence an isomorphism 
$$H^0(\xi|_{2D})=H^1[\xi(-2D)\to \xi]\rTo{\sim} H^1\Big(\Cone[E_{10}\oplus E_{21}\to E_{11}][-1]\Big)=H^1(L^\bullet).$$

In fact, this isomorphism is induced by a chain map
\begin{equation}\label{xi-2D-E-chain-map}
[\xi(-2D-E)\rTo{1} \xi(-E)]\to L^\bullet
\end{equation}
with the components 
$$\xi(-E)\rTo{(0,\id,\wt{\phi})}(\xi(-D)\oplus \xi(-E))\oplus \OO(D)=V_0^0\oplus V_1^0\sub E_{10}\oplus E_{21}\sub L^\bullet,$$
$$\xi(-2D-E)\rTo{(-1,\wt{\phi},\wt{\phi})}\xi(-D-E)\oplus \OO\oplus \OO(-D)\sub V_0^0(-D)\oplus V_1^{-1}\oplus V_1^{-1}(-D)\sub
E_{10}\oplus E_{21}\oplus E_{11}[-1]\sub L^\bullet.$$

\noindent
{\bf Step 4. Computation of $\ker(\a)$}.
Next, we need to compute the kernel of the map $\a$ from this space to $H^1(E_{00}\oplus E_{22})$ (see \eqref{alpha-map-eq}).
We begin by observing that the trace morphisms $\tr:E_{ii}\to \OO$ induce isomorphisms on $H^1$, for $i=0,1,2$.
Hence, we can replace the map \eqref{alpha-map-eq} by the map induced on $H^1$ by the map of complexes
\begin{diagram}
E_{10}\oplus E_{21}&\rTo{}& E_{11}\\
\dTo{}\\
\OO\oplus \OO
\end{diagram}
given by $(x_{10},x_{21})\mapsto (\tr(x_{10}d),\tr(dx_{21}))$.
Now, since the differential in the top complex is given by $(x_{10},x_{21})\mapsto dx_{10}-x_{21}d$, the equality
$\tr(dx_{21})=\tr(x_{21}d)$ shows that the above map of complexes is homotopic to
$$(x_{10},x_{21})\mapsto (2\tr(x_{10}d),0).$$ 
Hence,
the kernel of $\a$ is identified with
the kernel of the map to $H^1(\OO)$, coming from the chain map 
$$E_{10}=\und{\Hom}(V_1,V_0)\to \OO$$
with components 
$$V_0^0(-D)\to \xi(-D-E)\rTo{\wt{\phi}} \OO, \ \ V_0^{-1}=\xi(-D-E)\rTo{-\wt{\phi}} \OO.$$ 

It follows that the composed chain map 
$[\xi(-2D-E)\to \xi(-E)]\to L^\bullet\to \OO$
is given by the component $-\wt{\phi}:\xi(-2D-E)\to \OO$.

Hence, the space $\ker(\a)$ can be identified with the kernel of the map induced on $H^1$ by the composition
$$[\xi(-2D-E)\to \xi(-E)]\to [\xi(-D-E)\to \xi(-E)]\to \OO.$$
We can rewrite this as a composition
$$[\xi(-2D-E)\to \xi(-E)]\to [\xi(-D-E)\to \xi(-E)]\to
[\OO\to \OO(D)]\to \OO,$$
where the middle chain map is given by $\wt{\phi}$.
At the level of the derived category, this corresponds to the composition
$$\xi|_{2D}[-1]\to \xi|_D[-1]\rTo{\phi} \OO_D(D)[-1]\rTo \OO,$$
which induces the map $\tau_D\circ\rho_\phi$ on $H^1$.
Thus, we get the claimed identification of the cotangent space $T^*B'=\ker(\a)$.

\noindent
{\bf Step 5. Identification of the tangent space}.

Recall that $TB'$ is identified with the kernel of the map
$$\ga:H^1C^\bullet\to H^1(E_{00}\oplus E_{22})\simeq H^1(\OO\oplus \OO),$$
where 
$$C^\bullet:=\Cone[E_{00}\oplus E_{11}\oplus E_{22}\rTo{d_h} E_{01}\oplus E_{12}][-1].$$

From the exact sequence of complexes
$$0\to \OO(D)\to V_1\to V_2\to 0,$$ 
we get the identification
$$E_{12}/E_{22}\rTo{\sim} \und{\Hom}(\OO(D),V_2)=V_2(-D).$$
As before, we will use the quasi-isomorphism $E_{11}\to \OO_D\oplus\OO_D(D)[-1]$. In addition,
we have natural quasi-isomorphisms $E_{00}\rTo{\tr}\OO$, $V_2(-D)\to [\OO\to \OO_D][1]$, and $E_{01}\to \und{\Hom}(V_0,\OO_D(D))\to \xi^{-1}(D)|_D$
induced by the projection 
$$\und{\Hom}(V_0^0,\OO_D(D))\to \und{\Hom}(\xi(-E),\OO_D(D))\simeq \xi^{-1}(D)|_D.$$ 

There is a homotopy $h$ from the composition $E_{00}\rTo{d\circ?} E_{01}\to \xi^{-1}(D)|_D$ to the composition $E_{00}\rTo{\tr}\OO\rTo{\phi} \xi^{-1}(D)|_D$ 
given by 
$$h:\und{\Hom}(V_0^{-1},V_0^0)=\OO(E)\oplus \OO(D)\to \OO(E)|_D=\OO_D\rTo{\phi} \xi^{-1}(D)|_D.$$
Next, the composition $E_{11}\rTo{-?\circ d} E_{01}\to \xi^{-1}(D)|_D$ coincides with the composition $E_{11}\to \OO_D\rTo{-\phi}\xi^{-1}(D)|_D$,
while the composition $E_{11}\rTo{d\circ ?} E_{12}/E_{22}\to V_2(-D)$ coincides with the composition $E_{11}\to V_1(-D)\to V_2(-D)$, so it
is homotopic to the composition $E_{11}\to \OO_D\to [\OO\to \OO_D][1]$, via the homotopy $h'$ with the component $E_{11}\to \und{\Hom}(V_1^0,V_1^0)=\OO\rTo{\id}\OO$.


Thus, we have quasi-isomorphisms
$$C^\bullet\to \Cone[E_{00}\oplus E_{11}\rTo{d_h} E_{01}\oplus E_{12}/E_{22}][-1] \to
\Cone[\OO\oplus (\OO_D\oplus \OO_D(D)[-1])\to \xi^{-1}(D)|_D\oplus [\OO\to\OO_D][1]][-1]\simeq \wt{C}^\bullet,$$
where
$$\wt{C}^\bullet:=\OO\cdot e_1\oplus \OO_D\oplus \OO\cdot e_2\rTo{\de}  \OO_D(D)\oplus \xi^{-1}(D)|_D\oplus \OO_D,$$
with 
$$\de=\left(\begin{matrix} 0 & 0 & 0 \\ \phi & \phi & 0 \\ 0 & -\id & -1 \end{matrix}\right).$$
The map $C^\bullet\to \wt{C}^\bullet$ is given by the components 
$$E_{00}\rTo{(\tr,-h)} \OO\cdot e_1\oplus \xi^{-1}(D)|_D, \ E_{01}[-1]\to \xi^{-1}(D)|_D[-1], \ E_{12}[-1]\to V_2(-D)[-1]=\OO(-D)\to \OO\cdot e_2,$$
$$E_{11}\to \und{\Hom}(V_1^0,V_1^0)\oplus \und{\Hom}(V_1^0,V_1^1)[-1]=\OO\oplus \OO(D)[-1]\rTo{(1,-\id),1} (\OO_D\oplus \OO\cdot e_2)\oplus \OO_D(D)[-1].$$

Furthermore, we claim that the map $\ga:H^1C^\bullet\to H^1(\OO\oplus \OO)$
is represented by the natural projection $\wt{C}^\bullet\to \OO\cdot e_1\oplus \OO\cdot e_2$.
Indeed, $\OO\cdot e_1$ corresponds to $E_{00}$, so the assertion follows from the fact that the composed map
$$\Cone[E_{22}\to E_{12}][-1]\to E_{22}\to \OO$$
can be identified with the projection $[\OO\cdot e_2\to \OO_D]\to \OO$.

Next, we observe that we have a quasi-isomorphism
$$\wt{C}^\bullet\to \ov{C}^\bullet:=[\OO\cdot e_1\oplus \OO\cdot e_2\rTo{\de'} \OO_D(D)\oplus \OO_D],$$
with
$$\de'=\left(\begin{matrix} 0 & 0 \\ 1 & -1 \end{matrix}\right),$$
given by the identity on both components of $\OO$, on $\OO_D[-1]$ and on $\OO_D(D)[-1]$, and by $\phi^{-1}:\xi^{-1}(D)|_D[-1]\to \OO_D[-1]$.

Finally, we have an obvious quasi-isomorphism
$$\OO(-D)\cdot e_1\oplus \OO\cdot (e_1+e_2)\oplus \OO_D(D)[-1] \hra \ov{C}^\bullet,$$
so that looking at the kernel of $\ga$, we get an identification
$$TB'\simeq \ker(H^1(\OO(-D))\to H^1(\OO))\oplus H^0(\OO_D(D)).$$

\noindent
{\bf Step 6. Identification of the Poisson structure: chain realization}.
Recall (see Lemma \ref{biv-Ch}) that the Poisson structure $T^*B'\to TB'$ is induced by the map of cones of the rows in the commutative square
\begin{equation}\label{Pois-str-chain-map-eq}
\begin{diagram}
E_{10}\oplus E_{21} &\rTo{} & E_{11}\\
\dTo{d_v} &&\dTo{d'_v}\\
E_{00}\oplus E_{11}\oplus E_{22}&\rTo{d_h}& E_{01}\oplus E_{12}
\end{diagram}
\end{equation}
where the vertical maps are
$$d_v(a_{10},a_{21})=(a_{10}d,da_{10},da_{21}), \ \ d'_v(b_{11})=(b_{11}d,0).$$
More precisely we use embedding 
$$T^*B'\sub H^1(L^\bullet)=H^1\Big(\Cone[E_{10}\oplus E_{21}\to E_{11}][-1]\Big), \ \ TB'\sub H^1(C^\bullet)=H^1\Big(\Cone[E_{00}\oplus E_{11}\oplus E_{22}\to E_{01}\oplus E_{12}][-1]\Big)$$
(in both cases we consider the kernel of the natural map to $H^1(E_{00}\oplus E_{22})$).
An important observation is that $d_v$ differs from $d_h^\vee$ in that it has zero component $E_{21}\to E_{11}$.

Recall that in Step 3 we constructed a quasi-isomorphism from $[\xi(-2D-E)\to \xi(-E)]$ to
 the shifted cone of the first row of \eqref{Pois-str-chain-map-eq}, $L^\bullet$.
Calculating the composition
\begin{equation}\label{Pois-str-chain-map-bis-eq} 
[\xi(-2D-E)\to \xi(-E)]\to L^\bullet\to C^\bullet\to \wt{C}^\bullet \to \ov{C}^\bullet, 
\end{equation}
we get the components 
$-\wt{\phi}:\xi(-2D-E)\to \OO(-D)\cdot e_1\sub\OO\cdot e_1$ and $-\phi:\xi(-E)\to \OO_D(D)$.

Thus, the induced map on $H^1$,
$$H^0(\xi|_{2D})\rTo{\sim} H^1(L^\bullet)\to H^1(\ov{C}^\bullet)\simeq H^1(\OO(-D)\cdot e_1\oplus \OO\cdot (e_1+e_2))\oplus H^0(\OO_D(D))$$
has components $H^0(\xi|_{2D})\to H^1(\xi(-2D-E))\rTo{-\wt{\phi}} H^1(\OO(-D))$ and $H^0(\xi|_{2D})\to H^0(\xi|_D)\rTo{-\phi} H^0(\OO_D(D))$,
where we use the connecting homomorphism corresponding to the exact sequence
$$0\to \xi(-2D-E)\to \xi(-E)\to \xi|_{2D}\to 0.$$

\noindent
{\bf Step 7. Dual point of view on $TB'$}.
Let us consider the perfect pairing
$$R(\cdot,\cdot):H^0(\xi^{-1}(2D)|_{2D})\ot H^0(\xi|_{2D}) \to k: R(s,t)=\Res_{2D}(s\cdot t).$$
It induces a perfect pairing between $T^*B'\sub H^0(\xi|_{2D})$ and $H^0(\xi^{-1}(2D)|_{2D})/\lan \phi\ran$, where
we view $\phi$ as an element of $H^0(\xi^{-1}(D)|_D)\sub H^0(\xi^{-1}(2D)|_{2D})$.

Let us compute the corresponding map 
$$\si:H^0(\xi^{-1}(2D)|_{2D})/\lan\phi\ran\rTo{\sim} TB'.$$ 
We claim that it comes from a natural map
$$H^0(\xi^{-1}(2D)|_{2D})\rTo{\wt{\phi}|_{2D}^{-1}} H^0(\OO(D)|_{2D})\rTo{\sim} H^1\Cone[E_{11}\to E_{01}\oplus E_{12}][-1]=H^1(L^\vee[-1])\to TB',$$
where the second arrow comes from the quasi-isomorphisms
$$\OO_{2D}(D)[-1]\simeq [\OO\rTo{(-1,\phi)}\OO(D)\oplus \xi^{-1}(D)|_D]\simeq 
\Cone[\ker(E_{11}\to E_{12})\to E_{01}][-1]\simeq L^\vee[-1],$$
where $\ker(E_{11}\to E_{12})\simeq [\OO\rTo{-1}\OO(D)]$.
We also claim that the composed arrow $H^0(\OO_{2D}(D))\to TB'\to H^1(\ov{C}^\bullet)$ is induced by the chain map
$$[\OO(-D)\rTo{-1} \OO(D)]\to \ov{C}^\bullet$$
with the components $1:\OO(-D)\to \OO\cdot e_1$ and $1:\OO(D)\to \OO_D(D)$.
In other words, the composition 
$$H^0(\OO(D)|_{2D})/\lan 1\ran \rTo{\wt{\phi}} H^0(\xi^{-1}(2D)|_{2D})/\lan \wt{\phi}\ran\rTo{\si} TB'\to H^1(\OO(-D)\oplus \OO_D(D)[-1])$$ 
is induced by the chain map
$$[\OO(-D)\rTo{1} \OO(D)]\to \OO(-D)\oplus \OO_D(D)[-1]$$ 
with components $-\id:\OO(-D)\to \OO(-D)$ and $1:\OO(D)[-1]\to \OO_D(D)[-1]$.
The two claims above follow immediately from the following two commutative diagrams of chain maps
\begin{diagram}
\Cone[[\OO\to \OO(D)]\to E_{01}][-1]&\rTo{}& [\OO\to \OO(D)\oplus \xi^{-1}(D)|_D]\\
\dTo{}&&\dTo{}\\
L^\vee[-1]&\rTo{}&\ov{C}
\end{diagram}
where the bottom arrow is the composition $L^\vee[-1]\to C\to \ov{C}$ and right vertical arrow has components $\id:\OO\to \OO\cdot e_1$, $1:\OO(D)[-1]\to \OO_D(D)[-1]$ and 
$\phi^{-1}:\xi^{-1}(D)|_D[-1]\to \OO_D[-1]$; and
\begin{diagram}
\Cone[[\OO\to \OO(D)]\to E_{01}][-1]&\rTo{}& [\OO\to \OO(D)\oplus \xi^{-1}(D)|_D]\\
\dTo{}&&\dTo{}\\
L^\vee[-1]&\rTo{}&\xi^{-1}(2D)|_{2D}
\end{diagram}
where the bottom arrow is obtained from the map dual to 
\eqref{xi-2D-E-chain-map}, and the right vertical arrow has components $\wt{\phi}:\OO(D)\to \xi^{-1}(2D)|_{2D}$ and the natural embedding $\xi^{-1}(D)|_D\to \xi^{-1}(2D)|_{2D}$. 

\noindent
{\bf Step 8. Final computation}.

Note that since the map $\tau_D$ is given by the residue at $D$, we can rewrite the formula \eqref{Pi-pairing-formula} as
$$\lan x,x'\ran=-R(\Phi(x),x')+R(\Phi(x'),x),$$
where $\Phi:T^*B'\to TB'$ is the composition of the map
$$T^*B'\to H^0(\xi^{-1}(2D)|_{2D})/\lan \phi\ran: x\mapsto \wt{\phi}\cdot F_D(\phi\cdot x|_D)$$
with the isomorphism $\si:H^0(\xi^{-1}(2D)|_{2D})/\lan \phi\ran\to TB'$.
In other words, the Poisson tensor is given by $-\Phi+\Phi^\vee$.

It remains to identify $-\Phi$ and $\Phi^\vee$ with two components of the map we calculated in Step 6.
In fact, we want to identify the map $\Phi$ with the map 
$$T^*B'\to H^0(\xi|_{2D})\to H^0(\xi|_D)\rTo{\phi}H^0(\OO_D(D))\sub TB'$$
and the map $\Phi^\vee$ with the map
$$T^*B'\to \ker(H^1(\OO(-D))\to H^1(\OO))\sub TB',$$
induced by the composition $H^0(\xi|_{2D})\to H^1(\xi(-2D-E))\rTo{-\wt{\phi}} H^1(\OO(-D))$.

For the first identification, we need to prove that the embedding $H_D\to H^0(\OO_D(D))\to TB'$ coincides with the composition
$$H_D\rTo{F_D}H^0(\OO(D)|_{2D})/\lan 1\ran\rTo{\wt{\phi}} H^0(\xi^{-1}(2D)|_{2D})/\lan \phi\ran.$$
Due to the description of the isomorphism $\si\wt{\phi}:H^0(\OO(D)|_{2D})/\lan 1\ran\to TB'$ in Step 7, this is equivalent to identifying the composition
$$H_D\rTo{F_D} H^1[\OO(-D)\to \OO(D)]\to H^1[\OO(-D)\rTo{0} \OO_D(D)]$$
with the natural embedding $H_D\to H^0(\OO_D(D))$. But this is clear from the definition of $F_D$, since the composition 
$$H^0(\OO(D))\to H^1[\OO(-D)\to \OO(D)]\to H^1[\OO(-D)\rTo{0} \OO_D(D)]$$
coincides with the natural map $H^0(\OO(D))\to H^0(\OO_D(D))$.
 
It is easy to see that under the dualities
$$H_D^\vee\simeq \ker(H^1(\OO(-D))\to H^1(\OO)), \ \ (H^0(\OO(D)|_{2D})/\lan 1\ran)^\vee\simeq \ker(H^1[\OO(-D)\to \OO(D)]\to H^1(\OO)),$$
the map $F_D^\vee$ dual to $F_D$ is induced by the natural map
$$H^1(\OO(-D)\to \OO(D))\to H^1(\OO(-D)).$$ 
Hence, $\Phi^\vee$ can be identified with the composition
\begin{align*}
&T^*B'\simeq \ker(H^1[\OO(-D)\to \OO(D)]\to H^1(\OO))\to \ker(H^1(\OO(-D))\to H^1(\OO))\simeq \\
&H^0(\OO_D)/\lan 1\ran\rTo{\phi} H^0(\xi^{-1}(D)|_D)/\lan \phi\ran\to H^0(\xi^{-1}(2D)|_{2D})/\lan \phi\ran\rTo{\si} TB'.
\end{align*} 
The result of Step 7 implies that the composition 
$$H^0(\OO_D)/\lan 1\ran \rTo{\phi} H^0(\xi^{-1}(D)|_D)/\lan \phi\ran\to H^0(\xi^{-1}(2D)|_{2D})/\lan \phi\ran\rTo{\si} TB'\to H^1(\OO(-D)\oplus \OO_D(D)[-1])$$
is induced by the chain map 
$$[\OO(-D)\to \OO]\to \OO(-D)\oplus \OO_D(D)[-1]$$
with the component $-\id:\OO(-D)\to \OO(-D)$.
This easily implies that the composition of $\Phi^\vee$ with the embedding $TB'\hra H^1(\OO(-D))\oplus H^0(\OO_D(D))$ is induced by negative of the map
$H^1(\OO(-D)\to \OO(D))\to H^1(\OO(-D))$, which leads to the claimed identification.
\end{proof}

 \subsection{Local coordinates}

Recall that $\VV$ denotes the vector bundle over $X^{(m)}$ with the fiber $H^0(\xi^{-1}(D)|_D)$ over $D$.
Given an open subset $U\sub X$ and a section $s\in \xi(U)$, we define a function $y(s)$ on the open subset 
$\tot(\VV|_{U^{(m)}})\sub \tot(\VV)$ by
$$y(s)(D,\phi)=\tau_D(\phi\cdot s).$$
It is easy to check that if $\phi$ is regular then the cotangent vector at $(D,\phi)$ associated with $y(s)$ depends only on
$s|_{2D}$ and this defines an isomorphism
$$H^0(\xi|_{2D})\rTo{\sim} T^*_{(D,\phi)}\tot(\VV).$$
Furthermore, under this identification, the cotangent map to the projection $\tot(\VV)\to X^{(m)}$ is given by
$$T^*_D X^{(m)}\simeq H^0(\OO_D)\rTo{\phi^{-1}} H^0(\xi(-D)|_D)\sub H^0(\xi|_{2D})\simeq  T^*_{(D,\phi)}\tot(\VV).$$
 
Now let us fix an open subset  $U_0\sub X$ and a nonvanishing section $s_0$ of $\xi|_{U_0}$.
We consider an \'etale open $\wt{\tot(\VV)}\to \tot(\VV)$ corresponding to a choice of distinct ordered points 
$p_1,\ldots,p_n\in U_0$ such that $D=p_1+\ldots+p_n$.

We denote by $x_i:\wt{\tot(\VV)}\to U_0$ the projection to $p_i$, and we define an additional set of $n$ functions,
$$y_i:\wt{\tot(\VV)}\to\A^1:(p_1,\ldots,p_n,\phi)\mapsto \Res_{p_i}(\phi s_0),$$
$i=1,\ldots,n$.
Then under the above identification of the cotangent space $T^*_{(D,\phi)}\tot(\VV)$ with $H^0(\xi|_{2D})$, we have
$$dx_i=\phi^{-1}|_{p_i}\in H^0(L(-p_i)|_{p_i}),$$
$$dy_i=s_0|_{2p_i}.$$
Note that the functions $x_i$ and $y_i/y_j$ descend to local functions on the projectivization $\P \VV$.

We have $\rho_\phi(dx_i)=0$, so 
$$\lan dx_i,dx_j\ran=0.$$
Hence, the Poisson bracket of $x_i$ with $x_j$ vanishes.

We have $\rho_\phi(dy_i)=(\phi s_0)|_{p_i}$, so 
$$\Res_{p_i}\rho_\phi(\frac{dy_i}{y_i})=1.$$
Thus, the differentials $dx_i$ and $dy_i/y_i-dy_j/y_j$ span (the pull-back of) the cotangent space to $\P\VV$.

Furthermore,
$$h_{ij}:=F(\rho_\phi(dy_i/y_i-dy_j/y_j)) \mod k$$
is a function in $H^0(\OO(p_i+p_j))$ that has the residue $1$ at $p_i$ and the residue $-1$ at $p_j$.

Therefore, we get
$$\lan \frac{dy_i}{y_i}-\frac{dy_j}{y_j}, dx_k\ran=\Res_{p_k} h_{ij}=\de_{ik}-\de_{jk}.$$
Note that
$$d(y_j/y_i)=-\frac{y_j}{y_i}\cdot(\frac{dy_i}{y_i}-\frac{dy_j}{y_j}).$$
Thus, the above formula can be rewritten as the following formula for the Poisson bracket:
$$\{\frac{y_j}{y_i},x_k\}=(\de_{jk}-\de_{ik})\frac{y_j}{y_i}.$$

Finally, we have
\begin{align*}
&\lan \frac{dy_i}{y_i}-\frac{dy_j}{y_j}, \frac{dy_i}{y_i}-\frac{dy_k}{y_k}\ran=
\tau_D\bigl([h_{ij}\cdot (s_0|_{2p_i}-s_0|_{2p_k})-h_{ik}\cdot (s_0|_{2p_i}-s_0|_{2p_j})]\cdot\phi\bigr)=\\
&(h_{ij}-h_{ik})(p_i)-h_{ij}(p_k)+h_{ik}(p_j).
\end{align*}
In other words,
$$\{\frac{y_j}{y_i},\frac{y_k}{y_i}\}=[(h_{ij}-h_{ik})(x_i)-h_{ij}(x_k)+h_{ik}(x_j)]\cdot \frac{y_j}{y_i}\cdot\frac{y_k}{y_i}.$$
Note that if use the uniformization of $X$ compatible with our choice of the global differential, then we can take
$$h_{ij}(x)=\zeta(x-x_i)-\zeta(x-x_j),$$
where $\zeta(x)$ is the Weierstrass zeta-function.
Then the above formula can be rewritten as
$$\{\frac{y_j}{y_i},\frac{y_k}{y_i}\}=2[\zeta(x_i-x_k)+\zeta(x_k-x_j)+\zeta(x_j-x_i)]\cdot \frac{y_j}{y_i}\cdot\frac{y_k}{y_i}.$$
Since $\zeta(x)$ is equal to the logarithmic derivative of the theta-function $\th(x)$ (with zero at $x=0$), we arrive at the following formulas for our Poisson structure.

\begin{prop}\label{Pois-br-bos-coord-prop}
Let us modify the variables $y_i$ by setting
$$y'_i:=y_i\cdot \prod_{j\neq i}\th(x_j-x_i).$$
Then in the variables $(x_i)$, $(y'_j/y'_i)$, the Poisson bracket is given by
$$\{x_i,x_j\}=0, \ \ \{\frac{y'_j}{y'_i},\frac{y'_k}{y'_i}\}=0,$$
$$\{\frac{y'_j}{y'_i},x_k\}=(\de_{jk}-\de_{ik})\frac{y'_j}{y'_i}.$$
\end{prop}

The formulas of Proposition \ref{Pois-br-bos-coord-prop} are compatible with those in \cite{Ode02}.

 
\section*{Appendix A: the second proof of Theorem \ref{thm:mixmoduliPoisson}}
There is an alternative way to interpret the stack morphism $p,q$ defined in Section \ref{sec:poisson str}. Since 
$\R\eps\uperf\simeq \ul\Map_{st}([\A^1\big/ \G_m], \R\uperf)$ by Lemma \ref{lem:1}, the inclusion of closed and open $\G_m$-orbits
\[
i: B\G_m\to [\A^1\big/ \G_m], \hspace{2cm} j:\Spec k\simeq \G_m\big/\G_m\to [\A^1\big/ \G_m]
\] induces two maps 
\[
\R\eps\uperf\to \Map_{st}(B\G_m,\R\uperf)=\R\uperf^\gr,\hspace{.5cm} \R\eps\uperf\to \Map_{st}(\G_m\big/\G_m,\R\uperf)\simeq \R\uperf.
\]
The first map coincides with $p$ obviously.  
\begin{prop}\label{q=genericfiber}
The inclusion of the open orbit $j: \G_m\big/ \G_m\to [\A^1\big/ \G_m]$ induces the $q$ map.
\end{prop}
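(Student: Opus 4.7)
The plan is to identify, under the equivalence $\Psi:\eps_{pe}-C(k)^\gr\simeq\perf([\A^1/\G_m])$ of Lemma \ref{lem:1}, the pullback functor induced by the open orbit inclusion $j$ with the total-complex functor that defines $q$. A convenient first step is to factor the open immersion as $j=\pi\circ\iota_1$, where $\pi:\A^1\to[\A^1/\G_m]$ is the $\G_m$-quotient and $\iota_1:\Spec k\to\A^1$ is the closed point $s=1$; consequently $j^*=\iota_1^*\circ\pi^*$ on $\perf$.

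I would then explicate $\pi^*\circ\Psi$ on a graded mixed complex $(E,\eps)$ by tracing through the Koszul duality underlying Lemma \ref{lem:1}. Under this equivalence $(E,\eps)$ corresponds to a $\G_m$-equivariant perfect $k[s]$-dg-module $\widetilde E$, whose underlying graded $k$-module may be presented as $\bigoplus_p E(p)\otimes_k k[s]$ (with $E(p)\otimes s^j$ placed in weight $p+j$), equipped with a differential that combines $d_E\otimes 1$ with a Koszul-type contribution from $\eps$. Since $\eps$ has bidegree (cohomological $+1$, weight $+1$) and $s$ has bidegree (cohomological $0$, weight $+1$), they pair so as to produce a differential of bidegree (cohomological $+1$, weight $0$) compatible with the $k[s]$-action. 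Applying $\pi^*$ forgets the weight grading, after which $\iota_1^* = -\otimes_{k[s]}k$ with $s\mapsto 1$ collapses the Koszul term to $\eps$ itself, yielding $\bigoplus_p E(p)$ with total differential $d_E+\eps$. This is the total complex $\tot(E,\eps)$, so $j^*\circ\Psi\simeq\tot$, and passing to the induced morphisms of moduli stacks gives the claim.

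A conceptually cleaner parallel route, which I would pursue as a sanity check, uses the identification $[\A^1/\G_m]\simeq B(\G_m\ltimes\G_a[-1])$ from the proof of Lemma \ref{lem:1}: the open orbit $j$ corresponds to the canonical map $\Spec k\to B(\G_m\ltimes\G_a[-1])$, and the induced pullback on representations of the group stack implements the total-complex construction directly, with the $\G_a[-1]$-action contributing the $\eps$-term of the total differential.

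The main obstacle is the explicit Koszul bookkeeping in the first route — in particular, verifying the bidegree and sign conventions carefully enough that the specialization at $s=1$ genuinely produces $d_E+\eps$ rather than a different combination. Once this is settled (either directly, or via the second route using the group stack $\G_m\ltimes\G_a[-1]$), the identification of $j^*$ with $\tot$, and hence with $q$, is immediate.
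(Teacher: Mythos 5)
Your main route is correct and is essentially the paper's own proof: the paper also makes the equivalence of Lemma \ref{lem:1} explicit via BGG/Koszul duality, identifying perfect graded mixed complexes with $D_{fd}(\gr-k[\eps])$, applying $F=K\ot_{k[\eps]}-$ with $K=k[x]\ot k[\eps]$, and implementing restriction to the open orbit as $\ot\,k[x,x^{-1}]$ followed by taking the weight-zero part (checked on bounded complexes of free $k[\eps]$-modules). On objects $F$ produces exactly the Rees-type module $E\ot k[x]$ with differential $d_E$ plus the Koszul term involving $x\cdot\eps$ that you describe, and your ``pull back to $\A^1$, then take the fiber at $s=1$'' is naturally equivalent to ``invert $x$, then take weight zero''; the sign ambiguity you flag is harmless, since $(\tot E,\,d_E-\eps)\cong(\tot E,\,d_E+\eps)$ via $(-1)^p$ on weight $p$.

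One caution about your proposed sanity check: it is not true that the open orbit corresponds to the canonical map $\Spec k\to B(\G_m\ltimes\G_a[-1])$. Pullback along that canonical point is the forgetful functor $\Rep\bigl(\G_m\ltimes\G_a[-1]\bigr)\to\perf(k)$, which returns the underlying complex $\bigoplus_p E(p)$ with differential $d_E$ only, not the total complex; under the identification with $[\A^1/\G_m]$ that point lies over the \emph{closed} orbit (the derived automorphism group of the origin is $\G_m\ltimes\G_a[-1]$, coming from the derived self-intersection of $0$), whereas the open point has trivial automorphisms. A concrete test: the graded mixed complex $k(0)\xrightarrow{\ \eps\ }k(1)[1]$ with $\eps$ an isomorphism has acyclic total complex and corresponds to the torsion module $k[x]/(x)$, which restricts to zero on the open orbit but has nonzero underlying complex. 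So the second route would not verify the statement; the first route is the one to rely on, and it suffices.
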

\begin{proof}
Recall from the proof of Lemma \ref{lem:1}, we show the the equivalence of $\eps_{pe}-C(k)^\gr$ and $\perf([\A^1\big/\G_m])$ through the representation category of the group stack $\G_m\ltimes \G_a[-1]$. An alternative way to construct an equivalence is via the Beilinson-Gelfand-Gelfand correspondence. Set $k[\eps]$ to be the algebra of dual number  with $|\eps|=1$ and  $k[x]$ be the polynomial algebra with $|x|=-1$. Let $K:=k[x]\ot_k k[\eps]$ be the Koszul resolution of $k$, i.e. 
\[\xymatrix{
k[x]\cdot\eps \ar[r]^{\eps\mapsto x}  &k[x].}
\]
The functor $F:=K\ot_{k[\eps]}? : D^-(\gr-k[\eps])\simeq D^+(\gr-k[x])$ is an equivalence. The subcategory of compact objects $\perf(\gr-k[x])\subset D^+(\gr-k[x])$ is identified with the subcategory $D_{fd}(\gr-k[\eps])\subset D^-(\gr-k[\eps])$ consisting of objects with finite dimensional cohomology. Note that $\perf([\A^1\big/\G_m])$ is equivalent to $D^b(\gr-k[x])$. A graded mixed object in $C(k)$ is the same as a complexes of graded $k[\eps]$-module. The graded mixed object is perfect if and only if the corresponding complex of $k[\eps]$-modules has finite dimensional cohomology. Therefore the derived category of $\eps_{pe}-C(k)^\gr$ is equivalent with $D_{fd}(\gr-k[\eps])$. We consider the diagram
\[
\xymatrix{
D_{fd}(\gr-k[\eps])\ar[r]^F\ar[d]^{|-|} & \perf(\gr-k[x])\ar[d]^{\ot k[x,x^{-1}]}\\
\perf(k) & \perf(\gr-k[x,x^{-1}])\ar[l] _{(-)_0}
}
\] where $|-|$ is the functor taking total complex and $(-)_0$ sends a graded module 
\[
\ldots M_{-1}+M_0+M_1+\ldots
\] to $M_0$. Note that $(-)_0$ is an equivalence. To verify 
the diagram commutes it suffices to check on bounded complexes of free $k[\eps]$-modules of finite rank. 
Let $C$ be a complex
\[
\ldots \to k[\eps]^{\oplus n_i}\to k[\eps]^{\oplus n_{i+1}}\to\ldots
\]
Since $F(k[\eps])=K$, $F(C)$ is the total complex of the double complex
\[
\ldots \to K^{\oplus n_i}\to K^{\oplus n_{i+1}}\to\ldots
\] where the internal differential of $K$ is vertical. The diagram commutes since the composition of $(-)_0$ and $\ot k[x,x^{-1}]$ sends $K$ to the complex $\{\id: k\to k\}$.

Then $|-|$ induces $q$ map on the moduli stack and the functor $\ot k[x,x^{-1}]$ induces $j$.

\end{proof}

Now we briefly recall the notion of boundary structures following \cite[Section 2.2.3]{Ca14}. Let $\sX$ be a $\cO$-compact (see \cite[Definition 2.1]{PTVV}) stack that is equipped with a fundamental class $[\sX]$ of degree $d$, i.e. a morphism of complexes $\R\Gamma(\sX,\cO_\sX)\to k[-d]$ for some $n\in\Z$. 
Fix a morphism $f:\partial\sY\to \sY$ between $\cO$-compact stacks. 
A \emph{boundary structure of degree $d$} is a null homotopy $f_*[\partial\sY]\sim 0$ as a morphism from $\R\Gamma(\partial\sY,\cO_{\partial\sY})$ to $k[-d]$. Given a perfect complex on $\sY$, a boundary structure induces a morphism $\R\Gamma(\sY,E^\vee)\to \Gamma(\sY,E)^\perp$ where $\Gamma(\sY,E)^\perp$ is the mapping cone of the morphism
\[
\Gamma(\partial \sY, f^*E^\vee)\to \Gamma(\sY,E)^\vee[-d].
\]
We say that a boundary structure is \emph{non degenerate} if the above morphism is a quasi-isomorphism for any perfect complex $E$.

\begin{lemma}\label{lem: boundary}
The stack morphism $f=(i,j): B\G_m\bigsqcup \G_m\big/\G_m\to [\A^1\big/\G_m]$ admits a non-degenerate boundary structure of degree zero.
\end{lemma}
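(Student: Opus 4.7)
The plan is to exhibit explicit fundamental classes of degree $0$ on each of the stacks $[\A^1/\G_m]$, $B\G_m$, and $\G_m/\G_m \simeq \Spec k$, produce the null-homotopy witnessing the boundary structure, and verify non-degeneracy by reducing to the generators $\OO(n)$ of $\perf([\A^1/\G_m])$.

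For the fundamental classes, observe that since $\G_m$ is linearly reductive and $\A^1$ is affine, all three stacks are $\OO$-compact and their structure-sheaf cohomology is concentrated in degree $0$: namely $\R\Gamma([\A^1/\G_m], \OO) = k[x]^{\G_m} = k$, $\R\Gamma(B\G_m, \OO) = k$, and $\R\Gamma(\Spec k, \OO) = k$. Under these canonical identifications, both pullback maps $i^*$ and $j^*$ become the identity of $k$. I take the fundamental class on each of the three stacks to be the identity map $\id_k$. The induced map $f_*[\partial\sY]: \R\Gamma(\partial\sY, \OO) \to k$ is then the composition of the diagonal restriction $k \to k \oplus k$ with the difference $k \oplus k \to k$ of the two fundamental classes, which is zero; this zero equality carries a canonical null-homotopy and gives the desired boundary structure at the level of the fundamental class.

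For non-degeneracy, I need to verify that, for every perfect complex $E$ on $[\A^1/\G_m]$, the induced sequence
\[
\R\Gamma([\A^1/\G_m], E^\vee) \to \R\Gamma(B\G_m, i^*E^\vee) \oplus \R\Gamma(\Spec k, j^*E^\vee) \to \R\Gamma([\A^1/\G_m], E)^\vee
\]
is a fiber sequence. By the equivalence $\perf([\A^1/\G_m]) \simeq \perf(\gr\text{-}k[x])$ recalled in Proposition \ref{q=genericfiber}, every perfect complex on $[\A^1/\G_m]$ is built from the line bundles $\{\OO(n)\}_{n \in \Z}$ by shifts and cones. Since all three functors above take exact triangles to exact triangles and commute with shifts, it suffices to check the fiber sequence for each $E = \OO(n)$. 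For $n = 0$ this is the obvious triangle $k \to k \oplus k \to k$ with diagonal followed by difference; for $n > 0$ (respectively $n < 0$), two of the three terms vanish and the remaining arrow is the canonical isomorphism $\R\Gamma(\Spec k, j^*\OO(-n)) \cong \R\Gamma([\A^1/\G_m], \OO(n))^\vee$ induced by the trivialization of $\OO(n)$ on the open orbit $\G_m/\G_m$.

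The main technical point is to formulate the null-homotopy and the non-degeneracy coherently in Calaque's framework rather than merely at the level of cohomology; since we work in characteristic zero with the linearly reductive group $\G_m$, however, the relevant homotopy-coherent data collapses to the strict level and the verifications reduce to the elementary computations described above.
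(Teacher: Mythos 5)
Your construction is in substance the same as the paper's: the paper also works with degree-zero fundamental classes concentrated in cohomological degree zero, chosen so that the pushforward to $[\A^1/\G_m]$ is \emph{strictly} zero on explicit strict models (simplicial cochain complexes), so the null-homotopy can be taken to be zero; and its non-degeneracy argument --- that $k[t]\subset k[t,t^{-1}]\times k$ is maximal isotropic for the induced pairing --- is exactly the weight-decomposed form of your generator-by-generator check on the twists $\OO(n)$. If anything, your reduction to the generators $\OO(n)$ (closed under shifts, cones and also retracts, which you should mention, though harmlessly so) spells out more explicitly why testing a single object suffices, whereas the paper only says "it suffices to test $E=\OO$". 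Two concrete corrections are needed. First, as literally written your fundamental class does not work: if both components of $[\partial\sY]$ are $+\id_k$, then $f_*[\partial\sY]=[I]\circ i^*+[J]\circ j^*=2\neq 0$, because in Calaque's definition (as quoted in the paper) the pushforward from a disjoint union is a sum, not a difference; you must take the two components with opposite signs, exactly as the paper does ($[I]=1$ and $[J]=-1$ times the projection). Your word "difference" indicates the right intention, but it contradicts "identity on each of the three stacks". Second, in the check for $E=\OO(n)$, $n\neq 0$, it is not the case that two of the three terms of the sequence vanish: only the $B\G_m$ summand of the middle term and one of the two outer terms vanish, and depending on the sign of $n$ the surviving arrow is either the restriction $\R\Gamma([\A^1/\G_m],E^\vee)\to\R\Gamma(\Spec k,j^*E^\vee)$ or the pairing $\R\Gamma(\Spec k,j^*E^\vee)\to\R\Gamma([\A^1/\G_m],E)^\vee$; both are isomorphisms induced by the trivialization of $\OO(n)$ on the open orbit (the latter using that the component of the fundamental class on $\G_m\big/\G_m$ is a nonzero scalar), so your conclusion stands. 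Finally, the closing appeal to "collapse to the strict level" is better replaced by exhibiting strict chain models on which the composite is literally zero, which is precisely what the paper does.
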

\begin{proof}
We need to construct two morphisms
\[
[I]: \R\Gamma(B\G_m, \cO_{B\G_m})\to k, \hspace{2cm} [J]: \R\Gamma(\G_m\big/\G_m, \cO_{\G_m\big/\G_m})=k\to k
\] such that $f_*[I\sqcup J]=f_*[I]+f_*[j]\sim 0$, i.e. the morphism 
\[\xymatrix{
\R\Gamma([\A^1\big/\G_m], \cO_{[\A^1\big/\G_m]})\ar[rrr]^{(i^*,j^*)}  &&& \R\Gamma(B\G_m, \cO_{B\G_m})\times \R\Gamma(\G_m\big/\G_m, \cO_{\G_m\big/\G_m})\\
&\ar[rr]^{([I],[J])} &&  k}
\] is homotopic to zero.

We use the equivalence between quasi-coherent sheaves on stacks and quasi-coherent sheaves on the corresponding simplicial schemes. The stack morphism $i, j$ can be realized as vertical maps of the simplicial schemes below
\[
\xymatrix{
\ldots \G_m\times\G_m\ar@<-.5ex>[r]\ar@<+.5ex>[r]\ar[d] & \G_m\ar[r]\ar[d] &\Spec k\ar[d]^i\\
\ldots \G_m\times\G_m\times \A^1\ar@<-.5ex>[r]\ar@<+.5ex>[r] & \G_m\times \A^1\ar[r] &\A^1\\
\ldots \G_m\times\G_m\times \A^1\setminus 0\ar@<-.5ex>[r]\ar@<+.5ex>[r]\ar[u] & \G_m\times \A^1\setminus 0\ar[r]\ar[u] &\A^1\setminus 0\ar[u]_j\\
}
\] The right most vertical maps are the canonical inclusions. 
We do not specify the other maps since they are not used. 
Use Dold-Kan correspondence we obtain $i^*, j^*$ as cochain maps
\[
\xymatrix{
k\ar[r] & k[\eps,\eps^{-1}]\ar[r] &\ldots\\
k[t]\ar[r]\ar[u]^{i^*}\ar[d]_{j^*} & k[t]\ot k[\eps,\eps^{-1}]\ar[r]\ar[u]\ar[d] &\ldots\\
k[t,t^{-1}]\ar[r] &k[t,t^{-1}]\ot k[\eps,\eps^{-1}]\ar[r] &\ldots\\
}
\] Set $[I]$ to be the chain map that is the 
identity map
$1: k\to k$ in degree zero and zero elsewhere  and set $[J]$ to be the canonical projection $k[t,t^{-1}]\to k$ times $-1$ in degree zero and zero elsewhere.
Obviously, $[I\sqcup J]:=([I],[J])$ is a fundamental class of $B\G_m\bigsqcup \G_m\big/\G_m$ such that $f_*[I\sqcup J]=0$.

To check that the boundary structure is non-degenerate it suffices to test for the perfect object $E=\cO_{[\A^1\big/\G_m]}$. The fundamental class $[I\sqcup J]$ induces a pairing on $\R\Gamma(B\G_m\bigsqcup \G_m\big/\G_m, \cO)$ whose restriction on degree zero part $k[t,t^{-1}]\times k$ is the bilinear form
\[
\Big(\sum_{i\in\Z} a_it^i, b_0\Big) \ot \Big(\sum_{i\in\Z} \alpha_it^i, \beta_0\Big)\mapsto -\sum_{i\neq 0} a_i\alpha_{-i} -(a_0\alpha_0-b_0\beta_0). 
\]
The non-degenerateness of the boundary structure then follows from the fact that the morphism $(j^*,i^*): k[t]\subset k[t,t^{-1}]\times k$ is maximal isotropic with respect to the pairing.
\end{proof}
\begin{proof}[Proof of Theorem \ref{thm:mixmoduliPoisson}]
By Lemma \ref{lem:1} and Theorem \ref{thm:epsmodulialgebraic}, $\R\eps\uperf\simeq \ul\Map_{st}([\A^1\big/ \G_m], \R\uperf)$ is a locally geometric stack locally of finite type. Fix a morphism $f:\partial\sY\to \sY$ between $\cO$-compact stacks, and $\sX$ a locally geometric stack locally of finite type equipped with a $d$-shifted symplectic structure. Suppose the mapping stacks  $\Map_{st}(\partial \sY, \sX)$ and $\Map_{st}(\sY, \sX)$ are both locally geometric and locally of finite type. Then every boundary structure on $f:\partial \sY\to Y$ defines an isotropic structure on $f^*: \Map_{st}(\sY, \sX)\to \Map_{st}(\partial \sY, \sX)$ and moreover non-degenerate boundary structure defines a Lagrangian structure (see \cite[Theorem 2.9]{Ca14}). By Lemma \ref{lem: boundary}, we have a non-degenerate boundary structure on the map $(i,j)$. And $\R\uperf$ is equipped with a 2-shifted symplectic structure constructed by Pantev, To\"en, Vaqui\'e and Vezzosi. It clearly extends to shifted structure on $\R\uperf^\gr_b$.  As a consequence of \cite[Theorem 2.9]{Ca14}, the stack morphism
\[
\R\eps\uperf \to \R\uperf^\gr_b\times \R\uperf
\] induced by $(i,j)$ is Lagrangian. Finally by Theorem \ref{thm:MS}, $\R\eps\uperf$ is 1-shifted Poisson.

\end{proof}

\section*{Appendix B: Invariance property of the Poisson bivector under autoequivalences}
Let $X$ be a reduced Gorenstein Calabi-Yau curve and $F$ be an autoequivalence of $\perf(X)$. By \cite[Corollary 9.13]{OL10}, $F$ can be represented as a kernel functor. Therefore it induces a quasi equivalence of stacks on $\R\uperf(X)$ and on $\R\eps\uperf(X)$.

\begin{theorem}
Set $\sX=\R\eps\uperf(X)$, $F:\sX\to \sX$ a stack quasi-equivalence induced by an autoequivalence and $\pi_h: L_\sX\to T_\sX$ to be the Poisson bivector induced by the 0-shifted Poisson structure. Then there exists a nonzero scalar $\lambda$ such that the diagram commutes in the derived category of $\sX$:
\[\xymatrix{
L_\sX\ar[r]^{\lambda\cdot \pi_h} & T_\sX\ar[d] \\
F^*L_\sX\ar[u]\ar[r]^{F^*\pi_h} & F^*T_\sX.
}\]
\end{theorem}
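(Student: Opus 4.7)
The plan is to exploit the reformulation of Appendix A, under which
\[
\R\eps\uperf(X) \simeq \ul\Map_{st}([\A^1\big/\G_m] \times X, \R\uperf)
\]
and the Lagrangian correspondence $(p_X, q_X)$ arises by pullback along the orbit inclusions $i \sqcup j \colon B\G_m \sqcup \G_m\big/\G_m \hra [\A^1\big/\G_m]$ crossed with $\id_X$. Since $F$ is a Fourier--Mukai autoequivalence of $\perf(X)$, it acts on $\R\uperf(X)$, on $\R\uperf^\gr_b(X)$ coordinate-wise, and on $\R\eps\uperf(X)$ term-wise on chains of perfect complexes. The maps $p_X$ (forget differential, a term-wise operation) and $q_X$ (total complex, which commutes with triangulated functors) intertwine these actions up to canonical natural isomorphism; hence $F$ induces a commutative square over the Lagrangian correspondence.

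The heart of the proof is to show that $F$ preserves the $1$-shifted symplectic form $\om$ on $\R\uperf(X)$ up to a single nonzero scalar $\la$. By the transgression result of Pantev--To\"en--Vaqui\'e--Vezzosi, $\om$ is obtained by transgressing the universal $2$-shifted symplectic form on $\R\uperf$ --- given pointwise by the trace pairing on $\ul\End(E)$ --- along the $\cO$-orientation $\tr_X \colon \R\Gamma(X, \cO_X) \to k[-1]$ of degree $1$. As a triangulated autoequivalence, $F$ preserves $\ul\End(E)$ and composition, and by Rouquier's theorem commutes with the Serre functor of $X$. Since $X$ is a connected reduced $1$-CY variety, the orientation $\tr_X$ is determined up to a scalar by a functional on the one-dimensional space $H^1(X, \cO_X)$; the action of $F$ on Hochschild homology forces $F^*\tr_X = \la \cdot \tr_X$ for some $\la \in k^\times$, and transgressing gives $F^*\om = \la \cdot \om$. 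The same $\la$ works for the product symplectic form on $\R\uperf^\gr_b(X)$.

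By Appendix A, the Lagrangian structure on $(p_X, q_X)$ is derived from the boundary structure of Lemma \ref{lem: boundary} on $i \sqcup j$, a datum intrinsic to $[\A^1\big/\G_m]$ and independent of $F$. The simultaneous rescaling of both symplectic forms on the target by $\la$ therefore rescales the isotropic homotopy $h \colon (p_X,q_X)^*\om \sim 0$ by $\la$ while preserving non-degeneracy. It follows that the Lagrangian data on $(p_X, q_X)$ is $F$-equivariant up to the uniform scalar $\la$.

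Finally, we unwind diagram (\ref{dig:bivector}): the weight $2$ bivector $\pi_h$ is the composition $L_\sX \to L_{(p_X, q_X)}[1] \to T_\sX$, the second arrow being the inverse of the quasi-isomorphism $\Theta_{\om, h}$ determined by $(\om, h)$. Rescaling $(\om, h)$ by $\la$ rescales $\Theta_{\om, h}$, and hence $\pi_h$, by a nonzero scalar, yielding the claimed commuting square. The principal obstacle is the middle step, namely establishing that $\la$ is a global invariant of $F$ rather than a scalar that could vary across components of $\R\uperf(X)$; this rests on the uniqueness up to scalar of the CY trace for connected $X$, which pins down a single $\la$ determined by the action of $F$ on the one-dimensional target of $\tr_X$.
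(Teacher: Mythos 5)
Your proposal is correct and follows essentially the same route as the paper: pin down a single scalar $\lambda$ from the action of $F$ on the one-dimensional Hochschild group governing the Calabi--Yau trace (the paper uses $HH_1(X)\cong H^0(\omega_X)$ and an explicit Serre-functor diagram chase, you use the dual orientation functional on $H^1(X,\cO_X)$ together with Rouquier's compatibility of equivalences with Serre functors), observe that the null-homotopy comes from the boundary structure on $[\A^1\big/\G_m]$ and is therefore independent of $F$ and of the Calabi--Yau form, and then rescale $\Theta_{\omega,h}$ to rescale $\pi_h$. The only difference is that the paper's Appendix~B lemma carries out the Serre-functor computation in detail, where you invoke it in condensed form.
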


\begin{lemma} 
Set $\sY=\R\uperf(X)$, $F:\sY\to \sY$ a stack quasi-equivalence induced by an autoequivalence. Let $\omega_\beta\in $ be the closed (symplectic) form of degree 1 on $\sY$ induced by nonzero section $\beta\in H^0(X,\omega_X)$ and the canonical 2-shifted symplectic form on $\R\uperf$. Denote by $\theta_{\omega_\beta}$ the underlying 2-form, viewed as a chain map $\theta_{\omega_\beta}: T_\sY\to L_\sY[1]$. Then $F^*\theta_{\omega_\beta}= \theta_{\omega_{\lambda_F\beta}}$ on cohomology groups for some nonzero scalar $\lambda_F$.
\end{lemma}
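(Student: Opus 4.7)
The plan is to make $\theta_{\omega_\beta}$ explicit at each point $E\in\sY=\R\uperf(X)$ as the Yoneda pairing followed by the Calabi--Yau trace against $\beta$, and then to exploit functoriality of $F$ to reduce the claim to an invariance property of the trace on $\perf(X)$ under autoequivalences.

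By the construction of the transgression from the canonical $2$-shifted symplectic form on $\R\uperf$, at a point $E\in\sY$ the underlying $2$-form $\theta_{\omega_\beta}$, viewed as a map $T_E\sY\to L_E\sY[1]$, is identified with the composition
\[
\R\Hom_X(E,E)[1]^{\otimes 2}\xrightarrow{\mathrm{Yon}}\R\Hom_X(E,E)[2]\xrightarrow{\mathrm{tr}}\R\Gamma(X,\OO_X)[2]\xrightarrow{\cup\beta}k[1].
\]
Since $F$ is represented by a Fourier--Mukai kernel \cite[Corollary 9.13]{OL10}, it comes from a dg-functor of $\perf(X)$ and therefore, at every $E$, induces an isomorphism $F_*\colon\R\Hom_X(E,E)\xrightarrow{\sim}\R\Hom_X(F(E),F(E))$ preserving the Yoneda product. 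Consequently, $F^*\theta_{\omega_\beta}$ at $E$ has exactly the same shape as the formula above but with the trace factor replaced by $\mathrm{tr}_{F(E)}\circ F_*$. So proving the lemma reduces to establishing, on the relevant cohomological degree, an identity
\[
\beta\cup(\mathrm{tr}_{F(E)}\circ F_*)=\lambda_F\cdot(\beta\cup\mathrm{tr}_E)
\]
for some scalar $\lambda_F\in k^\times$ that is independent of $E$.

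To obtain $\lambda_F$ uniformly in $E$, I would factor both trace maps through the categorical Chern character $\mathrm{ch}_E\colon\R\Hom_X(E,E)\to HH_*(\perf(X))$, whose naturality under Fourier--Mukai functors gives a commutative square $\mathrm{ch}_{F(E)}\circ F_*=F^{HH}_*\circ\mathrm{ch}_E$ for an automorphism $F^{HH}_*$ of $HH_*(\perf(X))$. The composition of $\mathrm{ch}_E$ with the Calabi--Yau trace $\mathrm{tr}^{HH}\colon HH_*(\perf(X))\to k[-1]$ induced by $\beta$ and Serre duality on $X$ recovers $\beta\cup\mathrm{tr}_E$. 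Since the space of non-degenerate $1$-Calabi--Yau structures on $\perf(X)$ is one-dimensional---it is identified, via Grothendieck--Serre duality, with $H^0(X,\omega_X)\simeq k$ using that $X$ is a connected Gorenstein CY curve---we must have $\mathrm{tr}^{HH}\circ F^{HH}_*=\lambda_F\cdot\mathrm{tr}^{HH}$ for a unique $\lambda_F\in k^\times$, which yields the lemma on cohomology.

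The principal difficulty is making the one-dimensionality statement precise and verifying it for singular (e.g.\ nodal) Gorenstein CY curves, where HKR is not directly available; one would either invoke Grothendieck--Serre duality to identify the relevant graded piece of the space of trace maps on $\perf(X)$ with $H^0(X,\omega_X)$ directly, or else verify the conclusion by reducing to generators of $\mathrm{Aut}\,\perf(X)$ (twists by line bundles, pushforwards along automorphisms of $X$, shifts, and spherical twists), on each of which the induced rescaling of $\beta$ can be read off by inspection.
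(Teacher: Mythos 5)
Your outline is essentially parallel to the paper's argument: in both cases the pointwise form is the Serre-duality/trace pairing against $\beta$, functoriality of the autoequivalence transfers the pairing, and the scalar $\lambda_F$ comes from the action of $F$ on a one-dimensional space. The difference is in implementation. You route the functoriality through the categorical Chern character $\R\Hom(E,E)\to HH_*(\perf(X))$ and a trace functional on $HH_1$, which forces you to verify (i) that the transgressed PTVV form really factors through this $HH$-trace and (ii) that the space of such trace functionals is one-dimensional. The paper instead performs an explicit diagram chase with the Serre functor $S=?\otimes\omega_X[1]$ (which exists on $\perf(X)$ for projective Gorenstein $X$ by Ballard) and the canonical natural isomorphism $\tau:F\circ S\to S\circ F$: this shows directly that $F^*\theta_{\omega_\beta}=\theta_{\omega_{\beta^F}}$, where $\beta^F$ is the image of $\beta$ under the induced action on natural transformations $[1]\Rightarrow S$, so no compatibility between the shifted symplectic form and the Chern-character formalism has to be checked. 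What remains in either approach is the one-dimensionality statement, and there the two arguments literally coincide: the paper quotes Weibel's computation $HH_1(X)\cong H^0(X,\omega_X)\cong k$ for a \emph{reduced} complex projective curve, which is exactly the fact you flag as your ``principal difficulty.''

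On that remaining point, your first proposed fix is the right one but understated: Grothendieck--Serre duality alone does not identify the space of trace maps with $H^0(X,\omega_X)$; one genuinely needs a computation of $HH_1$ of the (possibly singular) curve, i.e.\ precisely the cited result of Weibel (or an equivalent identification of natural transformations $[1]\Rightarrow S$ with $H^0(\omega_X)$). Your second fallback --- reducing to a generating set of $\mathrm{Aut}\,\perf(X)$ consisting of line-bundle twists, automorphisms, shifts and spherical twists --- is not viable in the stated generality: Appendix B is for an arbitrary reduced Gorenstein Calabi--Yau curve, and for singular such curves (nodal cubics, cycles of projective lines, etc.) no such generation statement is available, whereas the $HH_1$ computation covers them uniformly. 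With the Weibel input (or the equivalent Serre-functor statement) supplied, your argument closes and gives the same conclusion, including $\lambda_F\neq 0$ since $F$ acts invertibly on the one-dimensional space.
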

\begin{proof}
For a reduced complex projective curve $X$, the first Hochschild homology $HH_1(X)$ is isomorphic to $H^0(\omega_X)\cong k$ (see \cite[Example 1.6]{Weibel97}). An autoequivalence $F$ induces an action on $HH_1$ therefore acts on $H^0(X,\omega_X)$ as scalar multiplication by $\lambda_F\neq 0$.

We compute the action of $F$ on $\beta$. Denote by $S$ the Serre functor. We refer the readers to \cite[Chapter 5]{Ballard09} for the definition and properties of Serre functor on an additive category. Let $X$ be a projective Gorenstein $k$-scheme.
By \cite[Lemma 6.6]{Ballard09}, $\perf(X)$ admits a Serre functor $S:=?\ot \omega_X[\dim X]$. 
Given two perfect complexes $A,B$, denote by
\[
\eta_{A,B}: \Hom(A,B)\to \Hom(B,SA)^\vee
\] the natural isomorphism in the definition of Serre functor. It satisfies the compatibility condition 
\[
\eta_{B,SA}^\vee\circ s_*=\eta_{A,B}
\] where $s_*: \Hom(A,B)\to\Hom(SA,SB)$ is a natural isomorphism.
There exists a natural isomorphism 
\[
\tau: F\circ S\to S\circ F
\] defined by
\[\xymatrix{
\Hom(A,FSB)\ar[r]^{(f^{-1})_*} & \Hom(F^{-1}A,SB)\ar[rr]^{\eta_{F^{-1}A,B}} & &\Hom(SB,SF^{-1}A)^\vee\\
\ar[r]^{s_*^\vee} & \Hom(B,F^{-1}A)^\vee\ar[r]^{(f^{-1})_*^\vee} &\Hom(FB,A)^\vee\ar[r]^{(\eta_{FB,A}^\vee)^{-1}} & \Hom(A,SFB)
}
\]
Note that $\beta$ defines a natural isomorphism from the shift functor to $S$. We set $\beta^F$ to be the image of $\beta$ under the induced action of $F$. Denote by $\beta_A$ the morphism from $A[1]$ to $SA$. Then
\[
(\beta^F)_A=(f^{-1})_*\circ \tau^{-1}\circ \beta_{FA}\circ f_*,
\] where $f_*: \Hom(A,B)\to\Hom(FA,FB)$ is a natural isomorphism, i.e. we have the commutative diagram
\[
\xymatrix{
\Hom(FB,FA[1])\ar[r]^{\beta_{FA}} & \Hom(FB,SFA)\ar[r]^{\tau^{-1}} & \Hom(FB,FSA)\ar[d]^{(f^{-1})_*}\\
\Hom(B,A[1])\ar[u]_{f_*}\ar[rr]^{(\beta^F)_A} &  &\Hom(B,SA)
}
\]
By a diagram chasing we obtain the following commutative diagram: 
\[\xymatrix{
\Hom(A,B)\ar[rr]^{f_*}\ar[d]^{\eta_{A,B}} && \Hom(FA,FB)\ar[d]^{\eta_{FA,FB}}\\
\Hom(B,SA)^\vee \ar[d]^{(\beta^F_A)^\vee} & & \Hom(FB,SFA)^\vee\ar[d]^{\beta_{FA}^\vee}\\
\Hom(B,A[1])^\vee & & \Hom(FB,FA[1])^\vee\ar[ll]^{f_*^\vee}
}
\]
Now set $A=B=E$ and $S=?\ot \omega_X[1]$ where $X$ is a Gorenstein CY curve. The composition of the left column is $\theta_{\omega_{\beta^F}}$ and the composition of the right column is $F^*\theta_{\omega_\beta}$. Since $\beta^F_A=\lambda_F\cdot \beta_A$, we prove the desired statement.
\end{proof}

\begin{remark}
The group of autoequivalences of a complex elliptic curve is well understood. One can check that for $F$ being the equivalence induced by the Kummer involution $\lambda_F=-1$. If the elliptic curve admits complex multiplication then the corresponding $\lambda_F$ will be a primitive root of unity.
\end{remark}

\begin{proof}[Proof of Theorem] 
Recall that  $\sX=\R\eps\uperf(X)=\Map_{st}([\A^1/\G_m], \sY)$, $\sY=\R\uperf(X)$ and $\sY^\gr=\Map_{st}(B\G_m,\sY)$. Set $f=(p,q): \sX\to \sY^\gr\times \sY$ for simplicity. Then the symplectic form $\omega_\beta$ together with the null homotopy $h: f^*\omega_\beta \sim 0$ determines a quasi-isomorphism $\Theta_{h,\beta}: T_\sX\to L_f$. The Poisson bivector $\pi_{h,\beta}: L_{\sX}\to T_\sX$ is defined (in the derived category) as the composition of $\Theta_{h,\beta}^{-1}$ and the canonical map $L_\sX\to L_f$. Here we add the lower index $\beta$ to emphasize that the bivector depends on the Calabi-Yau structure $\beta$. From Lemma \ref{lem: boundary}, we see that $h$ is determined by the boundary structure on $B\G_m\bigsqcup \G_m\big/\G_m\to [\A^1\big/\G_m]$, therefore is independent of $\beta$.
By the above lemma, $F^*$ acts on  $\R\eps\uperf(X)=\Map_{st}([\A^1\big/ \G_m],\sY)$ via its action on $\sY$. Therefore we have
\[
F^*\pi_{h,\beta}=\pi_{h,\lambda_F\beta}=\lambda_F^{-1} \pi_{h,\beta}.
\]
\end{proof}

\section*{Appendix C: Fine moduli space of bosonizations}
Recall that the stacky bosonization map $\beta_\xi:\sB(\xi)\to \sN(\xi)$ descends to an ordinary Poisson morphism $\beta_\xi: B(\xi)\to N(\xi)$ between the coarse moduli schemes. In order to study quantization of these Poisson structures one needs to know the determinant of the universal family, which can be extracted by rigidifying the moduli problem.

For a primitive vector $v=(r,d)$ with $r\geq 0$, let $M(v)$ denote the coarse moduli space of stable sheaf of rank $r$ and degree $d$ on $X$.
To normalize a universal sheaf over $M(v)\times X$, we need to rigidify the moduli problem.
One natural way of doing this is to fix a vector $v_1=(r_1,d_1)$ with $r_1\geq 0$ such that $\chi(v_1,v)=1$ and fix a stable sheaf $V_1$ with vector $v_1$.
Then in the rigidified moduli problem we consider stable sheaves $V$ with vector $v$ equipped with a trivialization of the $1$-dimensional space $\Hom(V_1,V)$.
This rigidification gives rise to the universal sheaf $\cU_{v_1}=\cU_{v_1}(v)$ on $M(v)\times X$.
The natural question is how $\cU_{v_1}$ depends on $v_1$.

Alternatively, we can fix a vector $v_2=(r_2,d_2)$ with $r_2\geq 0$ such that $\chi(v,v_2)=1$, fix a stable sheaf $V_2$ with vector $v_2$ and rigidify the moduli problem
by trivializing $\Hom(V,V_2)$. Let us denote by $\cU'_{v_2}=\cU'_{v_2}(v)$ the corresponding universal sheaf on $M(v)\times X$. 
The question is how $\cU_{v_1}$ and $\cU'_{v_2}$ are related. 

Note that by associating with a stable sheaf in $M(v)$ its determinant, we can define an isomorphism $M(v)\simeq X$ uniquely up to a translation. 
We call any such isomorphism {\it standard}.
In particular, it makes sense to talk about degrees of line bundles on $M(v)$.

\begin{lemma}\label{univ-bundle-lem1}
Let $V_{v_2}$ be a stable sheaf on $X$ with vector $v_2$ such that $\chi(v,v_2)=1$.
\begin{enumerate}
\item[$(i)$] The line bundle 
$$L(v_1,v_2):=p_{1*}\underline{\Hom}(\cU_{v_1},p_2^*V_{v_2})$$ 
on $M(v)$ has degree $-\chi(v_1,v_2)$. One has
$$\UU'_{v_2}\simeq \cU_{v_1}\ot p_1^*L(v_1,v_2).$$

\item[$(ii)$] One has 
$$\cU_{v'_1}\simeq \cU_{v_1}\ot p_1^*\Lambda,$$
with $\Lambda$ a line bundle of degree $m$, where $v'_1=v_1+mv$.
\end{enumerate}
\end{lemma}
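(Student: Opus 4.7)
The plan is to handle (i) and (ii) in parallel: first identifying the line bundle twist by a universal-property argument, then computing its degree via Grothendieck--Riemann--Roch. Both $\cU_{v_1}$ and $\cU'_{v_2}$ are universal sheaves on $M(v)\times X$ that restrict on every slice $\{t\}\times X$ to the parametrized stable sheaf $V_t$ (with $\mathrm{Aut}(V_t)=\G_m$), so they must differ by the pullback of a line bundle from $M(v)$: $\cU'_{v_2}\simeq \cU_{v_1}\otimes p_1^*L$. Applying $p_{1*}\underline{\Hom}(-,p_2^*V_{v_2})$ to both sides, the defining normalization of $\cU'_{v_2}$ forces the left-hand side to be $\OO_{M(v)}$, which identifies $L$ with $L(v_1,v_2)$. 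The analogous argument for (ii) yields $\cU_{v'_1}\simeq\cU_{v_1}\otimes p_1^*\Lambda$.

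For the degree, I plan to apply Grothendieck--Riemann--Roch to $\pi=p_1\colon M(v)\times X\to M(v)$; since $X$ is elliptic, $T_\pi$ is trivial and $\mathrm{td}(T_\pi)=1$. Writing
\[
c_1(\cU_{v_1}) = a\,p_1^*[\mathrm{pt}] + d\,p_2^*[\mathrm{pt}] + \eta, \qquad \mathrm{ch}_2(\cU_{v_1}) = K\cdot p_1^*[\mathrm{pt}]\cdot p_2^*[\mathrm{pt}]
\]
with $\eta\in H^1(M(v))\otimes H^1(X)$, a direct computation of $\pi_*\mathrm{ch}_2(\cU_{v_1}^\vee\otimes p_2^*V_{v_2})$ gives $\deg L(v_1,v_2) = r_2 K - d_2 a$. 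Applying the same calculation to the defining identity $R\pi_*\underline{\Hom}(p_2^*V_1,\cU_{v_1})=\OO_{M(v)}$ (of degree zero) yields $r_1 K = d_1 a$, hence $K = d_1 a/r_1$.

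The main technical obstacle is to pin down the integer $a=\deg\cU_{v_1}|_{M(v)\times\{x\}}$. I expect to show $a=r_1$ by invoking that for $\gcd(r,d)=1$ on the elliptic curve $X$, the integral transform $\Phi_{\cU_{v_1}}\colon D^b(X)\to D^b(M(v))$ is a Fourier--Mukai equivalence, so its induced action on Mukai vectors lies in $\mathrm{SL}_2(\Z)$. Base change sends $[\OO_x]=(0,1)\mapsto[\cU_{v_1}|_{M(v)\times\{x\}}]=(r,a)$, while the normalization sends $[V_1^\vee]=(r_1,-d_1)\mapsto[\OO_{M(v)}]=(1,0)$; the determinant of the resulting matrix works out to $a/r_1$ and must equal $\pm 1$, with the positive sign fixed by comparison with the model case $\cU=\OO(\Delta)$ on $X\times X$ (where $r_1=1$ and $a=1$). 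Substituting $K=d_1$ and $a=r_1$ into the formula above gives $\deg L(v_1,v_2) = r_2 d_1 - r_1 d_2 = -\chi(v_1,v_2)$, proving (i). For (ii), the identical analysis applied to $\cU_{v'_1}$ gives $\deg\cU_{v'_1}|_{M(v)\times\{x\}} = r'_1 = r_1 + mr$; since tensoring by $p_1^*\Lambda$ shifts this degree by $r\cdot\deg\Lambda$, we conclude $\deg\Lambda = m$.
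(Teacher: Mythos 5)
Your argument for part (i) is correct but follows a genuinely different route from the paper. The paper reduces, via an autoequivalence of $D^b(X)$, to the model case $v_1=(1,0)$, $v=(1,1)$, where $M(v)=X$ and $\cU_{v_1}=\OO_{X^2}(\De)$, computes $\deg L(v_1,v_2)$ there by Grothendieck--Riemann--Roch, and then observes that $\cU_{v_1}\ot p_1^*L(v_1,v_2)$ carries the same rigidification as $\cU'_{v_2}$. You instead work directly on $M(v)\times X$: GRR expresses $\deg L(v_1,v_2)=r_2K-d_2a$ in terms of the unknown constants $a,K$ in $\mathrm{ch}(\cU_{v_1})$, the defining normalization gives $r_1K=d_1a$, and the fact that $\Phi_{\cU_{v_1}}$ is a derived equivalence (so that the induced map on $(\rk,\deg)$ preserves the antisymmetric Euler pairing) forces $a=r_1$; note that preservation of a nondegenerate antisymmetric form in rank $2$ already gives determinant exactly $1$, so your appeal to the model case to fix a sign is unnecessary. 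Your identification $\cU'_{v_2}\simeq\cU_{v_1}\ot p_1^*L(v_1,v_2)$ by applying $p_{1*}\underline{\Hom}(-,p_2^*V_{v_2})$ is the same rigidification argument as in the paper. The trade-off: the paper's reduction is shorter but leans on the (implicit) compatibility of autoequivalences with universal sheaves and rigidifications, while your computation is uniform in $(v_1,v,v_2)$ at the cost of invoking the Bridgeland/Mukai theorem that the universal sheaf of a fine moduli space with coprime invariants is a Fourier--Mukai kernel of an equivalence.

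There is, however, a gap in your part (ii). The lemma allows $r=0$, i.e.\ $v=(0,1)$ (this case actually occurs in the paper's later application, where $v_p=(0,1)$): then $\cU_{v_1}|_{M(v)\times\{x\}}$ has rank zero, so tensoring by $p_1^*\Lambda$ does \emph{not} shift its degree by $r\cdot\deg\Lambda$, and your comparison of slice degrees gives $0=0$ rather than $\deg\Lambda=m$. The paper avoids this by deducing (ii) formally from (i): comparing both $\cU_{v_1}$ and $\cU_{v'_1}$ with a common $\cU'_{v_2}$ gives $\Lambda\simeq L(v_1,v_2)\ot L(v'_1,v_2)^{-1}$, hence $\deg\Lambda=\chi(v'_1,v_2)-\chi(v_1,v_2)=m\,\chi(v,v_2)=m$, uniformly in $r$. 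You should either restrict your slice-degree argument to $r>0$ and treat $v=(0,1)$ separately, or simply replace it by this deduction from (i).
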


\begin{proof}
(i) Using an appropriate autoequivalence we reduce to the case $v_1=(1,0)$, $v=(1,1)$, $v_2=(d-1,d)$. Then $M(v)=X$ and $\cU_{v_1}=\OO_{X^2}(\De)$.
By the Grothendieck-Riemann-Roch
\[
ch(L(v_1,v_2))={p_1}_*ch(p_2^*V_{v_2}(-\Delta)).
\]
So it has degree $-d=-\chi(v_1,v_2)$.
Now the bundle $\cU_{v_1}\ot p_1^*L(v_1,v_2)$ has the same rigidification as $\cU'_{v_2}$, so they are isomorphic.

\noindent
(ii) This immediately follows from (i).
\end{proof}
If $V_1$ and $V_1^\prime$ are stable vector bundles of vector $v_1$ then the universal sheaves differ by $p_1^*\Lambda$ for a unique line bundle $\Lambda$ of degree zero on $M(v)$.
Let us denote by $\PP=\OO(\De-p_0\times X-X\times p_0)$ the Poincar\'e line bundle on $X\times X$.

\begin{lemma}\label{poincare-moduli-lem}
For a pair of vectors with $\chi(v_1,v_2)=1$ the line bundle
$$\PP(v_1,v_2)=p_{12*}\underline{\Hom}(p_{13}^*\cU'_{v_2}(v_1),p_{23}^*\UU_{v_1}(v_2))$$
on $M(v_1)\times M(v_2)$ get identified with $\PP$ under some standard isomorphisms $M(v_i)\simeq X$.
Here $p_{ij}$ are projections from $M(v_1)\times M(v_2)\times X$ to pairwise products of factors.
\end{lemma}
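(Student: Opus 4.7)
The plan is to reduce to the base case $(v_1,v_2)=((1,0),(0,1))$ using the transitive action of autoequivalences of $\perf(X)$ on primitive bases of $K_0(X)\simeq\Z^2$ with $\chi=1$, then verify the base case by a direct projection-formula computation.

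For the base case I would take $V_{v_1}=\OO_X$, $V_{v_2}=\OO_{p_0}$, and identify $M(v_1)\simeq X$ via the Poincar\'e bundle (so that $x\in X$ corresponds to $L_x=\PP|_{\{x\}\times X}$) and $M(v_2)\simeq X$ via $\OO_p\mapsto p$. The rigidification conditions $\Hom(L,\OO_{p_0})\simeq k$ and $\Hom(\OO_X,\OO_p)\simeq k$ are canonically satisfied, so that $\cU'_{v_2}(v_1)=\PP$ (trivialized along $X\times\{p_0\}$) and $\cU_{v_1}(v_2)=\OO_\De$. Applying the projection formula to $p_{12}:X^3\to X^2$, and using that $p_{23}^*\OO_\De$ is supported on the subscheme $X\times\De_{23}$ which maps isomorphically onto $X\times X$ under $p_{12}$, one gets
\[
p_{12*}\underline{\Hom}(p_{13}^*\PP,\,p_{23}^*\OO_\De)\simeq p_{12*}\bigl(p_{13}^*\PP^{\vee}\otimes \OO_{X\times\De_{23}}\bigr)\simeq \PP^{\vee}.
\]
Since $\PP^{\vee}\simeq([-1]\times\id)^*\PP$, where $[-1]$ is inversion in the group law, this is identified with $\PP$ after composing the standard isomorphism $M(v_1)\simeq X$ with $[-1]$, which falls within the freedom in choosing an isomorphism of the moduli scheme with the elliptic curve.

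For the general case, autoequivalences of $\perf(X)$ act on $K_0(X)\simeq\Z^2$ preserving the Euler pairing $\chi$; for an elliptic curve the induced action realizes the full $SL_2(\Z)$, which is transitive on primitive ordered bases with $\chi(v_1,v_2)=1$. Thus there is an autoequivalence $F$ with $F(v_1)=(1,0)$ and $F(v_2)=(0,1)$, inducing isomorphisms $M(v_i)\simeq M(F(v_i))$ compatible with the determinant maps. Applying $F$ identifies the universal sheaves for $(v_1,v_2)$ with those for $(F(v_1),F(v_2))$, up to tensoring with line bundles pulled back from the bases $M(v_i)$ as quantified in Lemma \ref{univ-bundle-lem1}. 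Such external twists change $\PP(v_1,v_2)$ only by tensor product with pullbacks of line bundles from each factor of $M(v_1)\times M(v_2)$, which is exactly the ambiguity in choosing standard isomorphisms $M(v_i)\simeq X$. The principal obstacle will be this last bookkeeping step: since $F$ does not preserve the specific choices $V_{v_1},V_{v_2}$, one must compare the rigidifications $\Hom(F(V_{v_1}),-)$ and $\Hom(-,F(V_{v_2}))$ with the standard rigidifications of the base case, and Lemma \ref{univ-bundle-lem1}(ii) supplies the precise twist-by-line-bundle recipe needed to match them.
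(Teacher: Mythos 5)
Your strategy is the same as the paper's --- reduce by an autoequivalence to a minimal pair of vectors and then compute the pushforward directly --- but your base case is different and in fact simpler. The paper reduces to $v_1=(1,0)$, $v_2=(1,1)$, where $\cU_{v_1}(v_2)=\OO_{X^2}(\De)$ and $\cU'_{v_2}(v_1)=\OO_{X^2}(X\times p_0-\De)$, and must then identify the determinant of a rank-two pushforward via two exact sequences; you reduce to $v_1=(1,0)$, $v_2=(0,1)$, where $\cU_{v_1}(v_2)=\OO_\De$ is supported on a section of $p_{12}$, so the pushforward is computed in one line by the projection formula. Your base-case computation is correct: the rigidifications via $\OO_X$ and $\OO_{p_0}$ are legitimate, the identification $\underline{\Hom}(p_{13}^*\PP,p_{23}^*\OO_\De)\simeq p_{13}^*\PP^\vee\otimes\OO_{X\times\De_{23}}$ holds since $\PP$ is invertible, and $p_{13}$ and $p_{12}$ agree on $X\times\De_{23}$, giving $\PP^\vee$.

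Two caveats. First, your answer $\PP^\vee$ is repaired by composing $M(v_1)\simeq X$ with inversion, but the paper's ``standard isomorphisms'' are only defined up to translation, and $\PP^\vee$ is not a translate-pullback of $\PP$ (translations only produce twists by degree-zero pullbacks from the factors), so inversion is not literally within the allowed freedom. This is a normalization ambiguity rather than an error: the determinant only identifies $M((1,0))$ with $\Pic^0(X)$, and the paper itself implicitly uses the inverted Abel--Jacobi identification (its universal sheaf $\OO_{X^2}(X\times p_0-\De)$ restricts on $\{x\}\times X$ to $\OO(p_0-x)$); the clean fix is to choose the identification $x\mapsto[\OO_X(p_0-x)]$ from the start, which produces $\PP$ on the nose without invoking inversion. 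Second, the bookkeeping you flag at the end is where actual care is needed: transporting the rigidifications under $F$ changes universal sheaves by pullbacks of line bundles from the bases, and these are absorbable into standard isomorphisms only when they have degree zero. What does this is Lemma \ref{univ-bundle-lem1}(i) together with the remark that replacing the rigidifying sheaf by another stable sheaf of the same vector twists by a degree-zero line bundle; part (ii), which you cite, concerns replacing $v_1$ by $v_1+mv$ and yields twists of nonzero degree that could not be absorbed, so it is not the right tool. Since the paper is equally brief about the reduction step, I would not call this a gap, but as written your final paragraph does not yet close it.
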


\begin{proof} 
Using an appropriate autoequivalence we reduce to the case $v_1=(1,0)$, $v_2=(1,1)$.
Then taking $\OO$ and $\OO(p_0)$ as fixed line bundles of degrees $0$ and $1$, we have 
$$\cU_{v_1}(v_2)=\OO_{X^2}(\De), \ \ \cU'_{v_2}(v_1)=\OO_{X^2}(X\times p_0-\De).$$
Thus, we need to identify the line bundle
$$\PP(v_1,v_2):=p_{12*}(\OO_{X^3}(\De_{23}+\De_{13}-X^2\times p_0)).$$
The exact sequence of vector bundles 
$$0\to \PP(v_1,v_2)\to p_{12*}(\OO_{X^3}(\De_{23}+\De_{13}))\to \OO_{X^2}(X\times p_0+p_0\times X)\to 0$$
gives an isomorphism
$$\PP(v_1,v_2)\simeq (\det p_{12*}(\OO_{X^3}(\De_{23}+\De_{13})))(-X\times p_0-p_0\times X).$$
On the other hand, the exact sequence
$$0\to p_{12*}\OO_{X^3}(\De_{23})\to p_{12*}(\OO_{X^3}(\De_{23}+\De_{13}))\to \OO_{X^2}(\De)\to 0$$
gives an isomorphism
$$\det p_{12*}(\OO_{X^3}(\De_{23}+\De_{13}))\simeq \OO_{X^2}(\De),$$
so we deduce an isomorphism
$$\PP(v_1,v_2)\simeq \PP.$$
\end{proof}

Set $v_i=(k(i),n(i))$ for $i=0,\ldots,p$. As a convention we set $v_{p+1}=(-1,0)$. 
We have an isomorphism
\[
\pi=(\pi_1,\ldots,\pi_p): B(\xi)\to M(v_{1})\times\ldots \times M(v_p)
\] sending 
\[
\xi_0=\xi\to \xi_1\to\ldots \to \xi_p\to\xi_{p+1}=\cO[1]
\] to $(\xi_1,\ldots,\xi_p)$. Fix a collection of stable sheaves $V_{v_1},\ldots,V_{v_p}$ of vector $v_1,\ldots,v_p$. Then we identify $M(v_i)$ with the fine moduli space of $\{V_{v_{i+1}}\to V\}$ for stable $V$ with vector $v_i$, which leads to a universal sheaf $\cU_{v_{i+1}}(v_i)$ on $M(v_i)\times X$. For different choices of $V_{v_i}$, the corresponding universal sheaf differ by tensoring with the pullback of a degree zero line bundle over $M(v_i)$. 

\begin{theorem}
Fix standard isomorphisms $M(v_i)\cong X$ for $i=1,\ldots, p$. The pull back of $\cO(1)$ on $\P\Ext^1(\xi,\cO_X)$ by the bosonization map $\beta_\xi$ is isomorphic to the line bundle $L_1\boxtimes L_2\ldots\boxtimes L_p(-\sum_{i=1}^{p-1} \Delta_{i,i+1})$ where $L_i$ are line bundles on $M(v_i)\cong X$ with degree $n_{p-i}+1$ for $i=1,p$ and degree $n_{p-i}+2$ for $1<i<p$.
\end{theorem}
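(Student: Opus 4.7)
The plan is to realise $\beta_\xi^* \OO(-1)$ on $B(\xi) = X^p$ as a tensor product of pushforwards of universal Hom sheaves and then compute its class factor by factor.

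First, at a geometric point $([\xi_1], \ldots, [\xi_p])$ of $X^p$, the fibre of $\beta_\xi^* \OO(-1)$ is the one-dimensional subspace of $\Ext^1(\xi, \OO)$ spanned by the composition $\phi_p \circ \cdots \circ \phi_0$, where $\phi_i: \xi_i \to \xi_{i+1}$ is the unique (up to scalar) nonzero morphism (with $\xi_0 = \xi$, $\xi_{p+1} = \OO[1]$). On $\sB(\xi) \times X$ one has a universal chain $\xi \to \cU_1 \to \cdots \to \cU_p \to \OO[1]$, and setting $\mathcal{H}_i := \pi_* \uHom(\cU_i, \cU_{i+1})$ (with $\cU_0 = \xi$, $\cU_{p+1} = \OO[1]$) produces a line bundle on $\sB(\xi)$ for each $i$. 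Composition of universal morphisms defines a nowhere-vanishing map $\bigotimes_{i=0}^p \mathcal{H}_i \to \Ext^1(\xi, \OO) \otimes \OO_{\sB(\xi)}$ landing in the line spanned by the composition. The $\mathcal{H}_i$ carry $\G_m$-weights $+1, 0, \ldots, 0, -1$ under the gerbe action of Lemma \ref{lem:Gm}, so their tensor product has weight zero and descends to a line bundle on $X^p$ canonically isomorphic to $\beta_\xi^* \OO(-1)$.

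Second, I identify each $\mathcal{H}_i$ as pulled back from a low-dimensional factor. Since each $\cU_i$ comes from a universal sheaf $\wt\cU_i$ on $M(v_i) \times X \cong X \times X$, base change gives $\mathcal{H}_i = p_{i,i+1}^* \mathcal{L}_{i,i+1}$ for $1 \le i \le p-1$, where $\mathcal{L}_{i,i+1}$ is the universal Hom line bundle on $M(v_i) \times M(v_{i+1})$. Since $\chi(v_i, v_{i+1}) = 1$ by \eqref{ni-ki-det1-eq}, Lemma \ref{poincare-moduli-lem} identifies $\mathcal{L}_{i,i+1}$ with the Poincar\'e line bundle $\PP$ for a specific choice of rigidifications. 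The endpoint contributions $\mathcal{H}_0 = p_1^* \LL_0$ and $\mathcal{H}_p = p_p^* \LL_p$ are pullbacks from $M(v_1)$ and $M(v_p)$ respectively.

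Third, combining these identifications across all $i$ requires a \emph{single} coherent choice of rigidification on each $M(v_i) \times X$. Fixing the rigidification $\cU_{v_{i-1}}(v_i)$ for each $1 \le i \le p$ makes $\LL_0$ trivial, and Lemma \ref{univ-bundle-lem1}(i) then shows that $\mathcal{L}_{i,i+1}$ differs from $\PP$ by the twist $p_1^* L(v_{i-1}, v_{i+1})$ on the $M(v_i)$-factor, while $\LL_p$ is identified with $L(v_{p-1}, v_{p+1})$ on $M(v_p)$. Each such twist has degree $-\chi(v_{i-1}, v_{i+1})$, and a Laplace expansion based on \eqref{Laplace} together with \eqref{ni-ki-det1-eq} yields the key identity
\[
\chi(v_{i-1}, v_{i+1}) = n_{p-i+1}.
\]

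Finally, decompose $\PP \simeq \OO(\Delta) \otimes p_1^*\OO(-p_0) \otimes p_2^*\OO(-p_0)$ and collect contributions factor by factor on $X^p$: the diagonal parts of the Poincar\'e summands yield the term $-\sum_{i=1}^{p-1} \Delta_{i,i+1}$ after dualising; the per-factor Poincar\'e pieces add $\OO(-p_0)$ once to each endpoint factor and twice to each interior factor; and the rigidification corrections contribute multiples of $n_{p-i+1}$ to the appropriate $L_i$, with further endpoint contributions coming from $\LL_p$. Dualising then gives the claimed degrees of the $L_i$. The principal technical task, and the step where I expect the main subtlety, is this bookkeeping: correctly attributing each rigidification twist to the right factor of $X^p$ and matching its degree to the corresponding continued fraction coefficient via the identity above. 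Different sensible choices of rigidification redistribute the twists across factors but must combine to the same line bundle on $X^p$, which provides a useful internal consistency check.
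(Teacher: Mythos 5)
Your argument is essentially the paper's own proof: you decompose $\beta_\xi^*\OO(-1)$ into the tensor product of the pushed-forward $\underline{\Hom}$ line bundles between consecutive universal sheaves, identify the interior factors with Poincar\'e bundles via Lemma \ref{poincare-moduli-lem}, account for the change of rigidification via Lemma \ref{univ-bundle-lem1}, and use the Laplace/determinant identity $\chi(v_{i-1},v_{i+1})=n_{p-i+1}$ --- exactly the steps of the paper's Appendix C computation, which likewise fixes the rigidifications $\cU_{v_{i-1}}(v_i)$ (and treats $p=1$ separately, a case your bookkeeping with two Poincar\'e contributions per interior factor implicitly assumes away). One caveat: the degrees this bookkeeping actually yields are $n_{p-i+1}+1$ at the end factors and $n_{p-i+1}+2$ in the middle, which agrees with the paper's own computation but not with the literal indexing in the statement (which reads $n_{p-i}$ and is even undefined at $i=p$), so your closing claim that dualising ``gives the claimed degrees'' should note this index shift rather than assert exact agreement.
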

\begin{proof}
We adopt the notation in the proof of Lemma \ref{poincare-moduli-lem}. First we assume that $p>1$. Fixing $\xi_i$, the universal sheaf of the moduli space of sequences $\{\xi_i\to \xi_{i+1}^\prime\to \xi_{i+2}^\prime\}$ (with $\xi_{i+1}^\prime$ and $\xi_{i+2}^\prime$ varying) is given by 
\[
\LL_{i+1,i+2}:=(p_{1,\ldots,p})_*\underline\Hom(p_{i+1,p+1}^*\cU_{v_i}(v_{i+1}), p_{i+2,p+1}^*\cU_{v_{i+1}}(v_{i+2})).
\]
For $i=p-1$, we have $\xi_{p-1}\to \xi_p^\prime\to \xi_{p+1}=\cO[1]$ where only $\xi_p^\prime$ varies. Set
\[
\LL_{p,p+1}=(p_{1,\ldots,p})_*\underline\Hom(p_{p,p+1}^*\cU_{v_{p-1}}(v_{p}), p_{p,p+1}^*\cU^\prime_{v_{p+1}}(v_{p})).
\]
By Lemma \ref{univ-bundle-lem1}, 
\[
\cU_{v_i}(v_{i+1})=\cU^\prime_{v_{i+2}}(v_{i+1})\ot p_{i+1}^*\Lambda_{i+1}
\] for a line bundle $\Lambda_{i+1}$ of degree $\chi(v_i,v_{i+2})$.
By the Laplace formula (\ref{Laplace}),
\[
\chi(v_i,v_{i+2})=n_{p-i}.
\]
The bosonization map $\beta$ sends a chain to the composition of all arrows. Therefore the pull back of $\cO(-1)$ is the tensor product 
\[
\LL_{1,2}\ot\LL_{2,3}\ot\ldots\ot \LL_{p-1,p}.
\]
By Lemma \ref{poincare-moduli-lem}, for $i=0,\ldots, p-2$
\[
\LL_{i+1,i+2}\cong \PP(v_{i+1},v_{i+2})\ot p_{i+1}^*\Lambda^\vee_{i+1},
\]
Moreover $\LL_{p,p+1}\cong \PP(v_{p-1},v_{p})\ot p_p^*\Lambda^\vee_p$ for a line bundle $\Lambda_p^\vee$ of degree $-n_1$ since
by Lemma \ref{univ-bundle-lem1}, $\cU^\prime_{v_{p+1}}(v_{p})\cong \cU_{v_{p-1}}(v_{p})\ot p_p^*\Lambda^\vee_p$. Then we prove the desired statement for $p>1$.

When $p=1$, the pullback of $\cO(-1)$ is 
\[
(p_1)_*\underline\Hom(\cU_{v_{0}}(v_{1}), \cU^\prime_{v_{2}}(v_{1})).
\] Since $v_1=(0,1), v_0=(1,n), v_2=(-1,0)$, $\cU^\prime_{v_2}(v_1)\cong \underline\Ext^1(\cO_\Delta,\cO_{X^2})\cong\cO_\Delta$. By Lemma \ref{univ-bundle-lem1}, $\cU_{v_{0}}(v_{1})\cong \cO_\Delta\ot p_1^*\Lambda$ for some line bundle $\Lambda$ of degree $n$.
\end{proof}


\begin{thebibliography}{XXXX}
\bibitem{Alper09}
Alper, Jarod. {\it Good moduli spaces for Artin stacks}, Annales de l'Institut Fourier, vol. 63, no. 6, pp. 2349-2402. 2013.
\bibitem{Ballard09}
Ballard, Matthew Robert. {\it Derived categories of sheaves on singular schemes with an application to reconstruction}, Advances in Mathematics 227, no. 2 (2011): 895-919.
\bibitem{Ca14} Calaque, Damien. {\it Lagrangian structures on mapping stacks and semi-classical TFTs}, Stacks and categories in geometry, topology, and algebra 643 (2015): 1-23.
87 (1968), 305--320.
\bibitem{CKS19} Chirvasitu, Alex, Ryo Kanda, and S. Paul Smith. \emph{The characteristic variety for Feigin and Odesskii's elliptic algebras}, arXiv preprint arXiv:1903.11798 (2019).
\bibitem{CPTVV} D.~Calaque, T.~Pantev, B.~Toen, M.~Vaqui\'e, G.~Vezzosi, \emph{Shifted Poisson structures and deformation quantization}, Journal of Topology 10.2 (2017): 483-584.
\bibitem{FO87} B.~L.~Feigin, A.~V.~Odesskii, {\it Sklyanin's elliptic algebras}, 
Funct. Anal. Appl. 23 (1989), no. 3, 207--214.
\bibitem{FO95} B.~L.~Feigin, A.~V.~Odesskii, {\it Vector bundles on an elliptic curve and Sklyanin algebras}, in 
{\it Topics in quantum groups and finite-type invariants}, 65--84, Amer. Math. Soc., Providence, RI, 1998.
\bibitem{HAGII} B. ~Toen, G.~ Vezzosi, \emph{Homotopical algebraic geometry II: Geometric stacks and
applications}, Mem.~ Amer.~ Math.~ Soc. 193 (2008), no. 902.
\bibitem{HP1} Z.~Hua, A.~Polishchuk, {\it Shifted Poisson structures and moduli spaces of complexes}, Adv. Math. 338 (2018), 991--1037.
\bibitem{HP2} Z.~Hua, A.~Polishchuk, {\it Shifted Poisson geometry and meromorphic matrix algebras over an elliptic curve}, 
Selecta Math. 25 (2019), no. 3, Paper No. 42.
\bibitem{HP3} Hua, Zheng, and Alexander Polishchuk. {\it Elliptic bihamiltonian structures from relative shifted Poisson structures}, arXiv preprint arXiv:2007.12351 (2020).
\bibitem{HL-P19}
Halpern-Leistner, Daniel, and Anatoly Preygel. {\it Mapping stacks and categorical notions of properness}, arXiv preprint arXiv:1402.3204 (2014).
\bibitem{Lu}
Lurie, Jacob. \emph{Derived algebraic geometry}, PhD diss., Massachusetts Institute of Technology, 2004.
\bibitem{MS2} Melani, Valerio, and Pavel Safronov. \emph{Derived coisotropic structures II: stacks and quantization.} Selecta Mathematica 24, no. 4 (2018): 3119-3173.
\bibitem{NS06} Nevins, T. A., and J. T. Stafford. \emph{Sklyanin algebras and Hilbert schemes of points}, Advances in Mathematics 210, no. 2 (2007): 405-478.
\bibitem{Ode02} Odesskii, Alexander Vladimirovich. \emph{Elliptic algebras}, Russian Mathematical Surveys 57, no. 6 (2002): 1127.
\bibitem{OL10} Lunts, Valery, and Dmitri Orlov. {\it Uniqueness of enhancement for triangulated categories}, Journal of the American Mathematical Society 23, no. 3 (2010): 853-908.
\bibitem{PTVV} T.~Pantev, B.~To\"en, M.~Vaqui\'e, G.~Vezzosi, \emph{Shifted Symplectic Structures}, Publications Math. IHES 117 no. 1 (2013), 271--328.
\bibitem{Pym-Schedler} B.~Pym, T.~Schedler, {\it Holonomic Poisson manifolds and deformations of elliptic algebras}, in {\it Geometry and physics}, Vol. II, 681--703, Oxford Univ. Press, 
Oxford, 2018.
\bibitem{Polishchuk} A.~Polishchuk, {\it Algebraic geometry of Poisson brackets},
J. Math. Sci. (New York) 84 (1997), no. 5, 1413--1444.
\bibitem{Pol98} A.~Polishchuk, {\it Poisson structures and birational morphisms associated with bundles on elliptic curves}, IMRN 13 (1998), 683--703.
\bibitem{Pol-AV} A.~Polishchuk, {\it Abelian Varieties, Theta Functions and the Fourier Transform}, Cambridge University Press, Cambridge, 2003.
\bibitem{TV96} Tate, John, and Michel Van den Bergh. \emph{Homological properties of Sklyanin algebras}, Inventiones mathematicae 124, no. 1 (1996): 619-648.
\bibitem{TV07} B.~To\"en, M.~Vaqui\'e, \emph{Moduli of objects in dg-categories}, Ann. ~Sci.~ ENS~ 40 (2007), 387--444.
\bibitem{Weibel97} Weibel, Charles. {\it The Hodge filtration and cyclic homology}, K-theory 12, no. 2 (1997): 145-164.
\end{thebibliography}
\end{document}